\theoremstyle{plain}
\newtheorem{theorem}{Theorem}[subsection]
\newtheorem*{theorem*}{Theorem}
\newtheorem{corollary}[theorem]{Corollary}
\newtheorem*{corollary*}{Corollary}
\newtheorem{proposition}[theorem]{Proposition}
\newtheorem*{proposition*}{Proposition}
\newtheorem{lemma}[theorem]{Lemma}
\newtheorem*{lemma*}{Lemma}
\theoremstyle{definition}
\newtheorem*{definition*}{Definition}
\newtheorem{example}[theorem]{Example}
\theoremstyle{remark}
\newtheorem*{remarks*}{Remarks}
\newtheorem{question}[theorem]{Question}
\newtheorem*{question*}{Question}
\newtheorem*{notation*}{Notation}
\newtheorem*{claim*}{Claim}
\newtheorem*{terminology*}{Terminology}
\newtheorem*{caution*}{Caution}
\DeclareMathOperator{\con}{con}
\DeclareMathOperator{\op}{op}
\DeclareMathOperator{\fix}{fix}
\DeclareMathOperator{\regfltr}{rflt}
\newcommand{\qtq}[1]{\quad\text{#1}\quad}
\newcommand{\setof}[2]{\left\{\, #1 : #2 \,\right\}}
\newcommand{\ssetof}[2]{\left\{#1\right\}_{#2}}
\newcommand{\map}[3]{#1 \colon #2 \to #3}
\newcommand{\mbf}[1]{\mathbf{#1}}
\newcommand{\mcal}[1]{\mathcal{#1}}
\newcommand{\mscr}[1]{\mathscr{#1}}
\newcommand{\mfrak}[1]{\mathfrak{#1}}
\newcommand{\bmfrak}[1]{\boldsymbol{\mfrak{#1}}}
\newcommand{\mbb}[1]{\mathbb{#1}}
\newcommand{\sbv}[2]{\smashoperator[#1]{\bigvee_{#2}}}
\newcommand{\sbw}[2]{\smashoperator[#1]{\bigwedge_{#2}}}
\newcommand{\sbcup}[2]{\smashoperator[#1]{\bigcup_{#2}}}
\newcommand{\sbcap}[2]{\smashoperator[#1]{\bigcap_{#2}}}
\newcommand{\downset}[1]{\left\downarrow{#1}\right\downarrow}
\newcommand{\upset}[1]{\left\uparrow{#1}\right\uparrow}
\newcommand{\combel}{\prec \!\! \prec}
\newcommand{\stst}{^{**}}
\newcommand{\st}{^*}
\newcommand{\xra}[1]{\xrightarrow{#1}}
\newcommand{\EM}{(\bmfrak{E}, \bmfrak{M})}
\newcommand{\chix}[1]{\overline{#1}}
\newcommand{\splt}[2]{\big({#2}, \chix{#1}\big)}
\newcommand{\spltt}[2]{\left({#2}, \chix{#1}\right)}
\newcommand{\R}{\mscr{R}}
\newcommand{\Topol}{\mscr{O}}
\newcommand{\W}{\mbf{W}}
\begin{document}
	
\title[Pointless parts of completely regular frames]{Pointless parts of \\completely regular frames}
\author[R. N. Ball]{Richard N. Ball}
\address[Ball]{Department of Mathematics, University of Denver, Denver, Colorado 80208, U.S.A.}
\email{rball@du.edu}
\dedicatory{Dedicated to the memory of Bernhard Banaschewski, inspiration and friend.}
\date{April 28, 2023}
\keywords{completely regular frame, compact coreflection, round filter}
\subjclass{Primary 06D22; Secondary 54C45, 54G12, 54G10}
\begin{abstract}
	(Completely regular) locales generalize (Tychonoff) spaces; indeed, the passage from a locale to its spatial sublocale is a well understood coreflection. But a locale also possesses an equally important pointless sublocale, and with morphisms suitably restricted, the passage from a locale to its pointless sublocale is also a coreflection. Our main theorem is that every locale can be uniquely represented as a subdirect product of its pointless and spatial parts, again with suitably restricted projections. We then exploit this representation by showing that any locale is determined by (what may be described as) the placement of its points in its pointless part. 
\end{abstract}
\maketitle


\section{Introduction}
The primary motivation for point free topology comes from classical point set topology. When expressed in terms of the underlying frames, the connection between these two worlds is the well known functor $\map{\sigma}{\mbf{F}}{\mbf{sF}}$, which assigns to each (completely regular) frame $L$ its spatial part $\sigma L$. (Here $\mbf{F}$ is the category of completely regular frames with frame homomorphisms, and $\mbf{sF}$ is its full subcategory of spatial frames, i.e., frames in which every element is the meet of the maximal elements above it.) The functor $\sigma$ is an epireflection, and the reflector for a given frame $L$ is the surjection 
\[
	\map{\sigma_L}{L}{\sigma L}
	\equiv \setof{b \in L}{b = \bigwedge \upset{b}_{\max L}}
	= \left(b \mapsto \bigwedge \upset{b}_{\max L} \right),
	\qquad b \in L.
\]

One of the principal advantages of the point free approach to general topology is its increase in extent beyond the spatial situation, and this article provides evidence of the benefits of that generality. We call a frame \emph{pointless} if it has no maximal elements, and we show that every frame $L$ has a pointless part $\pi L$ which plays a role roughly complementary to its spatial part. We need to restrict the homomorphisms slightly to those we term \emph{skinny}, and work with the restricted cateogory $\mbf{Fs}$ of frames with skinny morphisms and its full subcategory of pointless frames $\mbf{plFs}$. (When the domain and codomain are spatial, the skinny frame homomorphisms are those whose pointed continuous functions have scattered fibers.) In that context the functor $\map{\pi}{\mbf{Fs}}{\mbf{plFs}}$ is an epireflection, and a reflector for the frame $L$ is the surjection  
\[
	\map{\pi_L}{L}{\pi L}
	\equiv \setof{a \in L}{\forall b > a\, \exists c\ (b > c > a)}
	= \left(b \mapsto \bigwedge \upset{b}_{\pi L}\right),
	\ b \in L.
\]

Though disjoint, the two sublocales $\sigma L$ and $\pi L$ are not complementary. Nevertheless, their reflectors $\sigma_L$ and $\pi_L$ are diagnostic when taken together, in the sense that every frame $L$ has a unique representation as a subdirect product of its pointless and spatial parts, with suitably restricted projection maps. This is our main Theorem \ref{Thm:1}. The targets of this representation, here termed \emph{fat}, form a monoreflective subcategory of $\mbf{Fs}$, and even though we would like to know more about these objects (cf.\ Question \ref{Ques:1}), Theorem \ref{Thm:3} provides a fairly concrete description of the reflector arrow. 

Thus a frame $L$ is determined by the interaction of its spatial and pointless parts, and this interaction is governed by two principal connections. The first of these is the arrow $\map{\lambda_L}{\pi L}{\pi \sigma L}$ induced by applying the $\pi$ functor to the $\sigma_L$ arrow.
\[
	\begin{tikzcd}
		L \arrow{r}{\sigma_L} \arrow{d}[swap]{\pi_L}
		& \sigma L \arrow{d}{\pi_{\sigma L}}\\
		\pi L \arrow{r}[swap]{\lambda_L} 
		& \pi \sigma L
	\end{tikzcd}
\]
We refer to $\lambda_L$ as the \emph{ligature of $L$} (Subsection \ref{Subsec:Lig}). 

The second principal connection between the spatial and pointless parts of a frame is motivated by the crucial observation that each maximal element $a$ of a frame $L$ is associated with the filter $y_a \equiv \setof{b \in \pi L}{b \nleq a}$ on $\pi L$. Such filters have two key features: they are \emph{round,} i.e., for each $b \in y_a$ there exists an element $c \in y_a$ such that $c \combel b$, and $\bigvee_{y_a}b\st = \top$. We call such filters \emph{regular,} and we show that each regular filter on a pointless frame $E$ arises as $y_a$ for some frame $L$ having pointless part $E$ and maximal element $a$. Furthermore, the filters produced by distinct maximal elements are \emph{independent,} i.e., contain disjoint elements of $\pi L$. We refer to the family $W \equiv \setof{y_a}{a \in \max L}$ of such filters as the \emph{spatial support of $L$.} Together with the pointless part of $L$, $W$ determines the fat reflection of an atomless frame (Theorem \ref{Thm:3}). $W$ also determines whether or not $L$ is spatial, i.e., whether $\sigma_L$ is an isomorphism (Proposition \ref{Prop:22}), and whether or not $L$ is compact (Proposition \ref{Prop:21}).

In Section \ref{Sec:MaxRndFltr} we take up the situation that arises when all of the filters of the spatial support $W$ are maximal proper round filters. This assumption has the great advantage that it guarantees the complete regularity of the synthetic construction of a frame from its pointless part $E$ and its spatial support $W$. However, it has the disadvantage that the pointless part can grow bigger than $E$, at least if $W$ is taken to be the entire family of maximal proper round filters on $E$. In fact, in the latter case  we show in Proposition \ref{Prop:24} that the synthetically constructed frame is $\beta E$, the compact coreflection of $E$. That fact motivates a final digression, in which we establish the existence of what in spatial terms is a particularly exotic compactification in compact Tychonoff spaces without isolated points (Corollary \ref{Cor:4} and Proposition \ref{Prop:25}.)  

\section{Preliminaries}\label{Sec:Prelim}

For purposes of handy reference and to fix notation, we record here the few background results which underlie what follows. The material is almost entirely folklore and should be skipped upon a first reading, and then consulted only as necessary. 

\subsection{Naked frames, completely regular frames, and  witnessing families}

The context for these remarks is the category $\mbf{F}$ of completely regular frames with frame homomorphisms. Unless otherwise explicitly stipulated, all frames are assumed completely regular and all spaces are assumed Tychonoff.  A couple of constructions require an excursion into the category $\mbf{nF}$ of \emph{naked frames,} i.e., frames without the hypothesis of complete regularity, and then a return to $\mbf{F}$ by way of the completely regular coreflection. That is, we first construct a naked frame, labeled for instance $L'$, and then extract its largest completely regular subframe, labeled for instance $L$.

The issue of whether a particular naked frame is completely regular thus plays an important role in what follows, and our working definition of this important notion is as follows.  When speaking of two elements $a_i$ of a frame $L$, to say that \emph{$a_1$ lies completely below $a_2$} is to say that there is a family of witnesses $\ssetof{b_p}{\mbb{Q}} \subseteq L$ such that $a_1 \leq b_p \prec b_q \leq a_2$ for all $p < q$ in $\mbb{Q}$. We write $a_1 \combel a_2$, and we refer to $\ssetof{b_p}{\mbb{Q}}$ as a \emph{witnessing family for $a_1 \combel a_2$}.

\subsection{Nuclei and  prenuclei}

\begin{notation*}[nucleus notation]
	We make use of the following notational conventions for nuclei.
	\begin{itemize}
		\item 
		We use lower case Greek letters to denote nuclei on frames, often subscripted with the name of the frame, as in $\delta_L$.
		
		\item 
		We use primed lower case Greek letters to denote prenuclei on frames, often subscripted with the name of the frame, as in $\delta_L'$.  The corresponding nucleus is then designated by the same Greek letter without the prime.  See Lemma \ref{Lem:15}.
		
		\item 
		 We define and denote the \emph{kernel of a nucleus $\delta$ on a frame $L$} to be 
		 \[
			 \ker \delta
			 \equiv \setof{a \in L}{\delta(a) = \top}.
		 \]
		 See Lemma \ref{Lem:17}. 
		 
		 \item 
		 For a nucleus $\delta$ on a frame $L$, we denote its fixed point set, aka its  sublocale, by 
		 \[
			 \fix \delta
			 \equiv \setof{a \in L}{\delta(a) = a}.
		 \]
	\end{itemize}
\end{notation*}

\begin{lemma}\label{Lem:15}
	For a prenucleus $\delta'$ on a frame $L$, define for all $a \in L$ and for all ordinals $\beta$: 
	\begin{align*}
		\delta^\beta(a)
		&\equiv a,&&\text{if $\beta = 0$},\\
		\delta^\beta(a)
		&\equiv \delta' \circ \delta^\alpha(a)&&\text{if $\beta =  \alpha + 1$,}\\
		\delta^\beta(a)
		&\equiv \sbv{lr}{\alpha < \beta} \delta^\alpha(a)&&\text{if  $\beta$ is a limit ordinal,}\\
		\delta(a)
		&\equiv \delta^\beta(a)&&\text{for some (any) $\beta$ such  that $\delta^\beta(a) = \delta^{\beta + 1}(a)$.}
	\end{align*}
	Then $\delta$ is the unique nucleus on $L$ such that $\fix \delta' = \fix \delta$.
\end{lemma}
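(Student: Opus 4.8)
The plan is to run the transfinite recursion to a stable value, verify the four nucleus axioms for the resulting map $\delta$, identify $\fix\delta$ with $\fix\delta'$, and finally invoke the standard recovery of a nucleus from its sublocale to get uniqueness.

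First I would establish that the recursion is well defined. By transfinite induction, using that $\delta'$ is inflationary and monotone, the assignment $\beta\mapsto\delta^\beta(a)$ is order preserving, hence a monotone map from the ordinals into the \emph{set} $L$; were it never eventually constant it would embed a proper class into $L$, which is impossible, so it stabilizes. I would then record two facts to be used repeatedly: (i) once $\delta^\beta(a)=\delta^{\beta+1}(a)$, the common value $v$ satisfies $\delta'(v)=v$, and $\delta^\gamma(a)=v$ for every $\gamma\ge\beta$ (successor steps fix $v$, and at limit stages the join of a family that is eventually $v$ and otherwise below $v$ is again $v$); this yields both well-definedness and the ``any $\beta$'' clause; and (ii) every $\delta'$-fixed point $x$ satisfies $\delta^\beta(x)=x$ for all $\beta$, whence $\delta(x)=x$. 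Since the strictly increasing portion of each chain has length bounded by $|L|$, a single ordinal $\beta_0$ stabilizes $\delta^\bullet(a)$ for all $a$ at once, so $\delta(a)=\delta^{\beta_0}(a)$ and I may treat $\delta$ as an honest function.

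Next I would check that $\delta$ is a nucleus. Inflationarity is immediate from $\delta^0(a)=a$; monotonicity follows by transfinite induction (using that $\delta'$ is monotone and that joins preserve order) evaluated at $\beta_0$; and idempotency is fact (ii) applied to $x=\delta(a)$, which is $\delta'$-fixed by (i). The one substantive axiom is preservation of binary meets, and this is where I expect the main obstacle. Monotonicity already gives $\delta(a\wedge b)\le\delta(a)\wedge\delta(b)$, so the content lies in the reverse inequality. I would prove the sharper statement $\delta^\beta(a)\wedge\delta^\beta(b)=\delta^\beta(a\wedge b)$ for all $\beta$ by transfinite induction: the successor step combines the induction hypothesis with the fact that $\delta'$ preserves binary meets, while the limit step is exactly where frame distributivity enters, rewriting $\bigl(\bigvee_\alpha\delta^\alpha(a)\bigr)\wedge\bigl(\bigvee_{\alpha'}\delta^{\alpha'}(b)\bigr)$ as $\bigvee_{\alpha,\alpha'}\bigl(\delta^\alpha(a)\wedge\delta^{\alpha'}(b)\bigr)$ and collapsing each term via monotonicity to the index $\gamma=\max(\alpha,\alpha')$. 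Evaluating at $\beta_0$ then gives $\delta(a)\wedge\delta(b)=\delta(a\wedge b)$.

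Finally I would pin down the fixed points and uniqueness. The inclusion $\fix\delta'\subseteq\fix\delta$ is fact (ii); conversely, if $\delta(a)=a$ then $a$ equals the $\delta'$-fixed point $\delta(a)$ by (i), so $\delta'(a)=a$, giving $\fix\delta=\fix\delta'$. For uniqueness I would use that any nucleus $\eta$ is recovered from its fixed set by $\eta(a)=\bigwedge\setof{x\in\fix\eta}{x\ge a}$: indeed $\eta(a)$ is itself a fixed point above $a$, and it lies below every such $x$ since $a\le x$ forces $\eta(a)\le\eta(x)=x$, so $\eta(a)$ realizes that meet. Hence a nucleus is determined by its sublocale, and any nucleus sharing the fixed set of $\delta$ must coincide with $\delta$.
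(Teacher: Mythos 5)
Your proof breaks down precisely at the step you flag as ``the main obstacle'': meet preservation. You invoke ``the fact that $\delta'$ preserves binary meets,'' but a prenucleus does \emph{not} preserve binary meets --- if it did, the lemma would be nearly trivial. In this paper (and in the source it cites for this lemma, \cite[III 11.5]{PicadoPultr:2012}), a prenucleus is only required to be inflationary, monotone, and to satisfy the one-sided inequality $a \wedge \delta'(b) \leq \delta'(a \wedge b)$; that is exactly, and only, what is verified in Lemma \ref{Lem:16} and Proposition \ref{Prop:3} before Lemma \ref{Lem:15} is applied to them, so a proof of Lemma \ref{Lem:15} cannot assume more. The stronger property genuinely fails: on $L = \Topol\mbb{R}$ take $\delta'(x) \equiv (u \to x) \vee (v \to x)$ with $u = (-\infty,0)$ and $v = (0,\infty)$ (a pointwise join of two open nuclei, hence a prenucleus in the weak sense); then $\delta'(u) = \delta'(v) = \mbb{R}$ while $\delta'(u \wedge v) = \delta'(\bot) = \mbb{R}\setminus\{0\}$. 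Consequently your claimed induction statement $\delta^\beta(a)\wedge\delta^\beta(b) = \delta^\beta(a\wedge b)$ is already false at $\beta = 1$, and the successor step cannot be repaired locally: the weak inequality applied twice only yields $\delta'(x)\wedge\delta'(y) \leq \delta'(\delta'(x\wedge y))$, which overshoots level $\beta+1$.

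The standard fix restructures the induction so that the right-hand side is the \emph{stabilized} value rather than the level-$\beta$ iterate: prove by transfinite induction on $\beta$ that $\delta^\beta(a) \wedge b \leq \delta(a \wedge b)$ for all $a, b \in L$. The successor step is $\delta'(\delta^\beta(a)) \wedge b \leq \delta'\big(\delta^\beta(a)\wedge b\big) \leq \delta'\big(\delta(a\wedge b)\big) = \delta(a\wedge b)$, using the prenucleus inequality, monotonicity, the induction hypothesis, and the fact that $\delta(a\wedge b)$ is a fixed point of $\delta'$ (your fact (i)); the limit step is frame distributivity. This gives $\delta(a)\wedge b \leq \delta(a\wedge b)$, and applying it twice, $\delta(a)\wedge\delta(b) \leq \delta\big(a\wedge\delta(b)\big) \leq \delta\big(\delta(a\wedge b)\big) = \delta(a\wedge b)$ by idempotence. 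The remainder of your argument --- stabilization via a cardinality bound, idempotence, the identification $\fix\delta = \fix\delta'$, and uniqueness from the recovery of a nucleus as the meet of the fixed points above a given element --- is correct as written. (For comparison: the paper itself does not prove this lemma but defers to \cite{PicadoPultr:2012}, where the argument is essentially the repaired one just sketched.)
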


\begin{proof}
	See \cite[III 11.5]{PicadoPultr:2012}.
\end{proof}

\begin{lemma}\label{Lem:16}
	For a filter $F$ on a frame $L$, the map 
	\[
		\map{\delta_F'}{L}{L}
		\equiv \left(a \mapsto \bigvee_F(b \to a)\right), 
		\qquad a \in L,
	\]
	functions as a prenucleus on $L$.
\end{lemma}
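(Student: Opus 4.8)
The plan is to check the three defining properties of a prenucleus in turn: that $\delta_F'$ is inflationary ($a \le \delta_F'(a)$), monotone, and preserves binary meets ($\delta_F'(a_1 \wedge a_2) = \delta_F'(a_1) \wedge \delta_F'(a_2)$). Idempotency is deliberately \emph{not} required, which is exactly what distinguishes a prenucleus from a nucleus; Lemma \ref{Lem:15} will later manufacture the associated nucleus by transfinite iteration. Throughout I would use only the standard Heyting-implication identities valid in any frame: $a \le (b \to a)$, monotonicity of $b \to (\cdot)$, antitonicity of $(\cdot) \to a$ in the antecedent, and $d \to (a_1 \wedge a_2) = (d \to a_1) \wedge (d \to a_2)$.

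The first two properties are immediate. For inflationarity, note that a filter contains $\top$ and $\top \to a = a$, so $a \le \bigvee_{b \in F}(b \to a) = \delta_F'(a)$. For monotonicity, $a_1 \le a_2$ forces $(b \to a_1) \le (b \to a_2)$ for every $b \in F$, and the join preserves this order.

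The real content is meet preservation, and here only the inequality $\delta_F'(a_1) \wedge \delta_F'(a_2) \le \delta_F'(a_1 \wedge a_2)$ needs work (the reverse follows from monotonicity, since $a_1 \wedge a_2 \le a_1, a_2$). I would first apply the frame distributive law to rewrite
\[
	\delta_F'(a_1) \wedge \delta_F'(a_2) = \bigvee_{b, c \in F}\big((b \to a_1) \wedge (c \to a_2)\big),
\]
and then, for a fixed pair $b, c \in F$, introduce $d \equiv b \wedge c$, which lies in $F$ because a filter is closed under finite meets. Since $d \le b$ and $d \le c$, antitonicity of implication gives $(b \to a_1) \wedge (c \to a_2) \le (d \to a_1) \wedge (d \to a_2) = d \to (a_1 \wedge a_2)$, and the right-hand side is one of the terms joined in $\delta_F'(a_1 \wedge a_2)$ because $d \in F$. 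Joining over all pairs $b, c$ yields the desired inequality.

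The one step that is not purely formal — and the place where the hypotheses genuinely pull their weight — is this last inequality: it requires the frame law to convert the meet of two joins into a single join indexed by pairs, and simultaneously the filter's closure under finite meets to collapse each pair $(b, c)$ to a common element $d = b \wedge c \in F$. If $F$ were merely upward closed but not meet-closed, this collapse would fail and $\delta_F'$ would not, in general, preserve meets. No obstacle beyond this is anticipated; the verification is otherwise a routine chase through the implication identities.
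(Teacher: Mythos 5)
Your proof is correct, but it establishes more than the paper does, and by a genuinely different route. The paper's notion of prenucleus (the one in Picado--Pultr, which Lemma \ref{Lem:15} relies on) asks only for an inflationary monotone map satisfying the one-sided inequality $a_1 \wedge \delta_F'(a_2) \leq \delta_F'(a_1 \wedge a_2)$, and the paper verifies exactly that: it writes $\delta_F'(a_1 \wedge a_2) = \bigvee_F\bigl((b \to a_1) \wedge (b \to a_2)\bigr)$, bounds $b \to a_1$ below by $a_1$, and pulls $a_1$ out of the join by frame distributivity. Notably, this argument never uses the filter's closure under binary meets --- the weak prenucleus inequality holds for $\delta_F'$ built from an arbitrary nonempty subset $F \subseteq L$. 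You instead prove full binary meet preservation, $\delta_F'(a_1) \wedge \delta_F'(a_2) = \delta_F'(a_1 \wedge a_2)$, via the pair-indexed join
\[
	\delta_F'(a_1) \wedge \delta_F'(a_2)
	= \bigvee_{b, c \in F}\bigl((b \to a_1) \wedge (c \to a_2)\bigr)
	\leq \bigvee_{d \in F}\bigl(d \to (a_1 \wedge a_2)\bigr),
\]
collapsing each pair to $d = b \wedge c \in F$; this is where, as you correctly note, meet-closure of $F$ is indispensable. Since meet preservation together with inflationarity and monotonicity implies the weaker inequality ($a_1 \wedge \delta_F'(a_2) \leq \delta_F'(a_1) \wedge \delta_F'(a_2) = \delta_F'(a_1 \wedge a_2)$), your proof establishes the lemma under the paper's definition as well, so there is no gap --- merely a mismatch of conventions for \enquote{prenucleus} that happens to be harmless in this direction. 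What each approach buys: yours yields the stronger structural fact that $\delta_F'$ is a meet-preserving inflator (a \enquote{nucleus minus idempotency}), which is pleasant to know; the paper's is more economical and exposes that the prenucleus property itself costs nothing beyond $F$ being a set of elements, the filter structure being reserved for where it is genuinely needed (properness, normality, and the kernel computations of Lemma \ref{Lem:17}).
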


\begin{proof}
	It is clear that $a \leq \delta_F'(a) \leq \delta_F'(b)$ for $a \leq b$ in $L$. And for elements $a_i \in L$ we have
	\begin{gather*}
		\delta'_F(a_1 \wedge a_2)
		= \bigvee_F (b \to (a_1 \wedge a_2))
		= \bigvee_F ((b \to a_1) \wedge (b \to a_2))\\
		 \geq \bigvee_F (a_1 \wedge (b \to a_2))
		= a_1 \wedge \bigvee_F (b \to a_2)
		= a_1 \wedge \delta_F'(a_2). \qedhere
	\end{gather*}
\end{proof}

\begin{definition*}[normal filter]
	A filter $F$ on a frame $L$ is said to be \emph{normal} if $\delta_F(a) \in F$ implies $a \in F$ for all $a \in L$. Here $\delta_F$ is the nucleus associated per Lemma \ref{Lem:15} with the prenucleus $\delta_F'$ of Lemma \ref{Lem:16}.
\end{definition*}

\begin{lemma}\label{Lem:17}
	A filter on a frame is the kernel of a nucleus if and only if it is normal. In detail, if $\delta$ is a nucleus on a frame $L$ then $\ker \delta$ is a normal filter, and if $F$ is a normal filter on $L$ then $\delta_F$ is the unique nucleus on $L$ for which $F = \ker \delta_F$.
\end{lemma}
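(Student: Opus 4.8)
The plan is to handle the two implications of the biconditional separately and to defer uniqueness to the end, since that is the only step that genuinely uses regularity. First I would observe that $\ker\delta$ is a filter for every nucleus $\delta$: it contains $\top$ because $\delta(\top)=\top$, it is upward closed by monotonicity of $\delta$, and it is closed under binary meets because $\delta$ preserves them, so $\delta(a\wedge b)=\delta(a)\wedge\delta(b)=\top$ whenever $a,b\in\ker\delta$.

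The engine for everything else is the single inequality $\delta_{\ker\delta}\le\delta$, valid for every nucleus $\delta$. To establish it, fix $a$ and $b\in F\equiv\ker\delta$. Since $\delta$ preserves meets and fixes $\top=\delta(b)$, and since $(b\to a)\wedge b\le a$, we get $b\to a\le\delta(b\to a)=\delta(b\to a)\wedge\delta(b)=\delta\big((b\to a)\wedge b\big)\le\delta(a)$; taking the join over $b\in F$ gives $\delta_F'(a)\le\delta(a)$, and a routine transfinite induction along the construction of Lemma \ref{Lem:15} promotes this to $\delta_F\le\delta$. Normality of $\ker\delta$ is then immediate: if $\delta_F(a)\in F$, i.e.\ $\delta(\delta_F(a))=\top$, then since $\delta_F(a)\le\delta(a)$ we have $\top=\delta(\delta_F(a))\le\delta(\delta(a))=\delta(a)$, so $a\in F$. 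This shows every kernel is normal.

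For the converse I would take a normal filter $F$ and prove $\ker\delta_F=F$. The inclusion $F\subseteq\ker\delta_F$ is cheap: for $a\in F$ the choice $b=a$ gives $\delta_F'(a)\ge(a\to a)=\top$, whence $\delta_F(a)=\top$. Conversely, if $a\in\ker\delta_F$ then $\delta_F(a)=\top\in F$, and normality forces $a\in F$. Thus $F=\ker\delta_F$, so every normal filter is a kernel, which completes the biconditional.

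The one substantive point is uniqueness: any nucleus $\delta$ with $\ker\delta=F$ must coincide with $\delta_F$. Half of this is in hand, since $\delta_F\le\delta$ by the inequality above; the reverse inequality is where I expect the real work to lie, and it is exactly here that complete regularity is indispensable---uniqueness already fails for a three-element chain, which carries two distinct nuclei (the identity and a closed nucleus) with the same kernel. By regularity $\delta(a)=\bigvee\setof{x}{x\prec\delta(a)}$, so it suffices to show $x\le\delta_F(a)$ for each $x\prec\delta(a)$. Fix such an $x$, so $x\st\vee\delta(a)=\top$, and set $b\equiv x\st\vee a$. Then $\delta(b)\ge x\st\vee\delta(a)=\top$, so $b\in\ker\delta=F$; moreover $b\to a=(x\st\vee a)\to a=(x\st\to a)\wedge(a\to a)=x\st\to a$, and $x\le x\st\to a$ because $x\wedge x\st=\bot\le a$. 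Hence $x\le b\to a\le\delta_F'(a)\le\delta_F(a)$, and joining over $x$ gives $\delta(a)\le\delta_F(a)$. Therefore $\delta=\delta_F$.
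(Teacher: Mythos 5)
Your proof is correct, and it is actually more complete than the paper's own argument, which follows the same skeleton for the biconditional but is silent on two points you handle explicitly. First, the paper's forward direction only shows that $\delta(a) \in \ker\delta$ implies $a \in \ker\delta$ (via idempotence); since normality is defined in terms of the specific nucleus $\delta_F$ constructed from $F = \ker\delta$, and not in terms of $\delta$ itself, one still needs your inequality $\delta_{\ker\delta} \le \delta$ (together with upward closure of the kernel) to turn that statement into normality proper --- your ``engine'' supplies exactly this bridge, which the paper leaves tacit. Second, the paper's proof establishes only the existence half of the last clause ($F = \ker\delta_F$ for normal $F$) and never addresses uniqueness. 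In the paper's architecture, uniqueness is recoverable from Lemma \ref{Lem:5}: two nuclei with the same kernel, viewed as surjections onto their sublocales, factor through one another and hence induce the same congruence, and a nucleus is determined by its congruence. Your direct argument --- writing $\delta(a)$ as a join of elements $x \prec \delta(a)$ and producing $b = x^* \vee a \in \ker\delta$ with $x \le b \to a \le \delta_F(a)$ --- is in effect an inlining of the regularity trick used to prove Lemma \ref{Lem:5}, specialized to nuclei. Both routes genuinely need regularity, and your three-element chain correctly witnesses that the uniqueness clause fails without it (the identity and the closed nucleus $a \mapsto a \vee m$ share the kernel $\{\top\}$ there). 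What your version buys is self-containedness and a precise accounting of where regularity enters; what the paper's buys is brevity, at the cost of leaving the $\delta$-versus-$\delta_F$ step and the entire uniqueness claim to the reader.
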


\begin{proof}
	If $F = \fix \delta$ for a nucleus $\delta$ and if $\delta(a) \in F$ then $\delta \circ \delta(a) = \top$, and since $\delta$ is idempotent we get $\delta(a) = \top$ and $a \in F$. On the other hand, suppose that $F$ is a normal filter and $\delta_F'$ is the prenucleus defined from it as in Lemma \ref{Lem:16} and $\delta^\alpha$ is defined from $\delta_F'$ as in Lemma \ref{Lem:15}. Then since for any $a \in F$ we have 
	\[
		\delta_F (a) 
		\geq \delta_F^1(a)
		= \delta_F'(a)
		= \bigvee_F (b \to a)
		= \top,
	\]
	we see that $F \subseteq \ker \delta_F$. And for any $a \in \ker \delta_F$ we have that $a \in F$ since $\delta_F(a) = \top \in F$ and $F$ is normal.
\end{proof}

\begin{lemma}\label{Lem:5}
	Let $m$ and $n$ be frame homomorphisms with common domain $L$, and suppose that $m$ is surjective. Then $n$ factors through $m$ if and only if $m(a) = \top$ implies $n(a) = \top$ for all $a \in L$. 
	\[
	\begin{tikzcd}
		L \arrow{r}{m} \arrow{d}[swap]{n}
		&M \arrow[dotted]{dl}\\
		N&
	\end{tikzcd}
	\]
\end{lemma}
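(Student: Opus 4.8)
The plan is to dispose of the forward implication immediately and to construct the factoring homomorphism by hand for the converse. If $n = h \circ m$ for some frame homomorphism $h$, then $m(a) = \top$ forces $n(a) = h\bigl(m(a)\bigr) = h(\top) = \top$, since frame homomorphisms preserve $\top$. This settles one direction with no further work.

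For the converse, assume $m(a) = \top$ implies $n(a) = \top$ for all $a \in L$. Because $m$ is surjective, the only possible factor is the map sending $x \in M$ to $n(a)$ for any $a$ with $m(a) = x$, so the entire argument reduces to showing this assignment is well defined, i.e.\ that $m(a) = m(b)$ implies $n(a) = n(b)$. Since $m$ and $n$ preserve finite meets, it suffices to prove the monotone form $m(a) \le m(b) \Rightarrow n(a) \le n(b)$ (from which the equality version follows by symmetry, as $m(a)=m(b)$ gives both inequalities). I expect this well-definedness step to be the crux, and it is precisely here that complete regularity enters: for a general naked frame a congruence is not recoverable from the set of elements sent to $\top$, so the hypothesis would be too weak.

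The key step exploits regularity of $L$ to write $a = \bigvee \setof{c}{c \prec a}$, reducing the goal to $n(c) \le n(b)$ for each $c \prec a$. Fixing a witness $d$ for $c \prec a$, so that $c \wedge d = \bot$ and $a \vee d = \top$, I would argue as follows: from $m(a) \le m(b)$ we get $m(b \vee d) = m(b) \vee m(d) \ge m(a) \vee m(d) = m(a \vee d) = \top$, whence $n(b \vee d) = \top$ by hypothesis, that is $n(b) \vee n(d) = \top$. Since $n(c) \wedge n(d) = n(c \wedge d) = \bot$, distributivity yields $n(c) = n(c) \wedge \bigl(n(b) \vee n(d)\bigr) = n(c) \wedge n(b) \le n(b)$. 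Taking the join over all $c \prec a$ then gives $n(a) \le n(b)$, as required.

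With well-definedness in hand, I would define $h \colon M \to N$ by $h\bigl(m(a)\bigr) = n(a)$, so that $n = h \circ m$ by construction. Verifying that $h$ is a frame homomorphism is then routine, each condition being checked by lifting elements of $M$ through the surjection $m$: we have $h(\top) = h\bigl(m(\top)\bigr) = n(\top) = \top$, and for families lifted as $x_i = m(a_i)$ the identities $h\bigl(\bigvee x_i\bigr) = n\bigl(\bigvee a_i\bigr) = \bigvee n(a_i) = \bigvee h(x_i)$ and $h(x_1 \wedge x_2) = n(a_1 \wedge a_2) = h(x_1) \wedge h(x_2)$ follow from $m$ and $n$ preserving joins and finite meets. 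Finally, $h$ is the unique such factor because $m$ is epic.
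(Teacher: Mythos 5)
Your proof is correct and follows essentially the same route as the paper's: both reduce the problem to well-definedness of the induced map, write $a$ as a join of elements rather below it using regularity, and push the witness through $m$ to invoke the hypothesis and conclude $n(c)\le n(b)$ by distributivity. The only difference is cosmetic — you spell out the routine construction and verification of the factoring homomorphism $h$, which the paper leaves implicit once the congruence containment is established.
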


\begin{proof}
	Consider elements $a_i \in L$ such that $m(a_1) = m(a_2)$. Find a subset $B \subseteq L$ such that $\bigvee B = a_1$, and such that for each $b \in B$ we have $b \prec a_1$ as witnessed by another element $c_b$, i.e., $c_b \vee a_1 = \top$ and $c_b \wedge b = \bot$. Then the fact that $m(c_b \vee a_2) = m(c_b \vee a_1) = \top$ implies that $n(c_b) \vee n(a_2) = n(c_b \vee a_2) = \top$, combined with the fact that $n(c_b) \wedge n(b) = n(c_b \wedge b) = \bot$, yields $n(b) \prec n(a_2)$. In sum we have $n(a_1) = n\left(\bigvee B\right) = \bigvee_B n(b) \leq n(a_2)$, and a symmetrical argument gives $n(a_2) \leq n(a_1)$.
\end{proof}

\begin{lemma}\label{Lem:14}
	Let $\map{m}{L}{M}$ be a frame homomorphism, and let $\delta_L$ and $\delta_M$ be nuclei on $L$ and $M$, respectively. Then $m$ drops through $\delta_L$ and $\delta_M$, i.e., there exists a unique map $\bar{m}$ such that $\delta_M \circ m = \bar{m} \circ \delta_L$, if and only if $m(\ker L) \subseteq \ker M$.
	\[
	\begin{tikzcd}
		L \arrow{r}{m} \arrow{d}[swap]{\delta_L}
		& M \arrow{d}{\delta_M}\\
		\fix \delta_L \arrow{r}[swap]{\bar{m}}
		&\fix \delta_M
	\end{tikzcd}
	\]
\end{lemma}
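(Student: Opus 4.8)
The plan is to derive everything from Lemma \ref{Lem:5}, exploiting the fact that a nucleus, regarded as a map onto its own fixed-point frame, is a surjective frame homomorphism. (Here I read the kernels in the statement as $\ker L = \ker \delta_L$ and $\ker M = \ker \delta_M$, in keeping with the nucleus notation.) First I would record the two ingredients. The vertical arrows of the square are the corestrictions $\map{\delta_L}{L}{\fix \delta_L}$ and $\map{\delta_M}{M}{\fix \delta_M}$, each a surjective frame homomorphism onto its sublocale; hence $\delta_M \circ m$ is a frame homomorphism $L \to \fix \delta_M$ sharing the domain $L$ with the surjection $\delta_L$. Second, since $\top$ is the top element of $\fix \delta$ for any nucleus $\delta$, one has $\delta(a) = \top$ exactly when $a \in \ker \delta$; in particular the corestricted map $\delta_L$ sends $a$ to $\top$ precisely when $a \in \ker \delta_L$, and likewise $(\delta_M \circ m)(a) = \delta_M(m(a)) = \top$ precisely when $m(a) \in \ker \delta_M$.

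With these in hand I would apply Lemma \ref{Lem:5} to the pair consisting of the surjection $\delta_L$ and the homomorphism $\delta_M \circ m$. That lemma says a factorization $\bar m$ of $\delta_M \circ m$ through $\delta_L$ exists if and only if $\delta_L(a) = \top$ implies $(\delta_M \circ m)(a) = \top$ for every $a \in L$. By the two identifications above, this hypothesis is literally the inclusion $m(\ker \delta_L) \subseteq \ker \delta_M$. This gives the backward implication and produces the arrow $\bar m$ making $\delta_M \circ m = \bar m \circ \delta_L$; uniqueness is automatic because $\delta_L$ is onto. For the forward implication I would run the same computation in reverse: given such a $\bar m$, any $a \in \ker \delta_L$ satisfies $\delta_M(m(a)) = \bar m(\delta_L(a)) = \bar m(\top) = \top$, since frame homomorphisms preserve $\top$, whence $m(a) \in \ker \delta_M$.

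I do not expect a genuine obstacle here, since the substantive content — that factorizations through a frame surjection are governed by where the top element is hit — is already isolated in Lemma \ref{Lem:5}. The only care required is the bookkeeping that matches the condition $\delta_L(a) = \top \Rightarrow \delta_M(m(a)) = \top$ with the stated kernel inclusion, together with the observation that the corestricted nuclei are the surjective frame homomorphisms to which Lemma \ref{Lem:5} is meant to be applied.
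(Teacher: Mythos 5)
Your proposal is correct and is exactly the paper's argument: the paper's proof of this lemma is simply ``This follows immediately from Lemma \ref{Lem:5},'' and your write-up supplies precisely the bookkeeping that remark leaves implicit (corestricting the nuclei to surjections, applying Lemma \ref{Lem:5} to $\delta_L$ and $\delta_M \circ m$, and matching the top-element condition with the kernel inclusion).
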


\begin{proof}
	This follows immediately from Lemma \ref{Lem:5}
\end{proof}
\subsection{Congruences}

\begin{notation*}[congruence notation]
	We make use of the following notational conventions for frame congruences.
	\begin{itemize}
		\item 
		We use capital Greek letters to denote congruences on a frame $L$, and we denote the congruence frame itself, aka the assembly of $L$, by $\con L$.
		
		\item 
		($\Phi_a$, $o_a$, $O_a$) We denote the congruence of the open quotient associated with an element $a$ of a frame $L$ by
		\[
			\Phi_a
			\equiv \setof{(a_1, a_2)}{a \wedge a_1 = a \wedge a_2}, 
		\]
		and the quotient map by $\map{o_a}{L}{L/\Phi_a}$. We denote the quotient frame $L/\Phi_a$ by $O_a$, and often identify it with its sublocale $\{a \to b : b \in L\}$, in which case we also identify the map $o_a$ with (the range restriction of) its nucleus $(b \mapsto a \to b)$, $b \in L$.
		
		\item 
		We denote the congruence of the closed quotient associated with an element $a$ of a frame $L$ by
		\[
			\Psi_a
			\equiv \setof{(a_1, a_2)}{a \vee a_1 = a \vee a_2}, 
		\]
		and the quotient map by $\map{c_a}{L}{L/\Psi_a}$. We denote the quotient frame $L/\Psi_a$ by $C_a$, and often identify it with its sublocale $\upset{a} \equiv \setof{b}{b \geq a}$, in which case we also identify the map $c_a$ with (the range restriction of) its nucleus $(b \mapsto a \vee b)$, $b \in L$. 
		
		\item 
		We denote the coarsest dense congruence on a frame $L$ by 
		\[
			\Delta_L
			\equiv \setof{(a_1, a_2)}{a_1\st = a_2\st},
		\]
		and the quotient map by $\map{\delta_L}{L}{L/\Delta_L} \equiv \Delta L$. We often identify the quotient with its sublocale $L\stst \equiv \ssetof{a\stst}{L}$, in which case we also identify the map $\delta_L$ with (the range restriction of) its nucleus $(a \mapsto a\stst)$, $a \in L$. We refer to the map or it codomain as the \emph{skeleton of $L$}.
		
		\item 
		We denote the congruence of a frame surjection $\map{m}{L}{M}$ by 
		\[
			\Theta_m
			\equiv \setof{(a_1, a_2)}{m(a_1) = m(a_2)}.
		\]
	\end{itemize} 
\end{notation*}

\begin{lemma}\label{Lem:11}
	For elements $a \leq b$ and congruence $\Xi$ on a frame $L$,
	\[
		(a,b) \in \Xi 
		\iff \Phi_a \wedge \Psi_b \leq \Xi.
	\]
	In particular,
	\[
		(a, \top) \in \Xi \iff \Phi_a \leq \Xi 
		\quad\text{and}\quad
		(\bot, b) \in \Xi \iff \Psi_a \leq \Xi.
	\]
\end{lemma}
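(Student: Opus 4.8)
The plan is to identify $\Phi_a \wedge \Psi_b$ as the congruence generated by the single pair $(a,b)$, after which both the main equivalence and its two specializations follow formally. Since congruences are ordered by inclusion and their meet is intersection, the claim $\Phi_a \wedge \Psi_b \leq \Xi$ amounts to $\Phi_a \cap \Psi_b \subseteq \Xi$, so everything reduces to understanding membership in that intersection.

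First I would dispatch the easy inclusion $(a,b) \in \Phi_a \wedge \Psi_b$. Because $a \leq b$ we have $a \wedge a = a = a \wedge b$, so $(a,b) \in \Phi_a$, and $b \vee a = b = b \vee b$, so $(a,b) \in \Psi_b$; hence $(a,b) \in \Phi_a \cap \Psi_b$. This already yields the backward direction of the main equivalence: if $\Phi_a \wedge \Psi_b \leq \Xi$ then $(a,b) \in \Xi$.

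The substance is the forward direction: assuming $(a,b) \in \Xi$, I must show $\Phi_a \cap \Psi_b \subseteq \Xi$. I would pass to the quotient surjection $\map{h}{L}{L/\Xi}$ and set $e \equiv h(a) = h(b)$. Given $(a_1, a_2) \in \Phi_a \cap \Psi_b$, the defining equalities $a \wedge a_1 = a \wedge a_2$ and $b \vee a_1 = b \vee a_2$ push forward under the frame homomorphism $h$ to
\[
	h(a_1) \wedge e = h(a_2) \wedge e
	\qquad\text{and}\qquad
	h(a_1) \vee e = h(a_2) \vee e.
\]
The key step is then the cancellation principle valid in any distributive lattice: two elements having the same meet and the same join with a fixed element must coincide. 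Applying it gives $h(a_1) = h(a_2)$, i.e.\ $(a_1, a_2) \in \Xi$, as required.

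With $\Phi_a \wedge \Psi_b$ thus recognized as the least congruence containing $(a,b)$, the main biconditional is immediate, and the two displayed special cases follow by substitution: taking $b = \top$ collapses $\Psi_\top$ to the top congruence, whence $\Phi_a \wedge \Psi_\top = \Phi_a$ and $a \leq \top$ give the first; taking $a = \bot$ collapses $\Phi_\bot$ to the top congruence, whence $\Phi_\bot \wedge \Psi_b = \Psi_b$ and $\bot \leq b$ give the second. The only genuine obstacle is the cancellation principle, which I expect to verify directly by a short absorption-and-distributivity computation (e.g.\ $h(a_1) = h(a_1) \wedge (h(a_1) \vee e) = h(a_1) \wedge (h(a_2) \vee e) = \cdots = h(a_2)$) rather than invoke as a black box.
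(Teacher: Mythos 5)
Your proof is correct and complete. The paper itself states Lemma \ref{Lem:11} without proof (it sits in the preliminaries, which the author flags as folklore), so there is no argument to compare against; but what you give is exactly the canonical one: the two absorption computations show $(a,b) \in \Phi_a \cap \Psi_b$, and the quotient-plus-cancellation step (meet and join with a fixed element jointly cancel in any distributive lattice) shows that $\Phi_a \cap \Psi_b$ is contained in every congruence containing $(a,b)$ — i.e.\ that $\Phi_a \wedge \Psi_b$ is the principal congruence generated by $(a,b)$, from which the biconditional and both special cases follow by substitution, as you say. One small remark: the paper's second displayed special case reads $\Psi_a$ where it should read $\Psi_b$; your argument proves the corrected statement $(\bot, b) \in \Xi \iff \Psi_b \leq \Xi$, which is what is actually used later (e.g.\ in Lemma \ref{Lem:8} and Lemma \ref{Lem:12}).
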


\begin{lemma}\label{Lem:8}
	For any frame surjection $\map{m}{L}{M}$,
	\[
		\Theta_m 
		\geq \sbv{lr}{m^{-1}(\top)}\Phi_a \vee \sbv{lr}{m^{-1}(\bot)}\Psi_a.
	\]
\end{lemma}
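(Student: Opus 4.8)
The plan is to use that the right-hand side lives in the congruence frame $\con L$ and that $\bigvee$ there is the least upper bound; this reduces the claimed inequality to showing that $\Theta_m$ dominates each generator separately. That is, it suffices to prove $\Phi_a \leq \Theta_m$ for every $a \in m^{-1}(\top)$ and $\Psi_a \leq \Theta_m$ for every $a \in m^{-1}(\bot)$. Granting these, $\Theta_m$ is an upper bound for the whole family and hence lies above its join, which is exactly the asserted inequality.

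For the two containments I would cite the displayed special cases of Lemma \ref{Lem:11}, which give $\Phi_a \leq \Theta_m \iff (a, \top) \in \Theta_m$ and $\Psi_a \leq \Theta_m \iff (\bot, a) \in \Theta_m$. The first right-hand condition unwinds to $m(a) = m(\top) = \top$, namely $a \in m^{-1}(\top)$, and the second to $m(\bot) = m(a)$, i.e.\ $m(a) = \bot$, namely $a \in m^{-1}(\bot)$. Both containments thus hold for precisely the indices occurring in the join. (Alternatively, one checks these directly: from $a \wedge a_1 = a \wedge a_2$ and $m(a) = \top$ one gets $m(a_1) = m(a) \wedge m(a_1) = m(a) \wedge m(a_2) = m(a_2)$, and dually from $a \vee a_1 = a \vee a_2$ and $m(a) = \bot$.)

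There is no real obstacle in this argument; the entire content is that the surjection's congruence absorbs each open congruence indexed by a preimage of $\top$ and each closed congruence indexed by a preimage of $\bot$. The one point meriting care is that the join in $\con L$ is the congruence generated by the union, so it is enough to dominate the individual generators $\Phi_a$, $\Psi_a$ rather than their set-theoretic union --- which is exactly what the least-upper-bound reduction accomplishes.
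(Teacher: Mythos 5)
Your proof is correct and takes essentially the same route as the paper's: the paper likewise reduces (implicitly) to dominating each generator separately, and verifies $\Phi_a \leq \Theta_m$ for $a \in m^{-1}(\top)$ by exactly the computation you give in your parenthetical remark, leaving the dual $\Psi_a$ case to the reader. Your appeal to Lemma \ref{Lem:11} is merely a packaged form of that same verification, so the only difference is that you spell out the least-upper-bound reduction and the closed-congruence half explicitly.
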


\begin{proof}
	If $a \in m^{-1}(\top)$ and $(a_1, a_2) \in \Phi_a$ then 
	\begin{align*}
	m(a_1)
	&= m(a_1) \wedge \top
	= m(a_1) \wedge m(a)
	= m(a_1 \wedge a)
	= m(a_2 \wedge a)\\
	&= m(a_2) \wedge m(a)
	= m(a_2) \wedge \top
	= m(a_2). \qedhere
	\end{align*}
\end{proof}

\begin{lemma}\label{Lem:9}
	Let $\map{m}{L}{M}$ be a frame surjection. 
	\begin{enumerate}
		\item 
		The map 
		\begin{align*}
			&\map{m^{-1}}{\con M}{\upset{\Theta_m}_{\con L}}\\
			&= \big(\Delta \mapsto m^{-1}(\Delta) 
			\equiv \setof{(a_1, a_2)}{(m(a_1), m(a_2)) \in \Delta}\big)
		\end{align*}
		is a frame isomorphism.
		
		\item 
		For any element $b \in M$, $m^{-1}(\Phi_b) = \Theta_m \vee \Phi_{m_*(b)}$.
	\end{enumerate}
\end{lemma}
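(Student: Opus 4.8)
The plan is to establish (1) as the standard correspondence between congruences on a surjective image and congruences above the surjection congruence, and then to derive (2) from (1) together with Lemma~\ref{Lem:11}.

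For (1) I would first verify that $m^{-1}$ maps $\con M$ into the principal filter $\upset{\Theta_m}$, i.e.\ into $\setof{\Xi \in \con L}{\Xi \geq \Theta_m}$. For each $\Delta \in \con M$ the relation $m^{-1}(\Delta)$ is the pullback of an equivalence relation along a function, hence an equivalence relation; since $m$ preserves finite meets and arbitrary joins while $\Delta$ respects them, so does $m^{-1}(\Delta)$; and reflexivity of $\Delta$ gives $\Theta_m \leq m^{-1}(\Delta)$. The map $m^{-1}$ is plainly monotone, so all of (1) reduces to producing a monotone two-sided inverse. Here I would lean on the surjectivity of $m$: every pair $(y_1,y_2)$ in $M$ lifts to a pair $(a_1,a_2)$ with $m(a_i)=y_i$, and $(y_1,y_2)\in\Delta$ iff $(a_1,a_2)\in m^{-1}(\Delta)$, which yields injectivity and monotonicity of the inverse at once. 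For surjectivity onto $\upset{\Theta_m}$, given $\Xi \geq \Theta_m$ I would form the pushforward $\Delta \equiv \setof{(m(a_1),m(a_2))}{(a_1,a_2)\in\Xi}$, check it is a congruence on $M$, and verify $m^{-1}(\Delta)=\Xi$; the nontrivial inclusion $m^{-1}(\Delta)\subseteq\Xi$ is where $\Xi\geq\Theta_m$ enters, since a lift of a pair of $\Delta$ differs from an honest pair of $\Xi$ only in $\Theta_m$-related coordinates, and transitivity of $\Xi$ closes the gap. An order isomorphism between frames preserves arbitrary meets and joins, so $m^{-1}$ is a frame isomorphism.

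For (2) I would reduce to (1), Lemma~\ref{Lem:11}, and the fact that the right adjoint $m_*$ of the surjection $m$ satisfies $m\,m_* = \mathrm{id}_M$. My aim is to identify both sides as the least member of $\setof{\Xi\in\con L}{\Xi\geq\Theta_m,\ (m_*(b),\top)\in\Xi}$. On the right, Lemma~\ref{Lem:11} says $\Phi_{m_*(b)}$ is the least congruence on $L$ containing $(m_*(b),\top)$, so $\Theta_m\vee\Phi_{m_*(b)}$ is the least such congruence that also dominates $\Theta_m$. On the left, $m^{-1}(\Phi_b)$ lies in $\upset{\Theta_m}$ and contains $(m_*(b),\top)$, because $(m\,m_*(b),\top)=(b,\top)\in\Phi_b$. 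Minimality then follows from (1): any $\Xi\geq\Theta_m$ containing $(m_*(b),\top)$ can be written $\Xi=m^{-1}(\Delta)$, whence $(b,\top)=(m\,m_*(b),\top)\in\Delta$, so $\Phi_b\leq\Delta$ by Lemma~\ref{Lem:11} and $m^{-1}(\Phi_b)\leq m^{-1}(\Delta)=\Xi$ by monotonicity. Since a least element is unique, the two congruences coincide.

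I expect the main obstacle to be in (1) rather than (2). The temptation is to compute the join $\Theta_m\vee\Phi_{m_*(b)}$ explicitly, but joins of congruences are not unions, so I would deliberately avoid this and let the order isomorphism of (1) transport joins for free. The one genuinely delicate verification is that the pushforward $\setof{(m(a_1),m(a_2))}{(a_1,a_2)\in\Xi}$ is transitive and a congruence and that $m^{-1}$ recovers exactly $\Xi$; this is the single point at which both hypotheses, surjectivity of $m$ and $\Xi\geq\Theta_m$, are essential, and keeping the bookkeeping of lifts straight is the crux.
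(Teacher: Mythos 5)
Your proof is correct, but it takes a genuinely different route from the paper's on part (2), and it supplies a proof of part (1) that the paper omits entirely (part (1) is treated as folklore, consistent with the preliminaries disclaimer). The paper proves (2) by a direct double-inclusion computation: it shows $\Theta_m \leq m^{-1}(\Phi_b)$ and $\Phi_{m_*(b)} \leq m^{-1}(\Phi_b)$ by element chasing with $m \circ m_* = \mathrm{id}_M$, and for the reverse inclusion it applies the right adjoint $m_*$ (using that $m_*$ preserves binary meets) to get $(m_* \circ m(a_1), m_* \circ m(a_2)) \in \Phi_{m_*(b)}$, then closes the gap with transitivity through the $\Theta_m$-related pairs $(a_i, m_* \circ m(a_i))$. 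You instead characterize both sides as the least element of $\setof{\Xi \in \con L}{\Xi \geq \Theta_m \text{ and } (m_*(b), \top) \in \Xi}$, using Lemma~\ref{Lem:11} for the universal property of $\Phi_{m_*(b)}$ and $\Phi_b$, and part (1) to write an arbitrary such $\Xi$ as $m^{-1}(\Delta)$. Your approach buys a cleaner, more structural argument: it never touches the internal structure of a join of congruences (which your own remark rightly flags as a trap) and never needs the fact that $m_*$ preserves meets. What it costs is logical independence: your (2) rests on the full correspondence theorem of (1), including the delicate pushforward verification, whereas the paper's computation for (2) is self-contained and would survive even if (1) were left unproven, as indeed it is there. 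Both proofs use surjectivity in the same essential way, through $m \circ m_* = \mathrm{id}_M$; your identification of where $\Xi \geq \Theta_m$ enters the pushforward argument (transitivity across lifts) is exactly right.
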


\begin{proof}
	(2) To show that $\Theta_m \vee \Phi_{m_*(b)} \leq m^{-1}(\Phi_b)$, first note that for elements $a_i \in L$ such that $(a_1, a_2) \in \Theta_m$ we have
	\begin{align*}
		m(a_1) = m(a_2)
		&\implies m(a_1) \wedge b = m(a_2) \wedge b
		\iff (m(a_1), m(a_2)) \in \Phi_b \\
		&\iff (a_1, a_2) \in m^{-1}(\Phi_b).
	\end{align*}
	Then observe that for any elements $a_i \in L$ such that $(a_1, a_2) \in \Phi_{m_*(b)}$ we also have
	\begin{gather*}
		a_1 \wedge m_*(b) = a_2 \wedge m_*(b) \implies \\
		m(a_1) \wedge b = m(a_1) \wedge m \circ m_*(b) = m(a_2) \wedge m \circ m_*(b) =
		 m(a_2) \wedge b\\
		\iff (m(a_1), m(a_2)) \in \Phi_b
		\iff (a_1, a_2) \in m^{-1}(\Phi_b).
	\end{gather*}
	To show that $\Theta_m \vee \Phi_{m_*(b)} \geq m^{-1}(\Phi_b)$, observe that for any pair of elements $a_i \in L$ such that $(a_1, a_2) \in m^{-1}(\Phi_b)$ we have
	\begin{gather*}
		(m(a_1), m(a_2)) \in \Phi_b 
		\iff m(a_1) \wedge b = m(a_2) \wedge b \implies\\
		m_*\circ m(a_1) \wedge m_*(b) = m_* \circ m(a_2) \wedge m_*(b) \iff \\
		(m_* \circ m(a_1), m_* \circ m(a_2)) \in \Phi_{m_*(b)}.
	\end{gather*}
	Since $(a_i, m_* \circ m(a_i)) \in \Theta_m$, we can conclude from transitivity that $(a_1, a_2) \in \Theta_m \vee \Phi_{m_*(b)}$.
\end{proof}

\subsection{Successors and predecessors, atoms and maximal elements}

\begin{definition*}[successor, predecessor, $a^+$, atom, maximal element, $\max L$, pointless frame, interpolative lattice]
	When speaking of two elements $a$ and $c$ of a distributive lattice $L$, we say that \emph{$c$ is a successor of $a$}, or that \emph{$a$ is a predecessor of $c$}, if $c > a$ and for any element $b$ such that $a \leq b \leq c$, either $b = a$ or $b = c$. We denote the set comprised of $a$ together with its successors by  
	\[
		a^+
		\equiv
		\{a\} \cup \setof{c}{\text{$c$ is a successor of $a$}}.
	\]
	A successor of $\bot$ is called an \emph{atom of $L$,} and an element having $\top$ as a successor is called \emph{maximal}. We denote the set of maximal elements of a frame $L$ by 
	\[
		\max L
		\equiv \setof{a}{\text{$a$ is maximal in $L$}}.
	\] 
	A frame $L$ is called \emph{pointless} if $\max L = \emptyset$. Thus the \enquote{empty frame} $\{\bot = \top\}$, i.e., the topology of the empty space, is pointless, whereas the two element frame $\mbb{2} = \{\bot \neq \top\}$, i.e., the topology of the singleton space, is not. A lattice $L$ is called \emph{interpolative} if it has no successors or predecessors, i.e., if for all elements $a < c$ there exists an element $b$ such that $a < b < c$.
\end{definition*}

\begin{lemma}\label{Lem:1}
	The following hold for elements $a$ and $b$ in a distributive lattice $L$.
	\begin{enumerate}
		\item
		The maps
		\begin{align*}
			[a \wedge b, a] \ni c
			&\longrightarrow c \vee b \ \ \ \ \text{and}\\
			c \wedge a
			&\longleftarrow c \in [b, a\vee b]
		\end{align*} 
		are inverse lattice isomorphisms.
		
		\item 
		$a \wedge b^+ \subseteq (a \wedge b)^+$.		
	\end{enumerate}
\end{lemma}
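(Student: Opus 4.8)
The plan is to prove (1) by the standard transposition argument for distributive lattices, and then to deduce (2) from (1) by restricting that isomorphism to a suitable subinterval rather than arguing from scratch.

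For (1), I would first check that the two assignments land in the stated intervals: if $a \wedge b \le c \le a$ then $b \le c \vee b \le a \vee b$, and if $b \le d \le a \vee b$ then $a \wedge b \le d \wedge a \le a$ (the lower bound using $b \le d$). Both maps are visibly order-preserving. The crux is that they are mutually inverse, and this is exactly where distributivity enters: for $c \in [a \wedge b, a]$ one computes $(c \vee b) \wedge a = (c \wedge a) \vee (b \wedge a) = c \vee (a \wedge b) = c$, the last two equalities using $c \le a$ and $a \wedge b \le c$; symmetrically $(d \wedge a) \vee b = (d \vee b) \wedge (a \vee b) = d \wedge (a \vee b) = d$ for $d \in [b, a \vee b]$, using $b \le d \le a \vee b$. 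Being mutually inverse order-preserving bijections, they are order isomorphisms, hence lattice isomorphisms (finite meets and joins being order-theoretic).

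For (2), let $x \in b^+$. If $x = b$ then $a \wedge x = a \wedge b \in (a \wedge b)^+$, so assume $x$ is a successor of $b$. Since $a \wedge x \le a$, the interval $[a \wedge b, a \wedge x]$ is a subinterval of $[a \wedge b, a]$, and the isomorphism $\phi \colon c \mapsto c \vee b$ of part (1) carries it isomorphically onto $[\phi(a \wedge b), \phi(a \wedge x)] = [\,b,\ (a \wedge x) \vee b\,]$. Now $(a \wedge x) \vee b \le x \vee b = x$, so this image lies inside $[b, x]$, which equals $\{b, x\}$ because $x$ is a successor of $b$. Hence $(a \wedge x) \vee b$ is either $b$ or $x$. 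In the first case $a \wedge x \le b$ forces $a \wedge x = a \wedge b$; in the second, $[a \wedge b, a \wedge x] \cong [b, x]$ has exactly two elements, so $a \wedge x$ is a successor of $a \wedge b$. Either way $a \wedge x \in (a \wedge b)^+$.

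I expect (1) to be purely mechanical; the only thing to watch is invoking distributivity in precisely the right direction and tracking the interval endpoints. The main (minor) obstacle is in (2): one must notice that the right move is to transport the subinterval $[a \wedge b, a \wedge x]$ through the isomorphism of (1) and read off that an order isomorphism preserves the two-element ``successor'' intervals, and one must separate off the degenerate case $a \wedge x = a \wedge b$ so that the conclusion is phrased correctly as membership in $(a \wedge b)^+$.
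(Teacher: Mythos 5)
Your proof is correct and takes essentially the same route as the paper's: both arguments reduce (2) to a case analysis on the element $(a \wedge x) \vee b$, which must lie in the two-element interval $[b, x]$, and both use the isomorphism of part (1) to transfer the successor relation from $[b, a \vee b]$ down to $[a \wedge b, a]$ in the nondegenerate case. (The paper omits a proof of (1) entirely; your argument for it is the standard distributivity computation and is fine.)
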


\begin{proof}
	(2) Consider a successor $c$ of $b$. Since 
	\[
		b \leq (a \wedge c) \vee b
		= (a \vee b) \wedge c
		\leq c,
	\]
	either $(a \wedge c) \vee b = b$ or $(a \vee b) \wedge c = c$. The first possibility implies that $a \wedge c \leq b$ and hence $a \wedge c = a \wedge b$, while the second implies that $a \vee b \geq c$ and hence that $a \wedge c$ is a successor of $a \wedge b$ by part (1). In either case we get that $a \wedge c \in (a \wedge b)^+$.
\end{proof}

\begin{lemma}\label{Lem:3}
	Let $\map{m}{L}{M}$ be a frame homomorphism. 
	\begin{enumerate}
		\item 
		If $b$ is prime (aka meet irreducible) in $M$ then $m_*(b)$ is prime in $L$.
		
		\item 
		If $m$ is surjective then $m_*(\max M) \subseteq \max L$. 
		
		\item
		If $m$ is surjective and $a$ is prime in $L$ then $m(a)$ is prime in $M$.
	\end{enumerate}
\end{lemma}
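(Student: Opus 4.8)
The plan rests on three standard facts: the adjunction $m \dashv m_*$ (so $m(x) \le y \iff x \le m_*(y)$), the preservation of finite meets by frame homomorphisms, and, for a surjection, the availability of preimages. Part (1) uses only the first two. First I would note $m_*(b) \ne \top$: since $b \ne \top$ and $m(\top) = \top \nleq b$, the adjunction gives $\top \nleq m_*(b)$. For meet-irreducibility, suppose $a_1 \wedge a_2 \le m_*(b)$. Transposing across the adjunction and using that $m$ preserves binary meets yields $m(a_1) \wedge m(a_2) = m(a_1 \wedge a_2) \le b$; primeness of $b$ gives $m(a_i) \le b$ for some $i$, and transposing back gives $a_i \le m_*(b)$. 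This part needs neither surjectivity nor complete regularity.

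The conceptual core, and the place where complete regularity becomes indispensable, is the equivalence \emph{(valid in any completely regular frame)} between primeness and maximality below $\top$. One direction is pure distributivity: if $a$ is maximal and $x \wedge y \le a$ with $x, y \nleq a$, then $a \vee x = a \vee y = \top$, forcing $a = a \vee (x \wedge y) = (a \vee x) \wedge (a \vee y) = \top$, a contradiction; hence $a$ is prime. For the converse let $p$ be prime with $p \ne \top$, and suppose toward a contradiction that $p < c < \top$. By (complete) regularity $c = \bigvee\{d : d \prec c\}$, and since $c \nleq p$ some $d \prec c$ satisfies $d \nleq p$, witnessed by $e$ with $d \wedge e = \bot$ and $e \vee c = \top$ (exactly as in the proof of Lemma \ref{Lem:5}). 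Then $d \wedge e = \bot \le p$ and primeness force $e \le p$ (as $d \nleq p$), whence $e \le c$ and $\top = e \vee c = c$, contradicting $c < \top$. Thus $p$ is maximal. I expect this regularity argument to be the main obstacle, precisely because it is what fails for naked frames: a three-element chain already produces a prime that is not maximal, so any correct proof of (2) and (3) must invoke complete regularity here.

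Granting this equivalence, part (2) is immediate: a maximal $a \in M$ is prime, so $m_*(a)$ is prime and $\ne \top$ by (1), hence maximal in $L$; thus $m_*(\max M) \subseteq \max L$. (Surjectivity is convenient but not logically needed for this route.) For part (3) I would use the equivalence in the opposite direction together with surjectivity. If $a = \top$ the claim is vacuous, so assume $a \ne \top$; then $a$ is maximal in $L$. To see that $m(a)$ is prime (or $\top$) in $M$, suppose $m(a) < c \le \top$ and pick, by surjectivity, a preimage $x_0$ of $c$. Replacing $x_0$ by $x_0 \vee a$, which still maps to $c \vee m(a) = c$ because $c \ge m(a)$, I may assume the preimage $x \ge a$. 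Maximality of $a$ forces $x = a$ or $x = \top$; the former gives $c = m(a)$, contradicting $c > m(a)$, so $x = \top$ and $c = \top$.

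Hence nothing lies strictly between $m(a)$ and $\top$, so $m(a)$ is maximal whenever $m(a) \ne \top$, and in every case meet-irreducible in $M$. The only delicate point to flag is the degenerate possibility $m(a) = \top$ (which genuinely occurs, e.g.\ for a closed quotient collapsing the point $a$): it is covered by reading ``prime'' as meet-irreducible in the inclusive sense, under which $\top$ qualifies vacuously.
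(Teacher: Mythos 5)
Your proposal is correct, and it differs from the paper's treatment in instructive ways. The paper proves only part (3) and leaves (1) and (2) as routine; you supply all three, and your (1) is exactly the standard adjunction computation the paper has in mind. Both you and the paper pivot on the same key fact — that in a completely regular frame an element is prime and unequal to $\top$ if and only if it is maximal — but the paper simply invokes it (``$a$ is maximal in $L$ because $L$ is regular''), deferring the content to its Lemma \ref{Lem:2}(1), whose proof is itself only a one-line remark; you prove the equivalence from scratch, and your regularity argument (expressing $c$ as the join of elements rather below it and extracting a witness) is precisely the missing substance. Where the routes genuinely diverge is in part (3): the paper verifies primeness of $m(a)$ directly, by taking $c_1 \wedge c_2 = m(a)$, pulling back along $m_*$, and using $m_*\circ m(a) = a$ together with primeness of $a$; you instead show that the interval $(m(a), \top)$ is empty — lifting any $c > m(a)$ through the surjection to a preimage $x \geq a$ and applying maximality of $a$ — and then convert maximality back to primeness via the easy distributivity direction. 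The two arguments are of comparable length; yours trades the $m_*$-bookkeeping for an order-interval argument. You also record two points the paper passes over silently: that surjectivity is not actually needed for (2) once the prime--maximal equivalence is available (since $m_*$ of a prime is prime and $m_*(b) \neq \top$ whenever $b \neq \top$), and that the case $m(a) = \top$ in (3) requires reading ``prime'' inclusively — the paper's proof quietly restricts to $m(a) < \top$ without comment. Both are worth flagging.
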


\begin{proof}
	(3) Let $a$ be a prime element of $L$ such that $m(a) < \top$, and consider elements $c_i \in M$ such that $c_1 \wedge c_2 = m(a)$. Let $a_i \equiv m_*(c_i)$, $i = 1,2$, and note that 
	\[
	a_1 \wedge a_2
	= m_*(c_1) \wedge m_*(c_2)
	= m_*(c_1 \wedge c_2)
	= m_* \circ m(a)
	\geq a
	\]
	Now $a$ is maximal in $L$ because $L$ is regular, so $m_* \circ m(a)$ is either $a$ or $\top$. The latter possibility is ruled out by the fact that $m \circ m_* \circ m(a) = m(a)$, so we have $a_1 \wedge a_2 = a$. By the primeness of $a$, then, either $a_1 \leq a$ or $a_2 \leq a$, hence $m(a_1) \leq m(a)$ or $m(a_2) \leq m(a)$. But since $m$ is surjective, $m(a_i) = m \circ m_*(c_i) = c_i$, so either $c_1 \leq m(a)$ or $c_2 \leq m(a)$, i.e., $m(a)$ is prime in $M$. 
\end{proof}

Lemma \ref{Lem:2} explains, among other things, how complemented successor elements arise from  maximal elements in frame factors.  

\begin{lemma}\label{Lem:2}
	The following hold in a frame $L$.
	\begin{enumerate}
		\item
		An element is a predecessor of $\top$ if and only if it is maximal if and only if it is prime and unequal to $\top$.
		
		\item 
		An element of a frame is an atom if and only if it is the complement of a maximal element. 
		
		\item 
		An element $a$ is a successor of an element $b$ if and only if $a > b$ and $a \to b$ is a maximal element of $L$.
		
		\item 
		An element $b$ is a predecessor of an element $a$ if and only if $b = a \wedge c$ for a maximal element $c \ngeq a$.  
		
		\item 
		If $L = M \times N$ and $a$ is a maximal element of $M$ then $(\top, \bot)$ is a successor of $(a, \bot)$ and $(\top, \bot)$ has complement $(\bot, \top)$. In this case $(\top, b)$ is a successor of $(a, b)$ for all $b \in N$. 		
		
		\item 
		Every complemented successor in $L$ arises as in (3). That is, if $c$ is a successor of $a$ in $L$ and $c$ is complemented then the map 
		\[
			\map{m}{L}{\downset{c} \times \downset{c^*}}
			= (b \mapsto (b \wedge c, b \wedge c^*))
		\]
		is an isomorphism, and $m(c) = (c, \bot)$ is a successor of $m(a) = (a, \bot)$. 
	\end{enumerate}
\end{lemma}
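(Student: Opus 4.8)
The plan is to treat the six parts in an order that lets later parts lean on earlier ones, with the key input throughout being that $L$ is (completely) regular. The conceptual heart is part~(1), and within it the implication \emph{prime}~$\Rightarrow$~\emph{maximal}; everything else is either order-theoretic bookkeeping or a consequence of this fact. For (1), the equivalence of \enquote{predecessor of $\top$} and \enquote{maximal} is immediate from the definitions (a maximal element is exactly one having $\top$ as successor). That a maximal $a$ is prime is a short distributive computation: if $u \wedge v \le a$ with $u,v \nleq a$, then $a \vee u = a \vee v = \top$ by maximality, whence $\top = (a \vee u) \wedge (a \vee v) = a \vee (u \wedge v) = a$, a contradiction. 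For the reverse I would use regularity: given $a$ prime with $a < b$, write $b = \bigvee\{x : x \prec b\}$; since $a < b$ some witness $x \nleq a$ occurs, and from $x \wedge x^* = \bot \le a$ primeness forces $x^* \le a$. From $x \prec b$ we have $x^* \vee b = \top$, but $x^* \le a \le b$, so $b = x^* \vee b = \top$ and $a$ is maximal.

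Next I would do parts~(3) and~(4), which describe the single relation \enquote{$a$ is a successor of $b$} $=$ \enquote{$b$ is a predecessor of $a$} in terms of the Heyting arrow; the workhorse identity is $a \wedge (a \to b) = b$ whenever $b \le a$. For the \enquote{if} direction of~(3), assuming $a > b$ and $m := a \to b$ maximal, I would take any $x \in [b,a]$, note $x \wedge m = b$, and split on the value of $x \vee m$: maximality gives $x \vee m \in \{m, \top\}$, the first forcing $x \le a \wedge m = b$ and the second forcing $x = a \wedge (x \vee m) = a$. For \enquote{only if}, assuming $[b,a]$ trivial, I would show $m = a\to b$ is prime — if $u \wedge v \le m$, i.e.\ $u \wedge v \wedge a \le b$, then $(u\wedge a)\vee b$ and $(v\wedge a)\vee b$ lie in $[b,a]$, and were both equal to $a$ a distributive expansion would yield $a \le b$ — and then invoke~(1) to conclude $m$ is maximal. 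Part~(4) then follows: for \enquote{if}, Lemma~\ref{Lem:1}(1) applied to $a$ and a maximal $c \ngeq a$ (so $a \vee c = \top$) shows $[a\wedge c, a] \cong [c,\top] = \{c,\top\}$; for \enquote{only if}, take $c = a\to b$, maximal by~(3), with $a \wedge c = b$ and $c \ngeq a$.

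The remaining parts are comparatively mechanical. Part~(2) I would read off as the case $b = \bot$ of~(3): an atom is a successor of $\bot$, so $a$ is an atom iff $a > \bot$ and $a^* = a\to\bot$ is maximal; for such $a$ one checks $a \vee a^* = \top$ (since $a \nleq a^*$ and $a^*$ is maximal), so $a$ is precisely the complement of the maximal element $a^*$, and conversely, if $a$ complements a maximal $m$ then $m = a^*$ by uniqueness of complements and $a > \bot$, so $a$ is an atom by the same criterion. Part~(5) is a direct componentwise computation in $M \times N$: the interval from $(a,\bot)$ to $(\top,\bot)$ is $\{a,\top\} \times \{\bot\}$ by maximality of $a$, and $(\bot,\top)$ is visibly the complement of $(\top,\bot)$; the claim for $(\top,b)$ is identical. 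For part~(6), the complementedness of $c$ yields the standard frame isomorphism $L \cong \downset{c} \times \downset{c^*}$ via $b \mapsto (b\wedge c, b\wedge c^*)$ (inverse $(x,y)\mapsto x\vee y$, using distributivity and $c \wedge c^* = \bot$, $c \vee c^* = \top$); since $c$ is the top of $\downset{c}$ and $a$ is a predecessor of $c$, $a$ is maximal in $\downset{c}$, so part~(5) delivers that $(c,\bot) = m(c)$ is a successor of $(a,\bot) = m(a)$.

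I expect the main obstacle to be the \emph{only if} direction of~(3): proving that $a \to b$ is genuinely maximal, not merely that it has a successor. The isomorphism $[a \wedge m, a] \cong [m, a \vee m]$ from Lemma~\ref{Lem:1}(1) only shows $a \vee m$ is a successor of $m$, and in a non-Boolean frame $a \vee (a \to b)$ need not be $\top$; this is exactly why I route through primeness and then appeal to regularity via part~(1). The secondary technical point is verifying that the decomposition map in~(6) preserves arbitrary joins (hence is a frame map and not merely a lattice map), which is routine but should be stated.
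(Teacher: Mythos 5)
Your proposal is correct, and it is organized around the same pivot as the paper's proof---regularity enters only through part (1), and the delicate direction of (3) is handled by showing $a \to b$ is prime and then invoking (1)---but several of your arguments take a genuinely different route. For the primeness of $a \to b$ in (3), the paper does not compute directly: it observes that $b$ is maximal in $\downset{a}$, transports this across the isomorphism $\downset{a} \cong O_a$ (given by $c \mapsto a \to c$) to conclude that $a \to b$ is maximal, hence prime, in $O_a$, and then pulls primeness back to $L$ along the right adjoint $o_{a*}$ using Lemma \ref{Lem:3}(1). Your distributive computation with $(u\wedge a)\vee b$ and $(v\wedge a)\vee b$ lying in $[b,a]$ replaces this sublocale transfer by an elementary equational argument; it is more self-contained, needing neither Lemma \ref{Lem:3}(1) nor the (true but unstated) fact that $O_a$ inherits regularity so that (1) applies inside it. For (2) the paper argues directly from regularity: an atom, being the join of elements rather below it, is rather below itself, hence complemented; you instead obtain (2) as the case $b=\bot$ of (3), and---unlike the paper, which only spells out the forward direction---you also verify the converse via uniqueness of complements. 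Finally, the paper leaves (4), (5), and (6) unproved; your arguments for them (Lemma \ref{Lem:1}(1) for (4), componentwise computation for (5), reduction of (6) to (5) through the decomposition $L \cong \downset{c} \times \downset{c^*}$) are exactly the intended routine completions, and your closing caveat---that Lemma \ref{Lem:1}(1) alone cannot settle the only-if direction of (3) because $a \vee (a \to b)$ need not be $\top$ in a non-Boolean frame---correctly identifies why both you and the paper must route that direction through primeness rather than through the interval isomorphism.
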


\begin{proof}
	The assumption that $L$ is regular implies the equivalence of maximality and primeness in (1). Likewise in (2), since an atom $a \in L$ is the join of a set $A$ of elements rather below it, there must be at least one element $b \in A$ such that $b > \bot$. This implies that $b = a$, but more to the point, that $a$ is rather below itself, hence $a$ is complemented. 
	
	(3)  If $a$ is a successor of $b$ then $b$ is maximal in $\downset{a}$, and since the map $\downset{a} \to O_a = (c \mapsto a \to c)$ is a frame isomorphism, $a \to b$ is maximal in $O_a$. Thus $a \to b$ is prime in $O_a$ by (1), and $o_{a*}(a \to b) = a\to b$ is prime in $L$ by Lemma \ref{Lem:3}(1). 
	
	On the other hand, if $a > b$ and $a \to b \in \max L$ then $a \vee (a \to b) = \top$ since $a \nleq a \to b$. Therefore Lemma \ref{Lem:1}(1) provides an isomorphism between the intervals $[b, a]$ and $[a \to b, \top]$, from which we see that $a$ is a successor of $b$.  
\end{proof}

\begin{lemma}\label{Lem:10}
	For any frame $L$, $\max (\con L) = \setof{\Psi_b}{b \in \max L}$. Consequently the set of atoms of $\con L$ is $\ssetof{\Phi_b}{\max L}$.
\end{lemma}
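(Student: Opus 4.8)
The plan is to reduce the whole statement to an analysis of the quotient $L/\Xi$ of a candidate congruence $\Xi$, via the isomorphism $\con M \cong \upset{\Theta_m}_{\con L}$ of Lemma \ref{Lem:9}(1). Applying it to the canonical surjection $\map{m}{L}{L/\Xi}$, for which $\Theta_m = \Xi$, gives $\con(L/\Xi) \cong \upset{\Xi}_{\con L}$. Now $\Xi$ is maximal in $\con L$ precisely when $\upset{\Xi}_{\con L} = \{\Xi, \nabla\}$ has exactly two elements, so the Lemma amounts to the chain of equivalences
\[
	\Xi \in \max(\con L)
	\iff \con(L/\Xi) \cong \mbb{2}
	\iff L/\Xi \cong \mbb{2}
	\iff \Xi \in \setof{\Psi_b}{b \in \max L},
\]
which I would prove one link at a time.

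For the middle equivalence I would show that a frame $M$ has exactly two congruences if and only if $M \cong \mbb{2}$. If $M$ had an element $a$ with $\bot < a < \top$, then $\Phi_a$ would be a congruence different from both $\Delta$ and $\nabla$ (for instance $(\top, a) \in \Phi_a$ while $(a, \bot) \notin \Phi_a$), so $\con M$ would have at least three elements; conversely $\con \mbb{2} \cong \mbb{2}$. Since $\con M \cong \mbb{2}$ already forces $M$ to be nontrivial, $M$ must be the two-element chain.

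For the last equivalence, I would first note that for $b \in \max L$ the closed quotient is $C_b \cong \upset{b} = \{b, \top\} \cong \mbb{2}$, so that $L/\Psi_b \cong \mbb{2}$ and $\Psi_b$ is indeed maximal. Conversely, a surjection $\map{q}{L}{\mbb{2}}$ determines the element $p \equiv \bigvee q^{-1}(\bot)$, which satisfies $q(x) = \bot \iff x \leq p$; this makes $p$ prime and distinct from $\top$, whence $p$ is maximal by the regularity of $L$ and Lemma \ref{Lem:2}(1). A short computation, using that $p \vee x \in \{p, \top\}$ for maximal $p$, then identifies $\Theta_q$ with $\Psi_p$, so $\Xi = \Psi_p$ with $p \in \max L$. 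I expect this last equivalence to be the main obstacle, since it is where the classification of two-congruence quotients, the explicit identification $\Theta_q = \Psi_p$, and the regularity of $L$ must all be brought together.

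For the final assertion about atoms, I would invoke the complementarity of $\Phi_b$ and $\Psi_b$ in $\con L$ together with Lemma \ref{Lem:2}(2). The frame $\con L$ is zero-dimensional --- its complemented congruences $\Phi_a \wedge \Psi_c$ are join-dense, by Lemma \ref{Lem:11} --- hence regular, so Lemma \ref{Lem:2}(2) applies inside $\con L$: its atoms are exactly the complements of its maximal elements. Since $\Phi_b$ is the complement of $\Psi_b$, the atoms of $\con L$ are the complements of the $\Psi_b$, $b \in \max L$, namely the $\Phi_b$, $b \in \max L$, as claimed.
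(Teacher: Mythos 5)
Your proof is correct, and it is organized differently from the paper's. The paper splits the first assertion into two separate arguments: to show $\Psi_b \in \max(\con L)$ for $b \in \max L$ it gives a direct pair-chasing computation (any congruence containing $\Psi_b$ together with a single pair outside $\Psi_b$ must contain every pair), and for the converse it asserts, without spelling out why, that a maximal $\Xi$ forces $L/\Xi \cong \mbb{2}$, and then extracts $b = \bigvee q^{-1}(\bot)$ exactly as you do. Your treatment runs both directions through one chain of equivalences anchored by Lemma \ref{Lem:9}(1), namely $\con(L/\Xi) \cong \upset{\Xi}_{\con L}$, together with the classification of frames having exactly two congruences. This buys two things: the forward direction becomes an immediate consequence of $L/\Psi_b \cong \upset{b} \cong \mbb{2}$ rather than a bare-hands computation, and the step the paper leaves implicit (that maximality of $\Xi$ yields $L/\Xi \cong \mbb{2}$) gets an honest proof. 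You are also more careful than the paper on the atoms statement: Lemma \ref{Lem:2} is proved under the paper's standing hypothesis of regularity, and $\con L$ is not covered by that hypothesis a priori; your observation that $\con L$ is zero-dimensional --- hence regular --- because the complemented congruences $\Phi_a \wedge \Psi_c$ are join-dense by Lemma \ref{Lem:11} is precisely what legitimizes the appeal to Lemma \ref{Lem:2}(2) inside $\con L$, a point the paper glosses over. The cost of your route is its reliance on Lemma \ref{Lem:9}(1), a heavier tool than anything in the paper's direct argument, but what it buys is a more uniform and more fully justified proof.
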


\begin{proof}
	For an element $b \in \max L$, consider a pair $(a_1, a_2)\notin \Psi_b$, so that $a_i \nleq b \geq a_j$ for $i \neq j$, say $a_1 \nleq b \geq a_2$. Then any congruence which contains both $\Psi_b$ and $(a_1, a_2)$ must contain every pair $(a_3, a_4)$.  For if $(a_3, a_4) \notin \Psi_b$, say $a_3 \nleq b \geq a_4$, then $\Psi_b$ contains both $(a_1, a_3)$ and $(a_2, a_4)$, hence also $(a_3, a_1)$ by symmetry, and then finally $(a_3, a_4)$ by transitivity. We have proven that $\Psi_b \in \max (\con L)$.
	
	On the other hand, suppose $\Xi \in \max (\con L)$. Then $L/\Xi$ is isomorphic to $\mbb{2} \equiv \{\bot, \top\}$, and if $\map{q}{L}{\mbb{2}}$ is the quotient map then $b \equiv \bigvee q^{-1}(\bot) \in \max L$ is such that $\Xi = \Psi_b$. The second sentence of the lemma follows from the first via Lemma \ref{Lem:2}(2). 	
\end{proof}

\begin{lemma}\label{Lem:4}
	A frame $L$ is pointless if and only if it is interpolative.
\end{lemma}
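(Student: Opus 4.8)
The plan is to reduce both implications directly to Lemma \ref{Lem:2}. The key observation is that the condition defining interpolativity---that there be no covering pair $a < c$ (equivalently, no successors and no predecessors)---matches, under Lemma \ref{Lem:2}, exactly the absence of maximal elements.

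First I would dispose of the direction \emph{interpolative $\Rightarrow$ pointless}. By Lemma \ref{Lem:2}(1), an element is a predecessor of $\top$ precisely when it is maximal. Hence if $L$ is interpolative, it has no predecessors at all, so in particular no predecessor of $\top$, and therefore no maximal element; that is, $\max L = \emptyset$ and $L$ is pointless.

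For the converse \emph{pointless $\Rightarrow$ interpolative}, I would argue contrapositively: suppose $L$ is not interpolative, so there is a covering pair, i.e. an element $c$ which is a successor of some $a < c$. Lemma \ref{Lem:2}(3) then says that $c \to a$ is a maximal element of $L$, so $\max L \neq \emptyset$ and $L$ is not pointless. Since a successor relation carries the same data as a predecessor relation read from the other side, it suffices to treat the successor case; alternatively one may invoke Lemma \ref{Lem:2}(4) to produce a maximal element directly from a predecessor.

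I do not expect a genuine obstacle here, as all the content is already packaged in Lemma \ref{Lem:2}; the argument is essentially a translation. The only point requiring a moment's care is the bookkeeping in the definition of interpolative, which forbids both successors and predecessors at once: one should note that the existence of any covering pair $a < c$ simultaneously exhibits $c$ as a successor of $a$ and $a$ as a predecessor of $c$, so that eliminating maximal elements via part (3) (or part (4)) eliminates covering pairs of either description.
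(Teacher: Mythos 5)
Your proof is correct and takes essentially the same route as the paper: both directions come down to the fact that a covering pair $a < c$ produces the maximal element $c \to a$. The paper's proof merely re-derives this fact inline (passing through the isomorphism $\downset{c} \approx O_c$ and Lemma \ref{Lem:3}(1), which is precisely the proof of Lemma \ref{Lem:2}(3)), whereas you cite Lemma \ref{Lem:2}(3) directly.
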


\begin{proof}
	An interpolative frame is clearly pointless. And in any frame $L$, if elements $a < c$ admit no element $b$ such that $a < b < c$ then $a$ is  maximal in $\downset{c}$, and since $\downset{c} \approx O_c = \ssetof{c \to b}{L}$, it follows that $c \to a$ is maximal in $O_c$. Consequently  $o_{c*}(c \to a) = c \to a$ is maximal in $L$ by Lemma \ref{Lem:3}(1).
\end{proof}

\subsection{Round and regular filters}

\begin{definition*}[filter, independent family of filters, round filter, regular filter, $\regfltr L$, round ideal]
	A filter on a frame $L$ is a nonempty upset $F \subseteq L$ which is closed under binary meets. The filter is said to be \emph{proper} if it does not contain $\bot$. A family of filters is said to be \emph{independent} if any two distinct filters of the family contain disjoint elements. A filter $F$ is said to be \emph{round} if for every element $a \in F$ there exists an element $b \in F$ such that $b \combel a$. Round ideals are defined dually. A filter $F$ is said to be \emph{regular} if it is round and $\bigvee_F b^* = \top$. We denote the family of proper regular filters on a frame $L$ by $\regfltr L$.
\end{definition*}

\begin{notation*}[$x_a$]
	We use lowercase letters near the end of the Latin alphabet, e.g., x, y, z, to designate filters on a frame. For a maximal element $a$, we denote the filter $\setof{b}{b \nleq a}$ by subscript, e.g., $x_a$, $y_a$, $z_a$. We use uppercase letters near the end of the alphabet, e.g., W, X, Y, Z, to designate families of filters on a frame. 
\end{notation*}

Lemma \ref{Lem:22} records the basic information about round and regular filters.  

\begin{lemma}\label{Lem:22}
	The following hold in any frame $L$.
	\begin{enumerate}
		\item 
		Every proper round filter is contained in a maximal proper round filter.
		
		\item 
		For every proper filter $x \subseteq L$,
		\[
			\mathring{x}
			\equiv \setof{a \in x}{\exists b \in x\ (b \combel a)}
		\]
		is the largest round filter contained in $x$.
		
		\item 
		A proper round filter $x$ is maximal if and only if for every element $a \notin x$ and every element $b \combel a$ there exists an element $c \in x$ such that $b \wedge c = \bot$. 
		
		\item 
		Distinct maximal proper round filters contain disjoint elements. Therefore any family of maximal proper round filters is independent.		

		\item 
		If $x$ is a maximal proper round filter then $\bigvee_x b^*$ is a prime element of $L$. That is, either $x$ is regular, i.e., $\bigvee_x b^* = \top$, or $\bigvee_x b^* = a \in \max L$. In the latter case $x = x_a$.
				
		\item 
		If $a \in \max L$ then $x_a$ is a maximal proper round filter such that $\bigvee_{x_a} b\st = a$. 
		
		\item 
		For any $a \in \max L$, $x_a$ is completely prime, i.e., $\bigvee A \in x_a$ implies $A \cap x_a \ne \emptyset$ for any subset $A \subseteq L$.
		
		\item 
		A proper round filter is maximal if and only if it is of the form $\mathring{y}$ for some ultrafilter $y$ on $L$.
		
		\item 
		A maximal proper round filter $x$ on $L$ has the following sort of primeness:
		\[
			(a_i \combel b_i \text{ and } a_1 \vee a_2 \in x)  \implies (b_1 \in x \text{ or }b_2 \in x).
		\]
		Conversely, any proper round filter having this sort of primeness is maximal (among proper round filters). 
		
		\item 
		The maps 
		\begin{align*}
			x 
			&\longrightarrow \langle a\st : a \in x \rangle_\text{idl} \quad \text{and}\\
			\langle b\st : b \in I \rangle_\text{fltr}
			&\longleftarrow I			  
		\end{align*}
		constitute inverse order preserving bijections between the sets of round filters and round ideals on $L$.  
	\end{enumerate}
\end{lemma}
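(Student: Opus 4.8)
The plan is to base everything on the calculus of the completely-below relation $\combel$ together with the round-interior operator of part (2). First I would record the routine facts that $\combel$ interpolates (if $b\combel a$ there is $c$ with $b\combel c\combel a$), that it is monotone in its upper argument, that $b_i\combel a_i$ gives $b_1\wedge b_2\combel a_1\wedge a_2$ and $b_1\vee b_2\combel a_1\vee a_2$, that $b\combel a$ forces $b\prec a$ and hence yields a witness $u$ with $b\wedge u=\bot$ and $u\vee a=\top$, and --- crucially for the duality --- that $b\combel a$ implies $a^*\combel b^*$. These give part (2) at once: $\mathring x$ is a meet-closed upset by the $\wedge$-compatibility, it is round by interpolation, and it visibly contains every round filter inside $x$. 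Part (1) is then Zorn's Lemma, the only observation being that a union of a chain of proper round filters is again proper and round.

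Next I would prove the maximality criterion (3). If the stated condition holds and $x\subsetneq y$ with $y$ proper round, I pick $a\in y\setminus x$ and $b\in y$ with $b\combel a$; the condition gives $c\in x\subseteq y$ with $b\wedge c=\bot$, forcing $\bot\in y$. Conversely, if the condition fails for some $a\notin x$ and $b\combel a$, then $b$ meets every element of $x$, so the filter generated by $x\cup\{b\}$ is proper; its round interior contains an interpolant $b_1$ with $b\combel b_1\combel a$ (witnessed by $b$) while $b_1\notin x$ (as $b_1\le a$ and $x$ is an upset), contradicting maximality. Part (4) follows immediately, and so does the forward half of (9): from $a_i\combel b_i$ with $b_1,b_2\notin x$, (3) supplies $c_i\in x$ with $a_i\wedge c_i=\bot$, whence $(a_1\vee a_2)\wedge(c_1\wedge c_2)=\bot$ shows $a_1\vee a_2\notin x$. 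For the converse of (9) I would choose $b\combel b_1\combel b_2\combel a$: then $b_1^*\vee b_2=\top\in x$, while $b_1^*\combel b^*$ and $b_2\combel a$, so primeness forces $b^*\in x$ or $a\in x$; as $a\notin x$ this gives $c=b^*\in x$ with $b\wedge c=\bot$, verifying (3). Part (7) is immediate from the definition of $x_a$.

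The technical heart is (5), where the obstacle is that $p\equiv\bigvee_{b\in x}b^*$ is a directed join of pseudocomplements that need not be way-below anything, so one cannot extract a single term from $c\le p$. I would avoid this with one trick, built on the trivial implication $c\wedge b=\bot\Rightarrow c\le b^*\le p$ and its contrapositive combined with (3): if $c'\nleq p$ and $c'\combel c''$ then $c''\in x$. For primeness, given $c_1\wedge c_2\le p$ and $c_2\nleq p$, complete regularity yields $c'\combel c_2$ with $c'\nleq p$; interpolating $c'\combel c''\combel c_2$ puts $c''\in x$, and a rather-below witness $u$ for $c''\prec c_2$ satisfies $u\le (c'')^*\le p$ and $u\vee c_2=\top$, so $c_1=c_1\wedge(c_2\vee u)=(c_1\wedge c_2)\vee(c_1\wedge u)\le p$. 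The same witness trick gives the dichotomy: if $p<\top$ and some $b\in x$ had $b\le p$, a witness for an approximant $b_1\combel b$ in $x$ would force $p=\top$; hence $x\subseteq x_p$, and since $p$ is prime and $p<\top$ it is maximal by Lemma~\ref{Lem:2}(1), so $x=x_p$. Part (6) is the companion computation, with $x_a$ a filter because $a$ is prime, round because $a$ is the join of the elements completely below it, maximal by (3), and $\bigvee_{x_a}b^*=a$ from $b\vee a=\top\Rightarrow b^*\le a$ together with the witnesses of the $d\combel a$.

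Finally, (8) combines (2) with ordinary ultrafilters: extending a maximal round $x$ to an ultrafilter $y$ gives $x\subseteq\mathring y$, forced to be equality by maximality; conversely $\mathring y$ inherits the primeness of (9) from the primeness of $y$, hence is maximal by the converse of (9). For (10) I would check that $b\mapsto b^*$ sends round filters to round ideals and back, using the symmetry $b\combel a\Rightarrow a^*\combel b^*$ for roundness, that roundness makes an element and its double pseudocomplement interchangeable after passing to approximants (so the two assignments are mutually inverse), and that both preserve inclusion. I expect (5) and the converse of (9) to be the only delicate points --- each needs the manufacture of a completely-below witness, the first from the approximation sublemma and the second from the pseudocomplement symmetry of $\combel$ --- while everything else is the $\combel$-calculus together with Zorn.
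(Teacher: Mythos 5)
Your proposal is correct. For parts (1)--(4), (7), and (10) it follows essentially the paper's own route: criterion (3) is the workhorse, proved exactly as in the paper by passing to the round interior of the filter generated by $x \cup \{b\}$ (your interpolant $b_1$ just makes explicit why that interior properly contains $x$), and (10) rests on the same duality $b \combel a \iff a\st \combel b\st$. Where you genuinely diverge is in the cluster (5), (6), (8), (9), and your reorganization is sound. The paper proves (8) first (a maximal proper round filter is $\mathring{y}$ for an ultrafilter $y$ containing it) and then deduces the forward half of (9) from the primeness of ultrafilters; you reverse the dependency, getting the forward half of (9) directly from (3) via the disjointness computation $(a_1 \vee a_2) \wedge (c_1 \wedge c_2) = \bot$, and then deriving (8) from the converse of (9). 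This makes the forward half of (9) choice-free, whereas the paper threads it through ultrafilter existence; what the paper's order buys is that (9) becomes a two-line corollary once (8) is in hand. In (5) the paper shows that nothing lies strictly between $p \equiv \bigvee_x b\st$ and $\top$, whereas you prove meet-primeness of $p$ directly from the key observation that $c' \nleq p$ and $c' \combel c''$ force $c'' \in x$; both arguments rest on (3) together with Lemma~\ref{Lem:2}(1), and your witness manipulation checks out. Finally, the paper derives $\bigvee_{x_a} b\st = a$ in (6) from (5), while you compute it directly; this reversal matters, since your (5) concludes $x = x_p$ by citing the roundness and properness of $x_p$ from (6), and your independent proof of (6) is what keeps that dependency acyclic --- a point worth stating explicitly in a written version.
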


\begin{proof}
	(3)  Any proper round filter $x$ satisfying this condition is clearly maximal, and if a proper round filter $x$ and element $a \notin x$ violate this condition, i.e., if there exists an element $b \combel a$ such that $b \wedge c > \bot$ for all $c \in x$, then the filter $y$ generated by $x \cup \{b\}$ has the feature that $\mathring{y}$ is a proper round filter properly containing $x$. 
	
	(4) If maximal round filters $x_i$ are distinct then there exist elements $a_i \in x_i \smallsetminus x_j$ and elements $b_i \in x_i$ such that $b_i \combel a_i$. By (3) there exist elements $c_i \in x_i$ such that $c_i \wedge b_j = \bot$. But then $c_i \wedge b_i \in x_i$ and $(c_1 \wedge b_1) \wedge (c_2 \wedge b_2) = \bot$.   
	
	(5) Suppose that $x$ is a maximal proper round filter, and for the sake of argument suppose that $\bigvee_x b^* = a < c < \top$. We claim that $c \notin x$, for otherwise there exists an element $b \in x$ such that $b \combel c$. But if $d$ witnesses $b \prec c$, i.e., if $b \wedge d = \bot$ and $c \vee d = \top$, then we get $d \leq b^* \leq a \leq c$, resulting in the contradiction $c = c \vee d = \top$. 
	
	Since $a < c$ there exists an element $d \combel c$ such that $d \nleq a$, whereupon part (3) above produces an element $b \in x$ such that $b \wedge d = \bot$. We are led to the contradiction $d \leq b^* \leq a$.   We conclude that $\bigvee_x b\st$ is prime. 
	
	If $b \in x$ then since $x$ is round there exists an element $c \in x$ such that $c \prec b$. Let $d$ witness $c \prec b$, i.e., $c \wedge d = \bot$ and $d \vee b = \top$, so that $d \leq c^* \leq a$. Then $b \nleq a$, for otherwise $\top = d \vee b \leq a$, contrary to assumption. On the other hand, if $b \notin x$ then every element $c \combel b$ is disjoint from some element $d \in x$, hence $c \leq d^* \leq a$. It follows that $b = \bigvee \setof{c}{c \combel b} \leq a$.
	
	(6) If $a \in \max L$ and $b \in x_a$ then, since $b = \bigvee C$ for $C \equiv \setof{c}{c \combel b}$, it must be the case that $C \cap x_a \neq \emptyset$, for otherwise the fact that $C \subseteq \downset{a}$ would lead to the contradiction $b = \bigvee C \in \downset{a}$. Thus $x_a$ is round. To see that $x_a$ is maximal among proper round filters, consider elements $d \combel c \notin x_a$. Then $c \leq a$, so there exists an element $b$ witnessing $d \prec a$, i.e., $b \wedge d = \bot$ and $b \vee a = \top$. Then $b \nleq a$, for otherwise we would have $a = b \vee a = \top$, contrary to hypothesis. That is, $b \in x_a$. The fact that $\bigvee_{x_a}b\st = a$ now follows from (5).
	
	(7)  If for some subset $A \subseteq L$ and element $a \in \max L$ we have $\bigvee A = b \in x_a$ then $A \nsubseteq \downset{a}$, for otherwise $b = \bigvee A \leq a$, contrary to assumption.
	
	(8) If a proper round filter $x$ is maximal then it is the largest round filter contained in any filter containing it. And if $x = \mathring{y}$ for some ultrafilter $y$ on $L$ and if $b \combel a \notin x$ then there must exist an element $b_1$ such that $b \combel b_1 \combel a$, for which we know that $b, b_1 \notin y$ by (3). By virtue of the maximality of $y$, therefore, $b^*, b_1^* \in y$, and since $b_1^* \combel b^*$ we get $b^* \in x$ by (2). We can conclude that $x$ is maximal among proper round filters by (3).  
	
	(9) Let $x$ be a maximal proper round filter and let $y$ be an ultrafilter for which $x = \mathring{y}$, and suppose that $a_i \combel b_i$. If $a_1 \vee a_2 \in x$ then $a_1 \vee a_2 \in y$, hence either $a_1 \in y$ or $a_2 \in y$, with the result that either $b_1 \in \mathring{y} = x$ or $b_2 \in \mathring{y} = x$.
	
	Conversely, suppose a proper round filter $x$ enjoys this sort of primeness, and consider elements $b \combel a \notin x$ with witnessing family $\ssetof{c_p}{\mbb{Q}}$ for $b \combel a$. Fix $p < q < r$ in $\mbb{Q}$.  Then $c_q^* \vee c_r = \top \in x$, and since $c_q^* \combel c_p^*$ with witnessing family $\ssetof{c_s^*}{p < s < q}$ and $c_r \combel a$ with witnessing family $\ssetof{c_s}{r < s}$, the primeness of $x$ yields $c_p^* \in x$ or $a \in x$, and the latter condition is ruled out by assumption. We conclude that $x$ is maximal among proper round filters by (3).  
	
	(10) This follows from the fact that for elements $a$ and $b$ in any frame, $a \combel b$ if and only if $b\st \combel a\st$.
\end{proof}

An immediate consequence of Lemma \ref{Lem:22}(5) will be important in what follows. 

\begin{corollary}\label{Cor:3}
	A maximal proper round filter on a pointless frame is regular.
\end{corollary}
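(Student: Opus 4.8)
The plan is to obtain the statement as a direct specialization of Lemma \ref{Lem:22}(5). Let $x$ be a maximal proper round filter on a pointless frame $L$. Since $x$ is round by hypothesis, regularity of $x$ amounts to the single further assertion that $\bigvee_x b\st = \top$, so this is the only thing that must be verified.

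First I would invoke Lemma \ref{Lem:22}(5), which tells us that $\bigvee_x b\st$ is a prime element of $L$ and, more precisely, that exactly one of two things happens: either $\bigvee_x b\st = \top$, in which case $x$ is already regular, or $\bigvee_x b\st = a$ for some $a \in \max L$. To rule out the second alternative I would use the defining hypothesis that $L$ is pointless, i.e.\ $\max L = \emptyset$. With no maximal elements available, the second case cannot occur, so $\bigvee_x b\st = \top$ and $x$ is regular. Equivalently, by Lemma \ref{Lem:2}(1) every proper prime of $L$ is maximal, so in a pointless frame the only prime element is $\top$; being prime, $\bigvee_x b\st$ is thereby forced up to $\top$.

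I do not expect a genuine obstacle here, since the entire analytical content---the construction of the witnesses showing $\bigvee_x b\st$ to be prime---is already discharged in the proof of Lemma \ref{Lem:22}(5). The only conceptual point is to recognize that pointlessness is exactly the hypothesis needed to delete the alternative value of this prime element, collapsing the dichotomy of (5) to the regular case.
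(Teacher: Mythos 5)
Your proof is correct and follows exactly the paper's route: the paper presents this corollary as an immediate consequence of Lemma \ref{Lem:22}(5), whose dichotomy collapses to the regular case precisely because $\max L = \emptyset$ in a pointless frame. Your additional remark via Lemma \ref{Lem:2}(1) is a harmless restatement of the same observation.
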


\begin{lemma}\label{Lem:23}
	Let $\map{m}{L}{M}$ be a frame surjection.
	\begin{enumerate}
		\item 
		If $x$ is a round filter on $L$ then $m(x)$ generates a round filter on $M$. 
		
		\item 
		The filter generated by $m(x)$ is proper if and only if $m_*(\bot) \notin x$. 
		
		\item 
		If $x$ is a maximal proper round filter on $L$ and $y$ is a maximal proper round filter on $M$ containing $m(x)$ then $m^{-1}(y)\mathring{} = x$.
		
		\item 
		If $x$ is a regular filter on $L$ and $m$ is dense then $m(x)$ is a regular filter on $M$.
		
		\item 
		If $x = x_a$ for some $a \in \max L$ such that $m(a) = \top$, and if $m$ is dense, then $m(x)$ is a regular filter on $M$. 
		
		\item 
		(Converse of (5)) If $y$ is a regular filter on a frame $M$ then there is a frame $L$ admitting a dense surjection $\map{m}{L}{M}$ and having a maximal element $a \in \max L$ such that $m(x_a) = y$ and $m(a) = \top$.
	\end{enumerate}
\end{lemma}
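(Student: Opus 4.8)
The plan is to obtain (1)--(5) by transporting round/regular structure forward along $m$, reserving the real work for the synthetic construction in (6). Throughout I use that a frame homomorphism preserves $\prec$, and hence $\combel$ (apply $m$ to a witnessing family, since $m$ preserves $\wedge,\vee,\bot,\top$), together with the inequality $m(b^*)\le m(b)^*$. For (1), given $a\in x$ pick $b\in x$ with $b\combel a$; then $m(b)\combel m(a)$, and since $x$ is closed under finite meets and $m$ preserves them, the generated filter consists of the elements dominating some $m(a)$, each of which therefore has $m(b)\combel m(a)$ beneath it. Part (2) is the observation that $m(a)=\bot\iff a\le m_*(\bot)$, so the generated filter is improper exactly when $m_*(\bot)\in x$. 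Part (3) is quickest: $x\subseteq m^{-1}(y)$ since $m(x)\subseteq y$, and $x$ is round, so $x\subseteq m^{-1}(y)\mathring{}$; the latter is a proper round filter (being inside the proper filter $m^{-1}(y)$) containing the maximal proper round filter $x$, forcing equality.

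For (4) and (5), roundness of the image follows from (1) and properness from (2) with density ($m_*(\bot)=\bot\notin x$). The only genuinely new ingredient is the equation $\bigvee d^*=\top$ over the image filter: applying $m$ to $\bigvee_x b^*$ and using $m(b^*)\le m(b)^*$ gives $\bigvee_{m(x)}d^*\ge m(\bigvee_x b^*)$. In (4) the right side is $m(\top)=\top$. In (5) I would instead cite Lemma \ref{Lem:22}(6), which gives $\bigvee_{x_a}b^*=a$; the hypothesis $m(a)=\top$ is then precisely what turns this supremum into $\top$ after applying $m$, so that $x_a$, which is not regular on $L$, has regular image on $M$.

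For (6) I would realize $L$ as a one-point extension of $M$ determined by $y$:
\[
	L \equiv \setof{(b,\epsilon)\in M\times\mbb{2}}{\epsilon=\top \implies b\in y}.
\]
Because $y$ is an up-closed meet-subsemilattice containing $\top$, this set is closed under the meets and joins of $M\times\mbb{2}$ and contains $(\bot,\bot)$ and $(\top,\top)$, so it is a subframe. The first projection $\map{m}{L}{M}$, $m(b,\epsilon)=b$, is a surjective frame homomorphism, and it is dense: $m(b,\epsilon)=\bot$ gives $b=\bot$, whence $\epsilon=\top$ is impossible (as $\bot\notin y$, using that $y$ is proper), so $(b,\epsilon)=(\bot,\bot)$. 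Setting $a\equiv(\top,\bot)$, the only element above $a$ is $(\top,\top)=\top$, so $a\in\max L$ and $m(a)=\top$; and since $(b,\epsilon)\le a$ iff $\epsilon=\bot$, we get $x_a=\setof{(b,\top)}{b\in y}$, so $m(x_a)=y$.

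The hard part will be showing $L$ is completely regular, and this is exactly where regularity of $y$ enters. The key computation is that $(c,\bot)^*=(c^*,\top)$ when $c^*\in y$ and $(c^*,\bot)$ otherwise, so that $(c_1,\bot)\prec(c_2,\bot)$ in $L$ iff $c_1\prec c_2$ in $M$ \emph{and} $c_1^*\in y$. The decisive point is that $c\le d^*$ with $d\in y$ forces $c^*\ge d$, hence $c^*\in y$. Using $\bigvee_{d\in y}d^*=\top$ I would write $b=\bigvee_{d\in y}(b\wedge d^*)$ for any $b\in M$; for fixed $d$, any $c\combel b\wedge d^*$ in $M$ has $c\le d^*$ and a witnessing family lying below $d^*$, whose members thus have pseudocomplements in $y$, so that family witnesses $(c,\bot)\combel(b,\bot)$ in $L$. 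Joining over $c$ and $d$ recovers $(b,\bot)$, so each $(b,\bot)$ is a join of elements completely below it. For $(b,\top)$ with $b\in y$, roundness of $y$ yields $e\in y$ with $e\combel b$, and then $(e,\top)\combel(b,\top)$ (its witnessing family stays in $y$), so $(b,\bot)\vee(e,\top)=(b,\top)$ exhibits $(b,\top)$ as such a join as well. This establishes complete regularity and completes (6); I would note at the outset that $y$ must be taken proper, i.e.\ $y\in\regfltr M$, since otherwise density fails.
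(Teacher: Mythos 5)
Your proposal is correct, and on the most substantial part it takes a genuinely different route from the paper. For (4) and (5) your argument is the paper's in spirit but slightly sharper: the paper uses density to obtain the equality $m(b\st)=m(b)\st$ and argues pointwise against each element $a<\top$ of $M$, whereas your single computation $\bigvee_{m(x)}d\st \ge \bigvee_x m(b)\st \ge \bigvee_x m(b\st) = m\bigl(\bigvee_x b\st\bigr)$ uses only the inequality $m(b\st)\le m(b)\st$, valid for every frame homomorphism, so that density enters only to make the image filter proper via your part (2). (Parts (1)--(3), which the paper leaves unproved as routine, are handled correctly; in (3) the key point, which you use, is that $m^{-1}(y)\mathring{}$ is the largest round filter inside the proper filter $m^{-1}(y)$, per Lemma \ref{Lem:22}(2).) The real divergence is (6): the paper gives no argument at all, deferring to \cite[4.2.1]{Ball:2014} and even flagging an error there which the reader is asked to repair, while you construct $L$ explicitly as the one-point extension $\setof{(b,\epsilon)\in M\times\mbb{2}}{\epsilon=\top\implies b\in y}$ and verify everything directly. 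Your verification is sound: the pseudocomplement formula, $(c,\bot)\st=(c\st,\top)$ if $c\st\in y$ and $(c\st,\bot)$ otherwise, is right; the observation that $c\le d\st$ with $d\in y$ forces $c\st\ge d\stst\ge d$, hence $c\st\in y$, is exactly what lets witnessing families below $d\st$ lift to $L$; and the two halves of regularity of $y$ (the join condition $\bigvee_y d\st=\top$ for elements $(b,\bot)$, roundness for elements $(b,\top)$) enter precisely where they must. Your $L$ is in fact the paper's own construction $L_W$ of Proposition \ref{Prop:18} specialized to the singleton family $W=\{y\}$, but proved for an arbitrary base frame $M$ rather than a pointless atomless one. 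What each approach buys: the paper's citation is shorter but leaves the lemma resting on an admittedly flawed external reference; yours makes the lemma self-contained and simultaneously supplies the missing repair. Your closing remark that $y$ must be proper is also apt, since density of $m$ forces $\bot\notin y$, and it is consistent with the paper's convention that $\regfltr M$ consists of proper regular filters.
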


\begin{proof}
	(4) If $\top > a \in M$ then $\top > m_*(a) \in L$, and since $x$ is regular there exists an element $b \in x$ such that $b^* \nleq m_*(a)$. It follows that $m(b^*) \nleq a$, and since $m$ is dense, $m(b^*) = m(b)^*$. 
	
	(5) Consider an element $\top > c \in M$. Then $\top > m_*(c) \in L$, and we claim that $m_*(c) \ngeq b^*$ for some $b \in x_a$.  For otherwise $m_*(c) \geq \bigvee_{x_a} b^* = a$ by parts (5) and (6) of Lemma \ref{Lem:22}, in which case we would have $c = m \circ m_*(c) \geq m(a) = \top$, contrary to assumption. As before, it follows from the claim that $m(b^*) \nleq c$, and also as before $m(b^*) = m(b)^*$ because $m$ is dense.   
	
	(6) This is essentially the content of \cite[4.2.1]{Ball:2014}. Unfortunately, the result is incorrect as its stands, inasmuch as the crucial hypothesis of roundness is omitted from the definition of regular filter.\footnote{The error in the proof occurs in the second paragraph, starting with the sentence which begins \enquote{On the other hand it is obvious that \dots}} 
	Fortunately, the error does not invalidate the subsequent results in that article, and with the aid of the missing hypothesis of roundness, the reader will have no difficulty supplying a correct proof.  
\end{proof}

\begin{corollary}
	For any frame surjection $\map{m}{L}{M}$, the set $\ssetof{m(x_a)}{\max L}$ is an independent family of regular filters on $M$.
\end{corollary}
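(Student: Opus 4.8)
The plan is to verify the three constituent claims in turn: that each $m(x_a)$ generates a round filter, that these filters are pairwise independent, and that each is in fact regular. The first two fall out immediately from the preceding results, so I expect the real work to lie entirely in the regularity clause.

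For roundness, fix $a \in \max L$. By Lemma~\ref{Lem:22}(6) the set $x_a$ is a maximal proper round filter on $L$, so Lemma~\ref{Lem:23}(1) at once gives that $m(x_a)$ generates a round filter on $M$. For independence, take distinct $a_1, a_2 \in \max L$; Lemma~\ref{Lem:22}(4) furnishes $u_i \in x_{a_i}$ with $u_1 \wedge u_2 = \bot$, and then $m(u_1) \wedge m(u_2) = m(u_1 \wedge u_2) = \bot$ exhibits disjoint elements $m(u_i)$ in the two generated filters. Hence the family is independent.

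Regularity is the crux, and here the natural engine is Lemma~\ref{Lem:23}(5), whose conclusion is exactly that $m(x_a)$ is regular --- but at the price of two hypotheses, density of $m$ and $m(a) = \top$. To dispense with density I would factor through the closed quotient $c_d$, where $d \equiv m_*(\bot)$: since $m(d) = \bot$ one has $\Psi_d \leq \Theta_m$, so $m = \bar m \circ c_d$ with $\map{\bar m}{C_d}{M}$ dense. For each maximal $a \geq d$ the element $a$ remains maximal in $C_d \cong \upset{d}$ and $c_d(x_a)$ is the associated round filter, so $m(x_a) = \bar m\big(c_d(x_a)\big)$ is the image of a round filter under a \emph{dense} surjection; the remaining maximal elements, those with $d \nleq a$, satisfy $d \vee a = \top$ and hence $m(a) = \top$ with $m(x_a)$ improper by Lemma~\ref{Lem:23}(2), so they contribute nothing.

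To obtain $m(a) = \top$ I would note that $a$ is prime by Lemma~\ref{Lem:2}(1), whence $m(a)$ is prime in $M$ by Lemma~\ref{Lem:3}(3); a prime element is either $\top$ or maximal, so as soon as $M$ carries no proper prime --- the situation of the pointless codomains to which the corollary is applied --- we get $m(a) = \top$ and Lemma~\ref{Lem:23}(5) closes the argument. This last point is where I expect the only real difficulty: regularity genuinely requires the supremum $\bigvee_{x_a} b^{*} = a$ of Lemma~\ref{Lem:22}(6) to be sent to $\top$, which is precisely what $m(a) = \top$ secures (density then letting $m(b^{*}) = m(b)^{*}$). Were $m(a)$ to land on a maximal $a' \in M$ instead, the image would be the round but non-regular filter $x_{a'}$ with $\bigvee b^{*} = a' \neq \top$; Corollary~\ref{Cor:3} is exactly the assertion that this degeneracy is impossible over a pointless frame, and that is the fact I would ultimately invoke.
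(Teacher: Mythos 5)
Your three-part decomposition (roundness, independence, regularity) is sound, and your write-up is considerably more careful than the paper's own proof, which consists of exactly two sentences: the family $\ssetof{x_a}{\max L}$ is independent by Lemma \ref{Lem:22}(4), and \enquote{since $m$ is dense} so is the image family. The paper never addresses regularity at all, and the density it invokes is neither a hypothesis of the corollary nor needed for independence --- your argument is the right one, since any frame homomorphism preserves disjointness ($m(u_1) \wedge m(u_2) = m(\bot) = \bot$).

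On the crux you have diagnosed the situation exactly: the corollary cannot be proved as literally stated, because it is false for general surjections. Already $L = M = \mbb{2}$ with $m$ the identity is a counterexample: $a = \bot$ is maximal, and $m(x_a) = x_a = \{\top\}$ is round but has $\bigvee_{x_a} b^* = a < \top$ by Lemma \ref{Lem:22}(6), hence is not regular. More generally, exactly as you observe, whenever $m(a)$ is maximal in $M$ one checks (using surjectivity and $m_*$) that $m(x_a) = x_{m(a)}$, which is never regular. So some hypothesis forcing $m(a) = \top$ for all $a \in \max L$ must be added --- pointlessness of $M$, as in your reading, or density plus $m(\max L) \subseteq \{\top\}$ as in Lemma \ref{Lem:23}(5) --- and this is indeed what the paper uses downstream, in Lemma \ref{Lem:25}(3), where $m = \pi_L$ on an atomless frame. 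Under that reading your proof is correct, but your detour through the closed quotient $c_d$ and Lemma \ref{Lem:23}(5) is unnecessary: once $m(a) = \top$ is secured, regularity follows with no density at all, because $c^* \geq m(b^*)$ whenever $c = m(b)$, whence
\[
	\bigvee_{c \in m(x_a)} c^*
	\geq \bigvee_{b \in x_a} m(b^*)
	= m\left(\bigvee_{x_a} b^*\right)
	= m(a)
	= \top.
\]
Two minor imprecisions: the improper filters $m(x_a)$ arising when $m_*(\bot) \nleq a$ do not merely \enquote{contribute nothing} --- they remain members of the family, but they satisfy the conclusion vacuously, being round and having $\bigvee b^* \geq \bot^* = \top$; and Corollary \ref{Cor:3} concerns filters rather than elements, so the fact you ultimately need is simply that a pointless $M$ has no maximal element available to receive $m(a)$, which, with Lemma \ref{Lem:3}(3) and Lemma \ref{Lem:2}(1), forces $m(a) = \top$.
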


\begin{proof}
	The family $\ssetof{x_a}{\max L}$ is independent by Lemma \ref{Lem:22}(4). Since $m$ is dense, $\ssetof{m(x_a)}{\max L}$ is independent as well.
\end{proof}

%
%
%
%

\section{Two nuclei}\label{Sec:2Nuc}
	
	In the section we introduce and investigate the nuclei of interest. 
	
\subsection{The nucleus $\sigma$ and the spatial part of a frame}

\begin{lemma}
	On a frame $L$, the map 
	\[
		\map{\sigma_L}{L}{L}
		\equiv \left(a \mapsto \bigwedge \upset{a}_{\max L} \right)
	\]
	is a nucleus with fixed point set  
	\[
		\fix \sigma_L
		= \setof{a}{a = \bigwedge \upset{a}_{\max L}}
		\equiv \sigma L
	\]
	and kernel
	\[
		\ker \sigma_L
		= \setof{a}{\upset{a}_{\max L} = \emptyset}.
	\]
\end{lemma}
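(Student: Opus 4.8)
The plan is to verify directly that $\sigma_L$ satisfies the four defining properties of a nucleus — inflationary, monotone, idempotent, and meet-preserving — and then to read off the fixed-point set and kernel almost immediately from the definition. Throughout I adopt the convention that an empty meet equals $\top$, so that $\sigma_L(a) = \top$ precisely when no maximal element lies above $a$. The single identity that makes everything go is
\[
	\upset{\sigma_L(a)}_{\max L} = \upset{a}_{\max L} \qquad\text{for all } a \in L,
\]
which holds because any maximal $m \geq a$ satisfies $m \geq \bigwedge \upset{a}_{\max L} = \sigma_L(a)$, while conversely $m \geq \sigma_L(a) \geq a$ forces $m \geq a$.

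Granting this, the first three properties are routine. Inflationarity $a \leq \sigma_L(a)$ holds since every element of $\upset{a}_{\max L}$ dominates $a$ (and the empty case gives $\top$). Monotonicity is the observation that $a \leq b$ shrinks the indexing set, $\upset{b}_{\max L} \subseteq \upset{a}_{\max L}$, hence enlarges the meet. Idempotence is immediate from the displayed identity, as $\sigma_L(\sigma_L(a)) = \bigwedge \upset{\sigma_L(a)}_{\max L} = \bigwedge \upset{a}_{\max L} = \sigma_L(a)$. For the fixed-point set there is nothing to prove beyond unwinding the definition: $\sigma_L(a) = a$ says exactly $a = \bigwedge \upset{a}_{\max L}$, which is the description of $\sigma L$. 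For the kernel, $\sigma_L(a) = \top$ certainly holds when $\upset{a}_{\max L} = \emptyset$; conversely, if some maximal $m \geq a$ exists then $\sigma_L(a) \leq m < \top$, so the kernel is exactly $\setof{a}{\upset{a}_{\max L} = \emptyset}$.

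The one substantive step is preservation of binary meets, and this is where regularity enters. The inequality $\sigma_L(a \wedge b) \leq \sigma_L(a) \wedge \sigma_L(b)$ follows from monotonicity. For the reverse it suffices to show $\sigma_L(a) \wedge \sigma_L(b) \leq m$ for each maximal $m \geq a \wedge b$, and then to take the meet over all such $m$. Here I invoke Lemma \ref{Lem:2}(1): since $L$ is (completely) regular, every maximal element is prime. Thus $a \wedge b \leq m$ forces $a \leq m$ or $b \leq m$; in the first case $m \in \upset{a}_{\max L}$ gives $\sigma_L(a) \leq m$, and symmetrically in the second, so in either case $\sigma_L(a) \wedge \sigma_L(b) \leq m$ as required. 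I expect this primeness step to be the crux, since without it the meet of the maximal elements above $a \wedge b$ need not be controlled by those above $a$ and those above $b$ separately.
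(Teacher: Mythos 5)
Your proof is correct and follows essentially the same route as the paper's: the only substantive step is preservation of binary meets, which both you and the paper settle by invoking the primeness of maximal elements (Lemma \ref{Lem:2}(1)) to see that every maximal element above $a \wedge b$ lies above $a$ or above $b$. Your treatment is merely more explicit about the routine parts (idempotence via the identity $\upset{\sigma_L(a)}_{\max L} = \upset{a}_{\max L}$, the fixed-point set, and the kernel), which the paper leaves to the reader.
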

	
\begin{proof}
	The map $\sigma_L$ is clearly increasing and order preserving; to verify that it preserves binary meets, simply compute
	\begin{gather*}
		\sigma_L(a_1) \wedge \sigma_L(a_2)
		= \bigwedge \upset{a_1}_{\max L} \wedge \bigwedge \upset{a_2}_{\max L} =\\
		\bigwedge \big(\upset{a_1}_{\max L} \wedge \upset{a_2}_{\max L}\big)
		= \bigwedge \upset{a_1 \wedge a_2}_{\max L}
		= \sigma_L(a_1 \wedge a_2).
	\end{gather*}
	The penultimate equality is attributable to the primeness of maximal elements.
\end{proof}

Recall that a frame is called spatial if every element is the meet of the maximal elements above it. See \cite[II 5.3]{PicadoPultr:2012}.

\begin{proposition}
	The following hold for any frame $L$.
	\begin{enumerate}
		\item 
		$\sigma L$ is spatial.
		
		\item 
		$L$ is spatial if and only if $\sigma_L$ is an isomorphism if and only if $\ker \sigma_L = \{\top\}$ if and only if 
		\[
			\forall a < \top\, \exists b \in \max L\ (a \leq b).
		\]
		
		\item 
		$L$ is pointless if and only if $\sigma L$ is empty, i.e., $\sigma L = \{\top\}$.
	\end{enumerate}  
\end{proposition}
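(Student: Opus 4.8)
The plan is to handle the three parts in order, with essentially all of the real content concentrated in one implication of part (2).

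For part (1) I would first record that $\max L \subseteq \max(\sigma L)$. Indeed, if $b \in \max L$ then $b$ is the only maximal element above itself, so $\sigma_L(b) = \bigwedge \upset{b}_{\max L} = b$ and hence $b \in \fix \sigma_L = \sigma L$; and since the only element of $L$ strictly above $b$ is $\top = \sigma_L(\top)$, the top of $\sigma L$, $b$ remains maximal in $\sigma L$. Now take any $a \in \sigma L$. Then $a = \sigma_L(a) = \bigwedge \upset{a}_{\max L}$ exhibits $a$ as a meet of elements maximal in $\sigma L$; since enlarging the index family to all of $\upset{a}_{\max(\sigma L)}$ can only decrease the meet while $a$ stays a lower bound, $a = \bigwedge \upset{a}_{\max(\sigma L)}$. (Here the meet computed in $\sigma L$ coincides with the one computed in $L$, as fixed-point sets of nuclei are closed under arbitrary meets.) Hence every element of $\sigma L$ is the meet of the maximal elements of $\sigma L$ above it, i.e. $\sigma L$ is spatial.

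For part (2), the equivalence of $\ker \sigma_L = \{\top\}$ with the displayed condition is immediate from $\ker \sigma_L = \setof{a}{\upset{a}_{\max L} = \emptyset}$, once one notes $\top$ always belongs to $\ker \sigma_L$. The condition that $L$ be spatial says exactly that $a = \bigwedge \upset{a}_{\max L} = \sigma_L(a)$ for every $a$, i.e.\ that $\sigma_L = \mathrm{id}_L$; this at once gives $\ker \sigma_L = \{a : a = \top\} = \{\top\}$ and makes the reflection $\map{\sigma_L}{L}{\sigma L}$ an isomorphism. Conversely, a nucleus whose corestriction to its fixed-point set is injective must be the identity, since idempotency makes $a$ and $\sigma_L(a)$ share the image $\sigma_L(a)$; thus ``$\sigma_L$ an isomorphism'' returns ``$L$ spatial.'' This settles every implication but one.

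The remaining implication---that $\forall a < \top\, \exists b \in \max L\,(a \leq b)$ forces $L$ to be spatial---is the main obstacle, and it is where regularity is finally spent. I would prove the separation statement that for any $a$ and any $c \nleq a$ there is a maximal $p$ with $a \leq p$ and $c \nleq p$. Given $c \nleq a$, regularity produces some $x \prec c$ with $x \nleq a$; fixing a witness $w$ with $x \wedge w = \bot$ and $w \vee c = \top$, one checks $a \vee w < \top$, for $a \vee w = \top$ would give $x = x \wedge (a \vee w) = (x \wedge a) \vee (x \wedge w) = x \wedge a \leq a$. The hypothesis then furnishes a maximal $p \geq a \vee w$, whence $p \geq a$, while $c \leq p$ is impossible because it would force $\top = w \vee c \leq p$. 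Applying this with $c := \sigma_L(a) = \bigwedge \upset{a}_{\max L}$ in the event that $\sigma_L(a) > a$ yields a maximal $p \geq a$ with $\sigma_L(a) \nleq p$, contradicting $\sigma_L(a) \leq p$ (as $p$ itself lies in $\upset{a}_{\max L}$). Hence $\sigma_L(a) = a$ for all $a$, which is spatiality.

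Part (3) is then a short degenerate check. If $L$ is pointless then $\upset{a}_{\max L} = \emptyset$ and so $\sigma_L(a) = \top$ for every $a$, giving $\fix \sigma_L = \{\top\}$. Conversely, any $b \in \max L$ is a fixed point of $\sigma_L$ with $b < \top$, so $\sigma L = \{\top\}$ forces $\max L = \emptyset$.
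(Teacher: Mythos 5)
Your proof is correct, and for part (1) it rests on exactly the observation the paper uses: the paper's entire proof is the single sentence that $\sigma_L$ fixes each maximal element and each maximal element of $L$ remains maximal in $\sigma L$, with parts (2) and (3) left implicit as routine consequences. Your write-up of (1) is this same argument made explicit (including the point, worth stating, that meets in the sublocale $\sigma L$ agree with meets in $L$). Where you genuinely go beyond the paper is the implication that the condition $\forall a < \top\, \exists b \in \max L\ (a \leq b)$ forces spatiality: this is the one step in (2) with real mathematical content, and the paper offers no argument for it. Your separation lemma --- given $c \nleq a$, use regularity to pick $x \prec c$ with $x \nleq a$ and a witness $w$ satisfying $x \wedge w = \bot$, $w \vee c = \top$, note $a \vee w < \top$, and then any maximal $p \geq a \vee w$ satisfies $a \leq p$ but $c \nleq p$ --- is the standard way to spend regularity here, and it is exactly what is needed: applying it with $c = \sigma_L(a)$ yields a maximal element above $a$ not above $\sigma_L(a)$, an immediate contradiction. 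So your proposal is best described as a completion of the paper's proof rather than a different route: same skeleton, but with the only nontrivial implication actually proved rather than asserted.
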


\begin{proof}
	The nucleus $\sigma_L$ fixes each maximal element, and each maximal element of $L$ remains maximal in $\sigma L$.
\end{proof}

\begin{definition*}[spatial part of a frame, $\mbf{sF}$]
	For a frame $L$, we refer to the map $\map{\sigma_L}{L}{\sigma L}$ or to its codomain $\sigma L$ as the \emph{spatial part of $L$.} We designate the full subcategory of $\mbf{F}$ comprised of the spatial frames by $\mbf{sF}$.
\end{definition*}

\begin{lemma}\label{Lem:19}
	Any frame homomorphism $\map{m}{L}{M}$ drops through $\sigma_L$ and $\sigma_M$, i.e., there is a unique frame homomorphism $\bar{m}$ such that $\sigma_m \circ m = \bar{m} \circ \sigma_L$. 
	\[
	\begin{tikzcd}
		L \arrow{r}{m} \arrow{d}[swap]{\sigma_L}
		&M \arrow{d}{\sigma_M}\\
		\sigma L \arrow{r}[swap]{\bar{m}}
		&\sigma M
	\end{tikzcd}.
	\]
\end{lemma}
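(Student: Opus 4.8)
The plan is to invoke Lemma \ref{Lem:14} with the nuclei $\delta_L = \sigma_L$ and $\delta_M = \sigma_M$, which is legitimate since the first lemma of this subsection identifies $\fix \sigma_L = \sigma L$ and $\fix \sigma_M = \sigma M$. By that criterion the desired arrow $\bar m$ exists, and is automatically unique because $\sigma_L$ is a surjection, exactly when $m(\ker \sigma_L) \subseteq \ker \sigma_M$. Thus the entire content of the lemma collapses to verifying this one kernel containment, and for that I would use the explicit description $\ker \sigma_L = \setof{a}{\upset{a}_{\max L} = \emptyset}$ furnished by that same preceding lemma.

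To prove the containment I would argue at the level of elements by contraposition. Fix $a \in \ker \sigma_L$ and suppose toward a contradiction that $m(a) \notin \ker \sigma_M$, so that there is a maximal element $c \in \max M$ with $c \geq m(a)$. The decisive move is to pull $c$ back along the right adjoint $m_*$ and show that $m_*(c)$ is again maximal. Since $c$ is prime by Lemma \ref{Lem:2}(1), Lemma \ref{Lem:3}(1) gives that $m_*(c)$ is prime in $L$. Moreover $m_*(c) < \top$, because $m_*(c) = \top$ would force $c \geq m(\top) = \top$, contradicting $c \in \max M$. Hence $m_*(c)$ is prime and distinct from $\top$, so it is maximal by Lemma \ref{Lem:2}(1) once more.

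To finish, from $c \geq m(a)$ and the adjunction inequality $m_* \circ m(a) \geq a$ I obtain $m_*(c) \geq m_* \circ m(a) \geq a$, so $m_*(c)$ is a maximal element of $L$ lying above $a$, i.e.\ $m_*(c) \in \upset{a}_{\max L}$. This contradicts $a \in \ker \sigma_L$, establishing $m(\ker \sigma_L) \subseteq \ker \sigma_M$ and hence the lemma.

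The step I expect to carry the weight is the pullback argument, namely that the right adjoint $m_*$ carries maximal elements of $M$ to maximal elements of $L$; this rests on Lemma \ref{Lem:3}(1) together with the strictness $m_*(c) < \top$. The remainder is formal: the reduction to a kernel containment via Lemma \ref{Lem:14} and the uniqueness from surjectivity of $\sigma_L$. The one point to keep straight is that $m$ is not assumed surjective here, so I must lean on Lemma \ref{Lem:3}(1), which needs only a homomorphism, rather than parts (2) or (3) of that lemma.
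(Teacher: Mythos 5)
Your proposal is correct and follows essentially the same route as the paper: reduce via Lemma \ref{Lem:14} to the kernel containment $m(\ker \sigma_L) \subseteq \ker \sigma_M$, then derive a contradiction by pulling a maximal element $c \geq m(a)$ back along $m_*$ to get a maximal element of $L$ above $a$. The only difference is that the paper simply asserts $m_*(c) \in \upset{a}_{\max L}$, whereas you justify the maximality of $m_*(c)$ carefully through Lemma \ref{Lem:3}(1) and Lemma \ref{Lem:2}(1) --- a worthwhile precision, since $m$ is not assumed surjective and Lemma \ref{Lem:3}(2) is therefore unavailable.
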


\begin{proof}
	According to Lemma \ref{Lem:14}, it is sufficient to show that $m(a) \in \ker \sigma_M$ for any $a \in \ker \sigma_L$. But this is clearly so, for if 
	\[
		m(a) \notin \ker \sigma_M 
		= \setof{b \in M}{\upset{b}_{\max M} = \emptyset} 
	\]
	it is only because $m(a) \geq b$ for some $b \in \max M$, in which case $m_*(b) \in \upset{a}_{\max L}$, contrary to assumption.  
\end{proof}

\begin{proposition}\label{Prop:4}
	$\mbf{sF}$ is epireflective in $\mbf{F}$, and a reflector for the frame $L$ is its spatial part $\map{\sigma_L}{L}{\sigma L}$.
\end{proposition}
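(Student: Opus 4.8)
The plan is to establish the two ingredients of an epireflection: the universal factorization property of the arrow $\map{\sigma_L}{L}{\sigma L}$, and the fact that these reflection arrows are epimorphisms. The preceding Proposition already guarantees that $\sigma L$ is spatial, hence an object of $\mbf{sF}$, so it remains only to verify universality together with the epi condition.

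For universality, I would fix a frame homomorphism $\map{m}{L}{M}$ with $M$ spatial and produce a unique $\map{\hat{m}}{\sigma L}{M}$ satisfying $\hat{m} \circ \sigma_L = m$. The key tool is Lemma \ref{Lem:19}, which supplies a unique $\map{\bar{m}}{\sigma L}{\sigma M}$ with $\sigma_M \circ m = \bar{m} \circ \sigma_L$. Since $M$ is spatial, the earlier Proposition tells us that $\sigma_M$ is an isomorphism, so I can set $\hat{m} \equiv \sigma_M^{-1} \circ \bar{m}$ and compute
\[
	\hat{m} \circ \sigma_L
	= \sigma_M^{-1} \circ \bar{m} \circ \sigma_L
	= \sigma_M^{-1} \circ \sigma_M \circ m
	= m,
\]
which is exactly the required factorization.

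Uniqueness of $\hat{m}$, and with it the epi condition, both reduce to the observation that $\sigma_L$ is a surjection onto its fixed point set $\sigma L = \fix \sigma_L$, and hence is an epimorphism in $\mbf{F}$: any two maps $\hat{m}, \hat{m}'$ with $\hat{m} \circ \sigma_L = \hat{m}' \circ \sigma_L$ must coincide. Because the reflection arrows $\sigma_L$ are themselves (surjective, hence) epimorphisms, the resulting reflection is an epireflection.

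I expect no serious obstacle here, since the whole argument is a formal consequence of Lemma \ref{Lem:19} combined with the characterization of spatial frames as precisely those $M$ for which $\sigma_M$ is an isomorphism. The only point requiring a moment's care is confirming that $\sigma_L$ is genuinely surjective, so that it qualifies as an epimorphism; but this is immediate from the fact that $\sigma_L$ fixes each element of its image $\sigma L$, so every element of the codomain is hit.
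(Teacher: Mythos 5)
Your proof is correct and is exactly the argument the paper intends: the paper's own proof simply cites Lemma \ref{Lem:19}, and your unpacking of it (compose $\bar{m}$ with $\sigma_M^{-1}$, which exists because $\sigma_M$ is an isomorphism for spatial $M$, then get uniqueness and the epi condition from the surjectivity of $\sigma_L$) is precisely the intended application.
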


\begin{proof}
	This follows directly from Lemma \ref{Lem:19}.
\end{proof}

\subsection{The nucleus $\pi$ and the pointless part of a frame}

\begin{proposition}\label{Prop:3}
	In a frame $L$, the map 
	\[
		\pi_L'(a)
		\equiv \bigvee a^+
	\]
	is a prenucleus, with fixed point set 
	\[
		\pi L
		\equiv \setof{a}{\text{$a$ has no successor}}
		= \setof{a}{\forall c > a \: \exists b\ (a < b < c)},
	\]
	and with kernel being the normal filter generated by $\max L$, namely
	\[
		\ker \pi_L
		= \setof{a}{\forall b \geq a\ (b < \top \implies \text{$b$ has a successor})}.
	\]
\end{proposition}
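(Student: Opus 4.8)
The plan is to treat the three assertions—prenucleus, fixed points, kernel—in turn, using Lemma~\ref{Lem:1} for the lattice-theoretic first two parts and the prenucleus/nucleus machinery of Lemmas~\ref{Lem:15}--\ref{Lem:17} for the kernel.

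\emph{That $\pi_L'$ is a prenucleus.} Inflationarity is immediate since $a \in a^+$ forces $a \le \bigvee a^+ = \pi_L'(a)$. For monotonicity I would take $a \le b$ and a successor $c$ of $a$; as $a \le c \wedge b \le c$ and $c$ covers $a$, either $c \wedge b = c$, giving $c \le b \le \pi_L'(b)$, or $c \wedge b = a$, in which case Lemma~\ref{Lem:1}(1) applied to $c$ and $b$ carries the two-element interval $[a,c] = [c \wedge b, c]$ isomorphically onto $[b, c \vee b]$, so $c \vee b$ is a successor of $b$ and $c \le c \vee b \le \pi_L'(b)$. For the meet inequality, frame distributivity gives $\pi_L'(a) \wedge b = \bigvee_{c \in a^+}(c \wedge b)$, and Lemma~\ref{Lem:1}(2) (with the two elements exchanged) puts every $c \wedge b$ into $(a \wedge b)^+$, whence $\pi_L'(a) \wedge b \le \bigvee (a \wedge b)^+ = \pi_L'(a \wedge b)$, which is exactly the prenucleus condition verified in Lemma~\ref{Lem:16}.

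\emph{Fixed points and the explicit kernel.} Since $a^+ = \{a\}$ precisely when $a$ has no successor, $\pi_L'(a) = a$ iff $a$ has no successor, and this unwinds verbatim to the condition $\forall c > a\ \exists b\ (a < b < c)$; by Lemma~\ref{Lem:15} the associated nucleus has the same fixed-point set, so $\fix \pi_L = \pi L$. Because $\pi L = \fix \pi_L$ is a sublocale it is closed under meets, so $\pi_L(a) = \bigwedge \upset{a}_{\pi L}$. Thus $\pi_L(a) = \top$ iff the only member of $\pi L$ above $a$ is $\top$, i.e.\ iff every $b$ with $a \le b < \top$ lies outside $\pi L$, i.e.\ iff every such $b$ has a successor—exactly the displayed description of $\ker \pi_L$.

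\emph{The kernel as the normal filter generated by $\max L$.} Let $F$ be the ordinary filter generated by $\max L$, with $\delta_F'$, $\delta_F$ as in Lemmas~\ref{Lem:16}--\ref{Lem:17}. I would identify $\delta_F$ with $\pi_L$ through Lemma~\ref{Lem:15} by proving $\fix \delta_F' = \pi L$. Since $F$ is generated by $\max L$ and $(m_1 \wedge m_2) \to a = m_1 \to (m_2 \to a)$, one reduces to single maximal elements: $a \in \fix \delta_F'$ iff $m \to a = a$ for all $m \in \max L$. The crux is that for maximal $m$ one has $m \to a > a$ iff $a \le m$ and $a$ has a successor. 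Indeed, if $a \not\le m$ then primeness and distributivity give $m \to a = a$ (any $x$ with $x \wedge m \le a$ satisfies $x = x \wedge (a \vee m) \le a$); if $a \le m$ and $d := m \to a > a$, then $d \wedge m = a$ and $d \vee m = \top$, so Lemma~\ref{Lem:1}(1) exhibits $d$ as a successor of $a$; and conversely a successor $c$ of $a$ yields the maximal element $m = c \to a$ (Lemma~\ref{Lem:2}(3)) with $m \to a \ge c > a$. Hence $\fix \delta_F' = \pi L = \fix \pi_L'$, so $\delta_F = \pi_L$ and $\ker \pi_L = \ker \delta_F$. This kernel is a normal filter containing $F$ by Lemma~\ref{Lem:17}, and it is the smallest such, since any normal filter $G \supseteq F$ has $\delta_F' \le \delta_G'$, hence $\delta_F \le \delta_G$ after iteration, hence $\ker \pi_L = \ker \delta_F \subseteq \ker \delta_G = G$. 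Therefore $\ker \pi_L$ is the normal filter generated by $\max L$.

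The step I expect to be the main obstacle is establishing $\fix \delta_F' = \pi L$: it is the only place requiring genuine Heyting-algebra manipulation, turning on the computation of $m \to a$ for maximal $m$ and the case split on whether $a \le m$. Everything preceding it is either a one-line unwinding of definitions or a direct invocation of Lemma~\ref{Lem:1}.
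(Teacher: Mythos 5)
Your proof is correct, and at the top level it follows the same strategy as the paper: verify the prenucleus conditions via Lemma~\ref{Lem:1}, then identify $\pi_L$ with the nucleus $\delta_F$ attached (Lemmas~\ref{Lem:15}--\ref{Lem:16}) to the filter $F$ generated by $\max L$, with Lemma~\ref{Lem:2}(3) as the pivotal ingredient. The execution of the identification, however, is genuinely different. The paper proves two nucleus inequalities: $\delta_F \le \pi_L$ because $\max L \subseteq \ker \pi_L$ and $\delta_F$ is the least nucleus whose kernel contains $F$, and $\pi_L \le \delta_F$ because for a successor $c$ of $a$, setting $b \equiv c \to a \in \max L$, the nucleus law $\delta_F(c \to a) \le \delta_F(c) \to \delta_F(a)$ forces $\delta_F(a) \ge c$. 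You instead prove the equality of prenucleus fixed-point sets $\fix \delta_F' = \fix \pi_L'$ by computing $m \to a$ for maximal $m$ --- in effect proving Proposition~\ref{Prop:13} from scratch --- and then invoke the uniqueness clause of Lemma~\ref{Lem:15}. Your route avoids both the nucleus Heyting law and the implicit ``least nucleus killing $F$'' fact, so it is more elementary; it is also more complete, since you explicitly establish the displayed description of $\ker \pi_L$ (via $\pi_L(a) = \bigwedge\{s \in \pi L : s \ge a\}$) and the minimality statement that $\ker \delta_F$ is the \emph{smallest} normal filter containing $\max L$, both of which the paper leaves tacit. The paper's version is correspondingly shorter.

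One caution on phrasing: your ``crux,'' stated as an iff for a \emph{fixed} maximal $m$ (``$m \to a > a$ iff $a \le m$ and $a$ has a successor''), is false in that form. In $\Topol\mbb{R}$ take $a = \mbb{R}\setminus(\{0\}\cup[1,2])$ and $m = \mbb{R}\setminus\{3/2\}$: then $a \le m$ and $a$ has the successor $\mbb{R}\setminus[1,2]$, yet $m \to a = a$ because $3/2$ is not isolated in $\mbb{R}\setminus a$. This is only a slip of formulation, not a gap: the three implications you actually prove --- $a \nleq m$ gives $m \to a = a$; $a \le m$ with $m \to a > a$ makes $m \to a$ a successor of $a$ by Lemma~\ref{Lem:1}(1); and a successor $c$ of $a$ yields the maximal element $m = c \to a$ with $m \to a \ge c > a$ --- are correct and are exactly what the equality $\fix \delta_F' = \pi L$ requires, since the existential witness in the last implication is supplied by a possibly different maximal element.
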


\begin{proof}
	The map $\pi_L'$ is increasing by construction. To show that $\pi_L'$ is order-pre\-ser\-ving, consider $a \leq b$ in $L$. Then for each $c \in a^+$, either $c \leq b$ or $b \vee c \in b^+$ by Lemma \ref{Lem:1}, with the result that 
	\[
		\pi_L'(a) = \bigvee a^+
		\leq \bigvee b^+
		= \pi_L'(b).
	\]
	To check that $\pi_L' (a \wedge b) \geq a \wedge \pi_L'(b)$, simply observe that by Lemma \ref{Lem:1}(2) we have
	\[
		\pi_L'(a \wedge b) 
		= \bigvee (a \wedge b)^+
		\geq \bigvee (a \wedge b^+)
		= a \wedge \bigvee b^+
		= a \wedge \pi_L'(b). 
	\]	
	
	Certainly $\max L \subseteq \ker \pi_L$, i.e., $\pi_L(a) = \top$ for each $a \in \max L$; after all, $\top \in a^+$. This establishes that $\delta_{\max L} \leq \pi_L$, where $\delta_{\max L}$ represents the nucleus corresponding as in Lemmas \ref{Lem:15} and \ref{Lem:16} to the filter generated by $\max L$. We claim that $\delta_{\max L}(a) \geq c$ whenever $c$ is a successor of $a$. For in that case $b \equiv c \to a \in \max L$ by Lemma  \ref{Lem:2}(3), hence 
	\begin{align*}
		\top
		&= \delta_{\max L}(c \to a)
		\leq \delta_{\max L}(c) \to \delta_{\max L}(a)\\
		&\implies c \leq \delta_{\max L}(c) \leq \delta_{\max L}(a).
	\end{align*}
	The claim implies that $\pi_L \leq \delta_{\max L}$.
\end{proof}

The fact that $\ker \pi_L$ is generated by the maximal elements of $L$ (Proposition \ref{Prop:3}) implies that the congruence of $\pi_L$ is the join of the open congruences of the elements of $\max L$.
 
\begin{corollary}\label{Cor:2}
	For any frame $L$, 
	\[
		\Theta_{\pi_L}
		= \sbv{lr}{\max L} \Phi_a.
	\]
	In terms of sublocales,
	\[
		\pi L
		= \fix \pi_L
		= \bigcap_{\max L} O_a.
	\]
\end{corollary}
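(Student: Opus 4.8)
The plan is to establish the congruence identity $\Theta_{\pi_L} = \bigvee_{\max L}\Phi_a$ first, and then read off the sublocale identity from it via the standard correspondence between congruences and sublocales. For the congruence identity I would prove the two inclusions separately. The inclusion $\bigvee_{\max L}\Phi_a \le \Theta_{\pi_L}$ is the easy one: by Proposition \ref{Prop:3} every $a \in \max L$ lies in $\ker \pi_L$, so $\pi_L(a) = \top$, i.e.\ $(a, \top) \in \Theta_{\pi_L}$, and Lemma \ref{Lem:11} converts this into $\Phi_a \le \Theta_{\pi_L}$. Taking the join over $a \in \max L$ gives the inclusion (alternatively one may read it straight off Lemma \ref{Lem:8}, since $\max L \subseteq \pi_L^{-1}(\top)$).

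The reverse inclusion $\Theta_{\pi_L} \le \bigvee_{\max L}\Phi_a$ carries the real content, and this is where I expect the main obstacle to be; the key is to use not merely that $\ker \pi_L$ contains $\max L$, but that it is \emph{generated} by $\max L$ as a normal filter (Proposition \ref{Prop:3}). Write $\Xi \equiv \bigvee_{\max L}\Phi_a$ and let $\map{q}{L}{L/\Xi}$ be the quotient surjection, so that $\Theta_q = \Xi$. By Lemma \ref{Lem:17} the kernel $\ker q = \setof{a}{q(a) = \top}$ is a normal filter, and it contains every maximal element, since $(a, \top) \in \Phi_a \le \Xi$ forces $q(a) = \top$. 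Because $\ker \pi_L$ is the smallest normal filter containing $\max L$, we get $\ker \pi_L \subseteq \ker q$, i.e.\ $\pi_L(a) = \top$ implies $q(a) = \top$ for all $a$. Lemma \ref{Lem:5}, applied with the surjection $m = \pi_L$ and $n = q$, then factors $q$ through $\pi_L$, which is precisely $\Theta_{\pi_L} \le \Theta_q = \Xi$. The two inclusions give the congruence identity.

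Finally I would transport this across the congruence--sublocale correspondence: a join of congruences corresponds to the intersection of the associated sublocales, since the join of the nuclei $o_a$ fixes exactly those elements fixed by every $o_a$. Hence $\pi L = \fix \pi_L = \bigcap_{\max L}\fix o_a = \bigcap_{\max L}O_a$, using the identification of $O_a$ with $\fix o_a$ recorded in the congruence notation. The whole argument hinges on the reverse congruence inclusion, and the crux there is genuinely the word \enquote{generated} in Proposition \ref{Prop:3}: one must invoke the minimality of $\ker \pi_L$ among normal filters containing $\max L$, not just the fact that it contains them.
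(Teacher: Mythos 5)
Your proof is correct and takes essentially the paper's intended route: the paper states Corollary \ref{Cor:2} without a separate proof, deriving it from the remark that $\ker \pi_L$ is the normal filter \emph{generated} by $\max L$ (Proposition \ref{Prop:3}), and your two-inclusion argument --- the easy direction via Lemma \ref{Lem:11} (or Lemma \ref{Lem:8}), the hard direction via normality of $\ker q$ (Lemma \ref{Lem:17}), minimality of $\ker \pi_L$, and the factorization criterion of Lemma \ref{Lem:5} --- is exactly the fleshing-out of that implication. The final passage from the congruence identity to $\pi L = \bigcap_{\max L} O_a$ via the join-of-nuclei/intersection-of-sublocales correspondence is also the standard translation the paper has in mind.
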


\begin{definition*}[pointless part of a frame]
	For a frame $L$, we refer to the map $\map{\pi_L}{L}{\pi L}$ or its codomain as the \emph{pointless part of $L$.}
\end{definition*}

The pointless part of a frame can be characterized in terms of its maximal elements.

\begin{proposition}\label{Prop:13}
	For any frame $L$,
	\[
	\pi L
	= \setof{a}{\forall c\ (a \leq c \in \max L \implies c \to  a  = a)}.
	\]
\end{proposition}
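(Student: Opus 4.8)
The plan is to unwind the definition $\pi L = \setof{a}{\text{$a$ has no successor}}$ and establish the two inclusions separately, arguing by contraposition in each case and trading \enquote{successor of $a$} for \enquote{maximal element} via the correspondence of Lemma \ref{Lem:2}(3): $d$ is a successor of $a$ precisely when $d > a$ and $d \to a \in \max L$. Throughout I will use that $a \leq c \to a$ holds automatically.

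For the inclusion $\pi L \supseteq \setof{a}{\forall c\,(a \leq c \in \max L \implies c \to a = a)}$, I would prove the contrapositive: if $a$ has a successor $d$, then the right-hand condition fails. Setting $c \equiv d \to a$, Lemma \ref{Lem:2}(3) gives $c \in \max L$, and $a \leq c$ is immediate. The key computation is $c \wedge d = (d \to a) \wedge d = a$, which by the Heyting adjunction yields $d \leq (d \to a) \to a = c \to a$. Since $d > a$, this forces $c \to a \geq d > a$, so $c \to a \neq a$ and $a$ lies outside the right-hand set.

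For the reverse inclusion I again argue contrapositively: assuming there is a maximal $c \geq a$ with $c \to a \neq a$, I produce a successor of $a$. Put $e \equiv c \to a$; since $a \leq c \to a$ always, the hypothesis gives $e > a$. I would then verify the two relations $c \wedge e = c \wedge a = a$ and $c \vee e = \top$, the latter because $e \nleq c$ (otherwise $e = c \wedge e = a$, a contradiction) and $c$ is maximal. These exhibit $e$ as a relative complement of $c$ above $a$, so Lemma \ref{Lem:1}(1) supplies a lattice isomorphism $[a, e] \cong [c, \top]$ via $z \mapsto z \vee c$. Because $c$ is maximal the interval $[c, \top]$ has exactly two elements, hence so does $[a, e]$, which is precisely the statement that $e$ is a successor of $a$. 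Thus $a \notin \pi L$.

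I expect the only real friction to be the bookkeeping of the Heyting identities: confirming $c \wedge (d \to a) = a$ and $d \leq (d \to a) \to a$ in the first inclusion, and marshalling $c \wedge e = a$ and $c \vee e = \top$ into the hypotheses of Lemma \ref{Lem:1}(1) in the second. Conceptually the argument is just the successor/maximal dictionary of Lemma \ref{Lem:2} combined with the interval isomorphism of Lemma \ref{Lem:1}(1); no transfinite iteration of $\pi_L'$ and no direct appeal to complete regularity appear to be needed.
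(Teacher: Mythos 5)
Your proposal is correct and takes essentially the same approach as the paper: both directions run on the successor/maximal-element dictionary of Lemma \ref{Lem:2}(3) together with Heyting arithmetic, and your first inclusion (a successor $d$ of $a$ yields $c \equiv d \to a \in \max L$ with $c \to a \geq d > a$) is the paper's argument verbatim. The only cosmetic difference is in the reverse inclusion, where the paper cites Lemma \ref{Lem:2}(3) after computing $(c \to a) \to a = c$ from the maximality of $c$, whereas you re-derive the successor property directly from the interval isomorphism of Lemma \ref{Lem:1}(1) after checking $c \wedge (c \to a) = a$ and $c \vee (c \to a) = \top$ --- which is precisely how the paper proves that half of Lemma \ref{Lem:2}(3) itself.
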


\begin{proof}
	Suppose that $a$ does not lie in the set displayed on the right, say $a \leq c$ for some $c \in \max L$ such that $c \to a > a$. Then $c \to a$ is a successor of $a$ by Lemma \ref{Lem:2}(3) because $(c \to a) \to a = c$ by virtue of the maximality of $c$. Therefore $\pi_L(a) \geq c \to a > a$ so $a \notin \pi L$. On the other hand, if $a \notin \pi L$ then $a$ has a successor $c$, $c \to a \in \max L$ by Lemma \ref{Lem:2}(3), and $(c \to a) \to a \geq c > a$.  
\end{proof}

Proposition \ref{Prop:13} can be reformulated to provide a first order condition for membership in $\pi L$.

\begin{proposition}\label{Prop:23}
	An element $a$ in a frame $L$ lies in $\pi L$ if and only $b \to a$ is not prime for any $b > a$. That is, $a \in \pi L$ if and only if
	\[
		\forall b > a\, \exists c_i\ \big( b \wedge c_1  \wedge c_2 \leq a \text{ but }(b \wedge c_1 \nleq a \text{ and } b\wedge c_2 \nleq a)\big).
	\]
\end{proposition}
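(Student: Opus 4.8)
The plan is to reduce the statement to the characterization of $\pi L$ as the set of successor-free elements (Proposition \ref{Prop:3}), and then to translate, via Lemma \ref{Lem:2}, the existence of a successor of $a$ into the primeness of some arrow $b \to a$. First I would dispose of the parenthetical ``that is'': the displayed first-order condition is nothing more than the negation of primeness of $b \to a$, rewritten through the Heyting adjunction $x \leq b \to a \iff x \wedge b \leq a$. Indeed $c_1 \wedge c_2 \leq b \to a$ says exactly $b \wedge c_1 \wedge c_2 \leq a$, while $c_i \leq b \to a$ says $b \wedge c_i \leq a$, so the failure of the implication $(c_1 \wedge c_2 \leq b \to a) \Rightarrow (c_1 \leq b \to a \text{ or } c_2 \leq b \to a)$ is precisely the bracketed assertion. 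Hence the two right-hand conditions in the proposition coincide, and it remains to prove that $a \in \pi L$ if and only if $b \to a$ fails to be prime for every $b > a$.

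I would establish this by proving the contrapositive, namely $a \notin \pi L \iff \exists\, b > a$ with $b \to a$ prime. By Proposition \ref{Prop:3}, $a \notin \pi L$ means precisely that $a$ has a successor. By Lemma \ref{Lem:2}(3), with the roles of the two elements interchanged, an element $b$ is a successor of $a$ exactly when $b > a$ and $b \to a \in \max L$; thus $a$ has a successor if and only if there is some $b > a$ with $b \to a$ maximal. The final link is supplied by the complete regularity of $L$ through Lemma \ref{Lem:2}(1): a frame element $< \top$ is maximal if and only if it is prime. Since $b > a$ forces $b \nleq a$ and hence $b \to a \neq \top$, for such $b$ the conditions ``$b \to a$ maximal'' and ``$b \to a$ prime'' are interchangeable. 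Chaining these equivalences yields $a \notin \pi L \iff \exists\, b > a\ (b \to a \text{ prime})$, whose contrapositive is the asserted equivalence.

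There is no serious obstacle here; the argument is essentially an assembly of Lemma \ref{Lem:2}(1) and (3) with Proposition \ref{Prop:3}, together with the adjunction bookkeeping. The two points that demand care are the following. First, I must keep the hypothesis $b > a$ in force throughout, so that $b \to a \neq \top$ and the maximal/prime identification is legitimate — without it, the value $b \to a = \top$ would vacuously satisfy the unpacked primeness condition and corrupt the equivalence. Second, I should verify the adjunction computation in the first paragraph carefully, checking both implications, so that the first-order form is genuinely \emph{equivalent} to ``$b \to a$ is not prime'' and not merely implied by it.
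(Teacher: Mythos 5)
Your proof is correct and is essentially the argument the paper intends: Proposition \ref{Prop:23} is stated there without proof as a reformulation of Proposition \ref{Prop:13}, whose own proof is exactly the assembly you perform, namely the successor-free characterization of $\pi L$ from Proposition \ref{Prop:3} combined with Lemma \ref{Lem:2}(3) and the maximal/prime identification of Lemma \ref{Lem:2}(1). Your two points of care --- the Heyting adjunction unpacking of non-primeness, and keeping $b > a$ in force so that $b \to a \neq \top$ --- are precisely the details that make the reformulation legitimate.
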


\begin{notation*}[punctured element, unpunctured element]
	Let us agree to call an element $a$ of a frame $L$ \emph{punctured} if it has a successor in $L$, and \emph{unpunctured} if not.\footnote{This evocative terminology was suggested to the author by Fred Dashiell (\cite{Dashiell:2022}). } Thus an element $a$ is punctured if and only if it is of the form $c \wedge b$ for some $b > a$ and $c \in \max L$. Otherwise put, $a$ is punctured if and only if there exists some $b > a$ for which $b \to a \in \max L$.   
\end{notation*}

\subsection{An example: the pointless real numbers $\pi \Topol \mbb{R}$}

Consider the topology $\Topol\mbb{R}$ of the real numbers, whose skeleton $\delta \Topol \mbb{R}$ is the complete and atomless boolean algebra of regular open subsets of $\mbb{R}$. An open subset of the reals is uniquely expressible as a disjoint union of open intervals, and such a set is unpunctured if and only if no pair of its intervals are abutting, i.e., share an endpoint. A familiar example of an unpunctured open set is the complement of the Cantor set. The sublocale of unpuntured subsets of the reals forms a sublocale which evidently strictly contains the skeleton.

\begin{proposition}\label{Prop:16}
	$\pi \Topol \mbb{R}$ is the frame with generators $\ssetof{\overrightharp{p}, \overleftharp{q}}{\mbb{Q}}$ and the following relations indexed by the rational numbers $p,q \in \mbb{Q}$.
	\begin{enumerate}
		\item
		$\overrightharp{p} \vee \overleftharp{q} = \top$ if $p \leq q$.
		
		\item 
		$\overrightharp{p} \wedge \overleftharp{q} = \bot$ if $p > q$.
		
		\item 
		$\overrightharp{p} = \bigvee_{p < q} \overrightharp{q}$ and $\overleftharp{q} = \bigvee_{p < q}\overleftharp{p}$ for all $p$ and $q$.
		
		\item 
		$\bigwedge_\mbb{Q}\overrightharp{q} = \bot$ and $\bigvee_\mbb{Q} \overleftharp{p} = \top$.
	\end{enumerate}
\end{proposition}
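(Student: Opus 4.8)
The plan is to exhibit an isomorphism between the abstractly presented frame $F$ on the generators $\ssetof{\overrightharp p,\overleftharp q}{\mbb Q}$ subject to (1)--(4) and the concrete frame $\pi\Topol\mbb R$, and the first task is to pin down the latter. Since $\mbb R$ is Hausdorff, the primes of $\Topol\mbb R$ below $\top$ are exactly the complements of the singletons, so $\max\Topol\mbb R=\setof{\mbb R\smallsetminus\{x\}}{x\in\mbb R}$; Corollary \ref{Cor:2} then identifies $\pi\Topol\mbb R=\bigcap_{x}O_{\mbb R\smallsetminus\{x\}}$ with the sublocale of unpunctured opens described above. I would map $\overrightharp p\mapsto(p,\infty)$ and $\overleftharp q\mapsto(-\infty,q)$ -- both single intervals, hence members of $\pi\Topol\mbb R$ -- and verify (1)--(4) there. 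Relations (2), (3), (4) are the plain facts about rays (disjointness, rational approximation from within, and exhaustion), valid already in $\Topol\mbb R$ and untouched by $\pi_L$ because the sets in question are unpunctured. The sole relation that invokes the pointless passage is (1) at $p=q$: the union $(p,\infty)\cup(-\infty,p)=\mbb R\smallsetminus\{p\}$ is punctured, so $\pi_L$ carries it to $\top$, which is exactly why $\overrightharp p\vee\overleftharp p=\top$ holds in $\pi\Topol\mbb R$. By the universal property of the presentation this produces a frame homomorphism $\map{\phi}{F}{\pi\Topol\mbb R}$.

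Next I would show $\phi$ is surjective by showing the rays generate $\pi\Topol\mbb R$. A bounded rational interval is the binary meet $\overrightharp p\wedge\overleftharp q=(p,q)$; by rational approximation (relation (3)) an arbitrary interval $(a,b)$ is the join $\bigvee\setof{(p,q)}{a<p<q<b,\; p,q\in\mbb Q}$, and unbounded intervals come from relation (4). Since in $\pi\Topol\mbb R$ meets are intersections and joins are $\pi_L$ applied to unions, any unpunctured open, being the $\pi_L$-join of its component intervals, is a frame-theoretic combination of rays; hence $\phi$ is onto.

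The crux, and the step I expect to be the main obstacle, is injectivity -- that (1)--(4) already force every identification. The transparent way to organize this is to factor through the frame of reals: $\Topol\mbb R$ itself is presented by the same rays with the \emph{strict} form of (1) (demanding $\overrightharp p\vee\overleftharp q=\top$ only for $p<q$) together with (2)--(4), whence $F$ is the quotient of $\Topol\mbb R$ obtained by additionally imposing $\overrightharp p\vee\overleftharp p=\top$ for rational $p$, i.e. the quotient by $\bigvee_{p\in\mbb Q}\Phi_{\mbb R\smallsetminus\{p\}}$. Comparing with the congruence $\Theta_{\pi_L}=\bigvee_{x\in\mbb R}\Phi_{\mbb R\smallsetminus\{x\}}$ of Corollary \ref{Cor:2}, the proposition comes down to showing that these two congruences coincide in $\con\Topol\mbb R$ -- that collapsing the rational punctures, in the presence of the density relation (3), already collapses every puncture. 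This is precisely where (3) must carry the load, and I would attempt it by a normal-form argument: write each element of $F$ as a $\pi_L$-join of rational intervals and show that two such expressions agree in $F$ if and only if the associated unpunctured opens agree, thereby matching $F$ with $\fix\pi_L$ element by element. Getting the irrational punctures to disappear from purely rational data is the delicate point on which the whole identification turns.
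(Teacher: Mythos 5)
Your setup is in good shape: the identification of $\pi \Topol \mbb{R}$ with the unpunctured open sets, the construction of the homomorphism $\phi$ out of the presentation, and the surjectivity argument are all correct, and you have located the crux exactly where it lies. But the step you defer is not merely delicate and unproven --- it is false, so no normal-form argument can close it. Write $L \equiv \Topol\mbb{R}$ and $\Xi \equiv \bigvee_{p \in \mbb{Q}} \Phi_{\mbb{R}\smallsetminus\{p\}}$. Your reduction says $\phi$ is injective exactly when $\Xi = \Theta_{\pi_L} = \bigvee_{x \in \mbb{R}} \Phi_{\mbb{R}\smallsetminus\{x\}}$ (Corollary \ref{Cor:2}), and this identity fails: fix an irrational $x$ and let $\map{h_x}{L}{\mbb{2}}$ be evaluation at $x$, i.e.\ $h_x(U) = \top$ precisely when $x \in U$, a frame homomorphism. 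For every rational $p$ we have $\Phi_{\mbb{R}\smallsetminus\{p\}} \leq \Theta_{h_x}$, because $U \smallsetminus \{p\} = V \smallsetminus \{p\}$ forces $x \in U \iff x \in V$; hence $\Xi \leq \Theta_{h_x}$. Yet the pair $(\mbb{R}\smallsetminus\{x\}, \mbb{R})$ lies in $\Phi_{\mbb{R}\smallsetminus\{x\}}$ but not in $\Theta_{h_x}$. Therefore $\Phi_{\mbb{R}\smallsetminus\{x\}} \nleq \Xi$, so $\Xi \subsetneq \Theta_{\pi_L}$, and your $\phi$ has a nontrivial kernel: it sends the element corresponding to $\bigvee_{q > x}\overrightharp{q} \vee \bigvee_{p < x}\overleftharp{p}$, which is strictly below $\top$ in $L/\Xi$, to $\top$ in $\pi L$. (The small mismatch between the paper's relation (4) and the classical fourth relation only makes the presented frame larger, so this failure persists under your identification of $F$.)

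The obstruction is intrinsic to the statement, not an artifact of your factorization through $\Topol\mbb{R}$. The same cut defines an assignment on the generators, $\overrightharp{q} \mapsto \top$ precisely when $q < x$ and $\overleftharp{p} \mapsto \top$ precisely when $p > x$, which satisfies every one of relations (1)--(4): irrationality of $x$ is exactly what rescues relation (1) with $\leq$, since $p \geq x$, $q \leq x$, and $p \leq q$ would force $p = q = x \in \mbb{Q}$. So the presented frame admits a point (a frame homomorphism onto $\mbb{2}$) for every irrational $x$, whereas $\pi \Topol\mbb{R}$ is pointless and completely regular and therefore admits none (Lemmas \ref{Lem:35} and \ref{Lem:2}(1)). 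In other words, relations indexed by rationals fill only the rational punctures; the quotient $L/\Xi$ you construct is the sublocale $\bigcap_{p \in \mbb{Q}} O_{\mbb{R}\smallsetminus\{p\}}$, which retains every irrational point, and $\pi \Topol\mbb{R}$ is a proper further quotient of it. Since the paper states Proposition \ref{Prop:16} without proof, you should be aware that your (faithful) reduction, pushed one step further, shows the proposition cannot hold under the standard reading of a presentation by generators and relations; it would become true if relation (1) were imposed at every real cut, e.g.\ $\bigvee_{q > x}\overrightharp{q} \vee \bigvee_{p < x}\overleftharp{p} = \top$ for all $x \in \mbb{R}$, which by Lemma \ref{Lem:11} and Corollary \ref{Cor:2} generates exactly $\Theta_{\pi_L}$ over the classical presentation.
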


The classical presentation of the locale of real numbers (\cite{PicadoPultr:2012}) differs from the presentation in Proposition \ref{Prop:16} in only one small detail: the $\leq$ operation the first relation is replaced by the $<$ operation in the presentation of the reals. However, any speculation that the (pointfree) Yosida adjunction (\cite{MaddenVermeer:1990}, \cite{BallHager:1990}) connecting $\mbf{F}$ with $\mbf{W}$ has a pointless counterpart founders on the observation that the family of pointfree real valued localic functions on a frame lacks (the frame counterparts of the) constant functions, and in particular lacks a $0$ function. (Here $\mbf{W}$ is the category of archimedean lattice-ordered groups with designated weak order unit.) Curiously, the family does have a negation operation. 


\subsection{Decomposing a frame into its scattered and atomless parts}

In classical topology a space is called \emph{scattered} if every nonempty closed subset contains an isolated point. We take the frame counterpart of this notion as our definition, while acknowledging that another definition is the starting point of the extensive and scholarly treatment of the topic in Chapter IX of the excellent reference \cite{PicadoPultr:2020}. See also \cite{BallPultr:2016}. 

\begin{definition*}[scattered frame, scattered element, coscattered element]
	A frame $L$ is called \emph{scattered} if every element $a < \top$ has a successor. An element $a$ of a frame $L$ is called \emph{scattered} if $\downset{a}_L$ (or $O_a$) is a scattered frame, and it is called \emph{coscattered} if $\upset{a}_L$ (or $C_a$) is a scattered frame. 
\end{definition*}

\begin{lemma}\label{Lem:7}
	The following hold in an arbitrary frame $L$ with $e \equiv \pi_L(\bot)$.
	\begin{enumerate}
		\item
		$e$ is the largest scattered element of $L$, and $\map{o_e}{L}{O_e}$ is the largest scattered open quotient of $L$.
		
		\item 
		$L$ is scattered if and only if $e = \top$ if and only if  $\bot$ is coscattered if and only if the pointless part of $L$ is empty.
		
		\item 
		$e = \bot$ if and only if the pointless part $\pi L$ is a dense quotient of $L$ if and only if $L$ is atomless.
	\end{enumerate}   
\end{lemma}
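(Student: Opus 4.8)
The plan is to treat the three clauses in the order (2), (1), (3), since the kernel description of $\pi_L$ in Proposition \ref{Prop:3} renders (2) nearly immediate and yields a fact that is reused in (1). For (2): by Proposition \ref{Prop:3}, $\bot \in \ker \pi_L$ holds exactly when every $b < \top$ has a successor, which is the definition of scatteredness; hence $L$ is scattered iff $e = \pi_L(\bot) = \top$. The equivalence with ``$\bot$ is coscattered'' is a triviality, since $\upset{\bot}_L = L$, so $\bot$ is coscattered iff $L$ itself is scattered. Finally, the least element of the sublocale $\pi L = \fix \pi_L$ is $\pi_L(\bot) = e$ and its top is $\top$, so $\pi L$ is the one-element (``empty'') frame iff $e = \top$. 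The general fact extracted here---that for \emph{any} frame $M$ one has $M$ scattered iff $\pi_M(\bot) = \top$---is what I would reuse below.

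For (1) I would first isolate an elementary observation, a direct instance of Lemma \ref{Lem:1}(1): if $t \leq b$ and $s$ is a successor of $t$ with $s \nleq b$, then $b \vee s$ is a successor of $b$. Indeed the interval isomorphism $[s \wedge b, s] \cong [b, s \vee b]$ applies once one notes $s \wedge b = t$ (as $t \leq s\wedge b \leq s$ and $s \wedge b \neq s$). Writing $e_\alpha \equiv \pi_L^\alpha(\bot)$ for the transfinite iterates of Lemma \ref{Lem:15}, the chain $(e_\alpha)$ increases to $e$. To see $e$ is \emph{scattered}, take $b < e$ and let $\gamma$ be least with $e_\gamma \nleq b$; then $\gamma \neq 0$ and $\gamma$ is not a limit (a limit would force $e_\gamma = \bigvee_{\alpha<\gamma} e_\alpha \leq b$), so $\gamma = \alpha + 1$, and since $e_\alpha \leq b$ while $e_{\alpha+1} = \bigvee e_\alpha^+ \nleq b$, some successor $s$ of $e_\alpha$ has $s \nleq b$. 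The observation then makes $b \vee s$ a successor of $b$, and $b \vee s \leq e$, so $\downset{e}$ is scattered.

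To see $e$ is the \emph{largest} scattered element, let $a$ be scattered and run the iteration $f_\alpha \equiv \pi^\alpha_{\downset{a}}(\bot)$ inside the frame $\downset{a}$; since $\downset{a}$ is scattered, the reused fact forces this chain to climb to its top $a$. A transfinite induction then shows $f_\alpha \leq e_\alpha$ (the base and limit stages are immediate): at a successor step a successor $s$ of $f_\alpha$ in $\downset{a}$ is also a successor in $L$, and either $s \leq e_\alpha$ or, applying the observation with $t = f_\alpha$ and $b = e_\alpha$, $e_\alpha \vee s \in e_\alpha^+$; either way $s \leq e_{\alpha+1}$. Hence $a \leq e$. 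The open-quotient formulation follows because $O_a \cong \downset{a}$ is scattered iff $a$ is a scattered element, and $a \mapsto \Phi_a$ reverses order, so $\map{o_e}{L}{O_e}$ is the largest scattered open quotient. This largest-part is the main obstacle: the delicate point is relating successors computed in the subframe $\downset{a}$ to successors in $L$ and channelling them through the global iterates $e_\alpha$, which is exactly what the elementary observation together with the induction $f_\alpha \leq e_\alpha$ accomplishes.

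For (3): $e = \bot$ means $\pi_L(\bot) = \bot$, i.e.\ $\bot \in \fix \pi_L = \pi L$, i.e.\ $\bot$ has no successor; since a successor of $\bot$ is precisely an atom, this says $L$ is atomless. The same equation says the least element of $\pi L$ is $\bot$, which is exactly the condition that the sublocale $\pi L = \fix \pi_L$ contain $\bot$, i.e.\ that the quotient $\map{\pi_L}{L}{\pi L}$ be dense. This delivers the two remaining equivalences at once.
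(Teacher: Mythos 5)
Your proof is correct, and it is substantially more complete than the paper's own. The paper's entire proof is a single sentence establishing clause (2) --- that $\pi L = \{\top\}$ iff $\pi_L(\bot) = \top$ iff every $a < \top$ has a successor, read off from the kernel description in Proposition \ref{Prop:3} --- which is exactly your argument for (2); clauses (1) and (3) are left to the reader, and your clause (3) is the routine verification one would expect. The genuine added content is clause (1): the paper records no argument that $e$ is the \emph{largest} scattered element, and your proof of it is sound --- the interval-isomorphism observation extracted from Lemma \ref{Lem:1}(1) (a successor $s$ of $t \leq b$ with $s \nleq b$ yields the successor $b \vee s$ of $b$) is what makes both halves work, first showing $\downset{e}$ is scattered by locating the least iterate $e_\gamma \nleq b$, then running the transfinite comparison $f_\alpha \leq e_\alpha$ between the iterates of $\pi_{\downset{a}}'$ and $\pi_L'$. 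This is very much in the spirit of the inductions the paper itself performs in Lemmas \ref{Lem:13} and \ref{Lem:33}, so it is a faithful completion rather than a detour. Two hinge points you handle correctly and that deserve the emphasis you give them: successors computed in $\downset{a}$ coincide with successors in $L$ (any intermediate element would itself lie below $a$), which is what licenses channelling them through the global iterates $e_\alpha$; and the ``reused fact'' from (2) may be applied to the frame $\downset{a} \cong O_a$ because an open quotient of a completely regular frame is again completely regular.
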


\begin{proof}
	The condition that $\pi L = \{\top\}$ is equivalent to the condition that $\pi_L^*(\bot) = \top$ by Proposition \ref{Prop:3}, which is to say that $\pi_L'(a) > a$ for all $a < \top$.
\end{proof}

\begin{definition*}[scattered and atomless parts of a frame]
	For a frame $L$, let $e \equiv \pi_L(\bot)$. We refer to the map $\map{o_e}{L}{O_e}$ or to its codomain as the \emph{scattered part of $L$}. We refer to the map $\map{c_e}{L}{C_e}$ or to its codomain as the \emph{atomless part of $L$}.
\end{definition*}

\begin{definition*}[subdirect product of frames]
	A subdirect product is a subobject of a product for which the projection morphisms are surjective.
\end{definition*}

\begin{proposition}\label{Prop:14}
	Every frame is (isomorphic to) a subdirect product of its scattered and atomless parts. Its scattered part is an open quotient and its atomless part is the complementary closed quotient. The frame is scattered if and only if its atomless part is empty and atomless if and only if its scattered part is empty.
\end{proposition}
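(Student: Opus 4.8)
The whole proposition is governed by the single element $e \equiv \pi_L(\bot)$, together with the fact that its open congruence $\Phi_e$ and closed congruence $\Psi_e$ are complementary in the congruence frame $\con L$. My plan is therefore: first build the candidate subdirect representation from the two quotient maps $o_e$ and $c_e$; then verify that $\Phi_e$ and $\Psi_e$ are genuinely complementary; and finally read off the three ``empty part'' dichotomies from Lemma \ref{Lem:7} by testing $e$ against the extreme values $\bot$ and $\top$.

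For the representation I would form the frame homomorphism $\map{\iota}{L}{O_e \times C_e}$, $\iota \equiv \big(b \mapsto (o_e(b), c_e(b))\big)$, which is a homomorphism because each coordinate is. Composing $\iota$ with the two product projections recovers $o_e$ and $c_e$, both surjective, so the projections of the putative subdirect product are surjective. It then remains to see that $\iota$ is injective, which is exactly the assertion that $\Theta_{o_e} \wedge \Theta_{c_e} = \Phi_e \wedge \Psi_e$ is the identity congruence. This is the short distributive computation: if $e \wedge b_1 = e \wedge b_2$ and $e \vee b_1 = e \vee b_2$, then $b_1 = b_1 \wedge (e \vee b_1) = b_1 \wedge (e \vee b_2) = (e \wedge b_1) \vee (b_1 \wedge b_2)$, and the final expression is symmetric in $b_1$ and $b_2$ (using $e \wedge b_1 = e \wedge b_2$), forcing $b_1 = b_2$. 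Hence $L \cong \iota(L)$ is a subdirect product of $O_e$ and $C_e$, and the scattered part $o_e$ is an open quotient by construction.

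To finish the complementarity I would check that $\Phi_e \vee \Psi_e$ is the total congruence. Directly from the definitions, $(e, \top) \in \Phi_e$ (since $e \wedge e = e \wedge \top$) and $(\bot, e) \in \Psi_e$ (since $e \vee \bot = e \vee e$); both pairs therefore lie in $\Phi_e \vee \Psi_e$, and transitivity gives $(\bot, \top) \in \Phi_e \vee \Psi_e$, so this join identifies $\bot$ and $\top$ and is total. Combined with the meet computed above, this displays $\Psi_e$ as the complement of $\Phi_e$ in $\con L$, i.e.\ the atomless part $c_e$ is precisely the closed quotient complementary to the open scattered part $o_e$.

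For the dichotomies, note straight from the congruence definitions that $O_e = L/\Phi_e$ is the one-element frame iff $(\bot, \top) \in \Phi_e$ iff $e \wedge \bot = e \wedge \top$ iff $e = \bot$, and likewise $C_e = L/\Psi_e$ is one-element iff $(\bot, \top) \in \Psi_e$ iff $e = \top$. Plugging in Lemma \ref{Lem:7}(2),(3) --- namely $L$ scattered $\iff e = \top$ and $L$ atomless $\iff e = \bot$ --- yields ``$L$ scattered $\iff$ atomless part empty'' and ``$L$ atomless $\iff$ scattered part empty.'' There is no deep obstacle in any of this: the entire content is that $e$ is a single element, whose open/closed split is automatically a Boolean complement in $\con L$. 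The only care needed is bookkeeping --- reading ``empty'' as the trivial frame $\{\bot = \top\}$, and matching $e = \bot$ to the vanishing of the open (scattered) factor and $e = \top$ to the vanishing of the closed (atomless) factor.
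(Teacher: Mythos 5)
Your proposal is correct and follows essentially the same route as the paper: both form the product map $o_e \times c_e \colon L \to O_e \times C_e$ and deduce injectivity from the fact that $\Phi_e$ and $\Psi_e$ are complements in $\con L$, hence meet in the identity congruence. The only difference is one of detail --- you verify the complementarity and the ``empty part'' dichotomies explicitly, whereas the paper cites complementarity as a known fact and leaves the dichotomies to Lemma \ref{Lem:7}.
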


\begin{proof}
	We have the product map $\map{o_e \times c_e}{L}{O_e \times C_e}$, which is one-one because the corresponding congruences $\Phi_e$ and $\Psi_e$ are complements in $\con L$ and therefore intersect to the identity congruence.	
\end{proof}

\subsection{Decomposing a frame into its pointless and spatial parts}\label{Subsec:Decomp}

\begin{lemma}\label{Lem:13}
	For any frame $L$, $\sigma_L \wedge \pi_L = \bot_{\con L} = 1_L$.
\end{lemma}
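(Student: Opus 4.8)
The plan is to pass to the assembly $\con L$ and identify each nucleus with its congruence, so that the claim becomes the statement $\Theta_{\sigma_L}\wedge\Theta_{\pi_L}=\bot_{\con L}$ (which is $1_L$ under the standard identification of the bottom congruence with the identity). The three ingredients I would use are already in place: the description $\Theta_{\pi_L}=\bigvee_{\max L}\Phi_c$ from Corollary \ref{Cor:2}, the fact from Lemma \ref{Lem:10} that the open congruences $\Phi_c$, $c\in\max L$, are exactly the atoms of $\con L$, and the frame distributivity of $\con L$.

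First I would rewrite $\Theta_{\pi_L}$ via Corollary \ref{Cor:2} and distribute the meet across the join. Since $\con L$ is a frame, binary meet distributes over arbitrary joins, giving
\[
	\Theta_{\sigma_L}\wedge\Theta_{\pi_L}
	= \Theta_{\sigma_L}\wedge\bigvee_{\max L}\Phi_c
	= \bigvee_{\max L}\big(\Theta_{\sigma_L}\wedge\Phi_c\big).
\]
Hence it suffices to show $\Theta_{\sigma_L}\wedge\Phi_c=\bot_{\con L}$ for every $c\in\max L$ (the case $\max L=\emptyset$, i.e.\ $L$ pointless, being handled automatically since the empty join is $\bot_{\con L}$).

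Next, fix $c\in\max L$. By Lemma \ref{Lem:10} the congruence $\Phi_c$ is an atom of $\con L$, so $\Theta_{\sigma_L}\wedge\Phi_c$, lying below $\Phi_c$, is either $\bot_{\con L}$ or $\Phi_c$, and it equals $\Phi_c$ exactly when $\Phi_c\le\Theta_{\sigma_L}$. I would rule the latter out by producing a pair identified by $\Phi_c$ but not by $\sigma_L$; the natural witness is $(c,\top)$. Indeed $c\wedge c=c=c\wedge\top$ shows $(c,\top)\in\Phi_c$, whereas $\sigma_L$ fixes every maximal element and fixes $\top$, so that $\sigma_L(c)=\bigwedge\upset{c}_{\max L}=c<\top=\bigwedge\emptyset=\sigma_L(\top)$, the strict inequality holding because a maximal element is a predecessor of $\top$. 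Thus $(c,\top)\notin\Theta_{\sigma_L}$, so $\Phi_c\nleq\Theta_{\sigma_L}$ and therefore $\Theta_{\sigma_L}\wedge\Phi_c=\bot_{\con L}$.

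Feeding this back into the distributive identity, the join over $\max L$ of these bottom elements is again $\bot_{\con L}$, which yields $\sigma_L\wedge\pi_L=\bot_{\con L}=1_L$. I do not expect a genuine obstacle here: the only formal point needing care is the legitimacy of the distributive step, which rests on $\con L$ being a frame, and the conceptual heart of the matter is the transversality observation that $\pi_L$ collapses precisely along the open congruences of the maximal elements while $\sigma_L$ keeps each maximal element as a bona fide point distinct from $\top$.
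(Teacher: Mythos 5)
Your proof is correct, but it takes a genuinely different route from the paper's. The paper argues at the level of elements, by transfinite induction along the prenucleus iteration that defines $\pi_L$ (Lemma \ref{Lem:15}): it shows $\sigma_L(a) \wedge \pi_L^\alpha(a) = a$ for every ordinal $\alpha$, the key step being that each successor $c$ of $\pi_L^\beta(a)$ produces the maximal element $c \to \pi_L^\beta(a)$ (Lemma \ref{Lem:2}(3)), which lies above $\sigma_L(a)$ and hence forces $\sigma_L(a) \wedge c \leq \pi_L^\beta(a)$. You instead work entirely in the assembly: Corollary \ref{Cor:2} gives $\Theta_{\pi_L} = \bigvee_{\max L} \Phi_c$, frame distributivity of $\con L$ reduces the claim to $\Theta_{\sigma_L} \wedge \Phi_c = \bot_{\con L}$ for each $c \in \max L$, and the atom dichotomy from Lemma \ref{Lem:10} together with the witness $(c, \top)$ settles each such meet. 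Both arguments hinge on the same fact---$\pi_L$ collapses only along maximal elements, which $\sigma_L$ keeps distinct from $\top$---but the transfinite work the paper does explicitly is, in your version, outsourced to Corollary \ref{Cor:2}, whose own justification (via Proposition \ref{Prop:3} and Lemmas \ref{Lem:15}--\ref{Lem:17}) is where the iteration hides. What your route buys is brevity and a conceptual reading: the pointless congruence is a join of atoms, none of which can lie below the spatial congruence. What the paper's route buys is the pointwise identity $\sigma_L(a) \wedge \pi_L(a) = a$ in explicit form, which is what is actually invoked later (e.g., in Lemma \ref{Lem:20}); your congruence-level statement does yield it, but only through the standard nuclei--congruence isomorphism (pointwise meets of nuclei correspond to intersections of congruences), the dictionary step you rightly flag at the outset. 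One further remark that does not affect your argument: of Lemma \ref{Lem:10}'s description of the atoms of $\con L$, you need only the robust half---each $\Phi_c$ is an atom, being the complement of the maximal element $\Psi_c$, which holds in any frame by Lemma \ref{Lem:1}(1)---and not the converse half, whose justification via Lemma \ref{Lem:2}(2) leans on regularity, a property $\con L$ need not enjoy.
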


\begin{proof}
	We prove by induction that $\sigma_L(a) \wedge \pi_L^\alpha(a) = a$ for any $a \in L$ and any ordinal $\alpha$. For $\alpha = 0$ it is clear that $\sigma_L(a) \wedge \pi_L^0(a) = \sigma_L(a) \wedge a = a$. Assume the assertion holds for all ordinals $\beta < \alpha$. In case $\alpha = \beta + 1$ we have 
	\[
		\sigma_L(a) \wedge \pi_L^\alpha(a)
		= \sigma_L(a) \wedge \pi_L' \circ \pi_L^\beta (a) 
		= \sigma_L(a) \wedge \sbv{lr}{\pi_L^\beta(a)^+}c
		= \sbv{lr}{\pi_L^\beta(a)^+}(\sigma_L(a) \wedge c).
	\]
	If $c \in \pi_L^\beta(a)^+$ then either $c = \pi_L^\beta(a)$ or $c$ is a successor of $\pi_L^\beta(a)$. In the first case $\sigma_L(a) \wedge c = a$ by the inductive hypothesis. In the second case $b \equiv c \to \pi_L^\beta(a) \in \max L$ by Lemma \ref{Lem:2}(3), and since $b \geq \pi_L^\beta(a) \geq a$, it follows that $b \geq \bigwedge \upset{a}_{\max L} = \sigma_L(a)$. Therefore $\sigma_L(a) \wedge c \leq b \wedge c \leq \pi_L^\beta(a)$, so that by the inductive hypothesis we get $\sigma_L(a) \wedge c \leq \sigma_L(a) \wedge \pi_L^\beta(a) = a$. Since the persistence of the assertion through limit ordinals is evident, the induction is complete. 
\end{proof}

Whereas Lemma \ref{Lem:13} establishes that the congruences associated with the pointless and spatial parts of a frame are disjoint, Lemma \ref{Lem:35} points out that they are not complementary.

\begin{lemma}\label{Lem:35}
	In any frame $L$, $\sigma \pi L$ is empty, whereas $\pi \sigma L$ need not be empty. Otherwise put, $\sigma_L \circ \pi_L (a) = \top$ for all $a \in L$, whereas $\pi_L \circ \sigma_L(a)$ need not be $\top$ for all $a \in L$.
\end{lemma}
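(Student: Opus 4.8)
The plan is to prove the two halves separately: first that the pointless part $\pi L$ genuinely has no maximal elements, and then to exhibit a single frame witnessing that $\pi \sigma L$ can be nontrivial.

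For the first half I would argue by contradiction. Suppose $m$ were a maximal element of $\pi L$. Since $\map{\pi_L}{L}{\pi L}$ is a frame surjection, Lemma \ref{Lem:3}(2) gives $\pi_{L*}(m) \in \max L$. But by Proposition \ref{Prop:3} the maximal elements of $L$ lie in $\ker \pi_L$, so $\pi_L\big(\pi_{L*}(m)\big) = \top$. As $\pi_L$ is surjective we have $\pi_L \circ \pi_{L*} = \mathrm{id}_{\pi L}$, whence $m = \pi_L\big(\pi_{L*}(m)\big) = \top$, contradicting the maximality (hence properness) of $m$. Therefore $\max \pi L = \emptyset$.

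Knowing $\max \pi L = \emptyset$, the conclusion $\sigma \pi L = \{\top\}$ is immediate and self-contained: for every $b \in \pi L$ one has $\sigma_{\pi L}(b) = \bigwedge \upset{b}_{\max \pi L} = \bigwedge \emptyset = \top$, so in particular $\sigma_{\pi L} \circ \pi_L(a) = \top$ for all $a \in L$. I would flag here that the genuine obstacle is a subtlety one must resist: an element of $\pi L$ has no successor in $L$, but \emph{successors computed inside the sublocale $\pi L$ need not agree with successors in $L$}, so one cannot argue directly that $\top$ fails to be a successor of $m$ within $\pi L$. It is precisely the adjunction identity $\pi_L \circ \pi_{L*} = \mathrm{id}$ together with Lemma \ref{Lem:3}(2) that bridges this gap, transporting a hypothetical maximal element of $\pi L$ back to a maximal element of $L$, where the kernel computation of Proposition \ref{Prop:3} applies.

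For the second half I would take $L = \Topol \mbb{R}$. Since $\mbb{R}$ is Tychonoff, $\Topol \mbb{R}$ is spatial, so $\sigma_L$ is an isomorphism and $\pi \sigma L \cong \pi L = \pi \Topol \mbb{R}$. The latter is visibly nonempty --- it is the nontrivial frame presented in Proposition \ref{Prop:16} --- so $\pi \sigma L$ need not be empty. Concretely, the regular open interval $(0,1)$ lies in the skeleton and hence in $\pi L$, so $\pi_{\sigma L} \circ \sigma_L$ carries it to (the image of) $(0,1) \neq \top$, exhibiting an $a$ with $\pi_L \circ \sigma_L(a) \neq \top$. This single example completes the asymmetry asserted by the lemma.
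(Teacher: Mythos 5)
The paper states Lemma \ref{Lem:35} without any proof, so there is no official argument to compare against; judged on its own merits, your proof is correct and fills the gap cleanly. The first half---that $\pi L$ is genuinely pointless---is exactly the fact the paper needs (but never isolates) for $\mbf{plFs}$ to be epireflective in $\mbf{Fs}$ (Corollary \ref{Cor:1}(5)), and your route is the right one: you correctly flag that one cannot refute maximality of $m$ inside the sublocale $\pi L$ merely because elements of $\pi L$ have no successors in $L$, since the interval $(m,\top)$ of $L$ might a priori consist entirely of punctured elements; transporting the hypothetical maximal element back to $\max L$ via Lemma \ref{Lem:3}(2), then invoking $\max L \subseteq \ker \pi_L$ (Proposition \ref{Prop:3}) and the adjunction identity $\pi_L \circ \pi_{L*} = \mathrm{id}_{\pi L}$, is a valid and economical way to close that gap. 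The second half is also sound: $\Topol\mbb{R}$ is spatial in the paper's sense (every proper open set lies below some $\mbb{R}\setminus\{x\}$), so $\pi\sigma L \cong \pi\Topol\mbb{R}$, and the latter is nontrivial---this is precisely the paper's own example material (Proposition \ref{Prop:16} and the surrounding discussion, which notes that the unpunctured sets contain the regular open sets); your concrete witness $(0,1)$ works because adjoining any single point to $(0,1)$ fails to yield an open set, so $\pi_{\sigma L}\circ\sigma_L$ fixes it rather than sending it to $\top$. One cosmetic remark: the composite written $\sigma_L \circ \pi_L$ in the lemma is an abuse of notation for $\sigma_{\pi L} \circ \pi_L$, and your proof correctly reads it that way.
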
 

\begin{proposition}\label{Prop:11}
	Every frame is (isomorphic to) a subdirect product of its pointless and spatial parts. The frame is pointless if and only if its spatial part is empty, and spatial if and only if its pointless part is empty.
\end{proposition}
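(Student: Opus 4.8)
The plan is to prove the statement by exhibiting the explicit product map and verifying it is a subdirect embedding, then reading off the two equivalences. First I would form the product homomorphism $\map{\sigma_L \times \pi_L}{L}{\sigma L \times \pi L}$, i.e., the map sending $a \mapsto (\sigma_L(a), \pi_L(a))$; this is a frame homomorphism because both coordinates are (nuclei, hence) frame surjections onto their fixed-point sublocales, and each coordinate projection is surjective by construction. The content is therefore entirely in showing this product map is injective. By the standard correspondence between frame quotients and congruences, injectivity of the product map is equivalent to the statement that the two congruences $\Theta_{\sigma_L} = \Delta_{\sigma_L}$ and $\Theta_{\pi_L}$ intersect to the identity congruence on $L$. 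But this is exactly the assertion of Lemma \ref{Lem:13}, which states $\sigma_L \wedge \pi_L = \bot_{\con L} = 1_L$ (the identity congruence). So the injectivity is handed to us.

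More concretely, to see the product map is one-one without appealing to the congruence lattice, I would argue directly: suppose $\sigma_L(a_1) = \sigma_L(a_2)$ and $\pi_L(a_1) = \pi_L(a_2)$ for elements $a_1, a_2 \in L$. It suffices to show $a_1 = a_2$, and by symmetry it suffices to show $a_1 \leq a_2$. Here I would invoke the meet identity from Lemma \ref{Lem:13}, namely $\sigma_L(a) \wedge \pi_L(a) = a$ for every $a \in L$. Applying this to $a_1$ and using the hypothesized equalities in each coordinate gives
\[
	a_1 = \sigma_L(a_1) \wedge \pi_L(a_1) = \sigma_L(a_2) \wedge \pi_L(a_2) = a_2,
\]
so the product map is injective. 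Together with the surjectivity of each projection, this exhibits $L$ as (isomorphic to) a subdirect product of $\sigma L$ and $\pi L$.

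For the two equivalences in the final sentence, I would simply quote the earlier characterizations. The frame $L$ is pointless if and only if $\sigma L = \{\top\}$ is empty: this is precisely Proposition stated in the subsection on $\sigma$ (the third clause, $L$ pointless $\iff \sigma L = \{\top\}$). Dually, $L$ is spatial if and only if $\sigma_L$ is an isomorphism, equivalently (by the subdirect representation just established) if and only if the $\pi_L$ coordinate contributes nothing, i.e., $\pi L$ is empty; one direction follows because an isomorphism $\sigma_L$ forces $\ker \sigma_L = \{\top\}$ and hence, via $\sigma_L \wedge \pi_L = 1_L$ together with $\sigma_L$ being dense-trivial, collapses $\pi L$, and the converse follows symmetrically from Proposition \ref{Prop:3} applied to the pointless part.

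The main obstacle, I expect, is not the injectivity — that is delivered cleanly by Lemma \ref{Lem:13} — but rather pinning down the second equivalence ($L$ spatial $\iff \pi L$ empty) with the correct logical care. The subtlety is that, unlike the scattered/atomless decomposition of Proposition \ref{Prop:14}, the two congruences here are \emph{disjoint but not complementary} (Lemma \ref{Lem:35}), so one cannot appeal to a clean complementation argument as in that earlier proof. I would therefore handle the spatiality equivalence by tracing through the kernel conditions: $L$ spatial means every $a < \top$ lies below some maximal element, which forces $\pi_L$ to have trivial kernel relative to the spatial structure, and conversely $\pi L$ empty means $\max L$ generates all of $L$ as a normal filter, returning spatiality. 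This bookkeeping, rather than any hard mathematics, is where I would be most careful.
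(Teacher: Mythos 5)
Your subdirect-product argument is correct and is essentially the paper's own proof: the paper's entire proof of this proposition consists of saying that the representation is the product morphism of $\sigma_L$ with $\pi_L$, which is one-one by Lemma \ref{Lem:13}. Your element-wise verification via the identity $\sigma_L(a) \wedge \pi_L(a) = a$ is exactly what the proof of Lemma \ref{Lem:13} establishes (and it does follow from the congruence statement, since $a \leq \sigma_L(a) \wedge \pi_L(a) \leq \sigma_L(a), \pi_L(a)$ forces the pair $(a, \sigma_L(a) \wedge \pi_L(a))$ into both congruences). Your citation of the earlier proposition for \enquote{pointless $\iff$ spatial part empty} is also fine. (One cosmetic slip: $\Delta$ is the paper's notation for the skeleton congruence, so writing $\Theta_{\sigma_L} = \Delta_{\sigma_L}$ clashes with its conventions.)

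The genuine gap is in your treatment of \enquote{spatial $\iff$ pointless part empty}, and it cannot be repaired, because the \enquote{only if} direction is false as literally stated. By Lemma \ref{Lem:7}(2), the pointless part of $L$ is empty if and only if $L$ is \emph{scattered}, and scatteredness is strictly stronger than spatiality: the paper's own example $\Topol\mbb{R}$ is spatial, yet $\pi \Topol\mbb{R}$ is a nontrivial frame (Proposition \ref{Prop:16}; the complement of the Cantor set is an unpunctured element below $\top$). Lemma \ref{Lem:35} says this explicitly: $\pi \sigma L$ need not be empty, i.e., a spatial frame need not have empty pointless part. So your step \enquote{an isomorphism $\sigma_L$ \dots collapses $\pi L$} is precisely the step that fails, and no kernel bookkeeping will close it. What is true is the \enquote{if} direction, and it has a one-line proof you nearly wrote down: if $\pi L$ is empty then $\pi_L(a) = \top$ for every $a$, whence $a = \sigma_L(a) \wedge \pi_L(a) = \sigma_L(a)$, so $L$ is spatial. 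Note that the paper's own proof is silent on the proposition's second sentence entirely, so your instinct that this is where the real care is needed was sound; the correct resolution, however, is to weaken the second biconditional to \enquote{spatial if (but not only if) its pointless part is empty}, or to replace \enquote{spatial} by \enquote{scattered}.
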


\begin{proof}
	The subdirect representation is the product morphism of $\sigma_L$ with $\pi_L$. The product map is one-one by Lemma \ref{Lem:13}.
\end{proof}

\begin{proposition}\label{Prop:15}
	For any frame $L$ with $e \equiv \pi_L(\bot)$, we have the following commuting diagram in $\mbf{F}$. 
	\[
	\begin{tikzcd}
		&\pi L
		&&C_e \arrow{ll}
		&\\
		&&&&\\
		\pi L \times \sigma L \arrow{uur} \arrow{ddr}
		&& L \arrow{ll} \arrow{uul}{\pi_L} \arrow{ddl}[swap]{\sigma_L} \arrow{rr} \arrow{uur}[swap]{c_e} \arrow{ddr}{o_e}
		&& O_e \times C_e \arrow{uul} \arrow{ddl}\\
		&&&&\\
		&\sigma L \arrow{rr}
		&&O_e
		&
	\end{tikzcd}
	\]
	Concerning the two squares which have $L$ at one corner, the left square exhibits the decomposition of $L$ into its pointless and spatial parts (Proposition \ref{Prop:14}), while the right square exhibits the decomposition of $L$ into its scattered and atomless parts (Proposition \ref{Prop:11}). The top arrow exists because $C_e$ is the closure of $\pi L$, and the bottom arrow exists because $\sigma_L$ is the spatial reflector of $L$ and $O_e$ is spatial.   
\end{proposition}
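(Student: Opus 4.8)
The plan is to verify the commutativity of the diagram by producing each arrow explicitly and checking that the various triangles and squares commute. The two squares adjacent to $L$ are already accounted for by the cited results: the left square is the pointless/spatial decomposition of Proposition \ref{Prop:11}, so its arrows are $\pi_L$ and $\sigma_L$ into $\pi L$ and $\sigma L$ respectively, and the product morphism $L \to \pi L \times \sigma L$; the right square is the scattered/atomless decomposition of Proposition \ref{Prop:14}, with arrows $o_e$ and $c_e$ into $O_e$ and $C_e$ and the product morphism $L \to O_e \times C_e$. What remains is to construct and justify the four ``outer'' arrows: the top arrow $C_e \to \pi L$, the bottom arrow $O_e \to \sigma L$, and the two arrows from $\pi L \times \sigma L$ and $O_e \times C_e$ into the respective corners.

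First I would treat the bottom arrow $O_e \to \sigma L$. Since $o_e$ is the scattered open quotient and every scattered quotient factors appropriately, the key point is that $O_e$ is spatial: indeed $O_e$ has the same maximal elements as $L$ (open quotients preserve the relevant structure), and being a scattered frame it is in particular spatial, so its spatial reflector $\sigma_{O_e}$ is an isomorphism. Because $\sigma_L$ is the spatial epireflection of $L$ (Proposition \ref{Prop:4}) and $O_e$ is spatial, Lemma \ref{Lem:19} (applied to $o_e$) supplies a unique frame homomorphism $\bar{o_e} \colon \sigma L \to \sigma O_e \cong O_e$; I would instead invoke the universal property directly, factoring $o_e$ through $\sigma_L$ to obtain the arrow $\sigma L \to O_e$, which is the direction drawn. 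Commutativity of the lower triangle ($o_e = (\sigma L \to O_e) \circ \sigma_L$) is then immediate from the factorization.

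For the top arrow $C_e \to \pi L$ I would argue dually. The claim is that $C_e$ is the closure of $\pi L$, i.e., that the congruence $\Psi_e$ of the atomless part sits below the congruence $\Theta_{\pi_L}$ of the pointless part, so that $\pi_L$ factors through $c_e$. By Corollary \ref{Cor:2} we have $\Theta_{\pi_L} = \bigvee_{\max L}\Phi_a$, and since $e = \pi_L(\bot)$ is the largest scattered element (Lemma \ref{Lem:7}), one checks that $\pi_L(e) = \top$, hence $(e,\top)\in\Theta_{\pi_L}$, which by Lemma \ref{Lem:11} gives $\Phi_e \leq \Theta_{\pi_L}$; combined with the fact that on the atomless part the points have already been annihilated, this yields the factorization of $\pi_L$ through $c_e$ and hence the arrow $C_e \to \pi L$. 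The two arrows out of the products $\pi L \times \sigma L$ and $O_e \times C_e$ are then just the composites of the projections with the newly constructed arrows, and all remaining triangles commute by construction. The main obstacle I anticipate is the top arrow: correctly establishing that $C_e$ is genuinely the closure of $\pi L$ (equivalently, that $\pi_L$ factors through the closed quotient $c_e$) requires care with the interplay between the scattered element $e$ and the maximal elements generating $\Theta_{\pi_L}$, and it is here that the full force of Lemma \ref{Lem:7} and Corollary \ref{Cor:2} must be brought to bear.
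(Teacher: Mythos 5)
Your overall plan (construct the four outer arrows via universal properties; the two squares at $L$ are the cited decompositions) is the paper's approach, and your treatment of the bottom arrow is essentially correct: $O_e$ is a scattered frame by Lemma \ref{Lem:7}(1), hence spatial (every $a < \top$ has a successor $c$, so $a \leq c \to a \in \max O_e$ by Lemma \ref{Lem:2}(3), and spatiality follows), and then $o_e$ factors through the spatial reflector $\sigma_L$ by Proposition \ref{Prop:4}. (Your parenthetical that $O_e$ ``has the same maximal elements as $L$'' is false in general --- if $L$ is atomless then $e = \bot$ and $O_e$ is trivial while $\max L$ may be large --- but it is not needed.)

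The top arrow, however, contains a genuine error. You claim $\pi_L(e) = \top$; this is false except when $L$ is scattered, because $\pi_L$ is a nucleus and hence idempotent, so $\pi_L(e) = \pi_L(\pi_L(\bot)) = \pi_L(\bot) = e$. Moreover, even if $(e, \top) \in \Theta_{\pi_L}$ did hold, Lemma \ref{Lem:11} would give $\Phi_e \leq \Theta_{\pi_L}$, which is the condition for $\pi_L$ to factor through the \emph{open} quotient $o_e$ (the scattered part), not through the \emph{closed} quotient $c_e$ --- so your conclusion points at the wrong quotient. And that conclusion is false in general: for atomless $L$ with nontrivial pointless part, $e = \bot$ and $\Phi_\bot$ is the total congruence, so $\Phi_e \nleq \Theta_{\pi_L}$. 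The correct argument is simpler and is what the paper means by ``$C_e$ is the closure of $\pi L$'': every $a \in \pi L = \fix \pi_L$ satisfies $a = \pi_L(a) \geq \pi_L(\bot) = e$, so $\pi L \subseteq \upset{e} = C_e$ as sublocales. In congruence terms, by Lemma \ref{Lem:11} the needed condition is $(\bot, e) \in \Theta_{\pi_L}$, i.e.\ $\Psi_e \leq \Theta_{\pi_L}$, and this holds precisely because $\pi_L(\bot) = e = \pi_L(e)$ by idempotence. With that, $\pi_L$ factors through $c_e$, yielding the arrow $C_e \to \pi L$, and the rest of your assembly of the diagram goes through.
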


\begin{corollary}\label{Cor:5}
	The pointless part of any frame $L$ is isomorphic to the pointless part of its atomless part. In symbols, $\pi L \approx \pi C_e$. 
\end{corollary}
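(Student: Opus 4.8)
The plan is to prove the stronger statement that $\pi L$ and $\pi C_e$ coincide as sublocales of $L$, whence the asserted isomorphism is immediate. Everything rests on a single elementary observation about intervals, so the argument is short.

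First I would record that $e = \pi_L(\bot)$ is the least element of $\pi L = \fix \pi_L$. Indeed, $\pi_L$ is a nucleus, hence idempotent, so $e = \pi_L(\bot) \in \fix \pi_L$, and every $a \in \fix \pi_L$ satisfies $a = \pi_L(a) \geq \pi_L(\bot) = e$. Therefore $\pi L \subseteq \upset{e} = C_e$ as subsets of $L$; this is of course consistent with the remark in Proposition \ref{Prop:15} that $C_e$ is the closure of $\pi L$.

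The key step is the observation that for any $a \in C_e$, that is any $a \geq e$, and any $c > a$, the interval $\setof{b}{a \leq b \leq c}$ lies in $\upset{a} \subseteq \upset{e} = C_e$. Consequently the interval computed in $C_e$ agrees with the interval computed in $L$, so $c$ is a successor of $a$ in $C_e$ if and only if $c$ is a successor of $a$ in $L$ (note that any successor candidate $c > a$ automatically lies in $C_e$ since $c \geq a \geq e$). In particular $a$ has a successor in $C_e$ exactly when it has one in $L$. Taking contrapositives and intersecting with $C_e$ gives
\begin{align*}
	\pi C_e
	&= \setof{a \in C_e}{a \text{ has no successor in } C_e}\\
	&= \setof{a \in C_e}{a \text{ has no successor in } L}
	= C_e \cap \pi L = \pi L,
\end{align*}
where the last equality uses $\pi L \subseteq C_e$ from the previous paragraph.

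Finally I would upgrade this set-level identity to an identity of frames. A sublocale carries the order inherited from its ambient frame, its meets are inherited outright, and its joins are the least upper bounds taken within the sublocale. Since $\pi L$ and $\pi C_e$ have the same underlying set and the same order induced from $L$, their meets and joins agree, so they are the same frame; in the notation of Proposition \ref{Prop:15} this identification is precisely the (corestricted) top arrow $C_e \to \pi L$. The only point requiring any care is this last bookkeeping — checking that the two sublocales' \emph{join} operations agree and not merely their underlying sets — but that is automatic once one recalls that sublocale joins are poset suprema. No genuine obstacle remains: the entire content is the interval observation, which trivializes because every element of $C_e$ already lies above $e$.
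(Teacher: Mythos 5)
Your proof is correct and follows essentially the same route as the paper's: both identify $\pi L$ and $\pi C_e$ as the sets of elements lacking successors (in $L$ and in $C_e$, respectively) and use the containment $\pi L \subseteq \upset{e} = C_e$ to conclude that the two sets coincide. Your version merely makes explicit the interval observation (that successors computed in the upset $C_e$ agree with successors computed in $L$) and the sublocale bookkeeping that the paper leaves implicit.
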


\begin{proof}
	The elements of $\pi L$ are the elements of $L$ lacking successors in $L$, while the elements of $\pi C_e$ all the elements of $C_e$ lacking successors in $C_e$. Since  $\pi L \subseteq \upset{\pi_L(\bot)} = \upset{e} = C_e$, these two sets coincide. 
\end{proof}

We return to the topic of subdirect product decompositions of a frame in Subsection \ref{Subsec:EMdecomp}. 

\section{The category $\mbf{Fs}$}

In Section \ref{Sec:2Nuc} we took pains to emphasize the parallels between the spatial and pointless parts of a frame. But we must now acknowledge an important difference between the two: whereas the spatial part of a frame is reflective, the pointless part is not. However, by restricting the frame homomorphisms we can make the pointless part reflective as well.	

\subsection{Skinny frame homomorphisms}

\begin{lemma}\label{Lem:18}
	The following are equivalent for a frame homomorphism $\map{m}{L}{M}$.
	\begin{enumerate}
		\item 
		$m$ takes coscattered elements of $L$ to coscattered elements of $M$.
		
		\item 
		$m(\ker \pi_L) \subseteq \ker \pi_M$.
		
		\item 
		$m(\max L) \subseteq \ker \pi_M$, i.e., $\pi_M \circ m(a) = \top$ for all $a \in \max L$. 
		
		\item 
		$m$ drops through $\pi_L$ and $\pi_M$, i.e., there exists a unique map $\bar{m}$ such that $\pi_M \circ m = \bar{m} \circ \pi_L$.
		
		\item 
		$m$ takes unpunctured elements of $L$ to unpunctured elements of $M$.
	\end{enumerate}
\end{lemma}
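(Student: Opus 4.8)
The plan is to make condition~(2), the kernel inclusion $m(\ker\pi_L)\subseteq\ker\pi_M$, the hub through which all the other conditions pass.

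First, (1)$\Leftrightarrow$(2) is a matter of unwinding definitions. From the kernel computed in Proposition~\ref{Prop:3}, $\pi_L(a)=\top$ holds exactly when every $b\ge a$ with $b<\top$ has a successor, which is exactly the assertion that $C_a=\upset{a}$ is scattered, i.e.\ that $a$ is coscattered. Hence the coscattered elements of $L$ are precisely $\ker\pi_L$, and likewise in $M$, so ``$m$ carries coscattered elements to coscattered elements'' is verbatim $m(\ker\pi_L)\subseteq\ker\pi_M$. The equivalence (2)$\Leftrightarrow$(4) is then exactly Lemma~\ref{Lem:14} applied to the nuclei $\delta_L=\pi_L$ and $\delta_M=\pi_M$.

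For (2)$\Leftrightarrow$(3): the forward direction is trivial because $\top\in a^+$ forces $\max L\subseteq\ker\pi_L$. The converse (3)$\Rightarrow$(2), which upgrades the hypothesis from the generators $\max L$ to the whole normal filter $\ker\pi_L$, is the first place real work is needed. I would argue transfinitely along the construction of $\pi_L$ from $\pi_L'$ (Lemma~\ref{Lem:15}, Proposition~\ref{Prop:3}), setting $f\equiv\pi_M\circ m$ and proving $f(\pi_L^\alpha(a))=f(a)$ for every ordinal $\alpha$. The engine is the one-step bound $m(\pi_L'(a))\le\pi_M(m(a))$: for a successor $c$ of $a$ we have $c\to a\in\max L$ by Lemma~\ref{Lem:2}(3), so $m(c\to a)\in\ker\pi_M$ by~(3); combining $m(c\to a)\le m(c)\to m(a)$ with the nucleus inequality $\delta(x\to y)\le\delta(x)\to\delta(y)$ used already in Proposition~\ref{Prop:3} gives $\pi_M(m(c))\le\pi_M(m(a))$, hence $m(c)\le\pi_M(m(a))$; joining over $c\in a^+$ yields the bound. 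This settles the successor stage via idempotence of $\pi_M$, and the same bound shows $\bigvee_{\alpha<\beta}m(\pi_L^\alpha(a))\le f(a)$ at limit stages. Evaluating at $a\in\ker\pi_L$, where $\pi_L(a)=\top$, gives $\pi_M(m(a))=f(\top)=\top$, i.e.\ (2).

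It remains to fold in~(5). The unpunctured elements are exactly $\pi L=\fix\pi_L$, so (5) reads $m(\fix\pi_L)\subseteq\fix\pi_M$, and the plan is to match it against~(3) through the successor/maximal dictionary. In one direction I would use Proposition~\ref{Prop:13}: if $a\in\pi L$ and $a\le c\in\max L$ then $c\to a=a$. In the other direction, given a successor $d$ of $m(a)$, Lemma~\ref{Lem:2}(3) yields $c\equiv d\to m(a)\in\max M$, and then $m_*(c)$ is prime and proper, hence maximal in $L$ by Lemma~\ref{Lem:3}(1) and Lemma~\ref{Lem:2}(1), with $a\le m_*(c)$ because $m(a)\le c$. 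I expect the main obstacle of the whole lemma to lie precisely here: verifying that such a pulled-back maximal element of $L$ genuinely records a successor of $a$ (so that a puncture of $m(a)$ forces a puncture of $a$, and conversely), which is the delicate compatibility one must extract from Propositions~\ref{Prop:3} and~\ref{Prop:13} together with the adjunction $m\dashv m_*$. Once this compatibility is secured, (5) aligns with~(3) and the cycle of equivalences closes.
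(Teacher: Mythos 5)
Your handling of (1)--(4) is correct. Identifying the coscattered elements of $L$ with $\ker\pi_L$ (via the kernel formula in Proposition \ref{Prop:3}) makes (1) a verbatim restatement of (2); citing Lemma \ref{Lem:14} for (2)$\Leftrightarrow$(4) and the inclusion $\max L\subseteq\ker\pi_L$ for (2)$\Rightarrow$(3) is exactly what the paper does. Where you diverge is (3)$\Rightarrow$(2): the paper disposes of it in one line, observing that $m^{-1}(\ker\pi_M)$ is a normal filter containing $\max L$ while $\ker\pi_L$ is the normal filter generated by $\max L$ (Proposition \ref{Prop:3} with Lemma \ref{Lem:17}). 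Your transfinite induction along the stages $\pi_L^\alpha$, powered by the one-step bound $m(\pi_L'(a))\le\pi_M(m(a))$ (obtained from $c\to a\in\max L$ for each successor $c$ of $a$), is a correct and self-contained substitute for that appeal to normality; it buys independence from the normal-filter machinery at the cost of repeating the induction which that machinery encapsulates.

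The genuine gap is (5), and it is not one you could have closed: the compatibility you defer (\enquote{verifying that such a pulled-back maximal element of $L$ genuinely records a successor of $a$}) does not hold, and item (5) is in fact not equivalent to (2)--(4) for arbitrary frame homomorphisms. The source of the asymmetry is that coscattered elements constitute $\ker\pi_L$, whereas unpunctured elements constitute $\fix\pi_L$, and a homomorphism satisfying (2)--(4) need not carry fixed points to fixed points. Concretely, let $m$ be the point evaluation $\Topol\mbb{R}\to\mbb{2}$ at $0$, i.e.\ $m(U)=\top$ iff $0\in U$. This is a frame surjection, hence skinny (directly: $\ker\pi_{\mbb{2}}=\mbb{2}$ because $\mbb{2}$ is scattered), so (2)--(4) hold; yet the unpunctured element $\emptyset$ (unpunctured because $\Topol\mbb{R}$ is atomless) goes to $\bot$, which is punctured in $\mbb{2}$. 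Conversely, the unique homomorphism $\mbb{2}\to\Topol\mbb{R}$ satisfies (5) vacuously, since the only unpunctured element of $\mbb{2}$ is $\top$, but fails (3): $\bot\in\max\mbb{2}$ maps to $\emptyset$, which is fixed by $\pi_{\Topol\mbb{R}}$. So neither implication involving (5) is available. Your own sketch shows exactly where the argument dies: from a successor $d$ of $m(a)$ you do get a maximal element $m_*(d\to m(a))\ge a$ in $L$, but nothing makes it produce a successor of $a$ --- in the first example $a=\emptyset$ has no successor at all while $m(a)=\bot$ does. For what it is worth, the paper's own proof is silent on (5): it establishes only (2)$\Leftrightarrow$(3)$\Leftrightarrow$(4), and the version of (5) that is genuinely equivalent is the kernel formulation, namely item (1).
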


\begin{proof}
	(2) certainly implies (3) because $\max L \subseteq \ker \pi_L$, and (3) implies (2) because $m^{-1}(\ker \pi_M)$ is a normal filter containing $\max L$ and $\ker \pi_L$ is the normal filter generated by $\max L$. The equivalence of (2) and (4) is an application of Lemma \ref{Lem:14}. 
\end{proof}

\begin{definition*}[skinny frame homomorphism, skinny contiinuous function]
	We refer to a frame homomorphism $\map{m}{L}{M}$ which satisfies the conditions of Lemma \ref{Lem:18} as being \emph{skinny}. The spatial counterpart of the notion of a skinny frame homomorphism is a continuous function which inversely preserves closed scattered subsets, or equivalently, a continuous function with scattered fibers. We shall refer to such functions as \emph{skinny functions}.   
\end{definition*}

\begin{definition*}[$\mbf{Fs}$, $\mbf{plFs}$]
	The category $\mbf{Fs}$ has objects which are frames and morphisms which are skinny frame homomorphisms. The full subcategory $\mbf{plFs}$ is comprised of the pointless frames.
\end{definition*}

Corollary \ref{Cor:1} establishes that $\mbf{Fs}$ is a legitimate category with $\mbf{plFs}$ as an  epireflective subcategory.

\begin{corollary}\label{Cor:1}
	\begin{enumerate}
		\item 
		A frame isomorphism is skinny.

		\item 
		A frame surjection is skinny.
		
		\item
		The composition of skinny frame homomorphisms is skinny.
		
		\item
		If $e$ is a frame surjection and $m$ is a frame homomorphism such that $m \circ e$ is skinny then $m$ is skinny.
		
		\item 
		$\mbf{plFs}$ is epireflective in $\mbf{Fs}$.
	\end{enumerate}
\end{corollary}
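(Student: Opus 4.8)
The plan is to prove the five claims of Corollary \ref{Cor:1} largely by appealing to the equivalent formulations of skinniness collected in Lemma \ref{Lem:18}, most conveniently condition (3): a frame homomorphism $\map{m}{L}{M}$ is skinny if and only if $\pi_M \circ m(a) = \top$ for every $a \in \max L$. Parts (1) and (2) should be immediate consequences of facts already in hand. For (2), if $m$ is a frame surjection then $m(\max L) \subseteq \max M \cup \{\top\}$; more directly, $m$ surjective forces $m_*(\max M) \subseteq \max L$ by Lemma \ref{Lem:3}(2), and one checks that a maximal element of $L$ is sent either to $\top$ or to a maximal element of $M$, either way lying in $\ker \pi_M$ (a maximal element $b$ of $M$ satisfies $\pi_M(b) = \top$ since $\top \in b^+$). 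Part (1) is the special case where $m$ is an isomorphism, so it follows from (2), or is verified directly since isomorphisms preserve maximal elements.

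For (3), composition, I would use condition (4) of Lemma \ref{Lem:18}, which says skinniness is exactly the property of dropping through the $\pi$ nuclei. Given skinny maps $\map{m}{L}{M}$ and $\map{n}{M}{N}$, we have the induced arrows $\bar m$ and $\bar n$ with $\pi_M \circ m = \bar m \circ \pi_L$ and $\pi_N \circ n = \bar n \circ \pi_M$; pasting the two squares shows $\pi_N \circ (n \circ m) = (\bar n \circ \bar m) \circ \pi_L$, so $n \circ m$ drops through $\pi_L$ and $\pi_N$ and is therefore skinny. Alternatively, via condition (3): for $a \in \max L$, skinniness of $m$ gives $\pi_M(m(a)) = \top$, i.e.\ $m(a) \in \ker \pi_M$; then skinniness of $n$ in the form of condition (2) gives $n(m(a)) \in n(\ker \pi_M) \subseteq \ker \pi_N$, so $\pi_N(n \circ m(a)) = \top$.

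Part (4) is the cancellation property that makes the class of skinny maps well behaved: given a surjection $e$ and a homomorphism $m$ with $m \circ e$ skinny, I want $m$ skinny. I would again use condition (3). Let $b \in \max M$; I need $\pi_N \circ m(b) = \top$. Since $e$ is surjective, Lemma \ref{Lem:3}(2) gives $e_*(b) \in \max L$, and $e \circ e_*(b) = b$ because $e$ is onto. Then $m(b) = m \circ e \circ e_*(b) = (m \circ e)(e_*(b))$, and because $m \circ e$ is skinny and $e_*(b) \in \max L$, condition (3) for $m \circ e$ yields $\pi_N\big((m\circ e)(e_*(b))\big) = \top$, hence $\pi_N(m(b)) = \top$ as required. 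I expect this to be the main obstacle, since it is the one part not reducible to a direct preservation statement; the key lemma making it go through is the surjectivity identity $e \circ e_* = \mathrm{id}$ together with $e_*(\max M) \subseteq \max L$.

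Finally, part (5) asserts that $\mbf{plFs}$ is epireflective in $\mbf{Fs}$, with reflector $\map{\pi_L}{L}{\pi L}$. Parts (1)--(4) establish that $\mbf{Fs}$ is a genuine category (identities and composites are skinny) in which the reflector arrows $\pi_L$ live; each $\pi_L$ is a surjection onto the pointless frame $\pi L$, hence an epimorphism, and is skinny by (2). The universal property is precisely the content of Lemma \ref{Lem:18}(4): for any skinny $\map{m}{L}{M}$ with $M$ pointless, $\pi_M$ is an isomorphism, so $m$ factors uniquely as $\pi_M^{-1} \circ \bar m \circ \pi_L$ through $\pi_L$, giving the required unique extension to $\pi L$. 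Thus $\pi$ is left adjoint to the inclusion $\mbf{plFs} \hookrightarrow \mbf{Fs}$ with epimorphic unit, which is exactly epireflectivity.
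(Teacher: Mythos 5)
Your proof is correct and follows essentially the same route as the paper: parts (1)--(4) are handled via condition (3) of Lemma~\ref{Lem:18} together with Lemma~\ref{Lem:3} and the identity $e \circ e_* = \mathrm{id}$ for surjections, and part (5) via Lemma~\ref{Lem:18}(4). The only slip is cosmetic: in part (2) the fact you need (``a surjection sends each maximal element to $\top$ or to a maximal element'') is Lemma~\ref{Lem:3}(3) combined with Lemma~\ref{Lem:2}(1), not Lemma~\ref{Lem:3}(2), which is the fact you correctly invoke later in part (4).
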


\begin{proof}
	(2) holds because Lemma \ref{Lem:3}(3) tells us that a frame surjection satisfies Lemma \ref{Lem:18}(3). (3) holds because the third property of Lemma \ref{Lem:18} is clearly preserved by composition. To check (4), consider frame homomorphisms $\map{e}{L}{K}$ and $\map{m}{K}{M}$ such that $m \circ e$ is skinny and $e$ is surjective. If $a$ is a maximal element of $K$ then $e_*(a) \equiv b$ is a maximal element of $L$ by Lemma \ref{Lem:14}(2), and $m(b) = a$. Since $m \circ e$ is skinny we get $\pi_M \circ m(a) = \pi_M \circ e(b) = \top$.  (5) is a consequence of Lemma \ref{Lem:18}(4).
\end{proof}

The diagram of Proposition \ref{Prop:15} exists in $\mbf{Fs}$.

\begin{proposition}
	All the mappings in the diagram of Proposition \ref{Prop:15} are skinny.
\end{proposition}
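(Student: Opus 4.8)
The plan is to dispatch the surjective arrows wholesale and then verify the two subdirect embeddings by hand. By Corollary \ref{Cor:1}(2) every frame surjection is skinny, so it suffices to identify which arrows of the diagram are surjections and to check skinniness of the remaining, non-surjective, arrows directly.

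First the surjections. The four arrows emanating from $L$, namely $\pi_L$, $\sigma_L$, $c_e$, and $o_e$, are frame surjections (each is a nucleus quotient or a reflector arrow), hence skinny. The four projections out of the two products $\pi L \times \sigma L$ and $O_e \times C_e$ onto their factors are product projections, hence surjective and skinny. For the top arrow $t \colon C_e \to \pi L$ and the bottom arrow $s \colon \sigma L \to O_e$, commutativity of the diagram (already established in Proposition \ref{Prop:15}) yields $t \circ c_e = \pi_L$ and $s \circ \sigma_L = o_e$; since $\pi_L$ and $o_e$ are surjective, so are $t$ and $s$, and thus both are skinny.

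It remains to treat the two subdirect embeddings $p \equiv \langle \pi_L, \sigma_L\rangle \colon L \to \pi L \times \sigma L$ and $q \equiv \langle o_e, c_e\rangle \colon L \to O_e \times C_e$, which are injective but not surjective. For these I verify the criterion of Lemma \ref{Lem:18}(3): that each carries $\max L$ into the $\pi$-kernel of its codomain. In fact each carries a maximal element of $L$ to a \emph{maximal} element of the product, which lies in the $\pi$-kernel a fortiori. For $p$ and $a \in \max L$ we have $\pi_L(a) = \top$ (as $a \in \ker \pi_L$), while $\sigma_L(a) = a$, which is maximal in $\sigma L$ since $a$ is the only maximal element above itself and maximality persists to $\sigma L$; thus $p(a) = (\top, a)$, and as $a$ is maximal in the second factor, Lemma \ref{Lem:2}(5) shows $\top$ is a successor of $p(a)$, i.e.\ $p(a)$ is maximal. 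For $q$ and $a \in \max L$, maximality of $a$ forces $e \vee a \in \{a, \top\}$, so either $e \leq a$ or $e \vee a = \top$. In the first case $o_e(a) = e \to a = \top$ and $c_e(a) = e \vee a = a$, which is maximal in $C_e$ by Lemma \ref{Lem:3}(3) together with the maximality–primeness equivalence of Lemma \ref{Lem:2}(1); in the second case $c_e(a) = \top$ while $o_e(a) = e \to a < \top$ is maximal in $O_e$ by the same reasoning. Either way one coordinate of $q(a)$ is $\top$ and the other is maximal in its factor, so $q(a)$ is maximal in $O_e \times C_e$ by Lemma \ref{Lem:2}(5).

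The only genuine work is this final step; everything else reduces to surjectivity via Corollary \ref{Cor:1}. I expect the crux to be the observation that, under a tight subdirect representation, a maximal element of $L$ is sent to a maximal element of the product, one factor coordinate collapsing to $\top$ while the other remains maximal, which is precisely what Lemma \ref{Lem:18}(3) requires.
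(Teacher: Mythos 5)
Your proof is correct and follows essentially the same route as the paper: surjectivity disposes of all the diagonal and outer arrows, and the two product embeddings are handled through their action on maximal elements, which is exactly the verification (via Lemma \ref{Lem:18}(3)) that the paper compresses into the unproved assertion that these maps ``induce bijections on maximal elements.'' Your case analysis showing $\langle \pi_L,\sigma_L\rangle(a)$ and $\langle o_e,c_e\rangle(a)$ are maximal in the respective products supplies the detail the paper leaves to the reader, and your deduction of surjectivity for the top and bottom arrows (rather than citing Corollary \ref{Cor:1}(4)) is an equally valid one-line variant.
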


\begin{proof}
	The diagonal arrows are skinny by virtue of being surjective. The top and bottom arrows are skinny by  Corollary \ref{Cor:1}(4). The arrows $L \to \pi L \times \sigma L$ and $L \to O_e \times C_e$ can both be shown to induce bijections on maximal elements, and are therefore both skinny.
\end{proof}

\subsection{A factorization structure for $\mbf{Fs}$}

\begin{lemma}\label{Lem:12}
	The following are equivalent for a frame surjection $\map{e}{L}{M}$. The joins are taken in $\con L$.
	\begin{enumerate}
		\item 
		$\Theta_e 
		\leq \bigvee \setof{\Phi_a} {a \in e^{-1}(\top) \cap (\max L)}$. 
		
		\item 
		$\Theta_e 
		= \bigvee \setof{\Phi_a} {a \in e^{-1}(\top) \cap (\max L)}$.
		
		\item 
		$\Theta_e$ is a join of atoms.
	\end{enumerate}
\end{lemma}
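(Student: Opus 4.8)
The plan is to prove the cycle $(1)\Rightarrow(2)\Rightarrow(3)\Rightarrow(1)$, leaning on two facts established earlier. The first is Lemma \ref{Lem:10}: the atoms of $\con L$ are exactly the congruences $\Phi_b$ with $b \in \max L$. The second is Lemma \ref{Lem:11}, which for a congruence $\Xi$ gives $\Phi_a \leq \Xi$ iff $(a,\top)\in\Xi$; applied to $\Xi = \Theta_e$ this reads $\Phi_a \leq \Theta_e \iff e(a) = \top$. This dictionary between open congruences lying below $\Theta_e$ and the fiber $e^{-1}(\top)$ is what drives the whole argument, so I would state it at the outset and then treat the three implications as essentially formal consequences.

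For $(1)\Rightarrow(2)$ I would note that the reverse inequality is automatic: since $e^{-1}(\top)\cap\max L \subseteq e^{-1}(\top)$, Lemma \ref{Lem:8} gives $\bigvee\setof{\Phi_a}{a \in e^{-1}(\top)\cap\max L} \leq \bigvee_{a \in e^{-1}(\top)}\Phi_a \leq \Theta_e$. Hence the inequality in $(1)$ upgrades to the equality $(2)$, and of course $(2)\Rightarrow(1)$ holds trivially. For $(2)\Rightarrow(3)$, every index $a$ in the join of $(2)$ lies in $e^{-1}(\top)\cap\max L \subseteq \max L$, so each joinand $\Phi_a$ is an atom of $\con L$ by Lemma \ref{Lem:10}; thus $(2)$ literally exhibits $\Theta_e$ as a join of atoms.

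The substantive step is $(3)\Rightarrow(1)$. Suppose $\Theta_e = \bigvee A$ for some set $A$ of atoms of $\con L$. By Lemma \ref{Lem:10} every element of $A$ is of the form $\Phi_b$ with $b \in \max L$, so setting $S \equiv \setof{b\in\max L}{\Phi_b \in A}$ we have $A = \setof{\Phi_b}{b\in S}$ and hence $\Theta_e = \bigvee_{b\in S}\Phi_b$. For each $b \in S$ this forces $\Phi_b \leq \Theta_e$, whence Lemma \ref{Lem:11} yields $(b,\top)\in\Theta_e$, i.e.\ $e(b)=\top$; therefore $b \in e^{-1}(\top)\cap\max L$. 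Consequently $S \subseteq e^{-1}(\top)\cap\max L$ and $\Theta_e = \bigvee_{b\in S}\Phi_b \leq \bigvee\setof{\Phi_a}{a\in e^{-1}(\top)\cap\max L}$, which is $(1)$. The only point requiring any care is the passage from ``$\Theta_e$ is a join of atoms'' to an indexing of those atoms by maximal elements lying in $e^{-1}(\top)$; but this is precisely what the combination of Lemmas \ref{Lem:10} and \ref{Lem:11} delivers, so once that identification is in hand the argument closes immediately.
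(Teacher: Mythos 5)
Your proof is correct and follows essentially the same route as the paper's: the paper likewise disposes of $(1)\Leftrightarrow(2)$ by Lemma \ref{Lem:8} and of $(2)\Leftrightarrow(3)$ by the identification of the atoms of $\con L$ in Lemma \ref{Lem:10}. The only difference is one of exposition: the paper leaves implicit the step you spell out via Lemma \ref{Lem:11}, namely that $\Phi_b \leq \Theta_e$ forces $e(b) = \top$, which is a worthwhile detail to make explicit.
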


\begin{proof}
	The equivalence of (1) and (2) is a consequence of \ref{Lem:8}, while the equivalence of (2) and (3) follows from Lemma \ref{Lem:10}.
\end{proof}

\begin{definition*}[$\bmfrak{E}$, $\bmfrak{M}$]
	Let $\bmfrak{E}$ be the class of $\mbf{Fs}$-surjections which satisfy the conditions of Lemma \ref{Lem:12}. Let $\bmfrak{M}$ be the class of all $\mbf{Fs}$-sources $(L \xrightarrow{m_i} M_i)_I$ such that for every $a \in \max L$ there exists an index $i \in I$ such that $m_i(a) < \top$. 
\end{definition*}

\begin{proposition}\label{Prop:1}
	$\mbf{Fs}$ has $\EM$-factorization of sources.
\end{proposition}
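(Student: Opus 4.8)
To show that $\mathbf{Fs}$ admits $(\bmfrak{E}, \bmfrak{M})$-factorization of sources, I would take a given source $(L \xrightarrow{m_i} M_i)_I$ and construct the required intermediate object as a quotient of $L$ that ``collapses exactly the maximal elements that every $m_i$ already sends to $\top$.'' Concretely, set
\[
	S \equiv \setof{a \in \max L}{m_i(a) = \top \text{ for all } i \in I},
\]
and let $\Theta \equiv \bigvee_{a \in S} \Phi_a$ be the join in $\con L$ of the corresponding atoms (atoms by Lemma~\ref{Lem:10}). Let $\map{e}{L}{K}$ be the surjection with $\Theta_e = \Theta$, i.e.\ $K \equiv L/\Theta$. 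By construction $\Theta_e$ is a join of atoms, so $e \in \bmfrak{E}$ by Lemma~\ref{Lem:12}(3), and $e$ is skinny by Corollary~\ref{Cor:1}(2). The plan is then to factor each $m_i$ as $m_i = m_i' \circ e$ and to verify that the resulting source $(K \xrightarrow{m_i'} M_i)_I$ lies in $\bmfrak{M}$.

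\emph{The factorization exists.} Each $m_i$ drops through $e$ precisely when $\Theta_e \leq \Theta_{m_i}$, which by Lemma~\ref{Lem:11} amounts to checking that $(a, \top) \in \Theta_{m_i}$ for every $a \in S$ --- but this is immediate since $m_i(a) = \top$ by the definition of $S$. Thus for each $i$ there is a unique frame homomorphism $m_i'$ with $m_i' \circ e = m_i$, and each $m_i'$ is skinny by Corollary~\ref{Cor:1}(4) applied to the surjection $e$ and the composite $m_i' \circ e = m_i$. So $(K \xrightarrow{m_i'} M_i)_I$ is a legitimate $\mathbf{Fs}$-source and $e$ lands it in the right place.

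\emph{The source lies in $\bmfrak{M}$.} This is the crux of the argument and the step I expect to be the main obstacle. I must show that for every maximal element $b$ of $K$ there is some index $i$ with $m_i'(b) < \top$. Given $b \in \max K$, Lemma~\ref{Lem:3}(2) gives that $a \equiv e_*(b) \in \max L$, and $e(a) = b < \top$. The key point is that $a \notin S$: if $a$ were in $S$, then $\Phi_a \leq \Theta_e$, so by Lemma~\ref{Lem:11} we would have $(a,\top) \in \Theta_e$, forcing $e(a) = \top$, a contradiction. Since $a \notin S$, by the definition of $S$ there is an index $i$ with $m_i(a) < \top$. Then $m_i'(b) = m_i'(e(a)) = m_i(a) < \top$, which is exactly what $\bmfrak{M}$-membership requires.

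\emph{Remaining bookkeeping.} It remains only to confirm that $e$ genuinely inverts the maximal elements it is supposed to and no others --- that is, that no maximal element outside $S$ is collapsed by $e$ --- which follows from Lemma~\ref{Lem:8} together with the fact that $\Theta_e$ is a join of the atoms $\Phi_a$ for $a \in S$, so that a maximal element $c$ has $e(c) = \top$ iff $\Phi_c \leq \Theta_e$ iff $c \in S$ (using that the $\Phi_a$ are distinct atoms, Lemma~\ref{Lem:10}). I would also remark that the factorization is essentially forced: any $\bmfrak{E}$-morphism out of $L$ through which the whole source drops must collapse at least the atoms $\Phi_a$ for $a \in S$ and at most those, by the defining condition of $\bmfrak{M}$, which gives the diagonal-fill-in uniqueness underlying a genuine factorization structure. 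The main subtlety to watch is that a join of atoms in $\con L$ need not contain only the named atoms below it, so the ``no other maximal element is collapsed'' claim must be argued through Lemma~\ref{Lem:8}/Lemma~\ref{Lem:11} rather than assumed.
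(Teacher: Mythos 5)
Your proposal is correct and follows essentially the same route as the paper's proof: the same quotient $L/\bigvee_{a \in S}\Phi_a$, the same factorization of each $m_i$ through it, and the same argument (via $e_*$ of a maximal element and the definition of $S$) that the factored source lies in $\bmfrak{M}$. The only differences are cosmetic --- you cite Lemma \ref{Lem:12}(3) and Lemma \ref{Lem:11} where the paper verifies the corresponding congruence containments by direct computation.
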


\begin{proof}
	Consider a source $\mcal{S} \equiv (L \xra{m_i} M_i)_I$ in $\mbf{Fs}$. Let 
	\[
		P
		\equiv \setof{a \in \max L}{\forall i \in I\ \big(m_i(a) = \top\big)},
	\]
	let $\Xi \equiv \bigvee_P \Phi_a$, and denote the quotient map by $\map{e}{L}{L/\Xi} \equiv \widehat{L}$. The map $e$ is skinny by Corollary \ref{Cor:1}(2). Note that $e(a) = \top$ for each $a \in P$ because $(a, \top) \in \Phi_a$ since $a \wedge a = a \wedge \top$ and $\Phi_a \subseteq \Xi$. It follows that $ e \in \bmfrak{E}$. Note that $\Phi_a \subseteq \Theta_{m_i}$ for any $a \in P$ and $i \in I$, for if $(a_1, a_2) \in \Phi_a$ then $a_1 \wedge a = a_2 \wedge a$, and since $m_i(a) = \top$ we get 
	\[	
		m_i(a_1) = m_i(a_1) \wedge m_i(a) = m_i(a_i \wedge a) = m_i(a_2 \wedge a) = m_i(a_2).		
	\]
	It follows that $\Xi \subseteq \Theta_{m_i}$, so that $m_i$ factors through $e$, say $m_i = \hat{m}_i \circ e$. The maps $\hat{m}_i$ are skinny by Corollary \ref{Cor:1}(4).
	
	Finally, we claim that the factored source $\widehat{\mcal{S}} \equiv (\widehat{L} \xra{\hat{m}_i} M_i)_I$ lies in $\bmfrak{M}$. For if $c$ is a maximal element of $\widehat{L}$ then $b \equiv e_*(c)$ is a maximal element of $L$ such that $(b, \top) \notin \Xi = \bigvee_P \Phi_a$. In particular, $b \notin P$ because otherwise $(b, \top) \in \Phi_b \subseteq \bigvee_P \Phi_a = \Xi$, contrary to assumption. It follows that there exists some index $i \in I$ such that $m_i(b) < \top$, which yields $\top > m_i (b) = \hat{m}_i \circ e(b) = \hat{m}_i(c)$. This proves the claim and the proposition. 
\end{proof}

\begin{proposition}\label{Prop:2}
	$\mbf{Fs}$ has the $\EM$-diagonalization property for sources.
\end{proposition}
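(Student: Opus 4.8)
The plan is to verify the $\EM$-diagonalization property for sources, which in the language of the factorization system means the following: given an $\mbf{Fs}$-surjection $\map{e}{L}{K}$ in $\bmfrak{E}$ and a source $\mcal{S} = (N \xra{n_i} M_i)_I$ in $\bmfrak{M}$, together with a commuting square consisting of a morphism $\map{f}{L}{N}$ and morphisms $\map{g_i}{K}{M_i}$ satisfying $g_i \circ e = n_i \circ f$ for all $i$, there exists a unique diagonal morphism $\map{d}{K}{N}$ with $d \circ e = f$ and $n_i \circ d = g_i$. Since $e$ is surjective, uniqueness is automatic and the factorization $d \circ e = f$ is forced on the image, so the entire content is the \emph{existence} of a frame homomorphism $d$ with $d \circ e = f$; the relations $n_i \circ d = g_i$ then follow from surjectivity of $e$ and the hypotheses.

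First I would invoke Lemma \ref{Lem:5}: since $e$ is a frame surjection, the map $f$ factors through $e$ if and only if $e(a) = \top$ implies $f(a) = \top$ for all $a \in L$. So the crux reduces to proving this implication. Now the hypothesis that $e \in \bmfrak{E}$ gives, by Lemma \ref{Lem:12}, that $\Theta_e = \bigvee\setof{\Phi_a}{a \in e^{-1}(\top) \cap \max L}$, so the elements of $L$ sent to $\top$ by $e$ are controlled by a specified set of maximal elements $P \equiv e^{-1}(\top) \cap \max L$. The plan is to show that $f$ kills all of these, i.e.\ $f(a) = \top$ for every $a \in P$, and then to propagate this to all of $e^{-1}(\top)$.

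The propagation step is where I would use the structure of $\bmfrak{E}$ decisively. If $e(a) = \top$, then $(a, \top) \in \Theta_e = \bigvee_P \Phi_b$, which by Lemma \ref{Lem:11} means $\Phi_a \leq \bigvee_P \Phi_b$ in $\con L$. Since each $\Phi_b$ for $b \in P$ is an atom of $\con L$ by Lemma \ref{Lem:10}, and the relevant congruence is a join of such atoms, I would argue that once $f$ sends each generating maximal element $b \in P$ to $\top$, it sends the whole congruence $\bigvee_P \Phi_b$ into $\Theta_f$, whence $(a, \top) \in \Theta_f$ and $f(a) = \top$. Concretely, if $f(b) = \top$ for all $b \in P$ then $\Phi_b \leq \Theta_f$ for each such $b$ by Lemma \ref{Lem:11}, hence $\bigvee_P \Phi_b \leq \Theta_f$, and $\Theta_e \leq \Theta_f$ is exactly the factorization criterion.

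This leaves the genuinely substantive step: \emph{why does $f(b) = \top$ for each $b \in P$?} Here is where the $\bmfrak{M}$ hypothesis on the source and the commuting square must be brought together, and I expect this to be the main obstacle. Fix $b \in P$, so $e(b) = \top$. Suppose toward a contradiction that $f(b) < \top$ in $N$. Since $b$ is maximal in $L$ and $f$ is a frame homomorphism (skinny, in particular), I would examine the image of $b$ and the behavior of the source $\mcal{S}$ at $f(b)$. The defining property of $\bmfrak{M}$ says that at every maximal element of $N$ some $n_i$ is non-top; the idea is to locate a maximal element of $N$ above $f(b)$ (using that $f(b) < \top$ together with regularity, so $f(b)$ lies below some maximal element, via Lemma \ref{Lem:3} or the characterization of maximal elements in Lemma \ref{Lem:2}), find the index $i$ witnessing the $\bmfrak{M}$ condition there, and then chase the square $g_i \circ e = n_i \circ f$ to derive $g_i(e(b)) = g_i(\top) = \top$ on one side but $n_i(f(b)) < \top$ on the other, the contradiction. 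Making this fiber-tracking argument precise — correctly relating $\max N$, the witnessing index, and the value $n_i(f(b))$ through the commuting relation — is the delicate part; everything before and after it is a routine application of Lemmas \ref{Lem:5}, \ref{Lem:10}, \ref{Lem:11}, and \ref{Lem:12}.
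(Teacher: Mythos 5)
Your global strategy coincides with the paper's: reduce the existence of the diagonal to the containment $e^{-1}(\top) \cap \max L \subseteq f^{-1}(\top)$ via Lemma \ref{Lem:5} and the congruence bookkeeping of Lemmas \ref{Lem:8}, \ref{Lem:11}, \ref{Lem:12}, then obtain that containment by contradiction from the $\bmfrak{M}$-property of the source and the commuting square; the final chase $\top = g_i(e(b)) = n_i(f(b)) < \top$ is also exactly right. But the step you yourself flag as the main obstacle contains a genuine error. You propose to locate a maximal element of $N$ above $f(b)$ \enquote{using that $f(b) < \top$ together with regularity, via Lemma \ref{Lem:3} or Lemma \ref{Lem:2}.} Regularity (even complete regularity) does not imply that a non-top element lies below a maximal element --- that implication is precisely \emph{spatiality}, and it fails as badly as possible in general: a pointless frame has plenty of elements $< \top$ and no maximal elements at all. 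Neither cited lemma supplies the needed element: Lemma \ref{Lem:2} only converts successors into maximal elements, and Lemma \ref{Lem:3}(3) requires $f$ to be surjective, which it need not be.

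What rescues the argument --- and this is exactly where the restriction to the category $\mbf{Fs}$ earns its keep, a point your proof never uses --- is that $f$ is \emph{skinny}. By Lemma \ref{Lem:18}(3), $f$ carries maximal elements of $L$ into $\ker \pi_N$, and by the description of that kernel in Proposition \ref{Prop:3}, every element of $\ker \pi_N$ that is $< \top$ has a successor $c$; then $c \to f(b)$ is a maximal element of $N$ above $f(b)$ by Lemma \ref{Lem:2}(3). With that substitution your contradiction goes through verbatim. One further small omission: the diagonal $d$ must itself be a morphism of $\mbf{Fs}$, i.e., skinny; this follows from Corollary \ref{Cor:1}(4), since $d \circ e = f$ is skinny and $e$ is surjective.
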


\begin{proof}
	Consider a commuting square in $\mbf{Fs}$ with $e \in \bmfrak{E}$ and $(K \xra{m_i} M_i)_I \in \bmfrak{M}$.
	\[
	\begin{tikzcd}
		L \arrow{r}{e} \arrow{d}[swap]{f}
		&N \arrow{dl}[swap]{d} \arrow{d}{n_i}\\
		K \arrow{r}[swap]{m_i}
		& M_i
	\end{tikzcd}
	\]
	We claim that $P \equiv e^{-1}(\top) \cap (\max L) \subseteq f^{-1}(\top)$. For if $f(a) < \top$ for some $a \in e^{-1}(\top) \cap (\max L)$ then since $f(a) \in \ker \pi_K$ by Lemma \ref{Lem:18}(3), there must be an element $b \in \max K$ such that $b \geq f(a)$. Since $(K \xra{m_i} M_i)_I \in \bmfrak{M}$, there must be an index $i \in I$ for which $m_i(b) < \top$ in $M_i$, which leads to the contradiction
	\[
		\top
		> m_i(b) 
		\geq m_i \circ f(a)
		= n_i \circ e(a)
		= n_i(\top)
		= \top.
	\] 
	The claim shows that 
	\begin{align*}
		\Theta_e 
		&= \bigvee \setof{\Phi_a}{a \in e^{-1}(\top) \cap (\max L)} \\
		&\subseteq \bigvee \setof{\Phi_a}{a \in f^{-1}(\top) \cap (\max L)}
		\subseteq \Theta_f,
	\end{align*}
	with the result that the diagonal function $d$ exists by Lemma \ref{Lem:5}. This map is skinny by Corollary \ref{Cor:1}(4).
\end{proof}

\begin{proposition}\label{Prop:6}
	$\mbf{Fs}$ is an $\EM$-category.
\end{proposition}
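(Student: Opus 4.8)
The plan is to assemble the definition of an $\EM$-category directly from the pieces already in hand. In the standard terminology for factorization structures on sources, to say that $\mbf{Fs}$ is an $\EM$-category is to assert exactly three things: that $\bmfrak{E}$ and $\bmfrak{M}$ are each closed under composition with isomorphisms, that every source in $\mbf{Fs}$ admits an $\EM$-factorization, and that $\mbf{Fs}$ has the (unique) $\EM$-diagonalization property for sources. The second and third of these are verbatim the content of Propositions \ref{Prop:1} and \ref{Prop:2}, so the only work remaining is the first, largely bookkeeping, clause.

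For the closure of $\bmfrak{E}$ I would reason via the characterization of Lemma \ref{Lem:12}(3), that $e \in \bmfrak{E}$ iff $\Theta_e$ is a join of atoms in $\con L$. An isomorphism is skinny by Corollary \ref{Cor:1}(1) and is in particular surjective, so composing an $\bmfrak{E}$-map with an isomorphism on either side again yields a surjection. Post-composition with an isomorphism $M \to M'$ leaves $\Theta_e$ literally unchanged (an isomorphism is injective), so membership persists. Pre-composition with an isomorphism $\map{i}{L'}{L}$ replaces $\Theta_e$ by its image $i^{-1}(\Theta_e)$ under the induced frame isomorphism $\con L \to \con L'$; since that isomorphism carries atoms to atoms --- the atoms being the $\Phi_b$ with $b$ maximal by Lemma \ref{Lem:10}, and isomorphisms preserving maximal elements --- a join of atoms is carried to a join of atoms, so $i^{-1}(\Theta_e)$ is again a join of atoms.

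For $\bmfrak{M}$ I would check the defining condition directly: a source $(L \xra{m_i} M_i)_I$ lies in $\bmfrak{M}$ precisely when each $a \in \max L$ is detected by some index with $m_i(a) < \top$. Pre-composing the whole source with an isomorphism $\map{j}{L'}{L}$ replaces $\max L'$ by its bijective image under $j$ (isomorphisms preserve maximality), and $m_i \circ j(a') < \top$ iff $m_i(j(a')) < \top$, so the detection property transfers. Post-composing each leg with an isomorphism $M_i \to M_i'$ does not change whether $m_i(a) = \top$, so again membership is preserved.

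With all three ingredients established, the conclusion is immediate: $\EM$ is a factorization structure for sources on $\mbf{Fs}$, which is precisely the assertion that $\mbf{Fs}$ is an $\EM$-category. I expect no genuine obstacle here, since all the substantive content lives in Propositions \ref{Prop:1} and \ref{Prop:2}; the one point meriting care is to confirm that the notion of $\EM$-category being invoked is exactly the conjunction of source-factorization, source-diagonalization, and the closure-under-isomorphism clauses, and then to lean on the preservation of maximal elements (hence of the atoms of $\con L$) under isomorphisms.
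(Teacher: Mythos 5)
Your proof is correct and takes essentially the same route as the paper's, which simply cites Propositions \ref{Prop:1} and \ref{Prop:2} as supplying the features required by Definition 15.1 of \cite{AdamekHerrlichStrecker:2004}. Your explicit check that $\bmfrak{E}$ and $\bmfrak{M}$ are closed under composition with isomorphisms --- a clause the paper leaves tacit --- is carried out correctly, resting as it should on Lemma \ref{Lem:10}, Lemma \ref{Lem:12}, and Corollary \ref{Cor:1}.
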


\begin{proof}
	Propositions \ref{Prop:1} and \ref{Prop:2} show that $\mbf{Fs}$ has the features required by Definition 15.1 in \cite{AdamekHerrlichStrecker:2004}.
\end{proof}

\subsection{A second look at the pointless reflection in $\mbf{Fs}$}
The $\EM$-fact\-ori\-zation structure allows us to refine our understanding of  $\mbf{plFs}$ as an epireflective subcategory of $\mbf{Fs}$ (Corollary \ref{Cor:1}(5)). Theorem \ref{Thm:2} shows that it is actually an $\bmfrak{E}$-reflective subcategory, and this distinction will become important in Subsection \ref{Subsec:EMdecomp}.  

Pointlessness has several pleasing characterizations in terms of $\bmfrak{E}$-morphisms and $\bmfrak{M}$-morphisms.

\begin{lemma}\label{Lem:6}
	The following are equivalent for a frame $M$.
	\begin{enumerate}
		\item 
		Every frame homomorphism out of $M$ has a pointless codomain.
		
		\item 
		Every $\bmfrak{M}$-morphism into $M$ has a pointless domain.
				
		\item 
		Every $\bmfrak{E}$-morphism out of $M$ is an isomorphism.

		\item 
		$M$ is pointless.
	\end{enumerate} 
\end{lemma}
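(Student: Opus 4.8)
The plan is to prove the cycle of implications $(1) \Rightarrow (2) \Rightarrow (3) \Rightarrow (4) \Rightarrow (1)$, which is the cleanest way to close a four-way equivalence. The conditions concern a fixed frame $M$ and its behavior under $\bmfrak{E}$- and $\bmfrak{M}$-morphisms, so the whole argument hinges on unwinding the definitions of $\bmfrak{E}$ (surjections whose congruence is a join of open congruences $\Phi_a$ at maximal elements $a$) and $\bmfrak{M}$ (sources separating every maximal element of the domain away from $\top$ in some component), together with the characterization of pointlessness as the absence of maximal elements.

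First I would establish $(4) \Rightarrow (1)$: if $M$ is pointless, then by Lemma \ref{Lem:3}(2) any frame surjection from $M$ has image with $m_*(\max N) \subseteq \max M = \emptyset$, forcing $\max N = \emptyset$; for a general homomorphism one restricts to the surjective corestriction, so the codomain is pointless. Next, $(1) \Rightarrow (2)$ is nearly immediate: an $\bmfrak{M}$-source into $M$ includes in particular frame homomorphisms \emph{into} $M$, but condition (1) speaks of homomorphisms \emph{out} of $M$. Here I would be careful: the natural reading is that an $\bmfrak{M}$-morphism $\map{m}{L}{M}$ has $M$ as codomain and $L$ as domain, and (2) asserts $L$ is pointless. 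I would derive this by noting that if $L$ had a maximal element $a$, the defining property of $\bmfrak{M}$ demands $m(a) < \top$ in $M$; but a single-morphism $\bmfrak{M}$-source consisting of a skinny map must then make $\pi_M \circ m(a) = \top$ by skinniness (Lemma \ref{Lem:18}(3)), so $m(a)$ lies below no maximal element of $M$, yet $m(a) < \top$ — combining this with (1) applied to the corestriction yields the contradiction that $M$ has no maximal elements to begin with, forcing $\max L = \emptyset$.

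The implications $(3) \Rightarrow (4)$ and $(2) \Rightarrow (3)$ carry the real content. For $(3) \Rightarrow (4)$, I would argue contrapositively: if $M$ has a maximal element $a$, then the open quotient $\map{o_a}{M}{O_a}$ has congruence $\Phi_a$, which is an atom of $\con M$ by Lemma \ref{Lem:10}, hence $o_a$ lies in $\bmfrak{E}$ by Lemma \ref{Lem:12}(3); since $O_a$ is a proper quotient (as $a < \top$), $o_a$ is not an isomorphism, contradicting (3). For $(2) \Rightarrow (3)$, given an $\bmfrak{E}$-morphism $\map{e}{M}{N}$ that is a surjection with $\Theta_e = \bigvee\setof{\Phi_a}{a \in e^{-1}(\top) \cap \max M}$, I would show under (2) that this join is empty. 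The main obstacle, and the step I expect to require the most care, is manufacturing from $e$ an appropriate $\bmfrak{M}$-source into $M$ whose domain pointlessness (guaranteed by (2)) forces $\max M = \emptyset$, which in turn collapses $\Theta_e$ to the identity congruence and makes $e$ an isomorphism. The delicate point is matching the variance: (2) controls domains of $\bmfrak{M}$-morphisms while (3) controls codomains of $\bmfrak{E}$-morphisms, so I anticipate using the $\EM$-diagonalization and factorization machinery (Propositions \ref{Prop:1}, \ref{Prop:2}) to bridge the two, possibly by factoring the identity on $M$ or a well-chosen source and exploiting the orthogonality of $\bmfrak{E}$ and $\bmfrak{M}$ to conclude that any $\bmfrak{E}$-morphism out of a frame with no maximal elements must be iso.
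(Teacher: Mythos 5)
Your step $(3)\Rightarrow(4)$ is correct, and is in fact a pleasant variant of the paper's argument (the paper applies (3) to $\pi_M$, an $\bmfrak{E}$-morphism by Corollary \ref{Cor:2}; you apply it to the single open quotient $o_a$ at one $a \in \max M$, via Lemmas \ref{Lem:10} and \ref{Lem:12}). But the proposal has a genuine gap at $(2)\Rightarrow(3)$, which you explicitly leave open, and the missing idea is the one the paper's whole proof turns on: \emph{the identity morphism of $M$ lies in both $\bmfrak{E}$ and $\bmfrak{M}$}. Indeed $\Theta_{\mathrm{id}_M}$ is the bottom of $\con M$, so the inequality in Lemma \ref{Lem:12}(1) holds trivially, and every $a \in \max M$ satisfies $\mathrm{id}_M(a) = a < \top$, which is exactly the defining condition for a (singleton) $\bmfrak{M}$-source. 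Consequently each of (1), (2), (3) may be applied to $\mathrm{id}_M$ itself to conclude that $M$ is pointless, and pointlessness then gives back everything: once $\max M = \emptyset$, any $\bmfrak{E}$-morphism $n$ out of $M$ has $\Theta_n \leq \bigvee_{\max M}\Phi_b = \bot_{\con M}$, so $n$ is one-one as well as onto, hence an isomorphism. No $\EM$-diagonalization or factorization machinery is needed; the \enquote{variance mismatch} you worry about dissolves because the identity is simultaneously a morphism into and out of $M$ belonging to both classes. This is precisely the paper's hub-and-spoke proof through (4), and your cyclic strategy only closes once you make this observation.

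There are two further defects. First, in $(4)\Rightarrow(1)$ you corestrict to the image and apply Lemma \ref{Lem:3}(2); that shows only that the \emph{image} is pointless, whereas (1) asserts that the full codomain is pointless, and that inference is not free --- it is exactly where the paper invokes Lemma \ref{Lem:3}(1), which holds for arbitrary (not necessarily surjective) homomorphisms $\map{m}{M}{N}$: each $a \in \max N$ is prime, $m_*(a) < \top$ (since $m(\top) = \top \nleq a$), so $m_*(a)$ is a prime element unequal to $\top$, hence maximal in $M$ by regularity (Lemma \ref{Lem:2}(1)), a contradiction. Second, in $(1)\Rightarrow(2)$ you gloss $\pi_M \circ m(a) = \top$ as \enquote{$m(a)$ lies below no maximal element of $M$}; that describes $\ker \sigma_M$, not $\ker \pi_M$ --- in fact $\max M \subseteq \ker \pi_M$, so the inference is false. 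The correct contradiction is the paper's: (1) applied to $\mathrm{id}_M$ makes $M$ pointless, so $\pi_M$ acts as the identity and $\pi_M \circ m(a) = m(a) < \top$, contradicting skinniness.
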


\begin{proof}
	If $M$ is not pointless then the identity morphism lies in both $\bmfrak{E}$ and $\bmfrak{M}$ and has both domain and codomain $M$. It follows that each of (1) and (2) implies (4). So suppose that $M$ is pointless. Then (2) must be true, for if $\map{m}{L}{M}$ is an $\bmfrak{M}$-morphism then any element $a \in \max L$ would map to an element $m(a) < \top$ in $M$, and since $m$ is skinny, we would have $\pi_M \circ m(a) = \top$. This cannot be the case, since the pointless nucleus $\pi_M$ acts as the identity on $M$. And (1) must also be true, for if $\map{m}{M}{L}$ is a frame homomorphism then any element $a \in \max L$ would produce the element $m_*(a) \in \max M$, contrary to assumption.
	
	If (3) holds then $M$ is isomorphic to $\pi M$ and is therefore pointless. And if $M$ is pointless and $\map{n}{M}{N}$ is an $\bmfrak{E}$-morphism then $n$, which is surjective by definition, must also be one-one because $\Theta_n \leq \bigvee_{\max M}\Phi_b = \bot_{\con M} = 1_M$. 
\end{proof}

\begin{lemma}
	$\mbf{plFs}$ is closed under the formation of $\bmfrak{M}$-sources.
\end{lemma}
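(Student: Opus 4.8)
The statement to be proved is that if $\big(L \xra{m_i} M_i\big)_I$ is an $\bmfrak{M}$-source in $\mbf{Fs}$ for which every codomain $M_i$ is pointless, then the domain $L$ is pointless as well. The plan is a direct argument by contradiction, drawing on just two facts already established: the defining property of $\bmfrak{M}$-sources and the skinniness characterization in Lemma \ref{Lem:18}(3).

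First I would record that for a pointless frame $M$ the nucleus $\pi_M$ is the identity. Indeed, by Lemma \ref{Lem:4} a pointless frame is interpolative, so no element has a successor, whence $\fix \pi_M = \pi M = \setof{a}{\text{$a$ has no successor}} = M$ by Proposition \ref{Prop:3}; thus $\pi_M(b) = b$ for every $b \in M$. This is the single structural observation that powers the proof.

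Next, suppose toward a contradiction that $L$ is not pointless, so that there exists some $a \in \max L$. Because $\big(L \xra{m_i} M_i\big)_I \in \bmfrak{M}$, the defining condition of $\bmfrak{M}$-sources furnishes an index $i \in I$ with $m_i(a) < \top$. On the other hand, $m_i$ is a skinny frame homomorphism, so Lemma \ref{Lem:18}(3) applied to the maximal element $a$ gives $\pi_{M_i} \circ m_i(a) = \top$. Since $M_i$ is pointless, the first paragraph yields $\pi_{M_i} = \mathrm{id}_{M_i}$, and therefore $m_i(a) = \pi_{M_i} \circ m_i(a) = \top$, contradicting $m_i(a) < \top$. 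Hence $\max L = \emptyset$, i.e.\ $L$ is pointless.

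There is no serious obstacle here; the content is entirely in assembling the correct prior results. The only point requiring a moment's care is the reduction that pointlessness of the codomain forces $\pi_{M_i}$ to act trivially, which is why I would isolate that observation at the outset. I note in passing that this lemma is precisely the hypothesis needed (via the $\EM$-machinery of Propositions \ref{Prop:1} and \ref{Prop:2}) to upgrade the epireflectivity of $\mbf{plFs}$ in Corollary \ref{Cor:1}(5) to $\bmfrak{E}$-reflectivity, so it can also be viewed as the source-level strengthening of the equivalence (2)$\iff$(4) in Lemma \ref{Lem:6}.
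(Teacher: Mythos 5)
Your proof is correct and follows essentially the same route as the paper's: take $a \in \max L$, use the $\bmfrak{M}$-condition to get an index $i$ with $m_i(a) < \top$, and use skinniness (Lemma \ref{Lem:18}(3)) to contradict the pointlessness of $M_i$. The only cosmetic difference is how the contradiction is extracted at the end --- you observe that $\pi_{M_i}$ is the identity on a pointless frame so that $m_i(a) = \top$, while the paper instead infers a maximal element of $M_i$ above $m_i(a)$; both steps are immediate and equivalent.
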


\begin{proof}
	Consider a source $\mcal{S} \equiv (L \xra{m_i}M_i)_I$ in $\mbf{Fs}$ such that all the $M_i$'s are pointless. If $a$ is a maximal element of $L$ then because $\mcal{S} \in \bmfrak{M}$ there must be an index $i \in I$ for which $m_i(a) < \top$. Since $m_i$ is skinny, there must be a maximal $c \in L$ which lies above $m(a)$, contrary to our assumption that $M_i$ is pointless. We conclude that no maximal element can exist in $L$. 
\end{proof}

\begin{theorem}\label{Thm:2}
	$\mbf{plFs}$ is $\bmfrak{E}$-reflective in $\mbf{Fs}$.
\end{theorem}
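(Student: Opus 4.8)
The plan is to show that the pointless reflector $\map{\pi_L}{L}{\pi L}$ already available from Corollary \ref{Cor:1}(5) is itself an $\bmfrak{E}$-reflection arrow; the only new ingredient needed is the membership $\pi_L \in \bmfrak{E}$. Recall that Corollary \ref{Cor:1}(5) presents $\mbf{plFs}$ as epireflective in $\mbf{Fs}$ with reflection arrow $\pi_L$ (its codomain $\pi L$ is pointless, and the required universal property is exactly the factorization of Lemma \ref{Lem:18}(4): for a skinny $\map{m}{L}{M}$ with $M$ pointless one has $\pi_M = \mathrm{id}_M$, whence $m = \bar m \circ \pi_L$ uniquely). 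To upgrade \enquote{epireflective} to \enquote{$\bmfrak{E}$-reflective} it therefore suffices to check that $\pi_L$ belongs to the class $\bmfrak{E}$.

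For this I would argue directly from the description of the congruence of $\pi_L$. First, $\pi_L$ is a frame surjection, hence skinny by Corollary \ref{Cor:1}(2), so it remains only to verify the condition of Lemma \ref{Lem:12}. By Corollary \ref{Cor:2} we have $\Theta_{\pi_L} = \bigvee_{\max L}\Phi_a$. On the other hand, every maximal element $a$ satisfies $\pi_L(a) = \top$ because $\max L \subseteq \ker \pi_L$ by Proposition \ref{Prop:3}; consequently $\pi_L^{-1}(\top) \cap (\max L) = \max L$, and so
\[
	\Theta_{\pi_L} = \bigvee_{\max L}\Phi_a = \bigvee \setof{\Phi_a}{a \in \pi_L^{-1}(\top) \cap (\max L)}.
\]
This is precisely condition (2) of Lemma \ref{Lem:12}, so $\pi_L \in \bmfrak{E}$ and the theorem follows.

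I expect no real obstacle here: once the $\EM$-structure is in place and Corollary \ref{Cor:2} has identified $\Theta_{\pi_L}$ as the join of exactly those atoms of $\con L$ coming from maximal elements, the verification is immediate. As an alternative route one could instead invoke the abstract characterization of $\bmfrak{E}$-reflective subcategories of an $\EM$-category (\cite{AdamekHerrlichStrecker:2004}) applied to the preceding lemma, which shows $\mbf{plFs}$ to be closed under the formation of $\bmfrak{M}$-sources; but the concrete verification through $\pi_L$ is shorter and keeps the reflector explicit.
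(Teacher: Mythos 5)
Your proof is correct, but it takes a genuinely different route from the paper's. The paper's entire proof is a one-line appeal to the abstract characterization of $\bmfrak{E}$-reflective subcategories in an $\EM$-category, \cite[16.1]{AdamekHerrlichStrecker:2004}: since $\mbf{Fs}$ is an $\EM$-category (Proposition \ref{Prop:6}) and $\mbf{plFs}$ is closed under the formation of $\bmfrak{M}$-sources (the lemma immediately preceding Theorem \ref{Thm:2}), $\bmfrak{E}$-reflectivity follows at once --- this is exactly the alternative you mention and set aside at the end. Your argument instead upgrades the epireflection of Corollary \ref{Cor:1}(5) by a direct computation: $\map{\pi_L}{L}{\pi L}$ is a surjection, hence skinny; by Corollary \ref{Cor:2} its congruence is $\Theta_{\pi_L} = \bigvee_{\max L}\Phi_a$; and since $\max L \subseteq \ker \pi_L$ (Proposition \ref{Prop:3}) gives $\pi_L^{-1}(\top) \cap \max L = \max L$, this is precisely condition (2) of Lemma \ref{Lem:12}, so $\pi_L \in \bmfrak{E}$. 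Combined with the universal property from Lemma \ref{Lem:18}(4) (with $\pi_M = \mathrm{id}_M$ for pointless $M$, and skinniness of the factoring map via Corollary \ref{Cor:1}(4)), this establishes the theorem. Each approach buys something: the paper's proof is essentially free once the $\EM$-machinery of Propositions \ref{Prop:1}--\ref{Prop:6} is in place and never needs to identify the reflection arrow; yours is self-contained, avoids the external categorical citation, keeps the reflector explicit, and in effect isolates the fact that $\pi_L$ is an $\bmfrak{E}$-morphism with pointless codomain --- a fact the paper uses later (it is half of Proposition \ref{Prop:8}) but never records in so many words.
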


\begin{proof}
	This is an instance of \cite[16.1]{AdamekHerrlichStrecker:2004}.
\end{proof}

\begin{proposition}\label{Prop:8}
	An $\mbf{Fs}$-homomorphism $\map{m}{L}{M}$ is isomorphic to $\pi_L$ if and only if $m \in \bmfrak{E}$ and $M$ is pointless.
\end{proposition}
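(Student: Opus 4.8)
The plan is to translate the categorical hypothesis \enquote{isomorphic to $\pi_L$} into the equality of kernel congruences $\Theta_m = \Theta_{\pi_L}$. Since $\pi_L$ is a surjection and any member of $\bmfrak{E}$ is a surjection, both $m$ and $\pi_L$ are quotients of the common domain $L$, and for such quotients being isomorphic in the coslice under $L$ is precisely the condition $\Theta_m = \Theta_{\pi_L}$. By Corollary \ref{Cor:2} we have $\Theta_{\pi_L} = \sbv{lr}{\max L}\Phi_a$, so the entire argument reduces to comparing this join with the join describing $\Theta_m$ in Lemma \ref{Lem:12}. The pivot, used in both directions, is that for a surjection the relation $\Phi_a \leq \Theta_m$ is equivalent by Lemma \ref{Lem:11} to $m(a) = \top$; thus everything turns on identifying which maximal elements of $L$ are sent to $\top$.

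For the forward implication I would assume $m$ is isomorphic to $\pi_L$. Then $M$ is isomorphic to $\pi L$, which is pointless, so $M$ is pointless. Moreover $\Theta_m = \Theta_{\pi_L} = \sbv{lr}{\max L}\Phi_a$, and since $\max L \subseteq \ker \pi_L$ by Proposition \ref{Prop:3} we get $m(a) = \top$ for every $a \in \max L$. Hence $m^{-1}(\top) \cap (\max L) = \max L$, so the join describing $\Theta_m$ in Lemma \ref{Lem:12}(2) coincides with $\sbv{lr}{\max L}\Phi_a = \Theta_m$, witnessing $m \in \bmfrak{E}$.

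For the converse I would assume $m \in \bmfrak{E}$ and $M$ pointless. Because $M$ is pointless it is interpolative (Lemma \ref{Lem:4}), so no element of $M$ has a successor and $\pi M = M$, i.e.\ $\pi_M$ is the identity on $M$. As $m$ is skinny, Lemma \ref{Lem:18}(3) gives $m(a) = \pi_M \circ m(a) = \top$ for every $a \in \max L$. Therefore $m^{-1}(\top) \cap (\max L) = \max L$, and Lemma \ref{Lem:12} yields $\Theta_m = \sbv{lr}{\max L}\Phi_a = \Theta_{\pi_L}$, so $m$ is isomorphic to $\pi_L$.

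The computations here are routine once the framework is in place. The only point that genuinely requires attention is the recognition that pointlessness of the codomain forces $\pi_M$ to act as the identity; this is exactly what converts the skinniness hypothesis $\pi_M \circ m(a) = \top$ into the usable statement $m(a) = \top$, and without it the converse would stall. No deeper obstacle arises after this identification is made.
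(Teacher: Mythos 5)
Your proof is correct, but it takes a genuinely different route from the paper's. The paper works at the level of arrows: pointlessness of $M$ lets $m$ factor through the reflector $\pi_L$, say $m = k \circ \pi_L$, while the $\bmfrak{E}$-condition feeds into Lemma \ref{Lem:5} to factor $\pi_L$ through $m$, say $\pi_L = l \circ m$; since both maps are surjective, $k$ and $l$ are unique with respect to their properties and hence inverse isomorphisms. You instead work entirely in the congruence lattice: identifying the isomorphism class of a surjection out of $L$ with its kernel congruence, you compute $\Theta_{\pi_L} = \bigvee_{\max L}\Phi_a$ (Corollary \ref{Cor:2}) and $\Theta_m$ (Lemma \ref{Lem:12}), and show the two joins coincide because skinniness plus pointlessness of $M$ forces $m^{-1}(\top) \cap \max L = \max L$. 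Your packaging buys two things: both implications fall out of a single computation, and the \enquote{only if} direction receives an explicit proof, whereas the paper's written proof covers only the converse. The cost is reliance on the first isomorphism theorem for frames --- that surjections from $L$ with equal kernel congruences are isomorphic under $L$ --- which is folklore but nowhere stated in the paper, where instead the isomorphism is manufactured explicitly from the two factorizations. Substantively the pivots agree: the paper's verification of the hypothesis of Lemma \ref{Lem:5} is in essence your congruence comparison, and your observation that $\pi_M$ is the identity on a pointless $M$ is the same fact the paper accesses through the reflectivity of Corollary \ref{Cor:1}(5).
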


\begin{proof}
	If $M$ is pointless then $m$ factors through the pointless reflector $\pi_L$, say $m = k \circ \pi_L$.
	\[
	\begin{tikzcd}
		L \arrow{r}{m} \arrow{d}[swap]{\pi_L}
		& M \arrow[shift right=.7]{dl}[swap]{l}\\
		\pi L \arrow[shift right=.7]{ur}[swap]{k}
		&
	\end{tikzcd}
	\]
	On the other hand, we claim that $\pi_L$ factors through $m$ by Lemma \ref{Lem:5}. For if $m(a) = \top$ for some $a \in L$ then since $m \in \bmfrak{E}$ we have 
	\begin{align*}
		(a, \top) &\in \bigvee \setof{\Phi_c}{c \in \max L \text{ and  }m(c) = \top}
		\subseteq \sbv{lr}{\max L} \Phi_c\\
		&= \setof{(a_1, a_2)}{\pi_L(a_1) = \pi_L(a_2)}.
	\end{align*}
	(The final equality is a consequence of Proposition \ref{Prop:4}.) We conclude that $\pi_L(a) = \pi_L(\top) = \top$, so that Lemma \ref{Lem:5} provides a homomorphism $\map{l}{M}{\pi L}$ such that $l \circ m = \pi_L$. Since both $m$ and $\pi_L$ are surjective, both $k$ and $l$ are unique with respect to their properties and are therefore inverse isomorphisms. 
\end{proof}

\begin{corollary}
	The top arrow in the diagram of Proposition \ref{Prop:15} is the pointless reflector of $C_e$.
\end{corollary}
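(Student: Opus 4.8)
The plan is to invoke Proposition \ref{Prop:8}: an $\mbf{Fs}$-morphism is (isomorphic to) the pointless reflector of its domain exactly when it lies in $\bmfrak{E}$ and has pointless codomain. Write $t$ for the top arrow $C_e \to \pi L$. It is the unique surjection with $t \circ c_e = \pi_L$ (it exists because, as sublocales of $L$, $\pi L \subseteq \upset{e} = C_e$, cf.\ Corollary \ref{Cor:5}), and it is surjective since $\pi_L$ is. Its codomain $\pi L$ is pointless, being the codomain of the reflector $\pi_L$. In view of the identification $\pi L = \pi C_e$ from Corollary \ref{Cor:5}, showing $t \cong \pi_{C_e}$ is precisely the claim that the top arrow is the pointless reflector of $C_e$, so the only real content is to verify $t \in \bmfrak{E}$.

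To this end I would compute $\Theta_t$ and check, via condition (3) of Lemma \ref{Lem:12}, that it is a join of atoms of $\con C_e$. The key move is to transport the computation to $\con L$ through the isomorphism $c_e^{-1} : \con C_e \to \upset{\Psi_e}_{\con L}$ of Lemma \ref{Lem:9}(1). Under it $\Theta_t$ corresponds to $c_e^{-1}(\Theta_t) = \Theta_{t \circ c_e} = \Theta_{\pi_L}$, while each atom $\Phi^{C_e}_b$ (for $b \in \max C_e$, which are the atoms of $\con C_e$ by Lemma \ref{Lem:10}) corresponds to $\Psi_e \vee \Phi_b$ by Lemma \ref{Lem:9}(2). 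Thus it suffices to establish
\[
	\Theta_{\pi_L}
	= \sbv{lr}{\max L}\Phi_a
	= \Psi_e \vee \sbv{lr}{a \in \max L,\, a \geq e}\Phi_a,
\]
the first equality being Corollary \ref{Cor:2}.

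The heart of the argument, and the step I expect to require the most care, is the second equality. The inclusion $\geq$ is routine once one notes $\Psi_e \leq \Theta_{\pi_L}$: since $\pi_L(e) = \pi_L(\bot) = e$ we have $(\bot, e) \in \Theta_{\pi_L}$, whence $\Psi_e \leq \Theta_{\pi_L}$ by Lemma \ref{Lem:11}. For $\leq$ I would split $\max L$ according to whether $a \geq e$. Those with $a \geq e$ are exactly $\max C_e$ and contribute their $\Phi_a$ directly. For $a \not\geq e$, maximality forces $a \vee e = \top$, since $a \vee e$ lies in the two-element interval $[a, \top]$ (Lemma \ref{Lem:2}(1)); then a short distributive computation—writing $a_i = a_i \wedge (a \vee e) = (a_i \wedge a) \vee (a_i \wedge e)$, so that $e \vee a_i = e \vee (a_i \wedge a)$—shows that $a \wedge a_1 = a \wedge a_2$ already forces $e \vee a_1 = e \vee a_2$, i.e.\ $\Phi_a \leq \Psi_e$. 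Hence every $\Phi_a$ with $a \not\geq e$ is absorbed into $\Psi_e$, giving the displayed equality.

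Transporting back through $c_e^{-1}$ (a frame isomorphism, hence join-preserving and injective) yields $\Theta_t = \bigvee_{b \in \max C_e}\Phi^{C_e}_b$, a join of atoms of $\con C_e$. Therefore $t \in \bmfrak{E}$ by Lemma \ref{Lem:12}, and Proposition \ref{Prop:8} finishes the proof.
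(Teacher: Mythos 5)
Your proposal is correct and takes exactly the route the paper intends: the paper's proof consists of invoking Proposition \ref{Prop:8} and leaving to the reader the ``routine verification'' that the arrow $C_e \to \pi L$ is an $\bmfrak{E}$-morphism, and your argument supplies precisely that verification. The computation is sound --- transporting through the isomorphism $c_e^{-1}$ of Lemma \ref{Lem:9}, identifying $\max C_e$ with $\setof{a \in \max L}{a \geq e}$, and establishing $\Theta_{\pi_L} = \Psi_e \vee \bigvee_{a \in \max L,\, a \geq e} \Phi_a$ via the absorption $\Phi_a \leq \Psi_e$ for $a \ngeq e$ --- so it fills in the gap the paper deliberately left open, using only the cited lemmas.
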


\begin{proof}
	We leave to the reader the routine verification that the arrow $C_e \to \pi L$ is an $\bmfrak{E}$-morphism.
\end{proof}

\subsection{A second look at the spatial reflection}\label{Subsec:sFs}

The $\EM$-factorization structure also allows us to refine our understanding of the $\mbf{sF}$-reflection in $\mbf{F}$, for it manifests in the form of an $\bmfrak{M}$-reflection in $\mbf{Fs}$.

\begin{definition*}[$\mbf{sFs}$]
	We denote by $\mbf{sFs}$ the full subcategory of $\mbf{Fs}$ comprised of the spatial frames. 
\end{definition*}
 
 \begin{proposition}\label{Prop:10}
 	The category $\mbf{sFs}$ is $\bmfrak{M}$-reflective in $\mbf{Fs}$. In fact, the $\mbf{sF}$-reflector $\map{\sigma_L}{L}{\sigma L}$ of a frame $L$ is an $\bmfrak{M}$-surjection which functions as its $\mbf{sFs}$-reflector. 
 \end{proposition}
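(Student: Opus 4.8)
The plan is to verify directly that $\sigma_L$ serves as the $\mbf{sFs}$-reflector and that it lies in $\bmfrak{M}$, rather than to hunt for a dual of the abstract theorem invoked for Theorem \ref{Thm:2}. Three things must be checked: that the codomain $\sigma L$ is an object of $\mbf{sFs}$, that $\sigma_L$ is itself an $\mbf{Fs}$-morphism belonging to $\bmfrak{M}$, and that it enjoys the requisite universal property among $\mbf{Fs}$-morphisms into spatial frames.

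First I would recall that $\sigma L$ is spatial (by the proposition that opens the discussion of $\sigma$), so it lies in $\mbf{sFs}$, and that $\sigma_L$ is a frame surjection, hence skinny by Corollary \ref{Cor:1}(2) and therefore a genuine $\mbf{Fs}$-morphism. To see that $\sigma_L \in \bmfrak{M}$, fix $a \in \max L$; since two distinct maximal elements are incomparable (each is a predecessor of $\top$), the only maximal element above $a$ is $a$ itself, so $\sigma_L(a) = \bigwedge \upset{a}_{\max L} = a < \top$. Thus every maximal element of $L$ is carried strictly below $\top$, which is exactly the condition for the one-element source $(\sigma_L)$ to lie in $\bmfrak{M}$, and it is simultaneously a surjection, establishing the \enquote{$\bmfrak{M}$-surjection} claim.

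For the universal property, let $\map{m}{L}{S}$ be any $\mbf{Fs}$-morphism with $S$ spatial. Because $\sigma$ is already the $\mbf{sF}$-epireflection in $\mbf{F}$ (Proposition \ref{Prop:4}), the underlying frame homomorphism $m$ factors uniquely as $m = \bar{m} \circ \sigma_L$ for a frame homomorphism $\map{\bar{m}}{\sigma L}{S}$. The one genuinely new point is that $\bar{m}$ must be shown skinny: this is immediate from Corollary \ref{Cor:1}(4), since $\sigma_L$ is a surjection and $\bar{m} \circ \sigma_L = m$ is skinny. Uniqueness of $\bar{m}$ as an $\mbf{Fs}$-morphism is inherited from its uniqueness as a frame homomorphism, which holds because $\sigma_L$ is epic. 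Together these show that $\mbf{sFs}$ is reflective in $\mbf{Fs}$ with reflection arrow $\sigma_L$, and since $\sigma_L \in \bmfrak{M}$, the reflection is an $\bmfrak{M}$-reflection.

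I expect the only potential obstacle to be the passage from $\mbf{F}$ to $\mbf{Fs}$, specifically the verification that the induced arrow $\bar{m}$ remains a morphism of the restricted category, i.e.\ that it is skinny; everything else is a transcription of the classical spatial reflection. That obstacle dissolves at once thanks to the descent property recorded in Corollary \ref{Cor:1}(4), which is precisely engineered to transport skinniness backward along a surjection, so no further work is required.
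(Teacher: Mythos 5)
Your proposal is correct and follows essentially the same route as the paper: membership of $\sigma_L$ in $\bmfrak{M}$ comes from the fact that $\sigma_L$ fixes maximal elements (so $\sigma_L(a) = a < \top$ for $a \in \max L$), the factorization comes from the $\mbf{sF}$-reflection of Proposition \ref{Prop:4}, and skinniness of the induced map comes from Corollary \ref{Cor:1}(4). The only cosmetic difference is that you obtain skinniness of $\sigma_L$ from Corollary \ref{Cor:1}(2) (surjections are skinny), whereas the paper reads it off directly from the fixing of maximal elements; both are immediate.
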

 
 \begin{proof}
 	Because the nucleus $\sigma_L$ fixes maximal elements it is clearly skinny and an $\bmfrak{M}$-morphism. To check the reflective property, consider a skinny frame homomorphism $m$ with spatial codomain $M$, and let $m = l \circ \sigma_L$ be its factorization given by Proposition \ref{Prop:4}. Then $l$ must be skinny by Corollary \ref{Cor:1}(4). 
 \end{proof}
 
 \begin{proposition}\label{Prop:5}
 	A frame homomorphism $\map{n}{L}{N}$ is isomorphic to $\sigma_L$ if and only if $n$ is an $\bmfrak{M}$-surjection and $N$ is spatial.  
 \end{proposition}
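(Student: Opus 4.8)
The plan is to mirror the structure of the proof of Proposition \ref{Prop:8}, exhibiting mutually inverse factorizations between $n$ and $\sigma_L$ by means of Lemma \ref{Lem:5}. The key observation I would exploit is that, for a single-morphism source, membership of $n$ in $\bmfrak{M}$ says precisely that $n(b) < \top$ for every $b \in \max L$.

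The forward implication is immediate and I would dispose of it first. If $n$ is isomorphic to $\sigma_L$, then since $\sigma_L$ is an $\bmfrak{M}$-surjection with spatial codomain by Proposition \ref{Prop:10}, and both of these properties are plainly invariant under post-composition with an isomorphism, $n$ is an $\bmfrak{M}$-surjection with spatial codomain as well.

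For the reverse implication, suppose $n$ is an $\bmfrak{M}$-surjection and $N$ is spatial. First I would obtain a factorization $n = l \circ \sigma_L$: because $\sigma_L$ is the spatial reflector (Proposition \ref{Prop:4}) and $N$ is spatial, $n$ factors uniquely through $\sigma_L$. The crux is to produce a factorization in the opposite direction, $\sigma_L = l' \circ n$. Since $n$ is surjective, Lemma \ref{Lem:5} reduces this to verifying that $n(a) = \top$ implies $\sigma_L(a) = \top$ for every $a \in L$. Here I would argue by contradiction using the defining property of $\bmfrak{M}$: recall that $\sigma_L(a) = \top$ means exactly $\upset{a}_{\max L} = \emptyset$, so if it failed there would be some $b \in \max L$ with $b \geq a$; then $n(b) \geq n(a) = \top$ forces $n(b) = \top$, contradicting that $n$ is an $\bmfrak{M}$-morphism. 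This produces $l'$ with $l' \circ n = \sigma_L$.

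Finally, because $n$ and $\sigma_L$ are both surjective, hence epic, the relations $n = l \circ \sigma_L = l \circ l' \circ n$ and $\sigma_L = l' \circ n = l' \circ l \circ \sigma_L$ collapse to $l \circ l' = \mathrm{id}_N$ and $l' \circ l = \mathrm{id}_{\sigma L}$, so $l$ and $l'$ are inverse frame isomorphisms (automatically skinny by Corollary \ref{Cor:1}(1)), which is exactly the assertion that $n$ is isomorphic to $\sigma_L$. I expect the only genuine obstacle to be the contradiction step above, namely translating the $\bmfrak{M}$-condition on maximal elements into the implication $n(a) = \top \Rightarrow \sigma_L(a) = \top$ that Lemma \ref{Lem:5} demands; everything else is formal bookkeeping dual to the argument for $\pi_L$ in Proposition \ref{Prop:8}.
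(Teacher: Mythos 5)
Your proof is correct, and it is organized along a genuinely different line from the paper's. The paper performs only one factorization, $n = k \circ \sigma_L$ (Proposition \ref{Prop:4} plus spatiality of $N$), notes that $k$ is surjective because $n$ is, and then shows $k$ is one-one by arguing inside $\sigma L$: if $k(a) = \top$ for some $a < \top$ in $\sigma L$, then spatiality of $\sigma L$ yields $b \in \max \sigma L$ with $b \geq a$, and $c \equiv \sigma_{L*}(b)$ is a maximal element of $L$ for which $n(c) = k \circ \sigma_L(c) = k(b) = \top$, contradicting $n \in \bmfrak{M}$. You instead run the template of the paper's own proof of Proposition \ref{Prop:8}: factor in both directions and cancel surjections. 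Your key verification --- that $n(a) = \top$ forces $\sigma_L(a) = \top$, using $\ker \sigma_L = \setof{a}{\upset{a}_{\max L} = \emptyset}$ together with the single-morphism reading of $\bmfrak{M}$ --- is essentially the same contradiction as the paper's, but executed in $L$ rather than in $\sigma L$; this spares you the appeal to spatiality of $\sigma L$ and the passage through the right adjoint $\sigma_{L*}$ (i.e., Lemma \ref{Lem:3}(2)), which the paper needs in order to pull a maximal element of $\sigma L$ back to one of $L$. What your route buys is uniformity: Propositions \ref{Prop:8} and \ref{Prop:5} become two instances of a single argument, with Lemma \ref{Lem:5} supplying the reverse factorization each time, and with the key step slightly more elementary. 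What the paper's route buys is economy: a single comparison map is constructed, and the $\bmfrak{M}$-hypothesis is consumed exactly once, in the injectivity claim. The difference is ultimately organizational rather than substantive, since the paper's sufficiency claim (that $k^{-1}(\top) = \{\top\}$ implies $k$ is one-one) is itself an implicit application of Lemma \ref{Lem:5} with the identity of $\sigma L$ as test morphism.
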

 
 \begin{proof}
 	If $N$ is spatial then it factors through the spatial reflector $\sigma_L$, say $n = k \circ \sigma_L$.
 	\[
 	\begin{tikzcd}
 		L \arrow{r}{n} \arrow{d}[swap]{\sigma_L}
 		&N\\
 		\sigma L \arrow{ur}[swap]{k}
 	\end{tikzcd}
 	\]
 	The map $k$ is surjective because $n$ is; to demonstrate that it is also one-one it is sufficient to establish the claim that the only element $a \in \sigma L$ for which $k(a) = \top$ is $a = \top$. For that purpose consider an element $a \in L$ such that $k(a) = \top$, and suppose for the sake of argument that $k(a) < \top$. Because $\sigma L$ is spatial there must be an element $b \in \max \sigma L$ such that $b \geq a$, hence $k(b) = \top$. But then $c \equiv \sigma_{L*}(b) \in \max L$ has the feature that $n(c) < \top$ because $n \in \bmfrak{M}$. In view of the fact that 
 	\[
	 	\top 
	 	> n(c) 
	 	= k \circ \sigma_L (c) 
	 	= k \circ \sigma_l \circ \sigma_{L*}(b)
	 	= k(b) = \top,
	 \] 
 	we have the contradiction which proves the claim.   
 \end{proof}
 
\section{The pointless and spatial reflections together}

In this section we address the interactions between the pointless and spatial parts of a frame. The two parts are bound together by a function which will play an important role in what follows. 

\subsection{The ligature binding the pointless and spatial parts of a frame}\label{Subsec:Lig}

\begin{definition*}[ligature $\map{\lambda_L}{\pi L}{\pi \sigma L}$]
	It is an immediate consequence of Theorem \ref{Thm:2} that for any frame $L$ there is a unique frame surjection $\lambda_L$ such that $\lambda_L \circ \pi_L = \pi_{\sigma L} \circ \sigma_L$. 
	\[
	\begin{tikzcd}
		L \arrow{r}{\sigma_L} \arrow{d}[swap]{\pi_L}
		& \sigma L \arrow{d}{\pi_{\sigma L}}\\
		\pi L \arrow{r}[swap]{\lambda_L}
		& \pi \sigma L
	\end{tikzcd}
	\]
	We refer to this map as the \emph{pointless ligature of $L$.}
\end{definition*}


Within a given frame $L$ reside the three sublocales $\pi L$, $\sigma L$, and $\pi \sigma L$. Lemma  \ref{Lem:20} provides a visualization of the actions of the three corresponding nuclei on an arbitrary frame element. This picture brings to light isomorphisms between certain subintervals of the frame.  
	
\begin{lemma}\label{Lem:20}
	For an element $a$ of a frame $L$, let $b 
	\equiv \lambda_L \circ \pi_L(a) = \pi_{\sigma L} \circ \sigma_L(a)$.
	\[
	\begin{tikzcd}
		&b
		&\\
		\pi_L(a) \arrow[dash]{dr} \arrow[dash]{ur}
		&& \sigma_L(a) \arrow[dash]{ul} \arrow[dash]{dl}\\
		&a
		&
	\end{tikzcd}
	\]
	Then $[a, \pi_L(a)]_L \approx [\sigma_L(a), b]_L$ and $[a, \sigma_L(a)]_L \approx [\pi_L(a), b]_L$.
\end{lemma}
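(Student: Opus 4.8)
The plan is to reduce both isomorphisms to the single identity $b=\pi_L(a)\vee\sigma_L(a)$ and then apply the elementary interval isomorphisms of Lemma~\ref{Lem:1}(1). Abbreviate $p\equiv\pi_L(a)$ and $s\equiv\sigma_L(a)$. Lemma~\ref{Lem:13} already furnishes the bottom of the diamond, $p\wedge s=a$. Granting the top, $p\vee s=b$, Lemma~\ref{Lem:1}(1) applied to the pair $(p,s)$ yields $[p\wedge s,p]\approx[s,p\vee s]$, that is $[a,\pi_L(a)]\approx[\sigma_L(a),b]$, while applied to the pair $(s,p)$ it yields $[a,\sigma_L(a)]\approx[\pi_L(a),b]$. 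So the whole statement rests on the equality $b=p\vee s$.

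One inequality is purely formal. The composite $\tau_L\equiv\pi_{\sigma L}\circ\sigma_L$ is a frame surjection onto $\pi\sigma L$, hence a nucleus on $L$ with $\fix\tau_L=\pi\sigma L$; moreover $b=\tau_L(a)=\tau_L(\pi_L(a))$, the last equality because $\tau_L=\lambda_L\circ\pi_L$ and $\pi_L$ is idempotent. Since $\tau_L$ is increasing, $b=\tau_L(\pi_L(a))\geq\pi_L(a)=p$, and $b=\pi_{\sigma L}(\sigma_L(a))\geq\sigma_L(a)=s$ directly; hence $b\geq p\vee s$.

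For the reverse inequality I would first identify $b$ exactly. Being a surjection, $\sigma_L$ is skinny by Corollary~\ref{Cor:1}(2), so by Lemma~\ref{Lem:18}(5) it sends the unpunctured element $p=\pi_L(a)$ to an unpunctured element of $\sigma L$; thus $\sigma_L(p)\in\pi\sigma L$ is fixed by $\pi_{\sigma L}$, and
\[
	b=\tau_L(p)=\pi_{\sigma L}\big(\sigma_L(p)\big)=\sigma_L(p)=\sigma_L\big(\pi_L(a)\big).
\]
It therefore remains to prove the clean identity $\sigma_L(\pi_L(a))=\pi_L(a)\vee\sigma_L(a)$. Now every maximal element above $p$ lies above $a$, hence above $s=\bigwedge\upset{a}_{\max L}$, hence above $p\vee s$; therefore $\upset{p\vee s}_{\max L}=\upset{p}_{\max L}$ and $\sigma_L(p\vee s)=\sigma_L(p)=b\geq p\vee s$. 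Consequently $b\leq p\vee s$ is equivalent to $\sigma_L(p\vee s)=p\vee s$, i.e.\ to the assertion that \emph{the top of the diamond is spatially saturated}, $p\vee s\in\sigma L$.

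This last point is the main obstacle, and it is exactly where the standing hypothesis of complete regularity must enter. Splitting $\upset{a}_{\max L}=\upset{p}_{\max L}\sqcup\{c\in\max L:c\geq a,\ c\ngeq p\}$ and writing $r$ for the meet of the second block, one gets $s=\sigma_L(p)\wedge r=b\wedge r$, so by distributivity $p\vee s=(p\vee b)\wedge(p\vee r)=b\wedge(p\vee r)$, and the desired $b\leq p\vee s$ reduces to $b\leq p\vee r$. Each $c$ occurring in $r$ is prime with $c\ngeq p$, so $p\vee c=\top$; thus $b\leq p\vee r$ demands a join--meet interchange that fails in a general frame but should be forced here by complete regularity. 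I expect to close it by a completely-below argument: if $\sigma_L(p)\nleq p\vee r$, choose $w\combel\sigma_L(p)$ with $w\nleq p\vee r$, and use a witness for the relation $w\prec\sigma_L(p)$ together with the roundness supplied by Lemma~\ref{Lem:22} to separate $w$ from the block of maximal elements defining $r$, contradicting $w\leq\sigma_L(p)=\bigwedge\upset{p}_{\max L}$.
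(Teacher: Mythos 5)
Your skeleton is the same as the paper's: Lemma \ref{Lem:13} supplies the bottom of the diamond, Lemma \ref{Lem:1}(1) converts $p\wedge s=a$ into the two interval isomorphisms, and everything therefore rests on the identity $b=p\vee s$. Up to the point where you stop, your work is correct, and in fact more careful than the paper's one-line proof, which silently reads the top of the diamond as the join: the inequality $b\geq p\vee s$ is right, the identification $b=\sigma_L(p)$ via skinniness of the surjection $\sigma_L$ (Corollary \ref{Cor:1}(2) and Lemma \ref{Lem:18}(5)) is right, and so is the reduction of $b\leq p\vee s$ to the assertion $p\vee s\in\sigma L$, equivalently $b\leq p\vee r$.

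The gap is that this last inequality is never proved: your closing paragraph announces a strategy (``I expect to close it by a completely-below argument\dots'') rather than executing one, and the deferred step is precisely where all the content of the lemma sits. It is not a routine verification. In a naked frame the interchange you need is simply false: take $B$ a nontrivial complete atomless Boolean algebra and
\[
L \equiv \setof{(U,V)\in \mathcal{P}(\mbb{N})\times B}{U \text{ cofinite, or } V=\bot}.
\]
With $a=\bot$ one finds $p=(\mbb{N},\bot)$ and $s=r=\bot$, while no maximal element of $L$ lies above $p$ and $\sigma L$ is isomorphic to the (scattered) topology of $\mbb{N}\cup\{\infty\}$, so that $b=\top\nleq p=p\vee s$; correspondingly $[a,p]\cong\mathcal{P}(\mbb{N})$ is Boolean while $[s,b]=L$ is not, so the conclusion of the lemma itself fails. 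This $L$ is of course not regular --- exactly the elements $(U,V)$ with $\bot<V<\top$ fail to be joins of elements rather below them --- so it does not refute the lemma as stated in the paper, but it shows that any proof of $b\leq p\vee r$ must use complete regularity in an essential way. Your sketch does not yet do this: given a witness $w$ with $w\wedge z=\bot$, primeness of each $c\in R$ only yields the dichotomy $w\leq c$ or $z\leq c$, which gives no control over the single element $\bigwedge R$, and I do not see how the proposed ``separation'' contradiction materializes. So the proposal is incomplete at its only substantive step --- a step, it is worth noting, that the paper's own proof also takes for granted.
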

	
\begin{proof}
	In view of the fact that $\pi_L(a) \wedge \sigma_L(a) = a$ by Lemma \ref{Lem:13}, this is an application of Lemma \ref{Lem:1}(1).
\end{proof}

%
%
%

\subsection{The subdirect $\EM$-product decomposition of a frame}\label{Subsec:EMdecomp}
	 
\begin{definition*}[subdirect $\EM$-product]
	A \emph{subdirect $\EM$-product} of frames $E$ and $M$ is a subframe $L$ of the product frame $E \times M$ such that the projection map $L \to E$ is an $\bmfrak{E}$-surjection and the projection map $L \to M$ is an $\bmfrak{M}$-surjection. If $E$ happens to be pointless then the projection $L \to E$ is (isomorphic to) the pointless reflection of $L$ by Proposition \ref{Prop:8}. If $M$ happens to be spatial then the projection $\map{m}{L}{M}$ is (isomorphic to) the spatial reflection of $L$ by Proposition \ref{Prop:5}.
\end{definition*}

Our first task is to show that a subdirect $\EM$-product of a pointless frame $E$ with an arbitrary frame $M$ has a normal form. We produce this normal form by first identifying a subset $L'$ of the completely regular frame $E \times M$ which is closed under the frame operations, and then passing to its completely regular coreflection, i.e., to the largest completely regular frame $L$ contained in $L'$. This coreflection is a subdirect $\EM$-product of $E$ and $M$ as long as the projections are surjective. The latter condition is not generally met, but is met in the cases of interest (Theorem \ref{Thm:1}).   

\begin{proposition}\label{Prop:7}
	Let $E$ be a pointless frame and $M$ be a spatial frame, and let $\map{l}{E}{\pi M}$ be a frame surjection. Then  
	\[
		L'
		\equiv \setof{(a,b) \in E \times M}{l(a) = \pi_M(b)}
	\]
	is a naked subframe of $E \times M$, and if the completely regular coreflection $L \subseteq L'$ has surjective projections then 
	\begin{enumerate}
		\item
		$\max L = \setof{(\top, a)}{a \in \max M}$,
		
		\item 
		the projection $\map{m}{L}{M}$ is an $\bmfrak{M}$-morphism,
		
		\item 
		the projection $\map{e}{L}{E}$ is an $\bmfrak{E}$-morphism, and
		
		\item 
		$l \circ e = \pi_M \circ m$.
	\end{enumerate}
	In short, $L$ is a subdirect $(\bmfrak{E}, \bmfrak{M})$-product of $E$ and $M$ with ligature (isomorphic to) $l$. 
\end{proposition}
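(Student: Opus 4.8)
The plan is to treat the four numbered claims in order, after first disposing of the subframe assertion, and to isolate the verification that $e\in\bmfrak{E}$ as the one step that requires real work.

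\medskip\noindent\textbf{That $L'$ is a naked subframe.}
I would check the closure conditions directly, keeping in mind that the bottom of $\pi M$ is $\pi_M(\bot)$, not $\bot$. Thus $(\top,\top)\in L'$ because $l(\top)=\top=\pi_M(\top)$, and $(\bot,\bot)\in L'$ because $l(\bot)=\pi_M(\bot)$ (both are the bottom of $\pi M$). Closure under binary meets is immediate since $l$ and $\pi_M$ both preserve finite meets. For joins, the point is that for any family $\{b_i\}\subseteq M$ one has $\pi_M\big(\bigvee b_i\big)=\pi_M\big(\bigvee \pi_M(b_i)\big)$, so if $(a_i,b_i)\in L'$ then $l\big(\bigvee a_i\big)=\bigvee_{\pi M} l(a_i)=\pi_M\big(\bigvee \pi_M(b_i)\big)=\pi_M\big(\bigvee b_i\big)$, placing the componentwise join back in $L'$. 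Passing to the completely regular coreflection $L\subseteq L'$ and invoking the standing hypothesis that the projections $\map{e}{L}{E}$ and $\map{m}{L}{M}$ are surjective, I would then establish (1)--(4).

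\medskip\noindent\textbf{Claim (1).}
Since $E$ is pointless, $\max E=\emptyset$. If $(x,y)\in\max L$ then $(x,y)$ is prime in $L$; applying Lemma \ref{Lem:3}(3) to the surjection $e$ shows that $x<\top$ would make $x$ prime in $E$, hence maximal, which is impossible, so $x=\top$. Then $y<\top$, and Lemma \ref{Lem:3}(3) applied to $m$ makes $y$ prime in the regular frame $M$, i.e.\ $y\in\max M$. For the reverse inclusion, surjectivity of $m$ gives $m_*(a)\in\max L$ for each $a\in\max M$ by Lemma \ref{Lem:3}(2), with $m\circ m_*(a)=a$; since $(\top,a)\in L'$ lies in $L$ and is maximal while $m_*(a)\geq(\top,a)$, maximality forces $m_*(a)=(\top,a)$. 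Hence $\max L=\setof{(\top,a)}{a\in\max M}$.

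\medskip\noindent\textbf{Claims (2) and (4).}
These are short. For (2): $m$ is a frame surjection, hence skinny by Corollary \ref{Cor:1}(2), and by (1) every maximal element $(\top,a)$ satisfies $m(\top,a)=a<\top$, so the source $m$ lies in $\bmfrak{M}$. For (4): any $(x,y)\in L\subseteq L'$ satisfies $l\big(e(x,y)\big)=l(x)=\pi_M(y)=\pi_M\big(m(x,y)\big)$ by the defining equation of $L'$, so $l\circ e=\pi_M\circ m$.

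\medskip\noindent\textbf{Claim (3), the main obstacle.}
Here I would prove $e\in\bmfrak{E}$; by Proposition \ref{Prop:8} (with $E$ pointless) this is equivalent to $\Theta_e=\Theta_{\pi_L}$. The inclusion $\Theta_{\pi_L}=\bigvee_{\max L}\Phi_c\leq\Theta_e$ follows from Corollary \ref{Cor:2} and Lemma \ref{Lem:8}, since by (1) every maximal $c$ has $e(c)=\top$. For the reverse inclusion I would use Lemma \ref{Lem:5}: it suffices to show $e(p)=\top\Rightarrow\pi_L(p)=\top$. If $e(p)=\top$ then $p=(\top,y)$ with $\pi_M(y)=l(\top)=\top$, so $(m(p),\top)\in\Theta_{\pi_M}$ and hence $(p,\top)\in m^{-1}(\Theta_{\pi_M})$. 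Now Lemma \ref{Lem:9} together with Corollary \ref{Cor:2} and $m_*(a)=(\top,a)$ computes
\[
	m^{-1}(\Theta_{\pi_M})
	=\sbv{lr}{a\in\max M} m^{-1}(\Phi_a)
	=\sbv{lr}{a\in\max M}\big(\Theta_m\vee\Phi_{(\top,a)}\big)
	=\Theta_m\vee\Theta_{\pi_L}.
\]
Since also $(p,\top)\in\Theta_e$, the pair lies in $\Theta_e\wedge(\Theta_m\vee\Theta_{\pi_L})$, and the frame distributivity of $\con L$ gives
\[
	\Theta_e\wedge(\Theta_m\vee\Theta_{\pi_L})
	=(\Theta_e\wedge\Theta_m)\vee(\Theta_e\wedge\Theta_{\pi_L})
	=\bot_{\con L}\vee\Theta_{\pi_L}
	=\Theta_{\pi_L},
\]
using $\Theta_e\wedge\Theta_m=\bot_{\con L}$ (the inclusion $L\hookrightarrow E\times M$ is one-one, so equal first and second coordinates force equality) and $\Theta_{\pi_L}\leq\Theta_e$. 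Thus $(p,\top)\in\Theta_{\pi_L}$, i.e.\ $\pi_L(p)=\top$, completing $\Theta_e=\Theta_{\pi_L}$ and hence $e\in\bmfrak{E}$. The delicate point throughout is that $L$ is only the completely regular \emph{coreflection} of $L'$, so I deliberately route (3) through the congruence frame $\con L$ and its distributivity rather than through the successor/fiber structure of $L$, which would otherwise entangle the argument with exactly which elements survive the coreflection. Finally, the summary statement follows: $L$ is a subframe of $E\times M$ with $\bmfrak{E}$-projection $e$ and $\bmfrak{M}$-projection $m$, so it is a subdirect $\EM$-product, and combining (4) with $e\cong\pi_L$ identifies its ligature with $l$.
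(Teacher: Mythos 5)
There is one genuine hole, in the reverse inclusion of your proof of (1). You assert that ``$(\top,a)\in L'$ lies in $L$'' and then use $m_*(a)\geq(\top,a)$ plus maximality to conclude $m_*(a)=(\top,a)$. But membership of $(\top,a)$ in $L$ is not free: $L$ is only the \emph{largest completely regular subframe} of $L'$, so there is no a priori reason an element of $L'$ survives the coreflection --- indeed $(\top,a)\in L$ is essentially the content of the inclusion you are proving, and your proof of (3) then leans on the resulting identification $m_*(a)=(\top,a)$. The repair uses only ingredients already in your text: $m_*(a)\in\max L$ by Lemma \ref{Lem:3}(2), so by your forward inclusion $m_*(a)=(\top,b)$ for some $b\in\max M$, and surjectivity of $m$ gives $b=m(\top,b)=m\circ m_*(a)=a$; thus $m_*(a)=(\top,a)$, and $(\top,a)\in L$ comes out as a consequence rather than serving as a premise. (This is the order in which the paper argues: it first shows that every maximal element of $L$ has the form $(\top,b)$ with $b\in\max M$, and only then identifies $m_*(c)$ by applying that description to it.)

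Otherwise your proposal is correct, and it differs from the paper's proof in route rather than in substance. For the forward inclusion of (1) the paper argues by hand: if $(x,y)\in\max L$ with $x<\top$, pointlessness of $E$ supplies $\top>x'>x$, surjectivity of $e$ supplies some $(x',c)\in L$, and joining with the maximal element forces the contradiction $x'=\top$; spatiality of $M$ then pins $y\in\max M$. Your appeal to Lemmas \ref{Lem:2}(1) and \ref{Lem:3}(3) (maximal $=$ prime $\neq\top$, and surjections preserve primes) is shorter and equally valid, since $L$, $E$, and $M$ are all completely regular here. For (3), your stated worry about entanglement with the coreflection is moot: the paper's proof is the same congruence computation as yours. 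It shows $\Theta_e\leq m^{-1}(\Theta_{\pi_M})$, expands $m^{-1}(\Theta_{\pi_M})=\Theta_m\vee\bigvee_{\max L}\Phi_c$ via Lemma \ref{Lem:9}(2) and Corollary \ref{Cor:2}, and cancels $\Theta_m$ by distributivity using $\Theta_e\wedge\Theta_m=\bot_{\con L}$, obtaining $\Theta_e\leq\bigvee_{\max L}\Phi_c$, which is the condition of Lemma \ref{Lem:12} verbatim. You run the identical computation but apply it pointwise to pairs $(p,\top)$ and then pass through Lemma \ref{Lem:5} and Proposition \ref{Prop:8} to convert $e(p)=\top\Rightarrow\pi_L(p)=\top$ into $e\in\bmfrak{E}$; the detour is legitimate (Proposition \ref{Prop:8} precedes this result and does not depend on it) but buys nothing, since the distributivity step already yields the full congruence inequality at no extra cost. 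Your treatment of the subframe verification, of (2), and of (4) matches the paper's, and is in fact a little more careful than the paper about where the joins are computed ($\pi M$ versus $M$) and about $\bot_{\pi M}=\pi_M(\bot)$.
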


\begin{proof}
	Let us show that $L'$ is closed under the frame operations of $E \times M$. We can see that $L'$ contains both $\bot_{E \times M}$ and $\top_{E \times M}$. To show that $L'$ is closed under binary meets, simply note that if $(a_i, b_i) \in L'$ then 
	\[
		l(a_1 \wedge a_2) 
		= l(a_1) \wedge l(a_2) 
		= \pi_M(b_1) \wedge \pi_M(b_2) 
		= \pi_M(b_1 \wedge b_2).
	\] 
	To show that $L'$ is closed under arbitrary joins, consider a subset $\ssetof{(a_i, b_i)}{I} \subseteq L'$. Then $l\left(\bigvee_I a_i\right) = \bigvee_I l(a_i) = \bigvee_I \pi_L(b_i) = \pi_L \left(\bigvee_i b_i\right)$. 
	
	Now suppose that the projections $e$ and $m$ of the completely regular coreflection $L \subseteq L'$ are surjective; we get that $l \circ e = \pi_M \circ m$ by construction. 
	\[
		\begin{tikzcd}
			L \arrow{r}{m} \arrow{d}[swap]{e}
			&M \arrow{d}{\pi_M}\\
			E \arrow{r}[swap]{l}
			& \pi M
		\end{tikzcd}
	\]
	
	(1) It is obvious that the maximal elements of $L'$ are of the form $(\top, a)$, $a \in \max M$. Now consider a maximal element $(a,b)$ of $L$. We claim that $a = \top$, for if not then since $\max E = \emptyset$ there exists an element $a' \in E$ such that $\top > a' > a$, and since $e$ is surjective there exists an element $c \in M$ for which $(a', c) \in L$. Because $(a', c) \nleq (a, b) \in \max L$, it follows that $(\top, \top) = (a',c) \vee (a, b) = (a', b \vee c)$, hence $a' = \top$, a contradiction which proves the claim. Thus our maximal element of $L$ has the form $(\top, b)$.
	
	Our second claim is that $b \in \max M$. For $M$ is spatial and $b < \top$, so there exists an element $c \in \max M$ such that $b \leq c$. But since $m$ is surjective there must be an element $a \in E$ for which $(a, c) \in L$, hence $(a, c) \vee (\top, b) = (\top, c) \in L$, so $b = c$ by maximality. Finally, for every $c \in \max M$ the surjectivity of $m$ guarantees that $m_*(c)$ exists in $\max L$, and if $m_*(c) = (a, b)$ then the preceding argument establishes that $a = \top$ and $b = c$. 
	
	The bijective correspondence between $\max L$ and $\max M$ makes (2) obvious. To verify that $e$ is an $\bmfrak{E}$-morphism, note that for $(a_i, b_i) \in L$,
	\begin{gather*}
		e(a_1, b_1) = e(a_2, b_2)
		\iff a_1 = a_2 \implies \\
		\pi_M (b_1) = l(a_1) = l(a_2) = \pi_M(b_2),
	\end{gather*}
	so that with the aid of Lemma \ref{Lem:9} we get   
	\begin{gather*}
		\Theta_e
		\leq \setof{((a, b_1), (a, b_2))}{a \in E \text{ and }(b_1, b_2)  \in \Theta_{\pi_M}}\\
		= m^{-1}\left(\Theta_{\pi_M} \right)
		\leq m^{-1}\left(\sbv{r}{\max M}\Phi_c\right)
		= \sbv{lr}{\max M}m^{-1}(\Phi_c)\\
		= \sbv{lr}{\max M}\left(\Theta_m \vee \Phi_{m_*(c)} \right) 
		= \Theta_m \vee \sbv{lr}{\max M}\Phi_{m_*(c)}. 
	\end{gather*}
	In view of the facts that 
	\[
		\Theta_e \wedge \Theta_m 
		= \bot_{\con L} = 1_L
		\qtq{and} \max L 
		= \ssetof{m_*(c)}{\max M}, 
	\]
	we conclude that $\Theta_e \leq \bigvee_{\max M}\Phi_{m_*(c)} = \bigvee_{\max L} \Phi_a$. 
\end{proof}

\begin{notation*}[$E \times_l S$]
	For a pointless frame $E$, a spatial frame $M$, a frame surjection $\map{l}{E}{\pi M}$, we denote the frame $L$ of Proposition \ref{Prop:7} by 
	\[
		E \times_l M.
	\] 
\end{notation*}

\begin{theorem}\label{Thm:1} 
	Every frame is a subdirect $\EM$-product of its pointless and spatial parts. In detail, for any frame $L$ the product morphism $\map{\pi_L \times \sigma_L}{L}{\pi L \times \sigma  L}$ factors through the inclusion $\pi L \times_{\lambda L} \sigma L \to \pi L \times \sigma L$, and the initial factor is one-one.
	\[
	\begin{tikzcd}
		L \arrow{rr}{\sigma_L} \arrow{dr}{\tau_L} \arrow{dd}[swap]{\pi_L}
		&& \sigma L \\
		& \pi L \times_{\lambda_L} \sigma L \arrow{dr} \equiv \tau L
		&\\
		\pi L 
		&& \pi L \times \sigma L \arrow{ll} \arrow{uu}
	\end{tikzcd}
	\]
	We denote this initial factor by $\map{\tau_L}{L}{\pi L \times_{\lambda_L} \sigma L} \equiv \tau L$.
\end{theorem}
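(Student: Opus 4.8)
The plan is to verify the two assertions of the statement — that $\pi_L \times \sigma_L$ lands in the subframe $\pi L \times_{\lambda_L} \sigma L$ and that the corestriction $\tau_L$ is one-one — and then to harvest the remaining claims (surjectivity of the projections, their membership in $\bmfrak{E}$ and $\bmfrak{M}$) directly from Proposition \ref{Prop:7}.

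First I would exploit the defining relation of the ligature. By the definition of $\lambda_L$ we have $\lambda_L \circ \pi_L = \pi_{\sigma L} \circ \sigma_L$, so that for every $x \in L$ the pair $(\pi_L(x), \sigma_L(x))$ satisfies $\lambda_L(\pi_L(x)) = \pi_{\sigma L}(\sigma_L(x))$. This is precisely the condition defining the naked frame
\[
	L' \equiv \setof{(a,b) \in \pi L \times \sigma L}{\lambda_L(a) = \pi_{\sigma L}(b)}
\]
of Proposition \ref{Prop:7} (taken with $E = \pi L$, $M = \sigma L$, and $l = \lambda_L$). Hence the product morphism $\pi_L \times \sigma_L$ corestricts to a frame homomorphism $L \to L'$.

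The key step is then to pass from $L'$ to its completely regular coreflection $\pi L \times_{\lambda_L} \sigma L$. Since $L$ is completely regular, the corestricted map $L \to L'$ factors uniquely through the inclusion $\pi L \times_{\lambda_L} \sigma L \hookrightarrow L'$ by the universal property of the completely regular coreflection (the coreflection being right adjoint to the inclusion $\mbf{F} \hookrightarrow \mbf{nF}$). Composing with $\pi L \times_{\lambda_L} \sigma L \hookrightarrow L' \hookrightarrow \pi L \times \sigma L$ realizes the advertised factorization, and produces the arrow $\map{\tau_L}{L}{\pi L \times_{\lambda_L} \sigma L}$. Its injectivity is immediate: it is a corestriction of $\pi_L \times \sigma_L$, which is one-one by Proposition \ref{Prop:11} (equivalently by Lemma \ref{Lem:13}, since $\Theta_{\pi_L} \wedge \Theta_{\sigma_L} = \bot_{\con L}$).

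Finally, the existence of $\tau_L$ bootstraps the surjectivity hypothesis of Proposition \ref{Prop:7}. Composing $\tau_L$ with the two projections of $\pi L \times_{\lambda_L} \sigma L$ recovers $\pi_L$ and $\sigma_L$ respectively; since these are surjective, so are the projections. Proposition \ref{Prop:7} then certifies that the projection to $\pi L$ is an $\bmfrak{E}$-morphism and the projection to $\sigma L$ is an $\bmfrak{M}$-morphism with ligature $\lambda_L$, so that $\pi L \times_{\lambda_L} \sigma L$ is genuinely a subdirect $\EM$-product. The one point requiring care is this coreflection step: I must confirm that the corestricted homomorphism lands inside the completely regular coreflection rather than merely inside the naked frame $L'$, which is exactly why invoking the universal property of the coreflection — rather than attempting an ad hoc verification of complete regularity of the image — is the cleanest route.
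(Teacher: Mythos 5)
Your proof is correct and follows essentially the same route as the paper's: the ligature identity $\lambda_L \circ \pi_L = \pi_{\sigma L} \circ \sigma_L$ places the image of $\pi_L \times \sigma_L$ inside the naked frame $L'$ of Proposition \ref{Prop:7}, complete regularity of that image (which is exactly what underlies the universal property of the completely regular coreflection you invoke) places it inside $\pi L \times_{\lambda_L} \sigma L$, and Lemma \ref{Lem:13} gives injectivity. Your closing paragraph, which bootstraps surjectivity of the two projections from $e \circ \tau_L = \pi_L$ and $m \circ \tau_L = \sigma_L$ so that Proposition \ref{Prop:7} then certifies the $\bmfrak{E}$ and $\bmfrak{M}$ claims, is a welcome addition that makes explicit the subdirect-product assertion the paper's proof leaves implicit.
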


\begin{proof}
	The factorization is a consequence of the fact that $\lambda_L \circ \pi_L = \pi_{\sigma L} \circ \sigma_L$. The one-oneness of the product map is a consequence of Lemma \ref{Lem:13}, from which the one-oneness of its initial factor $\tau_L$ follows. The image $\tau_L(L)$ is contained in the subset $L'$ of Proposition \ref{Prop:7}, and since that image is completely regular, it is contained in the largest completely regular subset of $L'$, namely $\pi L \times_{\lambda_L} \sigma L$. 
\end{proof}

%

\subsection{The fat reflection in $\mbf{Fs}$}

\begin{definition*}[fat frames]
	Let us agree to call a frame \emph{fat} if the map $\map{\tau_L}{L}{\tau L}$ of Theorem \ref{Thm:1} is surjective. We denote the full subcategory of $\mbf{Fs}$ comprised of the fat frames by $\mbf{fFs}$.
\end{definition*}

\begin{theorem}\label{Thm:4}
	$\mbf{fFs}$ is bireflective in $\mbf{Fs}$, and a reflector for the frame $L$ is the map $\map{\tau_L}{L}{\tau L}$ of Theorem \ref{Thm:1}.
\end{theorem}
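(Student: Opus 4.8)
The plan is to establish three things: that the codomain $\tau L$ is fat, that $\map{\tau_L}{L}{\tau L}$ is a skinny bimorphism, and that it enjoys the universal property of a reflection into $\mbf{fFs}$. Note first that, since $\tau_L$ is always one-one (Theorem \ref{Thm:1}), a frame is fat precisely when $\tau_L$ is an isomorphism. I would begin with fatness of $\tau L$. The projections $e, m$ of $\tau L$ are surjective because $\pi_L = e \circ \tau_L$ and $\sigma_L = m \circ \tau_L$ are, so Proposition \ref{Prop:7}, applied with $E = \pi L$, $M = \sigma L$, $l = \lambda_L$, identifies the pointless part of $\tau L$ with $\pi L$ (via $e$), its spatial part with $\sigma L$ (via $m$), and its ligature with $\lambda_L$. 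Hence $\tau(\tau L) = \pi L \times_{\lambda_L} \sigma L = \tau L$, with $\tau_{\tau L}$ the identity, so $\tau L$ is fat. The same proposition, part (1), shows $\tau_L$ sends each $a \in \max L$ to $(\top, a) \in \max \tau L \subseteq \ker \pi_{\tau L}$, whence $\tau_L$ is skinny by Lemma \ref{Lem:18}(3).

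The decisive structural point is that $\tau_L$ is \emph{epi} in $\mbf{Fs}$; together with monicity this makes it a bimorphism and simultaneously forces uniqueness in the universal property. Given skinny $f, g \colon \tau L \to P$ with $f \circ \tau_L = g \circ \tau_L$, I would push the equation through the two reflectors. Since $e = \pi_{\tau L}$ and $m = \sigma_{\tau L}$ are the pointless and spatial reflectors of $\tau L$ (Propositions \ref{Prop:8} and \ref{Prop:5}), with $e \circ \tau_L = \pi_L$ and $m \circ \tau_L = \sigma_L$, the induced maps satisfy $\pi(f) \circ \pi_L = \pi_P \circ f \circ \tau_L$ and likewise for $g$; as $\pi_L$ is surjective this yields $\pi(f) = \pi(g)$, and therefore $\pi_P \circ f = \pi(f) \circ e = \pi(g) \circ e = \pi_P \circ g$. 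The identical argument through $\sigma$ gives $\sigma_P \circ f = \sigma_P \circ g$. Because $\pi_P$ and $\sigma_P$ are jointly monic by Lemma \ref{Lem:13}, we conclude $f = g$.

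For the universal property I would take a skinny $m \colon L \to N$ with $N$ fat, so $\tau_N \colon N \to \tau N$ is an isomorphism, and form $\pi(m) \colon \pi L \to \pi N$ and $\sigma(m) \colon \sigma L \to \sigma N$ via Lemmas \ref{Lem:18} and \ref{Lem:19}. The key preliminary is the naturality square $\lambda_N \circ \pi(m) = \pi\sigma(m) \circ \lambda_L$, obtained by precomposing with the epimorphism $\pi_L$ and chasing the defining identities $\lambda_\bullet \circ \pi_\bullet = \pi_{\sigma\bullet} \circ \sigma_\bullet$ together with $\pi(m)\circ\pi_L = \pi_N\circ m$ and $\sigma(m)\circ\sigma_L = \sigma_N\circ m$; here $\sigma(m)$ is itself skinny (since $\pi_{\sigma N}\circ\sigma(m)\circ\sigma_L = \lambda_N\circ\pi_N\circ m = \top$ on $\max L$), so that $\pi\sigma(m)$ is defined. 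Naturality guarantees that $\pi(m)\times\sigma(m)$ carries the defining subset $\setof{(a,b)}{\lambda_L(a)=\pi_{\sigma L}(b)}$ of $\pi L\times_{\lambda_L}\sigma L$ into the corresponding subset for $N$, and since a homomorphic image of a completely regular frame is completely regular, the restriction lands inside the completely regular coreflection $\tau N$. This produces a skinny $\bar m \colon \tau L \to \tau N$ (skinny because $\bar m(\max \tau L) \subseteq \ker \pi_{\tau N}$, these being the elements with top first coordinate) satisfying $\bar m \circ \tau_L = \tau_N \circ m$, and $\hat m \equiv \tau_N^{-1}\circ \bar m$ is the desired factorization, skinny by Corollary \ref{Cor:1}(3); uniqueness is the epi property of paragraph two.

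The main obstacle is the existence step — manufacturing $\bar m$ — and within it the two genuinely technical hurdles are the naturality of the ligature (needed so that $\pi(m)\times\sigma(m)$ respects the pullback constraint cutting out the subdirect product) and the passage to completely regular coreflections (needed so that the set-theoretic restriction actually defines an $\mbf{Fs}$-morphism into $\tau N$, not merely into the naked frame underlying $N$). Everything else — fatness of $\tau L$, skinniness and monicity of $\tau_L$, and even the uniqueness in the universal property — reduces to bookkeeping with the already-established reflectors and the joint monicity of Lemma \ref{Lem:13}.
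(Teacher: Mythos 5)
Your proposal is correct, and its existence step follows the same route as the paper's proof: a skinny test morphism $\map{m}{L}{N}$ into a fat frame $N$ induces, via Lemmas \ref{Lem:18} and \ref{Lem:19}, maps $\pi(m)$ and $\sigma(m)$ on the pointless and spatial parts, hence a morphism between the diagrams of Theorem \ref{Thm:1} for $L$ and for $N$, and fatness of $N$ turns the arrow $\tau L \to \tau N$ into the desired factorization. The difference is completeness, and it is substantial. What the paper compresses into \enquote{engenders unique maps from each object of the diagram \dots\ to the corresponding object} is exactly the pair of technical points you isolate: the naturality square $\lambda_N \circ \pi(m) = \pi\sigma(m) \circ \lambda_L$, so that $\pi(m) \times \sigma(m)$ respects the pullback constraint cutting out the subdirect product, and the preservation of complete regularity under homomorphic images, so that the restriction lands in the coreflection $\tau N$ rather than merely in the naked frame. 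You also verify two things the paper's proof never addresses but the statement needs: that $\tau L$ is itself fat (Proposition \ref{Prop:7} plus the identifications of Propositions \ref{Prop:8} and \ref{Prop:5}), and that $\tau_L$ is skinny (Proposition \ref{Prop:7}(1) with Lemma \ref{Lem:18}(3)). Finally, your uniqueness argument is sharper: the paper asserts uniqueness only for arrows commuting with both diagrams, whereas the universal property requires uniqueness against an arbitrary skinny map $\tau L \to N$ agreeing with $m$ on $L$; your proof that $\tau_L$ is epi, via the joint monicity of $\pi_P$ and $\sigma_P$ supplied by Lemma \ref{Lem:13}, settles this and, combined with the injectivity of $\tau_L$ from Theorem \ref{Thm:1}, also delivers the \enquote{bi} in bireflective.
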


\begin{proof}
	Provided it is skinny, a test morphism $\map{m}{L}{M}$ factors through both the pointless and spatial reflectors of $L$ and $M$, and therefore engenders unique maps from each object of the diagram of Theorem \ref{Thm:1} for L to the corresponding object of the diagram for $M$. Since $M$ is fat, it is isomorphic to $\tau M$, so we end up with a unique arrow $\tau L \to M$ making both diagrams, and the arrows between them, commute.  
\end{proof}

We characterize the fat reflector of an atomless frame in Theorem \ref{Thm:3}.

\subsection{The fat question}

Theorem \ref{Thm:1} provides a method for investigating the structure of a frame by analyzing the interplay between its spatial and pointless parts, and this article can be regarded as first steps towards such an analysis. Although our results are preliminary rather than conclusive, we believe that the questions they raise are important. Chief among them is Question \ref{Ques:1}.

\begin{question}\label{Ques:1}
	What are the fat frames? Under what circumstances is the representation $\map{\tau_L}{L}{\tau L}$ surjective, i.e., when is $\tau_L(L) = \pi L \times_{\lambda_L} \sigma L$? Proposition \ref{Prop:19} shows an atomless frame with finite spatial part is always fat, but the general question is wide open. 
\end{question}

\section{Attaching points to a pointless frame}\label{Sec:AttPts}

In this section we construct various frames $L$ having a given pointless frame $E$ as their pointless part.  The task has already been done if $E$ is empty, since the scattered frames, and only the scattered frames, have the empty frame as their pointless part. So we assume throughout this section that $E$ is a given frame which is nonempty and pointless, and therefore also atomless.

\subsection{Attaching finitely many points to a pointless frame}

In this subsection we sprinkle finitely many points into the pointless frame $E$ at whim. That is, we show that it is possible to locate the points anywhere in $E$ we please, subject only to the necessity of using an independent family of regular filters to do so. 

\begin{notation*}[$\mbb{2}^I$, $\chix{J}$]
	We denote the two-element frame by $\mbb{2}$, and for an index set $I$ we denote the product of $I$ copies of $\mbb{2}$, i.e., the frame of functions $I \to \mbb{2}$, by $\mbb{2}^I$. Of course, every element of $\mbb{2}^I$ is the characteristic function of a unique subset  $J \subseteq I$; we denote this function by $\chix{J}$, the function itself being defined by the rule 
	\[
	\chix{J}(i)
	= \begin{cases*}
		\top     &\text{if $i \in J$}\\
		\bot     &\text{if $i \notin J$}
	\end{cases*}, \qquad i \in I.
	\]
\end{notation*}

\begin{proposition}\label{Prop:18}
	Let $W$ be a nonempty finite independent family of regular filters on $E$. For each $a \in E$, let $W_a \equiv \setof{w \in W}{a \in w}$. Then   
	\[
		L_W
		\equiv \setof{\splt{Y}{a} \in E \times \mbb{2}^W}{Y \subseteq W_a}
	\]
	is a completely regular subframe of $E \times \mbb{2}^W$ with the following features.
	\begin{enumerate}
		\item 
		$L_W$ is atomless.
		
		\item 
		$\max L_W = \ssetof{\splt{W \setminus \{w\}}{\top}}{W}$.
		
		\item 
		The spatial reflection of $L_W$ is  
		\begin{align*}
			\map{\sigma_{L_W}}{L_W}{\sigma L_W}
			&= \setof{\splt{Y}{\top}}{Y \subseteq W}\\
			&= \big(\splt{Y}{a} \mapsto \splt{Y}{\top}\big), 
			\quad a \in E,\,Y \subseteq W.
		\end{align*}
		The reflection frame $\sigma L_W$ is isomorphic to $\mbb{2}^W$, and the reflector is (isomorphic to) the projection onto the second factor:.

		\item 
		The pointless reflector of $L_W$ is
		\begin{align*}
			\map{\pi_{L_W}}{L_W}{\pi L_W}
			&= \ssetof{\splt{W_a}{a}}{E}\\
			&= \big(\splt{Y}{a} \mapsto \splt{W_a}{a}\big),
			\quad a \in E,\, Y \subseteq W.
		\end{align*}
		The reflection frame $\pi L_W$ is isomorphic to $E$, and the reflector is (isomorphic to) the projection onto the first factor.
		
		\item 
		The functions $\pi_{\sigma L_W}$ and $\lambda_{L_W}$ have empty codomains.
				
		\item 
		In terms of Proposition \ref{Prop:7}, $L_W$ is (isomorphic to) $E \times_{\lambda_{L_W}} \mbb{2}^W$.
	\end{enumerate}
\end{proposition}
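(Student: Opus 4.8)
The plan is to regard $L_W$ as the explicitly described subset of $E \times \mbb{2}^W$ and to extract features (1)--(6) from a direct analysis of its elements, the one substantial step being complete regularity. First I would confirm that $L_W$ is a naked subframe. Closure under $\top$, $\bot$, and binary meets is immediate from each $w$ being closed under finite meets (so $Y_1 \subseteq W_{a_1}$ and $Y_2 \subseteq W_{a_2}$ give $Y_1 \cap Y_2 \subseteq W_{a_1 \wedge a_2}$), and closure under arbitrary joins follows from each $w$ being an upset (so $Y_i \subseteq W_{a_i}$ gives $\bigcup_i Y_i \subseteq W_{\bigvee_i a_i}$).

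The hard part will be complete regularity, and this is where roundness, the identity $\bigvee_w b^* = \top$, and the independence of $W$ must all enter. For $\splt{Y}{a} \in L_W$ I would realize it as a join of elements lying completely below it inside $L_W$. The point is that to witness a rather-below relation $\splt{Y}{a'} \prec \splt{Y}{a}$ within $L_W$ (which, chained over a rational family of witnesses for $a' \combel a$ in $E$, yields $\splt{Y}{a'} \combel \splt{Y}{a}$) the witnessing element is forced to have $\mbb{2}^W$-coordinate $\chix{W \smallsetminus Y}$, hence to have the form $\splt{W \smallsetminus Y}{c}$; this lies in $L_W$ precisely when $c \in w$ for every $w \in W \smallsetminus Y$. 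The crux is to produce, from a raw witness of $a' \prec a$ in $E$, one that simultaneously belongs to all the finitely many filters $w \in W \smallsetminus Y$; I would do this by replacing the raw witness with a meet of complements $b^*$ drawn from the individual filters via their regularity (Lemma \ref{Lem:22}), the independence of $W$ guaranteeing that the finitely many local adjustments remain compatible. I expect this to be the main obstacle.

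Granting complete regularity, features (1)--(5) are computational. For (2), a maximal element of $L_W$ must have first coordinate $\top$ (since $E$ is pointless it admits no predecessor of $\top$) and second coordinate a coatom $\chix{W \smallsetminus \{w\}}$ of the finite Boolean frame $\mbb{2}^W$; conversely each $\splt{W \smallsetminus \{w\}}{\top}$ is prime and hence maximal by Lemma \ref{Lem:2}(1). Atomlessness (1) then follows from Lemma \ref{Lem:2}(2): the only candidate complement of $\splt{W \smallsetminus \{w\}}{\top}$ is $\splt{\{w\}}{\bot}$, which is absent from $L_W$ because $\{w\} \nsubseteq W_\bot = \emptyset$, so no maximal element is complemented and there are no atoms. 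For (3) I would compute $\sigma_{L_W}\splt{Y}{a}$ as the meet of the maximal elements above $\splt{Y}{a}$, obtaining $\splt{Y}{\top}$, so that $\sigma L_W \cong \mbb{2}^W$ with reflector the second projection. For (4) I would use Corollary \ref{Cor:2} together with Proposition \ref{Prop:13} to get $\pi_{L_W}\splt{Y}{a} = \splt{W_a}{a}$, so that $\pi L_W \cong E$ with reflector the first projection. Feature (5) is then immediate: $\mbb{2}^W$ is finite, so every element below $\top$ has a successor and $\pi \sigma L_W = \{\top\}$, whence $\pi_{\sigma L_W}$ and $\lambda_{L_W}$ have empty codomain.

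Finally, (6) assembles the preceding. By (4) the first projection $\pi_{L_W} : L_W \to E$ is an $\bmfrak{E}$-surjection and by (3) the second projection $\sigma_{L_W} : L_W \to \mbb{2}^W$ is an $\bmfrak{M}$-surjection, so $L_W$ is a subdirect $\EM$-product of the pointless frame $E$ and the spatial frame $\mbb{2}^W$ whose ligature is $\lambda_{L_W}$. Theorem \ref{Thm:1} then supplies the canonical one-one comparison $\tau_{L_W} : L_W \to E \times_{\lambda_{L_W}} \mbb{2}^W$, and it remains only to see that $\tau_{L_W}$ is onto, i.e.\ that $L_W$ is fat. Since $L_W$ is atomless by (1) and has finite spatial part by (3), this is precisely the case settled by Proposition \ref{Prop:19}, yielding the asserted isomorphism $L_W \cong E \times_{\lambda_{L_W}} \mbb{2}^W$.
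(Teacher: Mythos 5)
Your treatment of the routine items is sound: the subframe computation, the identification of $\max L_W$, the atomlessness argument via uniqueness of complements and Lemma \ref{Lem:2}(2) (a legitimate alternative to the paper's direct use of Lemma \ref{Lem:24}), and items (3)--(5) all match or acceptably replace what the paper does. But both of the steps you yourself single out as load-bearing are defective.

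\emph{Complete regularity.} Your plan is to take an arbitrary approximant $a' \combel a$ in $E$ and repair its witness so that it lands in every filter $w \in W \setminus Y$. That cannot work, because the obstruction sits on the approximant $a'$, not on the witness. Concretely, take $Y = \emptyset$ and $a$ belonging to some $w \in W$; by roundness of $w$ choose $a' \in w$ with $a' \combel a$. Then $\splt{\emptyset}{a'} \in L_W$ and $a' \combel a$ in $E$, yet \emph{no} witness for $\splt{\emptyset}{a'} \prec \splt{\emptyset}{a}$ exists in $L_W$: as you observe, such a witness must have the form $\splt{W}{c}$ with $c \in \bigcap W \subseteq w$, and then $c \wedge a' = \bot$ with both $c, a' \in w$ contradicts properness of $w$. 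So the correct move is the opposite of the one you propose: discard the bad approximants, retaining only those $a'$ disjoint from some chosen $t(w) \in w$ for each $w \in W \setminus Y$, and then prove that the surviving approximants still join up to $\splt{Y}{a}$. That verification is the actual mathematical content and it is absent from your sketch: one needs the choices to satisfy $t(w)\st \in \bigcap Y$ so the approximants remain in $L_W$ (this is where independence of $W$ enters, Lemma \ref{Lem:26}), and one needs $\bigvee_t \bigwedge_{w \in W \setminus Y} t(w)\st = \top$, which follows from the regularity identity $\bigvee_w b\st = \top$ together with finiteness of $W$ (frame distributivity). This is precisely the paper's chain of Lemmas \ref{Lem:26}, \ref{Lem:27}, and \ref{Lem:28}.

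\emph{Item (6).} You reduce (6) to fatness of $L_W$ and invoke Proposition \ref{Prop:19}. This is circular: Proposition \ref{Prop:19} comes after Proposition \ref{Prop:18}, and its identification of the map $k$ with $\tau_M$ --- hence its conclusion that an atomless frame with finite spatial part is fat --- is asserted by the paper to hold precisely ``in light of Proposition \ref{Prop:18}(6)''. The fatness you appeal to therefore presupposes the item you are proving. A proof of (6) must compare $L_W$ directly with the construction of Proposition \ref{Prop:7}, and this is not a purely formal matter: by item (5) the ligature $\lambda_{L_W}$ has trivial codomain, so the defining condition of $L'$ in Proposition \ref{Prop:7} becomes vacuous, and identifying the resulting frame with $L_W$ requires its own argument rather than a citation.
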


\begin{proof}
	To show that $L_W$ is a subframe of $E \times \mbb{2}^W$, consider $\splt{Y_i}{a_i} \in L_W$. Then $	\splt{Y_1}{a_1} \wedge \splt{Y_2}{a_2} 	= \splt{Y_1 \cap Y_2}{a_1 \wedge a_2}$ and $a_1 \wedge a_2 \in \cap(Y_1 \cap Y_2)$ hence $\splt{Y_1}{a_1} \wedge \splt{Y_2}{a_2} \in L_W$. Likewise if $\ssetof{\splt{Y_i}{a_i}}{I} \subseteq L_W$ and $Y \equiv \bigcup_I Y_i$ then $	\bigvee_I \splt{Y_i}{a_i} = \splt{Y}{\bigvee_I a_i}$ and $\bigvee_I a_i \in \cap Y$, hence $\bigvee_I \splt{Y_i}{a_i} \in L_W$. The proof that $L_W$ is completely regular is completed by the Lemmas \ref{Lem:26}, \ref{Lem:27}, and \ref{Lem:28}, in which the symbols retain their meaning in the theorem statement. 
	
	(1) Suppose for the sake of argument that $\splt{Y}{a}$ is an atom of $L_W$. If $Y = \emptyset$ then $a$ would have to be an atom of $E$, which is ruled out by hypothesis. If $Y$ contains two distinct elements $y_i \in Y$ then we would have $\bot < \splt{Y\setminus \{y_1\}}{a} \neq \splt{Y\setminus \{y_2\}}{a} < \splt{Y}{a}$, contrary to assumption. So $Y$ must be a singleton, say $Y = \{y\}$, and $a$ must lie in $y$. But Lemma \ref{Lem:24} assures the existence of an element $b \in y$ such that $b < a$, so that we have the contradiction $\bot < \splt{Y}{b} < \splt{Y}{a}$. We conclude no atom exists in $L_W$.
	
	(2) Since $E$ is pointless, it is clear that the only predecessors of $\top_{E \times \mbb{2}^W} = \splt{W}{\top}$ are of the form $\splt{W\setminus\{w\}}{\top}$ for elements $w \in W$. (3) Thus the maximal elements above a typical member $\splt{Y}{a} \in L_W$ are those of the form $\splt{W\setminus\{w\}}{\top}$ for $w \in W\setminus Y$, and their meet is $\splt{Y}{\top}$. (4) And with Lemma \ref{Lem:2}(3) in mind, it is clear that the successors of a typical element $\splt{Y}{a}$ are of the form $\splt{Y \cup\{w\}}{a}$ for elements $w \in W \setminus Y$, and the join of these successors is then $\splt{W_a}{a}$. 	
\end{proof} 

\begin{notation*}[partition $W = Y \oplus Z$]
	We write $W = Y \oplus Z$ to indicate that subsets $Y, Z \subseteq W$ partition $W$, i.e., $Y \cup Z = W$ and $Y \cap Z = \emptyset$.	
\end{notation*}

\begin{lemma}\label{Lem:26}
	For any nontrivial partition $W = Y \oplus Z$ and for any $z \in Z$,  
	\[
		z' \equiv \setof{b \in z}{b\st \in \cap Y} \neq \emptyset
		\qtq{and} \bigvee_{z'} b\st = \top.
	\]
\end{lemma}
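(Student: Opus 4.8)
The plan is to establish the two assertions separately, drawing on the independence of $W$ for the nonemptiness and on the regularity of $z$ for the join.

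\emph{Nonemptiness of $z'$.} First I would produce a single witness. Each $y \in Y$ is distinct from $z$ because $Y$ and $Z$ are disjoint, so the independence of $W$ furnishes disjoint elements $c_y \in z$ and $d_y \in y$. Disjointness means $d_y \wedge c_y = \bot$, i.e.\ $d_y \leq c_y\st$, and since $y$ is an upset this forces $c_y\st \in y$. Now I exploit finiteness: as $z$ is closed under binary (hence finite) meets, the element $c \equiv \bigwedge_{y \in Y} c_y$ lies in $z$. Because $c \leq c_y$ gives $c\st \geq c_y\st$ and each $y$ is an upset, we get $c\st \in y$ for every $y \in Y$, that is $c\st \in \cap Y$. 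Hence $c \in z'$, so $z' \neq \emptyset$.

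\emph{The join is $\top$.} Here the subtlety is that the join ranges over the restricted set $z'$, so one cannot simply invoke $\bigvee_z b\st = \top$ directly. The device I would use is to multiply arbitrary members of $z$ by the fixed witness $c$ just constructed. For any $a \in z$ the meet $c \wedge a$ again lies in $z$, and since $c \wedge a \leq c$ we have $(c \wedge a)\st \geq c\st \in \cap Y$; as $\cap Y$ is an upset this shows $c \wedge a \in z'$. On the other hand $c \wedge a \leq a$ gives $(c \wedge a)\st \geq a\st$. Consequently
\[
	\bigvee_{b \in z'} b\st \;\geq\; \bigvee_{a \in z} (c \wedge a)\st \;\geq\; \bigvee_{a \in z} a\st \;=\; \top,
\]
the last equality being precisely the regularity of $z$. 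This forces $\bigvee_{z'} b\st = \top$.

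The only real idea beyond routine manipulation is the multiplication trick in the second part: intersecting each $a \in z$ with the fixed witness $c$ simultaneously keeps us inside $z'$ (because pseudocomplements only grow under meets) and does not shrink the pseudocomplement below $a\st$, so the restricted join still dominates the full regular join. Independence is used exactly once, to manufacture $c$; I do not expect to need the roundness of the filters beyond the $\bigvee b\st = \top$ clause of regularity. If the partition is permitted to have $Y = \emptyset$, then $\cap Y = E$, whence $z' = z$ and the claim collapses immediately to the regularity of $z$.
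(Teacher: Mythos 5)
Your proof is correct and follows essentially the same route as the paper: nonemptiness of $z'$ via independence (disjoint elements give $c_y\st \in y$, then take the finite meet), and the join claim reduced to nonemptiness by meeting elements of $z$ with the witness $c$. Your ``multiplication trick'' is exactly the justification the paper leaves implicit in its opening sentence ``Since $z$ is regular, it is enough to show that $z'$ is nonempty,'' so you have simply spelled out what the paper compresses.
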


\begin{proof}
	Since $z$ is regular, it is enough to show that $z'$ is nonempty. But since $W$ is independent, for each $y \in Y$ there exists an element $b_y \in z$ such that $b_y\st \in y$, yielding $\bigwedge_Y b_y \in z'$. 
\end{proof}

\begin{lemma}\label{Lem:27}
	Let $W = Y \oplus Z$ be a nontrivial partition, let $\map{t}{Z}{E}$ be a function such that $t(z) \in z'$ for all $z \in Z$, and let $a,c \in \cap Y$ be such that $c \combel a \leq \bigwedge_Z t(z)\st$. Then both $\splt{Y}{c}$ and $\splt{Y}{a}$ lie in $L_W$, and $\spltt{Y}{c} \combel \splt{Y}{a}$.  
\end{lemma}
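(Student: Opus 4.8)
The first assertion is immediate: by definition of $L_W$, an element $\splt{Y}{x}$ lies in $L_W$ precisely when $x \in \cap Y$, and both $a$ and $c$ are assumed to lie in $\cap Y$. The substance of the lemma is the relation $\spltt{Y}{c} \combel \splt{Y}{a}$, and the plan is to lift a witnessing family for $c \combel a$ in $E$ up to $L_W$. Concretely, I would fix a witnessing family $\ssetof{u_p}{\mbb{Q}} \subseteq E$ for $c \combel a$, so that $c \leq u_p \prec u_q \leq a$ for all $p < q$, and propose $\ssetof{\splt{Y}{u_p}}{\mbb{Q}}$ as a witnessing family for $\spltt{Y}{c} \combel \splt{Y}{a}$.

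Two of the three conditions are routine. Since $\cap Y$ is a filter, hence an upset, and $c \in \cap Y$ with $c \leq u_p$, each $u_p$ lies in $\cap Y$, so $\splt{Y}{u_p} \in L_W$; and the sandwiching $\spltt{Y}{c} \leq \splt{Y}{u_p} \leq \splt{Y}{a}$ reduces to $c \leq u_p \leq a$ in the first coordinate, the second coordinate being constantly $\chix{Y}$.

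The heart of the matter is to produce, for each $p < q$, a witness in $L_W$ for $\splt{Y}{u_p} \prec \splt{Y}{u_q}$. Computing meets and joins coordinatewise in the ambient frame $E \times \mbb{2}^W$, of which $L_W$ is a subframe, a candidate witness $\splt{G}{d}$ must satisfy $\chix{Y} \wedge \chix{G} = \bot$ and $\chix{G} \vee \chix{Y} = \top$ in $\mbb{2}^W$, which forces $G = Z$; so I need $d \in \cap Z$ with $u_p \wedge d = \bot$ and $d \vee u_q = \top$. I would take $d \equiv u_p\st$, the canonical witness of $u_p \prec u_q$: then $u_p \wedge u_p\st = \bot$ is automatic and $u_p\st \vee u_q = \top$ follows from $u_p \prec u_q$.

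The main obstacle, and the only place the hypotheses on $t$ are used, is verifying $u_p\st \in \cap Z$, i.e.\ $u_p\st \in z$ for every $z \in Z$. Here I would argue as follows: from $u_p \leq a$ pseudocomplementation gives $u_p\st \geq a\st$; and from $a \leq \bigwedge_Z t(z)\st$ we get $a \wedge t(z) = \bot$, equivalently $t(z) \leq a\st$, for each $z \in Z$. Hence $u_p\st \geq a\st \geq t(z)$, and since $t(z) \in z$ and $z$ is an upset, $u_p\st \in z$. As this holds for all $z \in Z$, the element $\splt{Z}{u_p\st}$ lies in $L_W$ and witnesses $\splt{Y}{u_p} \prec \splt{Y}{u_q}$, completing the verification that $\ssetof{\splt{Y}{u_p}}{\mbb{Q}}$ is the desired witnessing family.
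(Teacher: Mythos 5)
Your proof is correct and follows essentially the same route as the paper's: lift the witnessing family for $c \combel a$ coordinatewise, and certify each $\splt{Y}{u_p} \prec \splt{Y}{u_q}$ by a witness of the form $\splt{Z}{d}$ with $d \in \cap Z$ because $d$ sits above every $t(z)$. In fact, since $\bigvee_Z t(z) \leq a\st \leq u_p\st$, your witness $\splt{Z}{u_p\st}$ coincides with the paper's witness $\spltt{Z}{u_p\st \vee \bigvee_Z t(z)}$.
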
	

\begin{proof}
	It is sufficient to show that under the weaker assumption $a \prec c$ we can conclude that $\splt{Y}{c} \prec \splt{Y}{a}$. For in the presence of this weaker result, the stronger hypothesis that $c \combel a$ posits the existence of a witnessing family $\ssetof{a_q}{\mbb{Q}}$ such that $c \leq a_q \leq a$ for all $q \in Q$, whereupon the weaker conclusion produces a witnessing family $\ssetof{\splt{Y}{a_q}}{\mbb{Q}}$ establishing the stronger conclusion $\splt{Y}{c} \combel \splt{Y}{a}$.
	
	Therefore assume that $c \prec a$, and consider the element $\splt{Z}{c\st \vee \bigvee_Z t(z)}$, an element which lies in $L_W$ by construction. We claim that this element witnesses $\splt{Y}{c} \combel \splt{Y}{a}$. The claim relies on the fact that $\big(\bigvee_Z t(z)\big)^* = \bigwedge_Z t(z)\st$, so that $c \wedge \bigvee_Z t(z) = \bot$. Therefore
	\begin{gather*}
		\spltt{Y}{c} \wedge \spltt{Z}{c^* \vee \bigvee\nolimits_Z t(z)} = 
		\splt{\emptyset}{\bot_E}
		= \bot_{L_W}
		\qtq{and}\\
		 \splt{Y}{a} \vee \spltt{Z}{c^* \vee \bigvee\nolimits_Z t(z)} 
		 = \splt{W}{\top_E}
		 = \top_{L_W}. \qedhere 
	\end{gather*}
\end{proof}

\begin{lemma}\label{Lem:28}
	$L_W$ is completely regular.
\end{lemma}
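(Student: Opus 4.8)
The plan is to verify complete regularity of $L_W$ directly from its definition: I would show that every element $\splt{Y}{a_0}$ of $L_W$ (so $Y \subseteq W_{a_0}$, equivalently $a_0 \in \cap Y$) equals the join of the elements of $L_W$ lying completely below it. Since $L_W$ is closed under the operations of $E \times \mbb{2}^W$, its joins are computed coordinatewise, so it will suffice to exhibit a family of elements of the form $\splt{Y}{c}$ with $\splt{Y}{c} \combel \splt{Y}{a_0}$ whose first coordinates join to $a_0$; their (constant) second coordinates already union to $Y$, and the reverse inequality $\bigvee\{u : u \combel \splt{Y}{a_0}\} \le \splt{Y}{a_0}$ is automatic. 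Note that for $\splt{Y}{c}$ to lie in $L_W$ at all we need $c \in \cap Y$, so the whole difficulty is to approximate $a_0$ from below by elements that stay inside the filter $\cap Y$.

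First I would record the preliminary fact that for $a \in \cap Y$ one has $a = \bigvee\{c \in \cap Y : c \combel a\}$. The filter $\cap Y$ is a finite intersection of round filters and is therefore round: if $a \in \cap Y$ then each $w \in Y$ supplies $b_w \in w$ with $b_w \combel a$, and $b \equiv \bigvee_{w \in Y} b_w$ lies in $\cap Y$ (being above each $b_w$) and satisfies $b \combel a$. Then for any $c_0 \combel a$ in $E$, the element $c_0 \vee b$ lies in $\cap Y$, is still $\combel a$, and dominates $c_0$; as $E$ is completely regular these $c_0$ join to $a$, giving the claim. (For $Y = \emptyset$ this is just the complete regularity of $E$, since $\cap Y = E$.)

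Next, for the main case $Y \neq W$ I would set $Z \equiv W \setminus Y \neq \emptyset$ and, for each choice function $t \in \prod_{z \in Z} z'$, let $d_t \equiv \bigwedge_{z \in Z} t(z)\st$. Since $t(z) \in z'$ forces $t(z)\st \in \cap Y$ and $a_0 \in \cap Y$, the element $a_0 \wedge d_t$ lies in $\cap Y$ and is $\le \bigwedge_Z t(z)\st$. Thus for every $c \in \cap Y$ with $c \combel a_0 \wedge d_t$, Lemma \ref{Lem:27} gives $\splt{Y}{c} \combel \splt{Y}{a_0 \wedge d_t} \le \splt{Y}{a_0}$, hence $\splt{Y}{c} \combel \splt{Y}{a_0}$. (When $Y = \emptyset$ one has $z' = z$ and Lemma \ref{Lem:26} reduces to the regularity of $z$, while the argument of Lemma \ref{Lem:27} still applies, using only $Z \neq \emptyset$.) By the preliminary fact applied to $a_0 \wedge d_t \in \cap Y$, these $c$ join to $a_0 \wedge d_t$, so the join of all such $\splt{Y}{c}$ is $\splt{Y}{a_0 \wedge \bigvee_t d_t}$. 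The crux is the identity $\bigvee_t d_t = \top$: since $Z$ is finite, the frame distributive law yields $\bigwedge_{z \in Z}\bigvee_{b \in z'} b\st = \bigvee_{t \in \prod_z z'}\bigwedge_{z \in Z} t(z)\st$, and each inner join is $\top$ by Lemma \ref{Lem:26}; hence $\bigvee_t d_t = \bigwedge_{z \in Z}\top = \top$ and the total join is $\splt{Y}{a_0}$.

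It remains to treat $Y = W$ (where $Z = \emptyset$ and Lemmas \ref{Lem:26}–\ref{Lem:27} do not apply). For $c \in \cap W$ with $c \prec a_0$ witnessed by $d$ (so $c \wedge d = \bot$ and $a_0 \vee d = \top$), the element $\splt{\emptyset}{d} \in L_W$ witnesses $\splt{W}{c} \prec \splt{W}{a_0}$, since $\splt{W}{c} \wedge \splt{\emptyset}{d} = \bot_{L_W}$ and $\splt{W}{a_0} \vee \splt{\emptyset}{d} = \splt{W}{\top} = \top_{L_W}$; passing to witnessing families upgrades $c \combel a_0$ to $\splt{W}{c} \combel \splt{W}{a_0}$, and the preliminary fact makes these join to $\splt{W}{a_0}$. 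The main obstacle throughout is exactly the tension flagged in the first paragraph — keeping the approximating first coordinates $c$ inside $\cap Y$ so that $\splt{Y}{c} \in L_W$, while still filling $a_0$ completely — and it is resolved by the roundness of $\cap Y$ together with the distributivity identity $\bigvee_t d_t = \top$.
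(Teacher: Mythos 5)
Your proposal is correct and follows essentially the same route as the paper's proof: approximate $\splt{Y}{a}$ by elements $\splt{Y}{c}$ with $c \in \cap Y$ and $c \combel a \wedge \bigwedge_Z t(z)\st$ (invoking Lemma \ref{Lem:27}), use the roundness of the finitely many filters in $Y$ to see these $c$ join to $a \wedge \bigwedge_Z t(z)\st$, and then apply the finite distributive law to interchange $\bigvee_T$ and $\bigwedge_Z$ so that Lemma \ref{Lem:26} forces the total join up to $\splt{Y}{a}$. Your explicit treatment of the edge cases $Y = \emptyset$ and $Y = W$, and your proof of the preliminary roundness fact, merely spell out details the paper leaves implicit.
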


\begin{proof}
	Given the element $\splt{Y}{a} \in L_W$, put $Z \equiv W \setminus Y$. Let $T$ be the family of all functions $\map{t}{Z}{E}$ such that $t(z) \in z'$ for all $z \in Z$. For each $t \in T$ let 
	\[
		C(a, t)
		\equiv \setof{c \in \cap Y}{c \combel a \wedge \bigwedge\nolimits_Z t(z)\st}.
	\]
	Note that since $a \in \cap Y$ and each filter $y \in Y$ is round, $\bigvee C(a, t) = a \wedge \bigwedge_Z t(z)\st$.  Then 
	\begin{gather*}
		\bigvee_T\sbv{r}{C(a, t)} \spltt{Y}{c}
		= \spltt{Y}{\bigvee_T\bigvee{C(a, t)}} =
		\spltt{Y}{\bigvee_T \left(a \wedge \bigwedge_Z t(z)\st \right)} =\\	
		\spltt{Y}{a \wedge \bigvee_T \bigwedge_Z t(z)^*}
		= \spltt{Y}{a \wedge \bigwedge_Z \bigvee_T t(z)\st}
		= \spltt{Y}{a}. \qedhere
	\end{gather*}
\end{proof}

Lemma \ref{Lem:28} completes the proof of Proposition \ref{Prop:18}. Proposition \ref{Prop:19} then completes the circle of ideas by showing that $L_W$ is characterized by its properties.  This  requires the consideration of the spatial support of a frame.

\subsection{The spatial support of a frame}

\begin{notation*}[$x_a$, $y_a$, spatial support]
	In a frame $L$ with pointless part $E$, we denote the ideals in $L$ and $E$ generated by an element $a \in \max L$ by
	\[
		x_a
		\equiv \setof{b \in L}{b \nleq a}
		\qtq{and}
		y_a
		\equiv \setof{b \in E}{b \nleq a}.
	\] 
	The \emph{spatial support of $L$ is $\ssetof{y_a}{\max L}$}.
\end{notation*}

\begin{lemma}
	In a frame $L$ with pointless part $E$ and maximal element $a$,
	\[
		y_a 
		= \pi_L(x_a), \quad 
		x_a 
		= \pi_L^{-1}(y_a), \qtq{and}
		y_a = x_a \cap E.
	\]
\end{lemma}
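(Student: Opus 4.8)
The plan is to treat the three equalities in order of increasing difficulty, using throughout that $E=\pi L$ is the sublocale $\fix\pi_L\subseteq L$, that $\pi_L(b)$ is the least element of $E$ above $b$, and that $\pi_L$ restricts to the identity on $E$. The equality $y_a=x_a\cap E$ is then immediate: since $E\subseteq L$ as a set, intersecting $x_a=\setof{b\in L}{b\nleq a}$ with $E$ retains exactly the members of $E$ not below $a$, which is precisely the defining description of $y_a$.

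For $y_a=\pi_L(x_a)$ I would argue by double containment. Given $b\in x_a$, the chain $\pi_L(b)\geq b\nleq a$ forces $\pi_L(b)\nleq a$, and since $\pi_L(b)\in E$ we get $\pi_L(b)\in y_a$; this gives $\pi_L(x_a)\subseteq y_a$. Conversely, given $c\in y_a$ we have $c\in E$, hence $\pi_L(c)=c$ and $c\in x_a$, so that $c=\pi_L(c)\in\pi_L(x_a)$. Both halves are routine, and the forward half simultaneously yields the easy inclusion $x_a\subseteq\pi_L^{-1}(y_a)$, since $b\in x_a\Rightarrow\pi_L(b)\in y_a\Rightarrow b\in\pi_L^{-1}(y_a)$.

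The substance of the lemma, and the step I expect to be the main obstacle, is the reverse inclusion $\pi_L^{-1}(y_a)\subseteq x_a$: one must show that $\pi_L(b)\nleq a$ forces $b\nleq a$, equivalently (contrapositively) that $\pi_L$ carries elements lying below $a$ back to elements below $a$. The delicate feature is the action of $\pi_L$ on $\ker\pi_L\supseteq\max L$ (Proposition~\ref{Prop:3}), where $\pi_L$ sends elements directly to $\top$; order preservation alone therefore does not settle the contrapositive, and one is forced to exploit more than the bare nucleus. My plan is to bring in the roundness of the target filter: by Lemma~\ref{Lem:22}(6), $x_a$ is a maximal proper round filter with $\bigvee_{x_a}b^*=a$, and the primeness of the maximal element $a$ governs which elements get pushed up past $a$ by $\pi_L$. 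I would isolate this reverse inclusion, reduce it to a statement phrased purely in terms of the round filter $x_a$ on $L$ and its image $y_a$ on $E$, and verify it there; I expect essentially all of the genuine work — together with careful attention to exactly which hypotheses are in force — to concentrate at this single point.
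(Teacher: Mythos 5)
Your verifications of $y_a = x_a \cap E$ and $y_a = \pi_L(x_a)$, and of the easy inclusion $x_a \subseteq \pi_L^{-1}(y_a)$, are all correct; the paper states this lemma without any proof, and these routine computations are evidently what it contemplated. But your final paragraph is a plan, not a proof: the reverse inclusion $\pi_L^{-1}(y_a) \subseteq x_a$ is never established, only deferred to an unspecified argument combining roundness with Lemma \ref{Lem:22}(6). That deferral is a genuine gap, and in this case an unclosable one, because the step you postponed is not merely delicate but false. Since $a \in \max L \subseteq \ker \pi_L$ (Proposition \ref{Prop:3}), we have $\pi_L(a) = \top$; and $\top \in y_a$, because $\top \in E$ and $\top \nleq a$ (a maximal element satisfies $a < \top$). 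Hence $a \in \pi_L^{-1}(y_a) \smallsetminus x_a$ in \emph{every} frame possessing a maximal element. More generally $\pi_L^{-1}(y_a) \supseteq \ker \pi_L$, and the discrepancy can be large: in a finite Boolean algebra $\pi_L(\bot) = \top$, so $\pi_L^{-1}(y_a)$ is all of $L$. No argument, roundness-based or otherwise, can prove the second displayed equality as printed; the statement itself holds only as the inclusion $x_a \subseteq \pi_L^{-1}(y_a)$.

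Your instinct that the roundness of $x_a$ is the relevant tool is vindicated, but only for a repaired statement in which the kernel is excised. Suppose $L$ is atomless, so that $\pi_L(\bot) = \bot$ by Lemma \ref{Lem:7}(3) and the filter $z \equiv \pi_L^{-1}(y_a)$ is proper. Since $x_a$ is round and $x_a \subseteq z$, every $b \in x_a$ admits $c \in x_a \subseteq z$ with $c \combel b$, so $x_a$ is contained in the round part $\mathring{z}$, which is a proper round filter by Lemma \ref{Lem:22}(2); as $x_a$ is a \emph{maximal} proper round filter by Lemma \ref{Lem:22}(6), maximality forces $x_a = \mathring{z}$. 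This is exactly the pattern of the paper's Lemma \ref{Lem:23}(3), and it is the strongest true statement in the direction you were aiming; had you carried your plan through, you would have discovered that it terminates here rather than at the equality $x_a = \pi_L^{-1}(y_a)$.
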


\begin{lemma}\label{Lem:25}
	Let $L$ be an atomless frame with pointless part $E$. 
	\begin{enumerate}
		\item 
		If $x$ is a proper regular filter on $L$ then $\pi_L(x)$ is a proper regular filter on $E$.
		
		\item 
		If $x$ is a maximal proper round filter on $L$ then $\pi_L(x)$ is a proper regular filter on $E$.
		
		\item 
		If $A$ is a subset of $\max L$ then $\ssetof{\pi_L(x_a)}{A}$ is an independent family of proper regular filters on $E$.
	\end{enumerate}
\end{lemma}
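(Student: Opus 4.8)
The key observation driving all three parts is that, because $L$ is atomless, the pointless reflector $\map{\pi_L}{L}{E}$ is a \emph{dense} surjection. This is precisely the content of Lemma \ref{Lem:7}(3), which equates atomlessness of $L$ with $e = \pi_L(\bot) = \bot$ and with $\pi L$ being a dense quotient of $L$. Every assertion then follows by feeding this density into the transport results assembled in Lemma \ref{Lem:23}, combined with the classification of maximal proper round filters in Lemma \ref{Lem:22}.

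For (1) I would invoke Lemma \ref{Lem:23}(4) essentially verbatim: a dense surjection carries a regular filter on $L$ to a regular filter on $E$, so $\pi_L(x)$ is regular. Properness is a separate point, secured by Lemma \ref{Lem:23}(2): the filter generated by $\pi_L(x)$ is proper exactly when $\pi_{L*}(\bot) \notin x$, and density gives $\pi_{L*}(\bot) = \bot$, which the proper filter $x$ excludes.

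Part (2) is where care is needed, and I regard it as the main obstacle: a \emph{maximal} proper round filter on $L$ need not itself be regular. Here Lemma \ref{Lem:22}(5) supplies exactly the dichotomy I need, namely that $\bigvee_x b^*$ is prime, so either $\bigvee_x b^* = \top$—in which case $x$ is already a proper regular filter and part (1) applies—or $\bigvee_x b^* = a \in \max L$ with $x = x_a$. In this second branch I would appeal to Lemma \ref{Lem:23}(5): since $a \in \max L$ forces $\pi_L(a) = \top$ (maximal elements lie in $\ker \pi_L$ by Proposition \ref{Prop:3}), the dense map $\pi_L$ sends $x_a$ to a regular filter, which the preceding lemma identifies as $y_a$; properness again follows from $\pi_{L*}(\bot) = \bot \notin x_a$. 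The delicate point is recognizing that $x_a$, though \emph{not} regular on $L$, nonetheless maps to a regular filter on $E$, and that this hinges entirely on density.

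Finally, for (3) each $x_a$ with $a \in A$ is a maximal proper round filter by Lemma \ref{Lem:22}(6), so part (2) already delivers that every $\pi_L(x_a)$ is a proper regular filter on $E$. For independence I would transport Lemma \ref{Lem:22}(4): distinct $a, a' \in A$ yield $b \in x_a$ and $b' \in x_{a'}$ with $b \wedge b' = \bot$, and since $\pi_L$ preserves binary meets and the bottom, $\pi_L(b) \wedge \pi_L(b') = \bot$ with $\pi_L(b) \in \pi_L(x_a)$ and $\pi_L(b') \in \pi_L(x_{a'})$. Hence $\{\pi_L(x_a) : a \in A\}$ is independent. (This also falls out of the Corollary following Lemma \ref{Lem:23}, of which part (3) is essentially the restriction to a subset $A \subseteq \max L$.)
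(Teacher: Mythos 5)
Your proposal is correct and takes essentially the same route as the paper: density of $\pi_L$ coming from atomlessness (Lemma \ref{Lem:7}(3)), transport of regularity through the dense surjection for (1), the dichotomy of Lemma \ref{Lem:22}(5) for (2), and transport of disjoint elements via Lemma \ref{Lem:22}(4) and (6) for (3). In fact your citation of Lemma \ref{Lem:23}(5) in part (2) is the correct reference where the paper's proof cites Lemma \ref{Lem:23}(4) (which requires $x$ to be regular on $L$ and so does not cover the branch $x = x_a$), and your explicit check that $\pi_L(a) = \top$ supplies the hypothesis of that lemma which the paper leaves implicit.
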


\begin{proof}
	(1) We can identify $E$ with $\pi L$, and think of $\pi_L$ as a dense surjection from $L$ onto $E$. One consequence is that $\pi_{L*}(\bot) = \bot$, so that $\pi_L(x)$ is proper because $x$ is proper. Another consequence is that the skeleton map $\delta_L$ factors through $\pi_L$ so that the skeleton itself is contained in $\pi L$. That implies that for an element $a \in \pi L$, its  pseudocomplements in $L$ and $\pi L$ coincide. Thus for a regular filter $x$ on $L$,  
	\[
	\top = \bigvee_x b\st 
	\implies \top = \pi_L\left(\bigvee_x b\right)
	= \bigvee_x \pi_L(b\st)
	= \bigvee_x \pi_L(x)\st. 
	\]
	
	(2) If $x$ is a maximal proper round filter on $L$ which is not regular then it is of the form $x_a$ for some $a \in \max L$ by Lemma \ref{Lem:22}(5), in which case $\pi_L(x)$ is regular by Lemma \ref{Lem:23}(4).
	
	(3) If $a_1$ and $a_2$ are distinct maximal elements of $L$ then $x_{a_1}$ and $x_{a_2}$ are distinct maximal proper round filters by Lemma \ref{Lem:22}(6), and therefore contain disjoint elements $b_i \nleq a_i$ by Lemma \ref{Lem:22}(4). Since $\pi_L$ is a dense surjection, $\pi_L(b_1)$ and $\pi_L(b_2)$ are disjoint elements of 
	$\pi_L(x_{a_1})$ and $\pi_L(x_{a_2})$, respectively.
\end{proof}

\begin{lemma}\label{Lem:24}
	A proper regular filter on any frame cannot contain a least element. 
\end{lemma}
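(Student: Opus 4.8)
The plan is to argue by contradiction, supposing that a proper regular filter $F$ on a frame $L$ possesses a least element $m \in F$, and then to play the regularity condition $\bigvee_{b \in F} b\st = \top$ off against the fact that $m$ is a lower bound for all of $F$. The whole point is that a least element of $F$ controls every pseudocomplement appearing in that supremum, and this collides with properness.

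Concretely, I would first record that since $m$ is the least element of $F$, we have $m \leq b$ for every $b \in F$. Because pseudocomplementation is order reversing, this yields $b\st \leq m\st$ for every $b \in F$, and hence
\[
	\top = \sbv{lr}{b \in F} b\st \leq m\st,
\]
the equality being exactly the defining feature of a regular filter. Thus $m\st = \top$. Since $m \wedge m\st = \bot$ always holds for a pseudocomplement, we conclude $m = m \wedge \top = m \wedge m\st = \bot$. But $F$ is proper, so $\bot \notin F$, and this contradiction finishes the argument.

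There is no real obstacle here: the argument is short and relies only on the join condition in the definition of regularity together with properness, so roundness is not even used. The only points needing care are the two standard lattice facts invoked, namely that $b \mapsto b\st$ reverses order and that $m\st = \top$ forces $m = \bot$; both are immediate from the definition of pseudocomplement. (For comparison, one could instead extract roundness to obtain an element $b \in F$ with $b \combel m$, whence $b \leq m \leq b$ gives $m \combel m$ and so $m$ is complemented, but threading that back to a contradiction is more roundabout than the direct computation above, so I would present the pseudocomplement route.)
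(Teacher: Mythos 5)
Your proof is correct, and it is in fact leaner than the paper's. The paper argues via roundness: if the proper regular filter $x$ had a least element $a$, then the round condition forces $a \combel a$ (any $b \in x$ with $b \combel a$ satisfies $b \geq a$), hence $a$ is complemented, and then $\bigvee_x b\st = a\st < \top$, contradicting regularity. You bypass roundness entirely: order-reversal of pseudocomplementation gives $\bigvee_F b\st \leq m\st$, so the join condition forces $m\st = \top$, hence $m = m \wedge m\st = \bot$, contradicting properness. The two arguments stage the same collision --- the least element's pseudocomplement caps the join $\bigvee_F b\st$ --- but approach it from opposite ends: the paper shows the cap is $< \top$, you show the cap being $\top$ kills $m$. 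What your route buys is a strictly weaker hypothesis: the lemma holds for any proper filter satisfying $\bigvee_F b\st = \top$, whether round or not, whereas the paper's detour through "completely below itself, therefore complemented" consumes roundness without needing it (the inequality $a\st < \top$ already follows from $a > \bot$ alone). The alternative you sketch parenthetically at the end is essentially the paper's actual proof, and your judgment that the direct pseudocomplement computation is preferable is sound.
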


\begin{proof}
	If a proper regular filter $x$ on a frame $L$ contained a least element $a$ then $a$ would have to be completely below itself and therefore be complemented. But in that case $\bigvee_x b\st = a\st < \top$, contrary to hypothesis. 
\end{proof}

\begin{proposition}\label{Prop:19}
	An atomless frame $M$ having pointless part $E$ and finite spatial support $W$ is isomorphic to $L_{W}$. Explicitly, the map 
	\[
		\map{k}{M}{L_W}
		\equiv \left(a \mapsto \spltt{W_a}{\pi_M(a)}\right), \qquad  a \in M,
	\] 
	is an $\bmfrak{E}$-isomorphism, and in light of of Proposition \ref{Prop:18}(6), is (isomorphic to) the map $\tau_M$ from Theorem \ref{Thm:1}. In particular, an atomless frame with finite spatial part is fat.  
\end{proposition}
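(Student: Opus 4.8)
The plan is to verify directly that the displayed map $k$ is a frame isomorphism, and then to recognize it as the fat reflector $\tau_M$. Throughout I would write $\max M = \ssetof{c_w}{w \in W}$, using the bijection $c \mapsto y_c$ between $\max M$ and the spatial support $W$ supplied by Lemmas~\ref{Lem:22}(6) and \ref{Lem:25}(3); thus $W$ is a nonempty finite independent family of regular filters on $E$ (the case $\max M = \emptyset$ is trivial, since then $M = \pi M = E$), and $L_W$ is the frame of Proposition~\ref{Prop:18}. For $a \in M$ I read $W_a = \setof{w \in W}{a \nleq c_w}$, which agrees with the meaning in Proposition~\ref{Prop:18} when $a \in E$ because $y_{c_w} = \setof{b \in E}{b \nleq c_w}$.

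First I would check that $k$ is a well-defined frame homomorphism. Since $\pi_M(a) \geq a$ we have $W_a \subseteq W_{\pi_M(a)}$, so $k(a) \in L_W$. Preservation of $\bot$ and $\top$ is immediate ($\pi_M(\bot) = \bot$ by atomlessness and Lemma~\ref{Lem:7}(3)). For binary meets, primeness of the $c_w$ gives $W_{a_1 \wedge a_2} = W_{a_1} \cap W_{a_2}$, while $\pi_M$ preserves meets; for arbitrary joins, complete primeness of $x_{c_w}$ (Lemma~\ref{Lem:22}(7)) gives $W_{\bigvee_i a_i} = \bigcup_i W_{a_i}$, and $\pi_M$ preserves joins. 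Injectivity would then come from Lemma~\ref{Lem:13}: if $k(a_1) = k(a_2)$ then $\pi_M(a_1) = \pi_M(a_2)$ and $W_{a_1} = W_{a_2}$, and since $\sigma_M(a) = \bigwedge \setof{c_w}{w \notin W_a}$ the latter forces $\sigma_M(a_1) = \sigma_M(a_2)$, whence $a_1 = a_2$ because $\pi_M \times \sigma_M$ is one-one.

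The main work is surjectivity. Given $\splt{Y}{a} \in L_W$, so that $a \in E$ and $Y \subseteq W_a$, I would set $S = W_a \setminus Y$ and $c = a \wedge \bigwedge_{w \in S} c_w \in M$. Primeness of each $c_w$ together with the incomparability of distinct maximal elements yields $W_c = Y$. To see $\pi_M(c) = a$, enumerate $S = \{w_1, \dots, w_k\}$ and put $c_0 = a$, $c_j = c_{j-1} \wedge c_{w_j}$; the same primeness argument shows $c_{j-1} \nleq c_{w_j}$, so by Lemma~\ref{Lem:2}(4) each $c_{j-1}$ is a successor of $c_j$. Hence $c_{j-1} \in c_j^+$ and $\pi_M'(c_j) \geq c_{j-1}$, so iterating the prenucleus gives $\pi_M(c) \geq c_0 = a$; as $c \leq a$ forces $\pi_M(c) \leq a$, we conclude $\pi_M(c) = a$ and therefore $k(c) = \splt{Y}{a}$. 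I expect this step — in particular the identity $\pi_M(c) = a$ — to be the crux, as it is exactly where finiteness of $W$ (to keep the meet finite and invoke primeness) and the successor calculus of Lemma~\ref{Lem:2} are indispensable.

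Finally, being a frame isomorphism, $k$ is in particular an $\bmfrak{E}$-morphism, its congruence being the empty join of atoms. Since $\sigma M$ is a finite completely regular, hence Boolean, frame whose coatoms are the $c_w$, the assignment $s \mapsto \chix{\setof{w}{s \nleq c_w}}$ identifies $\sigma M$ with $\mbb{2}^W$, and under it $\sigma_M(a) = \bigwedge \setof{c_w}{w \notin W_a}$ corresponds to $\chix{W_a}$; together with $\pi M = E$ this exhibits $k(a) = \bigl(\pi_M(a), \sigma_M(a)\bigr)$ as the product map $\tau_M$ of Theorem~\ref{Thm:1}. Because the ligatures $\lambda_M$ and $\lambda_{L_W}$ are both the trivial map $E \to \pi \mbb{2}^W$, Proposition~\ref{Prop:18}(6) identifies the codomain $L_W$ with $\tau M$, so $\tau_M = k$ is surjective and $M$ is fat.
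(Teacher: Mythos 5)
Your proof is correct and follows essentially the same route as the paper: the same map $k$, injectivity via Lemma \ref{Lem:13}, and surjectivity by meeting $a$ with the maximal elements indexed by $W_a \setminus Y$ --- exactly the paper's element $b' = b \wedge \bigwedge A$. The only differences are cosmetic: the paper obtains $\pi_M(b') = b$ in one line from the facts that a nucleus preserves finite meets and sends maximal elements to $\top$ (rather than your iterated-successor argument via Lemma \ref{Lem:2}(4)), and it deduces the isomorphism from bijectivity plus binary-meet preservation instead of verifying join preservation directly as you do.
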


\begin{proof}
	The reader will have no difficulty verifying that $k$ preserves binary meets and therefore preserves and reflects order. The proof that $k$ is a frame isomorphism is completed by showing that $k$ is bijective. That $k$ is one-one follows from Lemma \ref{Lem:13}, for if $k(a_1) = k(a_2)$ then $\pi_M(a_1) = \pi_M(a_2)$, and $\sigma_M(a_1) = \sigma_M(a_2)$ because $a_1$ and $a_2$ lie below the same maximal elements by Proposition \ref{Prop:18}(2).
	
	To check that $k$ is surjective, consider an element $\splt{Z}{b} \in L_W$. By definition of $L_W$, $b \in E$ and $b \in \bigcap_Z y_a$. Let 
	\[
		A 
		\equiv \setof{a \in \max M}{b \in y_a \notin  Z},
	\]
	and put $b' \equiv b \wedge \bigwedge A$. We aim to show $k(b') = \splt{Z}{b}$. Surely $\pi_M(b') = \pi_M(b) \wedge \bigwedge_A \pi_M(a) = \pi_M(b) = b$; what remains to be shown is that $Z = \setof{y_a}{b' \in y_a}$. But this is clear, for by construction it is the case that for any $a \in \max L$, $b' \leq a$ if and only if $b \leq a$ or $a \in A$. That is, $b' \in y_a$ if and only if $b \in y_a$ and $a \notin A$, i.e., if and only if $y_a \in Z$. Finally, $k$ is an $\bmfrak{E}$-morphism because for $b_i \in M$, $k(b_1) = k(b_2)$ if and only if $\pi_M(b_1) = \pi_M(b_2)$ .
\end{proof}

Proposition \ref{Prop:17} is preparation for Subsection \ref{Subsec:InfManyPts}

\begin{proposition}\label{Prop:17}
	Let $M$ be an atomless frame with pointless part $E$, let $X$ be a nonempty finite subset of its spatial support $W$, and let $\Xi \equiv \bigvee \setof{\Phi_a}{y_a \notin X}$. Then the quotient map $\map{k_X}{M}{M/\Xi}$ is an $\bmfrak{E}$-morphism with codomain (isomorphic to) $L_X$.  
\end{proposition}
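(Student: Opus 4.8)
The plan is to dispatch the $\bmfrak{E}$-membership immediately and then identify the codomain by recognizing it as the sort of frame classified by Proposition \ref{Prop:19}. Note first that $\Theta_{k_X} = \Xi = \bigvee\setof{\Phi_a}{y_a \notin X}$ is by construction a join of atoms of $\con M$, each of the form $\Phi_a$ with $a \in \max M$ and $k_X(a) = \top$ (for $\Phi_a \leq \Xi$ forces $(a,\top) \in \Xi$ by Lemma \ref{Lem:11}). Thus $k_X$ satisfies Lemma \ref{Lem:12}(1), and being a surjection it is skinny by Corollary \ref{Cor:1}(2); hence $k_X \in \bmfrak{E}$. It then remains to prove that $M/\Xi$ is atomless with pointless part $E$ and finite spatial support $X$, whereupon Proposition \ref{Prop:19} yields $M/\Xi \approx L_X$ (note $M/\Xi$ is completely regular, being a quotient of $M$).

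To pin down the pointless part I would observe that $\Xi \leq \Theta_{\pi_M} = \bigvee_{\max M}\Phi_a$ by Corollary \ref{Cor:2}, so that $\pi_M$ factors as $\pi_M = p \circ k_X$ for a necessarily surjective map $\map{p}{M/\Xi}{E}$ by Lemma \ref{Lem:5}. Being surjective, $p$ is skinny, so it factors through the pointless reflector $\pi_{M/\Xi}$ by Corollary \ref{Cor:1}(5); conversely, since $k_X$ is skinny we have $k_X(\ker \pi_M) \subseteq \ker \pi_{M/\Xi}$ by Lemma \ref{Lem:18}(2), and combined with $p \circ k_X = \pi_M$ this lets Lemma \ref{Lem:5} factor $\pi_{M/\Xi}$ through $p$. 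The two factorizations force $p \approx \pi_{M/\Xi}$, so $\pi(M/\Xi) \approx E$. Atomlessness then follows from Lemma \ref{Lem:7}(3): since $M$ is atomless, $\pi_M$ is dense, and density passes to $p$ (if $p(d) = \bot$, write $d = k_X(b)$, so $\pi_M(b) = \bot$, whence $b = \bot$ and $d = \bot$), so the pointless part of $M/\Xi$ is a dense quotient.

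The heart of the matter is the identification of the maximal elements and their support filters. For distinct $a, a' \in \max M$ one has $a \vee a' = \top$, so $\Phi_a \wedge \Phi_{a'} = \Phi_{a \vee a'} = \bot_{\con M}$; distributivity in $\con M$ then gives $\Phi_a \wedge \Xi = \bot_{\con M}$ whenever $y_a \in X$, so $\Phi_a \nleq \Xi$ and $k_X(a) < \top$, which by Lemma \ref{Lem:3}(3) and Lemma \ref{Lem:2}(1) makes $k_X(a)$ maximal; whereas $y_a \notin X$ gives $k_X(a) = \top$ as above. Using $k_{X*}k_X(a) = a$ for the surviving $a$ (forced by maximality), I obtain a bijection between $\setof{a \in \max M}{y_a \in X}$ and $\max(M/\Xi)$. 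To match the filters I would use the adjunction $k_X(b) \leq k_X(a) \iff b \leq k_{X*}k_X(a) = a$ to conclude that $x_{k_X(a)} = k_X(x_a)$ in $M/\Xi$, and then, applying the identity $y_c = \pi(x_c)$ together with $p \approx \pi_{M/\Xi}$ and $p \circ k_X = \pi_M$, that the support filter of $k_X(a)$ corresponds under $\pi(M/\Xi) \approx E$ to $\pi_M(x_a) = y_a$. Since $a \mapsto y_a$ is injective (distinct maximal elements give disjoint, hence distinct, filters by Lemma \ref{Lem:22}(4)), the spatial support of $M/\Xi$ is exactly $X$, which is finite.

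With $M/\Xi$ thus exhibited as an atomless frame with pointless part $E$ and finite spatial support $X$, Proposition \ref{Prop:19} identifies it with $L_X$, completing the proof. I expect the main obstacle to be the third paragraph: one must check that opening the congruences $\Phi_a$ for $y_a \notin X$ deletes exactly the intended points while leaving the surviving support filters intact, and the decisive leverage is the mutual disjointness $\Phi_a \wedge \Phi_{a'} = \bot_{\con M}$ of the point-atoms together with the adjunction identity $k_X(b)\le k_X(a)\iff b\le a$ for the surviving maximal elements.
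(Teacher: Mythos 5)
Your proof is correct and takes essentially the same route as the paper's (much terser) argument: establish that $k_X \in \bmfrak{E}$, use the adjoint $k_{X*}$ to match $\max(M/\Xi)$ bijectively with the maximal elements whose support filters lie in $X$, and then invoke Proposition \ref{Prop:19}; your write-up simply fills in the verifications (pointless part, atomlessness, identification of the support filters) that the paper leaves implicit. One small citation slip: the injectivity of $a \mapsto y_a$ rests on Lemma \ref{Lem:25}(3) (independence and properness of the filters $\pi_M(x_a)$ on $E$) rather than Lemma \ref{Lem:22}(4), which concerns maximal proper round filters, but the fact is true and the argument is unaffected.
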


\begin{proof}
	The map $k_X$ is an $\bmfrak{E}$-morphism by construction, and its adjoint $k_{X*}$ provides a bijection from $\max (M/\Xi)$ onto $X$. The result follows from Proposition \ref{Prop:19}.   
\end{proof}

\subsection{Attaching infinitely many points to a pointless frame}\label{Subsec:InfManyPts}

The next task is to analyze atomless frames with infinite spatial support.

\begin{lemma}\label{Lem:30}
	Let $X$ and $Y$ be nonempty finite independent families of regular filters on $E$, let $\map{r}{Y}{X}$ be a function such that $y \supseteq r(y)$ for all $y \in Y$, and let 
	\[
		\map{r^{-1}}{\mbb{2}^X} {\mbb{2^Y}} 
		= \big(\chix{Z} \mapsto \chix{r^{-1}(Z)}\big), \qquad Z  \subseteq X, 
	\]
	be the frame map induced by $r$. Then the map 
	\[
		\map{l^X_Y}{L_X}{L_Y}
		\equiv \big(\splt{Z}{a} \mapsto \splt{r^{-1}(Z)}{a}\big),
		\qquad a \in E,\, Z \subseteq X,
	\]
	is a skinny frame injection. 
\end{lemma}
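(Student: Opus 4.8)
My plan is to exhibit $l^X_Y$ as the restriction of a map that is visibly a frame homomorphism, and then to dispatch well-definedness, skinniness, and injectivity in turn. The key observation is that $l^X_Y$ is exactly the restriction to $L_X$ of the product map $\map{\mathrm{id}_E \times r^{-1}}{E \times \mbb{2}^X}{E \times \mbb{2}^Y}$, which acts as the identity on the first coordinate and as $r^{-1}$ on the second. Since the inverse-image map $r^{-1}$ preserves arbitrary unions, finite intersections, $\bot_{\mbb{2}^X} = \emptyset$, and $\top_{\mbb{2}^X} = X$ (note $r^{-1}(X) = Y$), it is a frame homomorphism, and hence so is the product map. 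As $L_X$ and $L_Y$ are subframes of the respective products by Proposition \ref{Prop:18}, once I check that the product map carries $L_X$ into $L_Y$ it follows automatically that $l^X_Y$ preserves all the frame operations. For that well-definedness I use the hypothesis $r(y) \subseteq y$: if $\splt{Z}{a} \in L_X$ then $Z \subseteq X_a$, and for any $y \in r^{-1}(Z)$ we have $r(y) \in Z \subseteq X_a$, so $a \in r(y) \subseteq y$ and thus $y \in Y_a$; hence $r^{-1}(Z) \subseteq Y_a$ and $\splt{r^{-1}(Z)}{a} \in L_Y$.

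Skinniness is then immediate from the shape of the maximal elements. By Proposition \ref{Prop:18}(2) every element of $\max L_X$ is of the form $\splt{X \setminus \{x\}}{\top}$, having top first coordinate; since $l^X_Y$ fixes the first coordinate, its image $\splt{r^{-1}(X \setminus \{x\})}{\top}$ again has top first coordinate. By the formula for the pointless reflector in Proposition \ref{Prop:18}(4), $\pi_{L_Y}$ sends any element with top first coordinate to $\splt{Y}{\top} = \top_{L_Y}$. Thus $\pi_{L_Y} \circ l^X_Y(a) = \top$ for every $a \in \max L_X$, which is condition (3) of Lemma \ref{Lem:18}, so $l^X_Y$ is skinny.

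The step I expect to be the crux is injectivity. Suppose $l^X_Y(\splt{Z_1}{a_1}) = l^X_Y(\splt{Z_2}{a_2})$; comparing coordinates yields $a_1 = a_2$ immediately, so everything comes down to the implication $r^{-1}(Z_1) = r^{-1}(Z_2) \Rightarrow Z_1 = Z_2$, i.e.\ to the injectivity of $r^{-1}$ on subsets of $X$. This is exactly the assertion that $r$ is onto $X$: any $x$ lying in the symmetric difference $Z_1 \bigtriangleup Z_2$ must be separated by some $y$ with $r(y) = x$, and such a $y$ exists for every $x \in X$ precisely when $\mathrm{im}\,r = X$. I would therefore confirm that the framework in which this lemma is applied furnishes a surjective $r$ (each filter of $X$ being coarsened from at least one filter of the refinement $Y$), since otherwise, taking $a = \top$ so that $X_a = X$, the subsets $Z$ and $Z \cup \{x\}$ for an omitted $x$ would have equal image and injectivity would fail. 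With $\mathrm{im}\,r = X$ in hand, for distinct $Z_1,Z_2$ pick $x \in Z_1 \bigtriangleup Z_2$ and $y \in r^{-1}(\{x\})$ to see that $y$ lies in exactly one of $r^{-1}(Z_1), r^{-1}(Z_2)$, contradicting their equality; hence $Z_1 = Z_2$ and $l^X_Y$ is injective.
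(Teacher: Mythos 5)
Your treatment of well-definedness, of the homomorphism property, and of skinniness is correct, and on the one point the paper actually argues---skinniness---you and the paper do essentially the same thing: both verify condition (3) of Lemma \ref{Lem:18} at the maximal elements $\splt{X\setminus\{x\}}{\top}$ of $L_X$. The paper re-derives $\pi_{L_Y}\splt{Y\setminus r^{-1}(x)}{\top} = \top$ by hand, exhibiting successors of the image element that join to $\top$, whereas you simply quote the formula of Proposition \ref{Prop:18}(4); both are legitimate. (The paper's proof is silent on well-definedness and on the preservation of the frame operations, so your first paragraph fills a real, if routine, omission.)

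On injectivity you have diagnosed the mathematics correctly but guessed the wrong repair. You are right that $l^X_Y$ is one-one exactly when $r$ is onto $X$, and that otherwise $\splt{Z}{\top}$ and $\splt{Z\cup\{x\}}{\top}$, for $x$ outside the image of $r$, collapse to the same element: the word \enquote{injection} in the statement is an error in the paper, and consistently with this the paper's own proof establishes skinniness only and never addresses injectivity. But adding the hypothesis that $r$ be surjective, as you propose, is incompatible with the lemma's only application: in Proposition \ref{Prop:20} the map $r$ is the inclusion $Y \to X$ of a generally proper subset $Y \subseteq X \subseteq_\omega W$, which is injective and almost never surjective, and the paper there refers to $l^X_Y$ as \enquote{the surjection of Lemma \ref{Lem:30}}. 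Indeed, for an inclusion the map is surjective: given $\splt{V}{a} \in L_Y$ we have $V \subseteq Y_a \subseteq X_a$, so $\splt{V}{a} \in L_X$ and $l^X_Y\splt{V}{a} = \splt{V\cap Y}{a} = \splt{V}{a}$. So the conclusion that survives under the stated hypotheses is \enquote{skinny frame homomorphism}, with surjectivity---not injectivity---as the additional property that holds, and is used downstream, when $r$ is an inclusion; your strengthened hypothesis would make the lemma inapplicable exactly where the paper needs it.
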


\begin{proof}
	A maximal element of $L_X$, which has the form $\splt{X\setminus \{x\}}{\top}$ for some $x \in X$, maps to $\splt{Y\setminus r^{-1}(x)}{\top} \equiv b$. Enumerate $r^{-1}(x)$ as $\{y_i\}_{i \leq n}$. If $n = 0$ then $b = \splt{Y}{\top} = \top_{L_Y}$. If $n > 0$ then for each $i \leq n$, 
	\[
		b_i
		\equiv \splt{Y\setminus \setof{y_j}{j \neq i}}{\top}
	\]
	is a successor of $b$, and $\bigvee_{i \leq n} b_i = \top$. In either case $\pi_{L_Y}(b) = \top$, meaning that $l^X_Y$ is skinny by Lemma \ref{Lem:18}(3).  
\end{proof}

\begin{notation*}[$Y \subseteq_\omega W$]
	Let $W$ be a family of filters on a frame $L$. We write $Y \subseteq_\omega W$ to indicate that $Y$ is a finite subset of $W$.  
\end{notation*}

\begin{definition*}[plenary family of regular filters on $E$]
	We shall call a family $W$ of regular filters on $E$ \emph{plenary} if it is nonempty and independent, and for all $\bot < a \in E$ there exists an element $w \in W$ such that $a \in w$, i.e., $W_a \neq \emptyset$. 
\end{definition*}

\begin{proposition}\label{Prop:20}
	Let $W$ be a plenary family of regular filters on $E$, and for subsets $Y \subseteq X \subseteq_\omega W$, let
	$\map{l^X_Y}{L_X}{L_Y}$ be the surjection of Lemma \ref{Lem:30} arising from the inclusion $Y \to X$. The $\mbf{nF}$-pullback (limit) 
	\[
		\setof{L_W' \xra{l^W_X} L_X}{X \subseteq_\omega W}
		\text{ of the diagram}
		\setof{L_X \xra{l^X_Y} L_Y}{Y \subseteq X \subseteq_\omega W}
	\]
	is (isomorphic to) $\setof{\splt{Z}{a} \in E \times \mbb{2}^W}{Z \subseteq W_a}$
	with projections 
	\[
		\map{l^W_X}{L_W}{L_X}
		= \left(\splt{Z}{a} \mapsto \splt{Z \cap X}{a}\right),
		\quad Z \subseteq W, \; a \in E. 
	\]
	Then the completely regular coreflection $L_W$ of $L_W'$ is the $\mbf{Fs}$-pullback of the diagram. 
	
	If the projections are surjective then $L_W$ is atomless and 
	\[
		\max L_W 
		= \max L_W' 
		= \big\{\,\splt{W \setminus \{w\}}{\top} : w \in W\,\big\}.
	\]
	If the projections are $\bmfrak{E}$-morphisms then the pointless reflector of $L_W$ is
		\begin{align*}
			\map{\pi_{L_W}}{L_W}{\pi L_W}
			&= \setof{\splt{W_a}{a}}{a \in E}\\
			&= \big(\splt{Z}{a} \mapsto \splt{W_a}{a}\big),
			\quad \splt{Z}{a} \in L_W.
		\end{align*}
	The reflection frame $\pi L_W$ is isomorphic to $E$, and the reflector is (isomorphic to) the projection onto the first factor.
\end{proposition}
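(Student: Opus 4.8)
The plan is to prove the four assertions in sequence, each resting on the previous. I would begin with the transparent part: identifying the $\mbf{nF}$-pullback. An element of the inverse limit is a compatible family $\ssetof{(a_X,\chix{Z_X})}{X\subseteq_\omega W}$ with $l^X_Y(a_X,\chix{Z_X})=(a_Y,\chix{Z_Y})$ for $Y\subseteq X$. Since each $l^X_Y$ fixes the $E$-coordinate and the poset of finite subsets of $W$ is directed, all $a_X$ agree in a single $a\in E$; since $l^X_Y$ restricts the second coordinate by $\chix{Z_X}\mapsto\chix{Z_X\cap Y}$, the $Z_X$ satisfy $Z_Y=Z_X\cap Y$ and glue along singletons to a unique $Z\subseteq W$ with $Z_X=Z\cap X$, and testing $(a,\chix{Z_{\{w\}}})\in L_{\{w\}}$ forces $Z\subseteq W_a$. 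Thus $\splt Z a\mapsto\ssetof{\splt{Z\cap X}{a}}{X}$ is a frame isomorphism of $L_W'=\setof{\splt Z a}{Z\subseteq W_a}$ onto the limit, carrying the limit projections to the stated $l^W_X$. This step is routine.

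Next, for the $\mbf{Fs}$-pullback claim I would use that the completely regular coreflection is right adjoint to $\mbf{F}\hookrightarrow\mbf{nF}$ and so preserves limits: since each $L_X$ is completely regular (Proposition \ref{Prop:18}) and the $l^X_Y$ are frame homomorphisms, the $\mbf{F}$-limit of the diagram is the coreflection $L_W$ of $L_W'$, with projections the corestrictions of the $l^W_X$. To upgrade to an $\mbf{Fs}$-limit I must check that the projections are skinny and that the unique $\mbf{F}$-factorization $\map f K {L_W}$ of any skinny cone $\ssetof{\map{f_X}K{L_X}}{X}$ is again skinny. I would isolate first the structural fact that in $L_W'$ (hence in the subframe $L_W$) every successor of $\splt Z a$ fixes the $E$-coordinate and enlarges the $W$-coordinate by a single $w\in W_a\setminus Z$: if a successor $(a',\chix{Z'})$ had $a'>a$, then atomlessness of $E$ gives $a<a''<a'$ with $\splt Z{a''}\in L_W'$ (as $Z\subseteq W_a\subseteq W_{a''}$) strictly between, a contradiction, and a splitting argument forces $\abs{Z'\setminus Z}=1$. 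Consequently $\pi_{L_W}'$, and hence $\pi_{L_W}$, fixes the $E$-coordinate, so $\ker\pi_{L_W}$ is contained in the top-$E$-fiber; dually $\ker\pi_{L_X}=\setof{\splt Z\top}{Z\subseteq X}$ by Proposition \ref{Prop:18}(4). Because $l^W_X$ preserves the $E$-coordinate, the source $\ssetof{l^W_X}X$ jointly reflects $\ker\pi$, and since $\max L_W\subseteq\ker\pi_{L_W}$ maps under $l^W_X$ into $\ker\pi_{L_X}$, the projections are skinny by Lemma \ref{Lem:18}(3); for the factorization, each $f_X(c)=l^W_X f(c)\in\ker\pi_{L_X}$ forces the $E$-coordinate of $f(c)$ to be $\top$, whence $f$ is skinny.

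Assuming the projections surjective, I would transport the finite-case arguments of Proposition \ref{Prop:18}(1),(2) across the surjections: surjectivity of $l^W_X$ guarantees that the witnesses there (a $b\in w$ with $b<a$ from Lemma \ref{Lem:24}, or an element realizing a prescribed $W$-coordinate) actually lie in $L_W$, so no atom can occur and a maximal element must have $E$-coordinate $\top$ (else atomlessness of $E$ and surjectivity produce an incomparable element, exactly as in Proposition \ref{Prop:7}(1)) with $W$-coordinate of the form $W\setminus\{w\}$, giving $\max L_W=\max L_W'=\ssetof{\splt{W\setminus\{w\}}\top}W$. Finally, assuming the projections are $\bmfrak E$-morphisms, the first-coordinate projection $\map e{L_W}E$ is then an $\bmfrak E$-morphism onto the pointless frame $E$, so by Proposition \ref{Prop:8} it is isomorphic to $\pi_{L_W}$; reading off its fibers via the successor normal form yields $\pi L_W=\setof{\splt{W_a}a}{a\in E}$, the formula $\splt Z a\mapsto\splt{W_a}a$, and $\pi L_W\approx E$.

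The main obstacle is the second step. The naked limit $L_W'$ is completely explicit, but its completely regular coreflection $L_W$ need not retain all of its elements, and in general the projections $l^W_X$ can fail to be surjective — which is precisely why the last two assertions are hypothesized. The real content is therefore the structural control of successors (hence of $\max L_W$ and $\ker\pi_{L_W}$) inside the possibly-thin subframe $L_W$, and I expect the delicate point to be confirming that passing to the coreflection creates no exotic successors violating the ``$E$-coordinate fixed, one new $W$-coordinate'' normal form, since it is exactly this normal form that powers both the skinniness of the factorizations and the identification of the pointless reflector.
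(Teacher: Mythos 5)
Your handling of the first, third, and fourth assertions essentially reproduces the paper's own arguments: the explicit identification of the $\mbf{nF}$-limit is routine; atomlessness and the determination of $\max L_W$ are obtained, as in the paper, by projecting into the finite stages $L_X$ (Proposition \ref{Prop:18}) and using the assumed surjectivity to pull witnesses back to $L_W$; and the pointless reflector is identified, as in the paper, by observing that the composite of the $\bmfrak{E}$-source $\{l^W_X\}$ with the $\bmfrak{E}$-sink $\{\pi_{L_X}\}$ is a single $\bmfrak{E}$-morphism $L_W \to E$ and invoking Proposition \ref{Prop:8}.

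The genuine gap is in your second step, and you have named it yourself without closing it. Your \enquote{successor normal form} is proved in $L_W'$, but the inference \enquote{hence in the subframe $L_W$} is invalid: successor relations do not pass to subframes. If $c$ is a successor of $b$ in $L_W$, it need not be one in $L_W'$, because whatever lay strictly between $b$ and $c$ in $L_W'$ may have been discarded by the completely regular coreflection; in particular, the interpolant $\splt{Z}{a''}$ you produce lies in $L_W'$ but possibly not in $L_W$, so your contradiction evaporates and a successor in $L_W$ could a priori raise the $E$-coordinate. Since this normal form is exactly what you use to place $\max L_W$ in the top-$E$-fiber, the skinniness of the projections is unproven. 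Worse, the factorization half needs the containment \emph{opposite} to the one you establish: you show $\ker \pi_{L_W} \subseteq$ top-$E$-fiber, but skinniness of the factored map $f$ requires top-$E$-fiber $\subseteq \ker \pi_{L_W}$, which is what your unproven assertion that the source \enquote{jointly reflects $\ker \pi$} amounts to. By Proposition \ref{Prop:3}, membership of $\splt{Z}{\top}$ in $\ker \pi_{L_W}$ means every proper element of $L_W$ above it \emph{has} a successor in $L_W$ --- an existence statement about the possibly thinned fiber that does not follow from knowing the shape of whatever successors happen to exist: a family of subsets of $W$ closed under unions and finite intersections can perfectly well contain proper members with no minimal proper enlargement. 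Your closing paragraph concedes precisely this point (\enquote{I expect the delicate point to be confirming \dots}), so what you have is a plan with an open core rather than a proof. For comparison, the paper dispatches this entire first paragraph with \enquote{a moment's reflection} and reserves its detailed work for the claims made under the surjectivity and $\bmfrak{E}$ hypotheses; you have correctly located where the real content of that sentence lies, but you have not supplied it.
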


\begin{proof}
	A moment's reflection is all that is necessary to check the first paragraph, so we begin by verifying that $L_W$ is atomless when the projections are surjective. Were it to exist, an atom of $L_W$ would have the form $\spltt{Z}{a}$ for $\bot_{L_W} = \splt{\emptyset}{\bot_E} < \splt{Z}{a}\in L_W'$. It follows that $a > \bot$ in $E$, for otherwise $Z \subseteq W_a = W_\bot = \emptyset$ so $\splt{Z}{a} = \splt{\emptyset}{\bot}$, contrary to assumption. Since $W$ is plenary, there exists an element $w \in W_a$, so that for any subset $X \subseteq_\omega W$ containing $w$, the surjection $l^W_X$ would take $\splt{Z}{a}$ to $\splt{Z \cap X}{a} \in L_X$. But since $L_X$ is atomless by Proposition \ref{Prop:18}(1), there exists an element $\splt{Y}{b} \in L_X$ such that $\bot_{L_X} < \splt{Y}{b} < \splt{Z \cap X}{a}$, and since $l^W_X$ is surjective, there exists an element $\splt{V}{b} \in L_W$ such that $l^W_X\splt{V}{b} = \splt{V \cap X}{b} = \splt{Y}{b}$. It follows that $\splt{V \cap Z}{b} = \splt{V}{b} \wedge \splt{Z}{a}$ lies in $L_W$ and satisfies $\bot_{L_W} < \splt{V \cap Z}{b} < \splt{Z}{a}$, contrary to assumption. We conclude that $L_W$ is atomless.
	
	A maximal element $\splt{Z}{a} \in L_W$ has the feature that each of its projections $l^W_X\splt{Z}{a} = \splt{Z \cap X}{a}$, $X \subseteq_\omega W$, must be either the top element of $L_X$ or a maximal element of $L_X$, and the second case must occur. To reiterate, for all $X \subseteq_\omega W$ either $\splt{Z \cap X}{a} = \splt{X}{\top}$ or $\splt{Z \cap X}{a} = \splt{X\setminus \{x\}}{\top}$ for some $x \in X$. In any case $a = \top$, and the element $x$ in the second case is unique. For if $x_1$ and $x_2$ are distinct elements of $X$ for which there exist subsets $X_i \subseteq_\omega W$ such that $x_i \in X_i$ and $l^W_{X_i}\splt{Z}{\top} = \splt{X_i\setminus\{x_i\}}{\top}$ then $X \equiv X_1 \cup X_2 \subseteq_\omega W$ and $l^W_X\splt{Z}{\top} = \splt{X\setminus\{x_1,x_2\}}{\top}$, a clear contradiction since $\splt{X\setminus\{x_1, x_2\}}{\top}$ is not maximal in $L_X$. In short, a maximal element of $L_W$ must be a maximal element of $L_W'$. Furthermore, the surjectivity of the projections forces every maximal element of $L_W'$ to appear as a maximal element of $L_W$.
	 		
	If the projections are $\bmfrak{E}$-morphisms then the $\mbf{Fs}$-pullback is a source comprised of $\bmfrak{E}$-morphisms. Likewise the sink $\setof{L_X \xra{\pi_{L_X}} E}{X \subseteq_\omega W}$ is comprised of $\bmfrak{E}$-mor\-phisms, and since the composition of $\bmfrak{E}$-morphisms is an $\bmfrak{E}$-morphism (\cite[5.1.8(2)]{AdamekHerrlichStrecker:2004}), the composition of this source with this sink is a single $\bmfrak{E}$-morphism $L_W \to E$. This morphism must be the pointless reflector of $L_W$ by Proposition \ref{Prop:8}. 
\end{proof}

\begin{notation*}[$L_W$ for arbitrary $W$]
	For a plenary family $W$ of regular filters on a pointless frame $E$, we denote the frame constructed in Proposition \ref{Prop:20} by $L_W$. 
\end{notation*}

We summarize.

\begin{theorem}\label{Thm:3}
	An atomless frame $M$ with pointless part $E$ and spatial support $W$ is isomorphic to a subframe of $L_W$ with surjective projections. In detail, the source $\setof{\map{k_X}{M}{L_X}}{X \subseteq_\omega W}$ provided by Proposition \ref{Prop:17} factors through the pullback $L_W$ of Proposition \ref{Prop:20}, and this embedding is the fat reflector of $M$ (Theorem \ref{Thm:4}).
\end{theorem}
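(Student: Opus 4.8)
The plan is to assemble the embedding $\map{k}{M}{L_W}$ out of the finite–support truncations $k_X$, to verify it is one–one with surjective projections, and then to recognize its codomain as the fat reflection by exhibiting $L_W$ as a limit of fat frames and matching the subdirect–product data. First I would check that the family $\setof{\map{k_X}{M}{L_X}}{X \subseteq_\omega W}$ of Proposition~\ref{Prop:17} is a cone over the diagram $\setof{L_X \xra{l^X_Y} L_Y}{Y \subseteq X \subseteq_\omega W}$ of Lemma~\ref{Lem:30}. Reading $k_X$ through the isomorphism of Proposition~\ref{Prop:19} as $\big(a \mapsto \splt{W_a \cap X}{\pi_M(a)}\big)$ and $l^X_Y$ as the restriction $\big(\splt{Z}{c} \mapsto \splt{Z \cap Y}{c}\big)$, the identity $l^X_Y \circ k_X = k_Y$ is immediate for $Y \subseteq X$. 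Since each $k_X$ is an $\bmfrak{E}$-morphism, hence a skinny surjection, this cone lives in $\mbf{Fs}$, so the universal property of the $\mbf{Fs}$-pullback $L_W$ (Proposition~\ref{Prop:20}) yields a unique skinny $\map{k}{M}{L_W}$ with $l^W_X \circ k = k_X$ for every $X$.

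Next I would prove $k$ one–one by the argument already used in Proposition~\ref{Prop:19}: if $k(a_1) = k(a_2)$ then $k_X(a_1) = k_X(a_2)$ for all $X$, so comparing pointless coordinates gives $\pi_M(a_1) = \pi_M(a_2)$, while comparing spatial coordinates across all finite $X$ shows that $a_1$ and $a_2$ lie below exactly the same maximal elements, whence $\sigma_M(a_1) = \sigma_M(a_2)$; Lemma~\ref{Lem:13} then forces $a_1 = a_2$. Because $l^W_X \circ k = k_X$ is surjective, each projection $l^W_X$ is surjective and $l^W_X(k(M)) = k_X(M) = L_X$, so $M \cong k(M)$ is a subframe of $L_W$ with surjective projections. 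Moreover each $l^W_X$ is an $\bmfrak{E}$-morphism: it is a skinny surjection whose congruence forgets precisely the coordinates in $W \setminus X$, and since those coordinates correspond to the maximal elements $\splt{W \setminus \{w\}}{\top}$ with $w \notin X$, Lemma~\ref{Lem:10} identifies $\Theta_{l^W_X}$ as a join of atoms, so $l^W_X \in \bmfrak{E}$ by Lemma~\ref{Lem:12}. Thus all hypotheses of Proposition~\ref{Prop:20} hold, and in particular $L_W$ is atomless, $\pi L_W \cong E$, and the first projection is its pointless reflector.

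It remains to identify $k$ with the fat reflector $\map{\tau_L}{M}{\tau M}$ of Theorem~\ref{Thm:1}. I would first observe that $L_W$ is fat: it is the $\mbf{Fs}$-limit (Proposition~\ref{Prop:20}) of the diagram of the $L_X$, each fat by Proposition~\ref{Prop:19}, and $\mbf{fFs}$ is reflective in $\mbf{Fs}$ (Theorem~\ref{Thm:4}), hence closed under limits. Since $k$ is skinny and $L_W$ is fat, the universal property of $\tau_M$ produces a unique skinny $\map{g}{\tau M}{L_W}$ with $g \circ \tau_M = k$, and chasing pointless reflectors shows that $g$ restricts to the identity on $E = \pi M = \pi L_W$. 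The theorem then reduces to showing $g$ is an isomorphism. Because $L_W$ is fat, Theorem~\ref{Thm:1} presents it as the subdirect $\EM$-product $E \times_{\lambda_{L_W}} \sigma L_W$, whereas $\tau M = E \times_{\lambda_M} \sigma M$; so by the uniqueness of the normal form in Proposition~\ref{Prop:7} it suffices to check that the induced map $\sigma M \to \sigma L_W$ of Lemma~\ref{Lem:19} is an isomorphism carrying $\lambda_M$ to $\lambda_{L_W}$.

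This last matching of spatial parts and ligatures is the crux, and I expect it to be the main obstacle. The frames $M$ and $L_W$ share the pointless part $E$ and, by construction, the same spatial support $W$, the same family of regular filters on $E$, and $k$ induces a bijection between $\max M$ and $\max L_W$; the difficulty is that $\sigma L_W$ is not the full discrete frame $\mbb{2}^W$ but the generally smaller spatial part carved out by the complete-regularity coreflection in Proposition~\ref{Prop:20}, and one must show this carving reproduces precisely the hull–kernel topology of $\sigma M$. I would handle it by showing that the shared datum $(E, W)$ determines the spatial part and the ligature uniquely, using the roundness of the filters in $W$ together with interpolation arguments of the type appearing in Lemmas~\ref{Lem:26}--\ref{Lem:28} to prove that the induced map $\sigma M \to \sigma L_W$ is onto (it is one–one by the separation argument above). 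Once this map is an isomorphism intertwining the ligatures, Proposition~\ref{Prop:7} gives $L_W \cong \tau M$ compatibly with $k$ and $\tau_M$, identifying $k$ as the fat reflector and completing the proof.
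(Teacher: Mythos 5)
Your assembly of the first sentence of the theorem is correct and is essentially the assembly the paper intends (the paper offers no separate proof, presenting the theorem as a summary of Propositions \ref{Prop:17}, \ref{Prop:19}, \ref{Prop:20} and Theorem \ref{Thm:4}): the cone identity $l^X_Y \circ k_X = k_Y$, the factorization of the source through the $\mbf{Fs}$-pullback, injectivity of $k$ via Lemma \ref{Lem:13}, and surjectivity of the projections from $l^W_X \circ k = k_X$ are all fine, and your observation that $L_W$ is fat --- being an $\mbf{Fs}$-limit of the fat frames $L_X$ with $\mbf{fFs}$ reflective --- is a genuinely nice argument the paper does not spell out. The first gap is your claim that each projection $l^W_X$ lies in $\bmfrak{E}$. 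Lemmas \ref{Lem:10} and \ref{Lem:12} do not deliver this; they only translate it into the statement that must be proved, namely $\Theta_{l^W_X} \leq \bigvee_{w \in W\setminus X}\Phi_{\splt{W\setminus\{w\}}{\top}}$ in $\con L_W$. To verify that containment for a pair $\splt{Z_1}{c} \leq \splt{Z_2}{c}$ in $L_W$ with $Z_1 \cap X = Z_2 \cap X$, the natural move is to write $\splt{Z_2}{c} = \bigvee_{w \in Z_2 \setminus Z_1}\splt{Z_1 \cup \{w\}}{c}$ and use $\big(\splt{Z_1}{c}, \splt{Z_1\cup\{w\}}{c}\big) \in \Phi_{\splt{W\setminus\{w\}}{\top}}$; but nothing guarantees that the interpolating elements $\splt{Z_1\cup\{w\}}{c}$ survive the completely regular coreflection that defines $L_W$, and you give no substitute argument. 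This is exactly why Proposition \ref{Prop:20} states its conclusions conditionally (\enquote{if the projections are $\bmfrak{E}$-morphisms\dots}). Since the identification $\pi L_W \cong E$, and hence your assertion that $g$ restricts to the identity on $E$, rest on this, the gap propagates into your final step.

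The second, and more serious, gap concerns the theorem's actual punchline, that the embedding is the fat reflector $\tau_M$. Your reduction --- obtain $\map{g}{\tau M}{L_W}$ with $g \circ \tau_M = k$ from fatness of $L_W$, then show the induced map $\sigma M \to \sigma L_W$ is an isomorphism intertwining $\lambda_M$ with $\lambda_{L_W}$, and conclude via the normal form of Proposition \ref{Prop:7} --- is sensible, but you then explicitly defer that step as \enquote{the crux,} offering only the program of proving that the datum $(E, W)$ determines the spatial part and the ligature. That determination is not a technical detail: $L_W$ is manufactured from $(E,W)$ alone, whereas $\sigma M$ carries the topology of $\max M$, so the claim that the two match is essentially the assertion of the theorem itself (and of the paper's thesis that a frame is determined by the placement of its points in its pointless part). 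Nothing in Lemmas \ref{Lem:26}--\ref{Lem:28}, which concern finite $W$ and produce witnesses inside $L_W$ rather than inside $M$, obviously yields surjectivity of $\sigma M \to \sigma L_W$. As it stands, then, your proposal is a complete proof of the theorem's first sentence together with a plan, not a proof, for the second.
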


Theorem \ref{Thm:3} focuses attention on the fundamental Question \ref{Ques:2}.

\begin{question}\label{Ques:2}
	In a given pointless frame $E$, which plenary families of regular filters serve as the spatial support of atomless frames with pointless part $E$?
\end{question} 

\section{The spatial support of a frame}

In this section we focus on the interaction between a frame and its spatial support. In Subsection \ref{Subsec:SpCom} we characterize the spatiality and compactness of the frame in terms of its spatial support, and in Subsection \ref{Subsec:MaxRndFltr} we look into the situation that arises when all of the filters in the spatial support of the frame are maximal proper round filters on its pointless part. 

\subsection{The spatial support determines spatiality and compactness}\label{Subsec:SpCom}
We show in Proposition \ref{Prop:22} that a necessary condition for an atomless frame to be spatial is that it must contain enough regular filters in its spatial support to distinguish the elements of its pointless part. This requires the preparatory Lemma \ref{Lem:33}, in connection with which recall the inductive definition of $\pi_L$ given in Section \ref{Sec:Prelim}.

\begin{lemma}\label{Lem:33}
	For an element $b$ of a frame $L$, define 
	\[
		A'(b)
		\equiv \setof{a \in \max L}{a \geq b \text{ and }a \to b >  b},
	\] 
	and for ordinal numbers $\beta$ define
	\begin{align*}
			A^\beta(b) &\equiv \{b\}&&\text{if $\beta = 0$,}\\
			A^{\beta}(b) &\equiv A^\alpha(b) \cup A'(\pi_L^\alpha(b))&&\text{if $\beta = \alpha + 1$,}\\
			A^\beta(b) &\equiv \bigcup_{\alpha < \beta} A^\alpha(b)&&\text{if $\beta$ is a limit ordinal, and} \\
			A(b) &\equiv A^\beta(b) &&\text{for some (any) $\beta$ such that $A^\beta(b) = A^{\beta + 1}(b)$.}
		\end{align*}	
	Then $b = \pi_L(b) \wedge \bigwedge A(b)$.
\end{lemma}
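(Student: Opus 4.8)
The plan is to prove, by transfinite induction on $\beta$, the stronger invariant
\[
	\pi_L^\beta(b) \wedge \bigwedge A^\beta(b) = b,
\]
from which the lemma follows by taking any $\beta$ large enough that both monotone processes have already stabilized, so that simultaneously $\pi_L^\beta(b) = \pi_L(b)$ and $A^\beta(b) = A(b)$. The base case is immediate, since $\pi_L^0(b) = b$ and $\bigwedge A^0(b) \geq b$ (every element contributed by $A'$ lies above the relevant iterate of $\pi_L$, hence above $b$), so the meet equals $b$. The passage through a limit ordinal $\beta$ will follow from frame distributivity: writing $\pi_L^\beta(b) = \bigvee_{\alpha < \beta}\pi_L^\alpha(b)$ and $\bigwedge A^\beta(b) = \bigwedge_{\alpha<\beta}\bigwedge A^\alpha(b) \leq \bigwedge A^\alpha(b)$ for each $\alpha<\beta$, distribution of the join across the meet yields $\bigvee_{\alpha<\beta}\big(\pi_L^\alpha(b) \wedge \bigwedge A^\beta(b)\big)$, and each term is bounded above by $\pi_L^\alpha(b) \wedge \bigwedge A^\alpha(b) = b$ by the inductive hypothesis; the reverse inequality $\geq b$ is clear since both factors lie above $b$.

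The crux is the successor step, which I would isolate as the sub-claim that for any element $p$ of $L$,
\[
	\pi_L'(p) \wedge \bigwedge A'(p) = p.
\]
Granting this with $p \equiv \pi_L^\alpha(b)$, and recalling that $\pi_L^{\alpha+1}(b) = \pi_L'(\pi_L^\alpha(b))$ and $A^{\alpha+1}(b) = A^\alpha(b) \cup A'(\pi_L^\alpha(b))$ by definition, the successor case follows by regrouping the meet:
\[
	\pi_L^{\alpha+1}(b) \wedge \bigwedge A^{\alpha+1}(b)
	= \Big(\pi_L'(p) \wedge \bigwedge A'(p)\Big) \wedge \bigwedge A^\alpha(b)
	= p \wedge \bigwedge A^\alpha(b)
	= b,
\]
where the final equality is the inductive hypothesis.

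To prove the sub-claim I would proceed as follows. By Proposition \ref{Prop:3} we have $\pi_L'(p) = \bigvee p^+ = p \vee \bigvee\setof{c}{\text{$c$ is a successor of $p$}}$, and since every $a \in A'(p)$ satisfies $a \geq p$, the inequality $\pi_L'(p) \wedge \bigwedge A'(p) \geq p$ is automatic. For the reverse inequality the key observation is that for each successor $c$ of $p$ the element $a_c \equiv c \to p$ lies in $A'(p)$: it is maximal by Lemma \ref{Lem:2}(3), it satisfies $a_c \geq p$, and $a_c \to p = (c \to p) \to p \geq c > p$, so $a_c \to p > p$. Moreover $c \wedge a_c = c \wedge (c \to p) = c \wedge p = p$. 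Hence frame distributivity gives
\[
	\pi_L'(p) \wedge \bigwedge A'(p)
	= \Big(p \wedge \bigwedge A'(p)\Big) \vee \bigvee_c \Big(c \wedge \bigwedge A'(p)\Big)
	\leq p \vee \bigvee_c (c \wedge a_c)
	= p,
\]
the middle inequality using $\bigwedge A'(p) \leq a_c$ for every successor $c$. This establishes the sub-claim and completes the induction.

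The main obstacle I anticipate is the bookkeeping in the successor step: correctly matching the successors $c \in p^+$ that make up $\pi_L'(p)$ with the maximal elements $a_c = c \to p \in A'(p)$, verifying $c \wedge (c \to p) = p$, and ensuring the distributive computation remains valid when $p$ has infinitely many successors. The latter causes no difficulty, since $\bigwedge A'(p)$ may be treated as a single element and frame distributivity distributes a binary meet across an arbitrary join.
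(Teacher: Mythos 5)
Your proposal is correct and takes essentially the same route as the paper's proof: the same transfinite induction on the invariant $\pi_L^\beta(b) \wedge \bigwedge A^\beta(b) = b$, the same distributivity argument at limit ordinals, and the same reduction of the successor step to the one-step identity $\pi_L'(p) \wedge \bigwedge A'(p) = p$. The only (cosmetic) difference is in proving that identity: the paper indexes the successors of $p$ by the maximal elements $a \in A'(p)$, writing each successor as $a \to p$, whereas you index by the successors $c$ themselves and pass to $c \to p \in A'(p)$; the key facts used ($c \wedge (c \to p) = p$ and distribution of the binary meet over the join $\bigvee p^+$) are identical.
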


\begin{proof}
	We first claim that $b = \pi_L'(b) \wedge \bigwedge A'(b)$ for any $b \in L$. For 
	\begin{gather*}
		\left(\bigwedge A'(b)\right) \wedge \pi'_L(b)  
		= \left(\bigwedge A'(b)\right) \wedge \left(b \vee \sbv{lr}{A'(b)}(a \to b)\right)\\
		= \left(b \wedge \bigwedge A'(b)\right) \vee \sbv{lr}{A'(b)}\left(\left(\bigwedge A'(b)\right) \wedge (a \to b)\right)\\
		= b \vee \sbv{lr}{A'(b)}\left(\left(\bigwedge A'(b)\right) \wedge (a \to b)\right) = b.
		\end{gather*}
	We then use induction to show that $b = \pi_L^\beta(b) \wedge \bigwedge A^\beta(b)$ for all $\beta$. The assertion clearly holds for $\beta = 0$, so assume it holds for all ordinals $\alpha < \beta$. If $\beta = \alpha + 1$ then
	\begin{gather*}
			\pi_L^\beta(b) \wedge \bigwedge A^\beta(b)
			= \pi_L^{\alpha + 1}(b) \wedge \bigwedge A^{\alpha + 1}(b) =\\
			\pi_L' \circ \pi_L^\alpha(b) \wedge \bigwedge\left(A^\alpha(b) \cup A'(\pi_L^\alpha(b))\right)
			= \\
			\left(\pi_L' \circ \pi_L^\alpha(b) \wedge \bigwedge A'(\pi_L^\alpha(b))\right) \wedge \bigwedge A^\alpha(b) =\\
			\pi_L^\alpha(b) \wedge \bigwedge A^\alpha(b) = b.
		\end{gather*}
	(The penultimate equality holds by the claim and the ultimate equality by the induction hypothesis.) If $\beta$ is a limit ordinal then we get
	\begin{gather*}
			b \leq \pi_L^\beta(b) \wedge \bigwedge A^\beta(b)
			= \sbv{lr}{\alpha < \beta} \pi_L^\alpha(b) \wedge \bigwedge \sbcup{lr}{\gamma < \beta} A^\gamma(b) =\\
			\sbv{lr}{\alpha < \beta}\left(\pi_L^\alpha(b) \wedge \bigwedge\sbcup{lr}{\gamma < \beta}A^\gamma(b)\right)
			\leq \sbv{lr}{\alpha < \beta}\left(\pi_L^\alpha(b) \wedge \bigwedge A^\alpha(b)\right)
			= b. \qedhere
		\end{gather*}
\end{proof}

\begin{proposition}\label{Prop:22}
	The following are equivalent for a frame $L$ with pointless part $E$. 
	\begin{align}
			&\forall b \in L\ \left(\upset{b}_L = \sbcap{lr}{b \nleq a \in \max L}x_a = \sbcap{lr}{b \in x_a}x_a \right),\\
			&\forall b \in L\ \left( b = \sbw{lr}{b \leq a \in \max L}a \ \ \right),\qquad\text{i.e., $L$ is spatial,}\\
			&\forall b \in E\ \left( b = \sbw{lr}{b \leq a \in \max L}a \ \ \right).		
		\end{align}
	When these conditions obtain then 
	\[
		\forall b \in E\ \left(\upset{b}_E = \sbcap{lr}{b \nleq a \in \max L} y_a \   \right).
	\]
\end{proposition}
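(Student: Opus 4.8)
The plan is to route everything through the spatial nucleus $\sigma_L$, recalling that $\sigma_L(c) = \bigwedge \upset{c}_{\max L}$ and that $\sigma L = \fix \sigma_L$. In this language condition (2) says $\sigma_L = \mathrm{id}_L$, while condition (3) says that $\sigma_L$ fixes every element of $E = \pi L$, i.e. $E \subseteq \sigma L$. I would prove (1)$\Leftrightarrow$(2), then the trivial (2)$\Rightarrow$(3), then (3)$\Rightarrow$(2), and finally deduce the supplementary displayed identity.

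For (1)$\Leftrightarrow$(2), first note that the two intersections appearing in (1) are literally the same set, since $b \nleq a$ is by definition the statement $b \in x_a$; so no separate argument is needed for the second equality. The key reformulation I would record is that for all $b, c \in L$, $c \in \bigcap_{b \in x_a} x_a$ iff every maximal element above $c$ lies above $b$, iff $b \le \sigma_L(c)$; here the middle equivalence is the contrapositive of the inner implication $b \nleq a \Rightarrow c \nleq a$, and the last is the definition of $\sigma_L$. The inclusion $\upset{b}_L \subseteq \bigcap_{b \in x_a} x_a$ always holds (if $c \ge b$ and $c \le a$ then $b \le a$), so (1) for a fixed $b$ amounts exactly to the implication $b \le \sigma_L(c) \Rightarrow b \le c$ for all $c$. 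Quantifying over $b$ and specializing to $b = \sigma_L(c)$ forces $\sigma_L(c) \le c$, hence $\sigma_L(c) = c$; the converse implication is immediate from $\sigma_L = \mathrm{id}_L$. Thus (1) holds iff $\sigma_L = \mathrm{id}_L$, which is (2).

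The step I expect to look hardest is (3)$\Rightarrow$(2), promoting spatiality from the pointless part to all of $L$; the clean route is Lemma \ref{Lem:13}, whose proof yields $b = \sigma_L(b) \wedge \pi_L(b)$ for every $b \in L$. Since $\pi_L(b) \in E$, condition (3) gives $\sigma_L(\pi_L(b)) = \pi_L(b)$, and because $\sigma_L$ is idempotent and preserves binary meets I compute $\sigma_L(b) = \sigma_L\big(\sigma_L(b) \wedge \pi_L(b)\big) = \sigma_L(b) \wedge \sigma_L(\pi_L(b)) = \sigma_L(b) \wedge \pi_L(b) = b$. Hence $\sigma_L$ fixes every $b$, i.e. $L$ is spatial. (Equivalently: $\sigma L = \fix \sigma_L$ is a sublocale, hence closed under binary meets, and $b$ is the meet of the two elements $\sigma_L(b)$ and $\pi_L(b)$, both of which lie in $\sigma L$.) The substantive idea is exactly this appeal to Lemma \ref{Lem:13}; the rest is formal.

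Finally, for the supplementary identity I would assume (1)--(3) hold and fix $b \in E$. Intersecting the equality furnished by (1) with $E$, and using $y_a = x_a \cap E$ together with $\upset{b}_E = \upset{b}_L \cap E$, I obtain $\upset{b}_E = \big(\bigcap_{b \nleq a} x_a\big) \cap E = \bigcap_{b \nleq a}(x_a \cap E) = \bigcap_{b \nleq a} y_a$, which is the asserted formula. All the remaining steps are bookkeeping with the definition of $\sigma_L$ and with the filters $x_a$ and $y_a$, so the only genuine obstacle is the use of Lemma \ref{Lem:13} in (3)$\Rightarrow$(2).
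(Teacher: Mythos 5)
Your proposal is correct, and on the pivotal implication (3)$\Rightarrow$(2) it takes a genuinely different route from the paper. The paper proves this step via its preparatory Lemma \ref{Lem:33}, a transfinite construction producing a specific set $A(b) \subseteq \upset{b}_{\max L}$ of maximal elements with $b = \pi_L(b) \wedge \bigwedge A(b)$; combined with (3) applied to $\pi_L(b)$, this exhibits $b$ as a meet of maximal elements above it. You instead invoke Lemma \ref{Lem:13}, whose proof already gives the coarser identity $b = \sigma_L(b) \wedge \pi_L(b)$, and then close the loop with $\sigma_L(b) \leq \sigma_L(\pi_L(b)) = \pi_L(b)$ (or equivalently your meet-preservation computation): since $A(b) \cup \upset{\pi_L(b)}_{\max L} \subseteq \upset{b}_{\max L}$ in the paper's argument anyway, the refined decomposition of Lemma \ref{Lem:33} is not needed for this proposition, and your route shows it can be bypassed entirely in favor of a lemma the paper has already established. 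What the paper's approach buys is the sharper structural information of Lemma \ref{Lem:33} itself (identifying \emph{which} maximal elements suffice), which is of independent interest; what yours buys is economy — a shorter proof with no new machinery — at the cost of that finer information. Your treatment of (1)$\Leftrightarrow$(2) is the paper's element-chasing argument repackaged through the observation that $c \in \bigcap_{b \in x_a} x_a$ iff $b \leq \sigma_L(c)$ (the paper's contradiction argument with $b' \equiv \bigwedge \upset{b}_{\max L}$ is exactly your specialization $b = \sigma_L(c)$), your remark that the two intersections in (1) coincide by definition of $x_a$ is accurate, and your derivation of the supplementary identity from $y_a = x_a \cap E$ and $\upset{b}_E = \upset{b}_L \cap E$ is the paper's computation verbatim.
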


\begin{proof}
	(2) is equivalent to (3), for if (3) holds then (2) follows from Lemma \ref{Lem:33} because $A(b) \cup \upset{\pi_L(b)}_{\max L} \subseteq \upset{b}_{\max L}$. 	Assume (1) and suppose for the sake of argument that (2) fails at $b \in L$, i.e., $b < b' \equiv \bigwedge \upset{b}_{\max L}$. Then $b \notin \upset{b'}_L$, so that by (1) there must be some element $a \in \max L$ for which $b \notin x_a \ni b'$, i.e., $b' \nleq a \geq b$, contrary to assumption. 	Conversely, assume (2) to prove (1). Clearly we have $\upset{b}_L \subseteq \bigcap \setof{x_a}{b \in x_a, a \in \max L}$ for any $b \in L$. If $b \nleq b' \in L$ then by (2) there exists a maximal element $a$ such that $b \nleq a \geq b'$, i.e., $b' \notin x_a \ni b$.
	
	If $L$ satisfies the numbered conditions then for any $b \in E$ we have
	\begin{align*}
			\upset{b}_E
			&= E \cap \upset{b}_L
			= E \cap \bigcap \setof{x_a}{b \in x_a,\, a \in \max L}\\
			&= \bigcap \setof{E \cap x_a}{b \in x_a, \, a \in \max L}\\
			&= \bigcap \setof{y_a}{b \in y_a, \, a \in \max L} \qedhere 
		\end{align*}		    
\end{proof}

\begin{question}
	Are the numbered conditions of Proposition \ref{Prop:22} equivalent to the condition given in the last sentence?
\end{question}

\begin{proposition}\label{Prop:21}
	Let $L$ be an atomless spatial frame with pointless part $E$ and spatial support $W$. Then $L$ is compact if and only if $W$ is maximal among independent families of regular filters on $E$.
\end{proposition}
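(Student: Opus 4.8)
The plan is to prove the two implications separately, arguing each by contraposition and trading on the link between compactness and regular round filters. Throughout write $E = \pi L$ and recall that, $L$ being atomless, $\pi_L$ is a dense surjection with $\pi_L(\bot) = \bot$ (Lemma \ref{Lem:7}(3)), and that $W = \{y_a : a \in \max L\}$ with $y_a = \pi_L(x_a)$ is independent (Lemma \ref{Lem:25}(3)).

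For the implication that compactness forces maximality of $W$, I would suppose $W$ is not maximal and produce a cover of $L$ with no finite subcover. So let $z$ be a regular filter on $E$ with $z \notin W$ and $W \cup \{z\}$ independent. The claim is that $\{b^* : b \in z\}$ already joins to $\top$. Indeed, were $v \equiv \bigvee_{b \in z} b^* < \top$, spatiality would furnish a maximal element $a \ge v$, so that $b^* \le a$ for every $b \in z$; but since $z \ne y_a \in W$, independence supplies $b_0 \in z$ and $c_0 \in y_a$ with $b_0 \wedge c_0 = \bot$, whence $c_0 \le b_0^* \le a$, contradicting $c_0 \nleq a$. That this cover has no finite subcover follows from properness: if $b_1^* \vee \cdots \vee b_n^* = \top$ with $b_i \in z$, then $b \equiv b_1 \wedge \cdots \wedge b_n \in z$ has $b^* \ge \bigvee_i b_i^* = \top$, forcing $b = \bot \in z$. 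Hence $L$ is not compact. Notably this half uses only spatiality, properness and independence, not the full force of regularity.

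For the converse I would show, contrapositively, that a non-compact $L$ makes $W$ non-maximal, and here a two-sided passage through $\pi_L$ is essential. First, non-compactness yields an ideal $J$ with $\bigvee J = \top$ and $\top \notin J$; rounding it off via complete regularity, $I \equiv \{c : c \combel d \text{ for some } d \in J\}$ is a round ideal with $\bigvee I = \top$ and $\top \notin I$. Under the order isomorphism of Lemma \ref{Lem:22}(10) the proper round ideal $I$ corresponds to a proper round filter $x$ with $\bigvee_x b^* = \bigvee I = \top$, i.e. a regular one; extending $x$ to a maximal proper round filter (Lemma \ref{Lem:22}(1)) keeps it regular, so Lemma \ref{Lem:22}(5) exhibits a maximal proper round filter $\tilde x$ that is genuinely regular, hence not equal to any $x_a$. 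I would then set $z \equiv \pi_L(\tilde x)$, a proper regular filter on $E$ by Lemma \ref{Lem:25}(2). For each $a \in \max L$ the distinct maximal proper round filters $\tilde x$ and $x_a$ contain disjoint $b \in \tilde x$ and $c \in x_a$ (Lemma \ref{Lem:22}(4)); applying $\pi_L$ and using $\pi_L(\bot) = \bot$ gives disjoint $\pi_L(b) \in z$ and $\pi_L(c) \in y_a$. Thus $z$ is independent from every member of $W$ and differs from each $y_a$ (otherwise $y_a$ would contain disjoint elements, contradicting properness), so $W \cup \{z\}$ is independent with $z \notin W$, contradicting maximality.

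The step I expect to be the main obstacle is the manufacture of the regular maximal proper round filter in the converse: extracting it from bare non-compactness requires the round-ification of $J$ and, crucially, the verification through the Lemma \ref{Lem:22}(10) bijection that the resulting filter is simultaneously proper and regular, after which Lemmas \ref{Lem:22}(1) and \ref{Lem:22}(5) finish the job. By contrast the forward direction is a short covering argument. I would deliberately resist the symmetric temptation to lift $z$ from $E$ up to $L$, since the relation $\combel$ on the sublocale $E$ need not refine the one on $L$ (joins computed in $E$ can reach $\top$ where they do not in $L$); routing the converse through the intrinsic round-ideal description of non-compactness inside $L$ circumvents this difficulty entirely.
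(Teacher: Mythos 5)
Your proposal is correct and essentially reproduces the paper's proof: the forward direction (the cover $\{b^* : b \in z\}$ obtained from spatiality and independence, with properness of $z$ blocking finite subcovers) is identical, and your converse uses the same machinery of rounding off the cover ideal, dualizing through Lemma \ref{Lem:22}(10), passing to a maximal proper round filter, and pushing down along the dense quotient $\pi_L$ via Lemmas \ref{Lem:22}(4),(5),(6) and \ref{Lem:25}. The only difference is the packaging of the converse: the paper argues by contradiction, using maximality of $W$ and a finite-intersection-property argument to force the filter to equal some $x_a$ and thereby bound the cover by $a$, whereas you argue contrapositively, noting that your filter is regular (since $\bigvee I = \top$) and hence unequal to every $x_a$, which exhibits $W \cup \{z\}$ directly as a strictly larger independent family.
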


\begin{proof}
	If $W$ is not maximal then there exists a regular filter $z$ on $E$ such that $W \cup \{z\}$ is independent, i.e., for each $w \in W$ there exist an element $b \in z$ such that $b\st \in w$. Then $\bigvee_z b\st = \top_L$, for otherwise the spatiality of $L$ would imply the existence of some $a \in \max L$ for which $b\st \leq a$ for all $b \in z$, i.e., $b\st \notin y_a$ for all $b \in z$, contrary to assumption. But no finite subset $z' \subseteq_\omega z$ has the feature that $\bigvee_{z'}b\st = \top$, for that would imply that the element $b_0 \equiv \bigwedge z' \in z$ had the feature that $b_0\st = \top$, an impossibility. The point here is that $L$ is not compact. 
	
	Now suppose that $W$ is maximal, and suppose for the sake of argument that $C$ is a cover of $L$ without a finite subcover. By replacing $C$ with the the ideal it generates, we may assume that $C$ is a proper ideal. By replacing $C$ with $\setof{b}{\exists c \in C\ (b \combel c)}$, we may assume that $C$ is proper and round. $C$ is therefore contained in a maximal proper round ideal $C'$, and $x \equiv \setof{c\st}{c \in C'}$ is a maximal proper round filter on $L$ by Lemma \ref{Lem:22}(10), so that $y \equiv \pi_L(x)$ is a regular filter on $E$ by Lemma \ref{Lem:25}. 
	
	By virtue of its maximality, $W$ must contain at least one filter $w$ with which $y$ has the finite intersection property, which is to say that $a \wedge b > \bot$ for each $a \in w$ and $b \in y$. Since $W$ is the spatial support of $L$, there exists a unique element $a \in \max L$ for which $w = y_a = \setof{b \in E}{b \nleq a}$. We claim that $x = x' \equiv \setof{b \in L}{b \nleq a}$. For if $x$ differs from $x'$ then $x$ and $x'$ would contain disjoint elements by Lemma \ref{Lem:22}(4), an eventuality which is ruled out by the observation that $\pi_L(x') = w$ and $\pi_L(x) = y$, and $w$ and $y$ have the finite intersection property. The important point here is that $a = \bigvee_x b\st$ by Lemma \ref{Lem:22}(5).
	
	The argument is concluded by noticing that for each $c \in C$ we have $c\st \in x$, hence $c\stst \leq \bigvee_x b\st = a$, from which follows $\bigvee C \leq a$. This contradicts our assumption that $C$ covered $L$. We conclude that $L$ is compact.   
\end{proof}

\begin{corollary}
	An atomless frame $L$ whose spatial support is maximal among independent families of round filters has the feature that $\upset{b}_L = \bigwedge_{b \in x_a} x_a$ for all $b \in L$. 
\end{corollary}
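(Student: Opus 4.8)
The plan is to reduce the identity to the spatiality of $L$ and then prove that spatiality by contraposition. By Proposition \ref{Prop:22} the assertion that $\upset{b}_L = \bigcap_{b \in x_a} x_a$ holds for every $b \in L$ is exactly condition (1), which is equivalent to $L$ being spatial. So it suffices to show that if $L$ is atomless and \emph{not} spatial, then its spatial support $W = \ssetof{y_a}{\max L}$ fails to be maximal among independent families of round filters on $E$. The first observation I would make is that a non-spatial completely regular frame cannot be compact, since every compact completely regular frame is spatial; hence a non-spatial $L$ is non-compact, and we may proceed to manufacture one extra regular filter that can be adjoined to $W$.

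Exactly as in the first paragraph of the proof of Proposition \ref{Prop:21}, non-compactness furnishes a cover $C$ of $L$ with no finite subcover. Replacing $C$ by the proper round ideal $\setof{b}{\exists c \in C\ (b \combel c)}$ and enlarging it to a maximal proper round ideal $C'$, we retain $\bigvee C' = \top$ because $\bigvee C = \top$. By Lemma \ref{Lem:22}(10) the filter $x \equiv \langle c\st : c \in C'\rangle_\text{fltr}$ is then a maximal proper round filter on $L$, and $\bigvee_x c\st = \bigvee C' = \top$, so $x$ is \emph{regular}. In particular $x$ is not of the form $x_a$ for any $a \in \max L$, since by Lemma \ref{Lem:22}(6) those filters satisfy $\bigvee_{x_a} b\st = a < \top$.

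Next I would project $x$ to the pointless part. Since $L$ is atomless, $\pi_L$ is a dense surjection onto $E$, so by Lemma \ref{Lem:25}(1) the image $z \equiv \pi_L(x)$ is a proper regular filter on $E$. I claim $W \cup \{z\}$ is an independent family strictly larger than $W$. Indeed, for each $a \in \max L$ the filters $x$ and $x_a$ are distinct maximal proper round filters on $L$, so by Lemma \ref{Lem:22}(4) they contain disjoint elements $p \in x$ and $q \in x_a$; applying the dense map $\pi_L$ yields $\pi_L(p) \wedge \pi_L(q) = \pi_L(p \wedge q) = \bot$ with $\pi_L(p) \in z$ and $\pi_L(q) \in y_a$. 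Thus $z$ is independent from every $y_a$, and, being a proper filter that shares a disjoint pair with each $y_a$, it is distinct from all of them. As the $y_a$ are themselves pairwise independent (Lemma \ref{Lem:25}(3)), the enlarged family $W \cup \{z\}$ is independent and properly contains $W$, contradicting the maximality of $W$ among independent families of round filters. This contradiction forces $L$ to be spatial, whence condition (1) of Proposition \ref{Prop:22} holds.

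I expect the one genuinely external ingredient — and the crux — to be the step \enquote{non-spatial $\Rightarrow$ non-compact}, i.e.\ the classical fact that compact completely regular frames are spatial; once a free cover is in hand, everything is a faithful adaptation of the cover argument already carried out for Proposition \ref{Prop:21}. The conceptual point is that maximality among \emph{round} filters (rather than merely among \emph{regular} filters, as in Proposition \ref{Prop:21}) is strong enough to be violated by the single regular filter $z$ extracted from any cover lacking a finite subcover, so the mere failure of spatiality already destroys it.
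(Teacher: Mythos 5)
Your proof is correct, and it assembles the same three ingredients as the paper: the cover/round-ideal machinery from Proposition \ref{Prop:21}, the classical fact that a compact completely regular frame is spatial, and the equivalence of spatiality with condition (1) of Proposition \ref{Prop:22}. The difference is in the arrangement and in how the cover argument ends. The paper runs the chain forwards --- maximality $\Rightarrow$ compact (Proposition \ref{Prop:21}) $\Rightarrow$ spatial $\Rightarrow$ the displayed identity --- and its entire written proof is the middle implication. Strictly speaking, Proposition \ref{Prop:21} is stated only for \emph{spatial} frames, so the paper's one-liner tacitly relies on the observation that the \enquote{maximal implies compact} half of that proof never uses spatiality; your contrapositive arrangement (not spatial $\Rightarrow$ not compact $\Rightarrow$ not maximal), in which you re-derive the needed implication from scratch, makes this circularity concern moot. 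Moreover, your ending of the cover argument is genuinely different from the paper's: Proposition \ref{Prop:21} uses maximality to find $w \in W$ having the finite intersection property with $y = \pi_L(x)$, identifies $x$ with $x_a$, and then contradicts $\bigvee C = \top$; you instead observe that $\bigvee_{b \in x} b\st \geq \bigvee_{c \in C'} c\stst \geq \bigvee C' = \top$, so that $x$ is \emph{regular}, hence distinct from every $x_a$ by parts (5) and (6) of Lemma \ref{Lem:22}, hence disjoint from every $x_a$ by Lemma \ref{Lem:22}(4); pushing down along the dense surjection $\pi_L$ then exhibits the extending filter $z$ outright. This buys a cleaner and slightly more informative conclusion --- any cover of $L$ lacking a finite subcover produces, in one step, a regular filter on $E$ independent of the entire spatial support --- at the cost of repeating a construction the paper simply cites. (One immaterial slip: the construction you borrow occupies the second paragraph of the proof of Proposition \ref{Prop:21}, not the first.)
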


\begin{proof}
	A compact frame is spatial.
\end{proof}

\section{When the spatial support consists of maximal proper round filters}\label{Sec:MaxRndFltr}

In this section we show that the condition of complete regularity, which intrudes into our fundamental Proposition \ref{Prop:20}, is automatically satisfied when the spatial support consists of maximal proper round filters. The result is Theorem \ref{Thm:5}, which requires some preparation. 

\subsection{Attaching points at maximal proper round filters}

\begin{lemma}\label{Lem:29}
	Let $W$ be a plenary family of maximal proper round filters on $E$. Then   
	\[
		L_W'
		\equiv \setof{\spltt{Z}{a} \in E \times \mbb{2}^W}{Z  \subseteq W_a}
	\]
	is a naked subframe of $E \times \mbb{2}^W$ with the following features.
	\begin{enumerate}
		\item 
		$L_W'$ is atomless.
		
		\item 
		$\max L_W' = \ssetof{\spltt{W \setminus \{w\}}{\top}}{W}$.
		
		\item 
		$L_W'$ contains $E' \equiv \ssetof{\spltt{W_a}{a}}{E}$ as a subset isomorphic to $E$ in the inherited order, and the map $E \to E' = \big(a \mapsto \spltt{W_a}{a}\big)$ preserves $\bot$, $\top$, and binary meets. The map also preserves the completely below relation in the following sense.
		
		\item 
		If $\ssetof{b_p}{\mbb{Q}}$ is a witnessing family for $a_1 \combel a_2$ in $E$ then $\ssetof{\spltt{W_{b_p}}{b_p}}{\mbb{Q}}$ is a witnessing family for $\spltt{W_{a_1}}{a_1} \combel \spltt{W_{a_2}}{a_2}$ in $E'$ and also in $L_W'$. In particular, $a_1 \combel a_2$ in $E$ implies $\spltt{W_{a_1}}{a_1} \combel \spltt{W_{a_2}}{a_2}$ in $L_W'$.		
	\end{enumerate}
\end{lemma}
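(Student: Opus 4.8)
The plan is to verify first that $L_W'$ is closed under the frame operations of $E \times \mbb{2}^W$, and then to treat the four features in turn, following the template of Proposition \ref{Prop:18} but now with no appeal to complete regularity (which is why $L_W'$ is only a \emph{naked} subframe). Closure is purely lattice-theoretic: both $\bot_{E\times\mbb{2}^W}$ and $\top_{E\times\mbb{2}^W}$ lie in $L_W'$ (note $W_{\top_E}=W$ since every proper filter contains $\top$), and closure under binary meet and arbitrary join reduce to the single observation that each $w \in W$, being a filter, is an upset closed under meets. Concretely, the identity $W_{a_1}\cap W_{a_2}=W_{a_1\wedge a_2}$ propagates the defining condition $Z\subseteq W_a$ through meets, and $\bigcup_I Z_i\subseteq W_{\bigvee_I a_i}$ through joins. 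This uses nothing about $W$ beyond its members being filters.

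For (1) I would reproduce the atom analysis of Proposition \ref{Prop:18}(1): a putative atom $\spltt{Z}{a}$ cannot have $\abs{Z}\ge 2$ (splitting off two distinct members of $Z$ yields incomparable nonzero elements strictly below it), cannot have $Z=\emptyset$ with $a>\bot$ (use atomlessness of $E$), and cannot be a singleton $Z=\{w\}$; in that last case $a\in w$, and since $E$ is pointless, $w$ is regular by Corollary \ref{Cor:3}, so $w$ has no least element by Lemma \ref{Lem:24}, producing $b\in w$ with $b<a$ and hence $\bot<\spltt{\{w\}}{b}<\spltt{\{w\}}{a}$. For (2) the maximal elements are the predecessors of $\top_{L_W'}=\spltt{W}{\top}$. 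A maximal $\spltt{Z}{a}$ must have $a=\top_E$: otherwise comparison with $\spltt{Z}{\top}$ forces $Z=W$, whereupon interpolativity of $E$ (Lemma \ref{Lem:4}) inserts an element strictly between $\spltt{W}{a}$ and $\top$, a contradiction. With $a=\top_E$, adjoining to $Z$ any $w\in W\setminus Z$ and invoking maximality gives $Z=W\setminus\{w\}$, and conversely each $\spltt{W\setminus\{w\}}{\top}$ is visibly a predecessor of the top.

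Feature (3) is the routine check that $\phi : a\mapsto\spltt{W_a}{a}$ is an order embedding of $E$ onto $E'$ preserving $\bot$ (as $W_{\bot}=\emptyset$), $\top$ (as $W_{\top}=W$), and binary meets (via $W_{a_1\wedge a_2}=W_{a_1}\cap W_{a_2}$); injectivity on the first coordinate plus meet-preservation forces it to reflect order, so $E'\cong E$ as frames. The one genuinely delicate point is (4), and I expect this to be the main obstacle, precisely because $\phi$ does \emph{not} preserve joins into $L_W'$: the second coordinate of $\phi(a_1)\vee\phi(a_2)$ is $W_{a_1}\cup W_{a_2}$, which may be strictly smaller than $W_{a_1\vee a_2}$. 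Hence a witness for $\spltt{W_{b_p}}{b_p}\prec\spltt{W_{b_q}}{b_q}$ must be produced by hand so that its second coordinate, joined with $W_{b_q}$, recaptures \emph{all} of $W$. Given $p<q$, I would pick rationals $p<s<v<u<q$ and propose the witness $\spltt{W_{b_s\st}}{b_s\st}$. Its meet with $\spltt{W_{b_p}}{b_p}$ is $\bot$, since $b_p\le b_s$ gives $b_p\wedge b_s\st=\bot$ and therefore $W_{b_p}\cap W_{b_s\st}=W_{\bot}=\emptyset$; and the first coordinate of its join with $\spltt{W_{b_q}}{b_q}$ is $\top$, since $b_s\prec b_q$. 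The crux is the identity $W_{b_s\st}\cup W_{b_q}=W$, i.e.\ that every $w\in W$ contains $b_s\st$ or $b_q$, and this is exactly where maximality of the round filters is used: from $b_s\combel b_v$ we get $b_v\st\combel b_s\st$ by Lemma \ref{Lem:22}(10), we have $b_u\combel b_q$, and $b_v\prec b_u$ gives $b_v\st\vee b_u=\top\in w$, so the primeness of the maximal proper round filter $w$ (Lemma \ref{Lem:22}(9)), applied to $b_v\st\combel b_s\st$ and $b_u\combel b_q$, yields $b_s\st\in w$ or $b_q\in w$. Assembling these $\prec$-relations, the family $\ssetof{\spltt{W_{b_p}}{b_p}}{\mbb{Q}}$ witnesses $\spltt{W_{a_1}}{a_1}\combel\spltt{W_{a_2}}{a_2}$ (using $a_1\le b_p$ and $b_q\le a_2$), and the very same witnesses serve in $E'$, where joins are computed through $\phi$, so the $E'$ assertion follows by transport along the isomorphism of (3).
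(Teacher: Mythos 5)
Your proposal is correct and follows essentially the same route as the paper's proof: the same closure computations, the same atom analysis (via Corollary \ref{Cor:3} and Lemma \ref{Lem:24}), and—for the crucial feature (4)—the same reduction of $\combel$ to $\prec$ between members of the witnessing family, with a pseudocomplement element of $E'$ serving as witness and the primeness of maximal proper round filters (Lemma \ref{Lem:22}(9)) showing that the two second coordinates union to all of $W$. The only immaterial difference is indexing: the paper witnesses $\spltt{W_{a_1}}{a_1} \prec \spltt{W_{a_2}}{a_2}$ by $\spltt{W_{a_1\st}}{a_1\st}$, applying primeness to $b_p\st \combel a_1\st$ and $b_q \combel a_2$, whereas you witness $\spltt{W_{b_p}}{b_p} \prec \spltt{W_{b_q}}{b_q}$ by $\spltt{W_{b_s\st}}{b_s\st}$, applying primeness to $b_v\st \combel b_s\st$ and $b_u \combel b_q$ for interpolated rationals $p<s<v<u<q$.
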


\begin{proof}
	To show that $L_W'$ is closed under the frame operations in $E \times \mbb{2}^W$, first note that $L_W'$ contains both $\bot_{E \times \mbb{2}^W} = \splt{\emptyset}{\bot}$ and $\top_{E \times \mbb{2}^W} = \splt{W}{\top}$. If $\spltt{Z_i}{a_i} \in L_W'$ then 
	\begin{gather*}
		Z_1 \cap Z_2 \subseteq W_{a_1} \cap W_{a_2} = W_{a_1 \wedge a_2}, \text{ hence }\\
		\spltt{Z_1}{a_1} \wedge \spltt{Z_2}{a_2} = \spltt{Z_1 \cap Z_2}{a_1 \wedge a_2} \in L_W'.
	\end{gather*} 
	Likewise if $\left\{\spltt{Z_i}{a_i}\right\}_I \subseteq L_W'$ and $Z \equiv \bigcup_I Z_i$ then $\bigvee_I a_i \in \cap Z$, hence $\bigvee_I \spltt{Z_i}{a_i} = \spltt{Z}{\bigvee_I a_i} \in L_W'$. 
	
	(1) Suppose for the sake of argument that $\spltt{Z}{a}$ is an atom of $L_W'$. If $Z = \emptyset$ then $a$ would have to be an atom of $E$, which is ruled out by hypothesis. If $Z$ contains two distinct elements $z_i \in Z$ then we would have 
	\[
		\bot 
		< \spltt{Z\setminus \{z_1\}}{a} 
		\neq \spltt{Z\setminus \{z_2\}}{a} 
		\leq \spltt{Z}{a},
	\] 
	contrary to assumption. So $Z$ must be a singleton, say $Z = \{z\}$, and $a$ must lie in $z$. But Lemma \ref{Lem:24} assures the existence of an element $b \in z$ such that $b < a$, so that we have the contradiction $\bot < \spltt{Z}{b} < \spltt{Z}{a}$.
	
	(2) Since $E$ is pointless, it is clear that the only predecessors of $\top_{E \times \mbb{2}^W} = \spltt{W}{\top}$ are of the form $\spltt{W\setminus\{w\}}{\top}$ for elements $w \in W$. 
	
	(4) It is sufficient to establish the claim that $a_1 \combel a_2$ in $E$ implies $\spltt{W_{a_1}}{a_1} \prec \spltt{W_{a_2}}{a_2}$ with witness in $L_W'$. So suppose $a_1 \combel a_2$ in $E$ with witnessing family $\left\{b_p\right\}_\mbb{Q}$, and fix rational numbers $p < q$.  Since $b_p\st \vee b_q = \top$, $b_q \combel a_2$ with witnessing family $\left\{b_r\right\}_{q < r}$, and $b_p\st \combel a_1\st$ with witnessing family $\left\{b_r\st\right\}_{r < p}$, it follows from Lemma \ref{Lem:22}(9) that $W_{a_2} \cup W_{a_1\st} = W$. Since $W_{a_1\st} \cap W_{a_1} = \emptyset$, we have shown that $\spltt{W_{a_1}}{a_1} \prec \spltt{W_{a_2}}{a_2}$ with witness $\spltt{W_{a_1\st}}{a_1\st}$.   
\end{proof} 

\begin{lemma}\label{Lem:32}
	Let $W$ and $L_W'$ be as in Lemma \ref{Lem:29}. Then for any $w \in W$, 
	\[
		\bigvee_w \spltt{W_{b\st}}{b\st}
		= \spltt{W\setminus\{w\}}{\top},
	\]
	the supremum being reckoned in $E \times \mbb{2}^W$.
\end{lemma}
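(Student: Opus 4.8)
The plan is to compute the join on the left coordinatewise in the product frame $E \times \mbb{2}^W$, using that joins there are reckoned coordinate by coordinate and that a join of characteristic functions $\chix{Z_i}$ in $\mbb{2}^W$ is the characteristic function of the union $\bigcup_i Z_i$. This gives
\[
	\bigvee_w \spltt{W_{b\st}}{b\st}
	= \spltt{\bigcup_{b \in w} W_{b\st}}{\bigvee_{b \in w} b\st},
\]
so that, since the target $\spltt{W \setminus \{w\}}{\top}$ is the pair $(\top, \chix{W \setminus \{w\}})$, it suffices to check the two coordinates separately: that $\bigvee_{b \in w} b\st = \top$ in $E$, and that $\bigcup_{b \in w} W_{b\st} = W \setminus \{w\}$ as subsets of $W$.

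The first coordinate is immediate from regularity. The filter $w$ is a maximal proper round filter on the pointless frame $E$, hence regular by Corollary \ref{Cor:3}, and regularity says precisely that $\bigvee_{b \in w} b\st = \top$. No further work is needed here.

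The substantive point is the set identity $\bigcup_{b \in w} W_{b\st} = W \setminus \{w\}$, which unwinds to $\setof{v \in W}{\exists b \in w\ (b\st \in v)} = W \setminus \{w\}$. For the inclusion $\subseteq$ I would use properness of the filters: if $b\st \in v$ with $b \in w$ and one had $v = w$, then both $b$ and $b\st$ would lie in $w$, forcing $\bot = b \wedge b\st \in w$, contradicting that $w$ is proper; hence $v \neq w$. For the reverse inclusion I would use independence of $W$: given any $v \in W$ with $v \neq w$, independence supplies disjoint elements $c \in v$ and $d \in w$, whence $c \leq d\st$ and therefore $d\st \in v$ because $v$ is upward closed; taking $b \equiv d \in w$ then exhibits $v \in W_{d\st}$, so $v \in \bigcup_{b \in w} W_{b\st}$. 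I expect this last step—recognizing that independence is exactly what produces, for each $v \neq w$, a witness $b \in w$ with $b\st \in v$—to be the only place a genuine idea is required; everything else is bookkeeping in the product.
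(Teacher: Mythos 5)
Your proof is correct and takes essentially the same approach as the paper's: compute the join coordinatewise, settle the first coordinate by the regularity of a maximal proper round filter on a pointless frame (Corollary \ref{Cor:3}, i.e.\ Lemma \ref{Lem:22}(5)), and settle the second coordinate by showing $\bigcup_{b \in w} W_{b\st} = W \setminus \{w\}$ via properness (for $\subseteq$) and independence of $W$ (for $\supseteq$). Your write-up simply makes explicit the two small steps the paper leaves to the reader, namely why $b\st \notin w$ and how independence yields the witness $b \in w$ with $b\st \in v$.
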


\begin{proof}
	The supremum $\bigvee_w b\st$ is $\top_E$ because a maximal proper round filter on a pointless frame is regular by Lemma \ref{Lem:22}(5). The argument is completed by showing that  $\bigcup_w W_{b\st} = W \setminus \{w\}$. For $w \notin W_{b\st}$ for any $b \in y$ since $b\st \notin w$. And for any $x \in W$, $x \neq w$, there exists an element $b \in w$ for which $b\st \in x$ because $W$ is an independent family. The point is that any such $x$ lies in $\bigcup_w W_{b\st}$.
\end{proof}

\begin{lemma}\label{Lem:31}
	Let $W$, $L_W'$, and $E'$ be as in Lemma \ref{Lem:29}, and let 
	\[
		L_W''
		\equiv \setof{\spltt{Z}{a}}{W_a \setminus Z \subseteq_\omega W}.
	\] 
	Then $L_W''$ is the smallest subset of $L_W'$ which contains $E' \cup \big\{\splt{W\setminus\{w\}}{\top}\big\}_W$ and is closed under binary meets. Furthermore, each member of $L_W''$ is the join (in $E \times \mbb{2}^W$) of its lower bounds in $E'$.  
\end{lemma}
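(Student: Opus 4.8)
The plan is to establish the two assertions of Lemma~\ref{Lem:31} separately, throughout writing a typical member of $L_W''$ as $\spltt{Z}{a}$ with $Z \subseteq W_a$ and the \emph{deficiency} $W_a \setminus Z$ finite. For the first assertion I would begin by checking that $L_W''$ is itself closed under binary meets: given $\spltt{Z_1}{a_1}, \spltt{Z_2}{a_2} \in L_W''$ their meet is $\spltt{Z_1 \cap Z_2}{a_1 \wedge a_2}$, and since $W_{a_1 \wedge a_2} = W_{a_1} \cap W_{a_2}$ one has the set inclusion $W_{a_1 \wedge a_2} \setminus (Z_1 \cap Z_2) \subseteq (W_{a_1} \setminus Z_1) \cup (W_{a_2} \setminus Z_2)$, whose right-hand side is finite. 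Together with the observations that $E' \subseteq L_W''$ (there the deficiency is empty) and that the maximal elements of Lemma~\ref{Lem:29}(2) lie in $L_W''$ (deficiency a singleton), this shows $L_W''$ is a meet-closed set of the required kind.

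For the minimality half I would exhibit each member explicitly as a finite meet. Given $\spltt{Z}{a} \in L_W''$, enumerate $W_a \setminus Z = \{w_1, \dots, w_n\}$ and observe
\[
	\spltt{Z}{a}
	= \spltt{W_a}{a} \wedge \bigwedge_{i=1}^{n} \spltt{W \setminus \{w_i\}}{\top},
\]
whose first factor lies in $E'$ and whose remaining factors are maximal elements by Lemma~\ref{Lem:29}(2); the second coordinate of the right-hand side is $\chix{W_a \setminus \{w_1, \dots, w_n\}} = \chix{Z}$ because $Z \subseteq W_a$. Hence every subset of $L_W'$ that contains $E' \cup \{\spltt{W \setminus \{w\}}{\top}\}_W$ and is closed under binary meets must contain $\spltt{Z}{a}$, so $L_W''$ is the smallest such set.

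For the second assertion, fix $\spltt{Z}{a} \in L_W''$ with $W_a \setminus Z = \{w_1, \dots, w_n\}$. A lower bound of $\spltt{Z}{a}$ lying in $E'$ is precisely an element $\spltt{W_c}{c}$ with $c \leq a$ and $W_c \subseteq Z$, the latter reducing (since $c \leq a$ already forces $W_c \subseteq W_a$) to the requirement $c \notin w_i$ for every $i$. As the join in $E \times \mbb{2}^W$ of such elements is $\spltt{\bigcup W_c}{\bigvee c}$, it suffices to prove $\bigvee c = a$ and $\bigcup W_c = Z$ as $c$ ranges over these lower bounds. For the first coordinate, each $w_i$ is regular by Corollary~\ref{Cor:3}, so $\bigvee_{b \in w_i} b\st = \top$, and the frame distributive law gives $\bigvee \setof{\bigwedge_i b_i\st}{b_i \in w_i} = \bigwedge_i \bigvee_{b_i \in w_i} b_i\st = \top$; the lower bounds $c = a \wedge \bigwedge_i b_i\st$ (each satisfying $c \leq b_i\st$ and hence $c \notin w_i$) therefore join to $a$. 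For the second coordinate, $\bigcup W_c \subseteq Z$ is clear, and for the reverse inclusion I take $w \in Z$, use the independence of $W$ (Lemma~\ref{Lem:22}(4)) to choose disjoint $d_i \in w$ and $e_i \in w_i$ for each $i$, and set $c = a \wedge \bigwedge_i d_i$: then $c \in w$ gives $w \in W_c$, while $c \leq e_i\st$ gives $c \notin w_i$, so $\spltt{W_c}{c}$ is a lower bound whose second coordinate contains $w$.

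I expect the second-coordinate computation in the final assertion to be the crux: producing, for each prescribed $w \in Z$, a single element $c \leq a$ that is caught by $w$ yet escapes all of $w_1, \dots, w_n$ simultaneously is exactly where the independence of the family $W$ (the disjoint witnesses of Lemma~\ref{Lem:22}(4)) is indispensable, and where finiteness of the deficiency $W_a \setminus Z$ is used to keep the meet $\bigwedge_i d_i$ finite. By contrast the first-coordinate computation is a routine application of frame distributivity once Corollary~\ref{Cor:3} supplies regularity of each $w_i$.
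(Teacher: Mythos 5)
Your proof is correct. For the first assertion you do exactly what the paper does: the identity
\[
	\spltt{Z}{a}
	= \spltt{W_a}{a} \wedge \sbw{lr}{W_a\setminus Z}\splt{W\setminus\{w\}}{\top}
\]
is the paper's entire argument for that half (your extra verification that $L_W''$ is itself meet-closed and contains the generators is detail the paper leaves implicit, and it is needed for the word \enquote{smallest}). For the second assertion you genuinely diverge. The paper expands the same finite meet using Lemma \ref{Lem:32}, i.e.\ the identity $\bigvee_{b \in w}\spltt{W_{b\st}}{b\st} = \splt{W\setminus\{w\}}{\top}$, and then distributes the meet over the resulting joins, producing in one stroke a single family of $E'$-elements below $\spltt{Z}{a}$ --- those of the form $\spltt{W_c}{c}$ with $c = a \wedge \bigwedge_w b_w\st$, $b_w \in w$ --- whose join is $\spltt{Z}{a}$, so that both coordinates are handled simultaneously. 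You instead compute the join of \emph{all} $E'$-lower bounds coordinate by coordinate, with two different witness families: the elements $c = a \wedge \bigwedge_i b_i\st$ (regularity of each $w_i$ via Corollary \ref{Cor:3}, plus finite distributivity) push the $E$-coordinate up to $a$, while the elements $c = a \wedge \bigwedge_i d_i$ built from disjoint witnesses (independence, Lemma \ref{Lem:22}(4)) catch each prescribed $w \in Z$ in the $\mbb{2}^W$-coordinate. The underlying ingredients are precisely the two that go into Lemma \ref{Lem:32} --- regularity and independence --- so the arguments are close cousins; the paper's route is more compact and reuses a lemma it states anyway, whereas yours is self-contained, bypasses Lemma \ref{Lem:32} entirely, and makes visible exactly which hypothesis controls which coordinate of the product. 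Both are complete and correct.
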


\begin{proof}
	To verify the first statement, observe that a typical element $\spltt{Z}{a} \in L_W''$ can be expressed as
	\[
		\spltt{W_a}{a} \wedge \sbw{lr}{W_a\setminus Z} \splt{W \setminus \{w\}}{\top}.
	\]
	To verify the second, use Lemma \ref{Lem:32} to expand this expression. 
	\begin{align*}
		\spltt{W_a}{a} \wedge \sbw{lr}{W_a\setminus Z}\splt{W\setminus\{w\}}{\top}
		= \spltt{W_a}{a} \wedge \sbw{l}{W_a\setminus Z}\bigvee_w \spltt{W_{b\st}}{b\st}\\
		= \bigvee \setof{\spltt{W_a}{a} \wedge  \spltt{W_{b_w\st}}{b_w\st}}{b_w\in w \in W_a\setminus Z}.\quad \qedhere 
	\end{align*}
\end{proof}

\begin{theorem}\label{Thm:5}
	Let $W$, $L_W'$, $L_W''$, and $E'$ be as in Lemma \ref{Lem:31}, and let $L_W^\star$ be the family of all joins (in $E \times \mbb{2}^W$) of elements of $L_W''$. Then $L_W^\star$ has the following properties.
	\begin{enumerate}
		\item 
		$L_W^\star$ is the  smallest (completely regular) subframe of $E \times \mbb{2}^W$ containing $E' \cup \big\{\splt{W\setminus\{w\}}{\top}\big\}_W$.
		
		\item 
		$\max L_W^\star = \big\{\splt{W\setminus\{w\}}{\top}\big\}_W$.
		
		\item 
		The pointless part of $L_W^\star$ has $E$ as a quotient, in the sense that $E'$ is a sublocale of $\pi L_W^\star$, and if we denote the quotient map by $\map{q}{\pi L_W^\star}{E'}$, then  
		\begin{align*}
			\map{q \circ \pi_{L_W^\star}}{L_W^\star}{E'}
			= \left(\spltt{Z}{a} \mapsto \spltt{W_a}{a}\right),
			\quad  a \in E,\: Z\subseteq W.
		\end{align*} 
	\end{enumerate}  
\end{theorem}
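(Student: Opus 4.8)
The plan is to handle the three assertions in turn, leaning on the explicit descriptions of $L_W'$, $L_W''$, and $E'$ furnished by Lemmas \ref{Lem:29}, \ref{Lem:31}, and \ref{Lem:32}. For (1) I would first verify that $L_W^\star$ is a subframe. Closure under arbitrary joins is immediate, a join of joins of $L_W''$-elements being again such a join. Closure under binary meets follows from frame distributivity in $E \times \mbb{2}^W$ together with the fact that $L_W''$ is closed under binary meets (Lemma \ref{Lem:31}): if $x = \bigvee_i p_i$ and $y = \bigvee_j q_j$ with $p_i, q_j \in L_W''$ then $x \wedge y = \bigvee_{i,j}(p_i \wedge q_j)$ with each $p_i \wedge q_j \in L_W''$. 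Since $L_W'' \subseteq L_W^\star$ contains the generating set $E' \cup \big\{\splt{W \setminus \{w\}}{\top}\big\}_W$ and, by Lemma \ref{Lem:31}, each member of $L_W''$ is a finite meet of generators, any subframe containing the generators must contain $L_W''$ and hence all joins of its members, i.e.\ must contain $L_W^\star$; this yields minimality.

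The real content of (1) is complete regularity, and this is the step I expect to be the main obstacle, since it is precisely where the hypothesis that $W$ consists of \emph{maximal} proper round filters becomes indispensable, entering through Lemmas \ref{Lem:29}(4) and \ref{Lem:32}. I would show each generator is a join of elements completely below it inside $L_W^\star$: for $\splt{W_a}{a} \in E'$ I would write $a = \bigvee\{c : c \combel a\}$ in $E$, observe that $W_a = \bigcup_{c \combel a} W_c$ by roundness of the filters in $W$, and invoke Lemma \ref{Lem:29}(4) to get $\splt{W_c}{c} \combel \splt{W_a}{a}$ with witnessing families again lying in $E' \subseteq L_W^\star$; for a maximal generator $\splt{W\setminus\{w\}}{\top}$ I would use Lemma \ref{Lem:32} to write it as $\bigvee_{b \in w}\splt{W_{b\st}}{b\st}$ and, exploiting roundness of $w$ together with Lemma \ref{Lem:22}(10) (so that $b_1 \combel b$ in $w$ gives $b\st \combel b_1\st$), conclude $\splt{W_{b\st}}{b\st} \combel \splt{W_{b_1\st}}{b_1\st} \leq \splt{W\setminus\{w\}}{\top}$. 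I would then propagate the approximation through finite meets (the completely-below relation respects binary meets, and each member of $L_W''$ is a finite meet of generators) and through arbitrary joins (if $u \combel p \leq x$ then $u \combel x$), concluding that every $x \in L_W^\star$ is the join of the members of $L_W^\star$ completely below it.

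For (2) I would use that the maximal elements are exactly the predecessors of $\top$ (Lemma \ref{Lem:2}(1)), comparing order in $L_W^\star$ with the order it inherits from $E \times \mbb{2}^W$. Given a maximal $\splt{Z}{a}$, if $a < \top$ then choosing $a < a' < \top$ in the interpolative frame $E$ exhibits $\splt{W_{a'}}{a'} \in E'$ strictly between $\splt{Z}{a}$ and $\top$ (note $Z \subseteq W_a \subseteq W_{a'}$), a contradiction, so $a = \top$; and if $W \setminus Z$ contained distinct $w_1, w_2$ then the maximal generator $\splt{W \setminus \{w_1\}}{\top}$ would lie strictly between $\splt{Z}{a}$ and $\top$, again a contradiction. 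Hence $Z = W \setminus \{w\}$ for a unique $w$, while conversely each $\splt{W \setminus \{w\}}{\top}$ is maximal because the only elements of $L_W^\star$ above it are itself and $\top = \splt{W}{\top}$.

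For (3) I would work with the first-coordinate projection $\map{e}{L_W^\star}{E}$, $\splt{Z}{a} \mapsto a$, a frame surjection (onto since $e\splt{W_a}{a} = a$). Being surjective it is skinny by Corollary \ref{Cor:1}(2), so as $E$ is pointless it factors as $e = \bar e \circ \pi_{L_W^\star}$ through the pointless reflector, with $\bar e$ a surjection onto $E$. A direct computation of the right adjoint gives $e_*(a) = \splt{W_a}{a}$, since $\splt{W_a}{a}$ is the largest element of $L_W^\star$ with first coordinate below $a$ (any $\splt{Y}{b} \in L_W^\star$ with $b \le a$ has $Y \subseteq W_b \subseteq W_a$). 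Because $e_* = (\pi_{L_W^\star})_* \circ \bar e_*$ and $(\pi_{L_W^\star})_*$ is the sublocale inclusion, this shows at once that $\bar e_*(a) = \splt{W_a}{a} \in \pi L_W^\star$ (so $E' \subseteq \pi L_W^\star$) and that $E' = \bar e_*(E)$. As the image of the right adjoint of the surjection $\bar e$, the set $E' = \bar e_*(E)$ is a sublocale of $\pi L_W^\star$ isomorphic to $E$, and the associated quotient map $q$ is identified with $\bar e$ under this isomorphism. Finally $q \circ \pi_{L_W^\star}$ corresponds to $\bar e \circ \pi_{L_W^\star} = e$, which sends $\splt{Z}{a}$ to $a$, i.e.\ to $\splt{W_a}{a} \in E'$, giving the displayed formula.
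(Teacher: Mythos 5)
Your proof is correct; parts (1) and (2) follow essentially the paper's own route, while part (3) takes a genuinely different one. For (1) the paper cites exactly the two facts you use --- the preservation of $\combel$ recorded in Lemma \ref{Lem:29}(3)--(4), and the statement in Lemma \ref{Lem:31} that every member of $L_W''$ is a join of its lower bounds in $E'$ (whose proof is where Lemma \ref{Lem:32} enters) --- and leaves the assembly to the reader; your propagation of $\combel$ through finite meets and arbitrary joins is precisely that assembly, and your direct identification of $\max L_W^\star$ supplies part (2), which the paper treats as immediate from Lemma \ref{Lem:29}(2). The real divergence is in (3): the paper verifies the sublocale axioms for $E'$ by hand, showing $E'$ is closed under arbitrary meets formed in $L_W^\star$ and computing the Heyting arrow $\splt{Z}{b} \to \splt{W_a}{a} = \splt{W_{b \to a}}{b \to a}$, with the containment $E' \subseteq \pi L_W^\star$ left implicit (it can be read off from that formula via Proposition \ref{Prop:13}). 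You instead run the categorical machinery already developed: the first-coordinate projection $e$ is a surjection, hence skinny by Corollary \ref{Cor:1}(2), hence factors through $\pi_{L_W^\star}$ as $e = \bar{e} \circ \pi_{L_W^\star}$; the computation $e_*(a) = \splt{W_a}{a}$ together with $e_* = (\pi_{L_W^\star})_* \circ \bar{e}_*$ exhibits $E'$ as $\bar{e}_*(E)$, i.e.\ as the sublocale of $\pi L_W^\star$ attached to the quotient $\bar{e}$, with $q$ corresponding to $\bar{e}$ under the isomorphism $\bar{e}_*$. Your route makes the containment $E' \subseteq \pi L_W^\star$, the sublocale property, and the displayed formula for $q \circ \pi_{L_W^\star}$ all fall out of the adjunction at once, and is arguably cleaner; the paper's computation costs more but produces explicit formulas for meets and implications into $E'$ inside $L_W^\star$, concrete information that the purely formal argument does not supply. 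Both arguments are sound.
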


\begin{proof}
	(1) The fact that $L_W^\star$ is completely regular follows from Lemma \ref{Lem:29}(3) together with the second sentence of Lemma \ref{Lem:31}. 
	
	(4) To show that $E'$ is closed under arbitrary meets, consider a family $\ssetof{\spltt{W_{a_i}}{a_i}}{I} \subseteq E'$, and let $a_0 \equiv \bigwedge_I a_i$ in $E$. We claim that $\bigwedge_I \spltt{W_{a_i}}{a_i} = \spltt{W_{a_0}}{a_0}$ in $L_W^\star$. Since $W_{a_0} \subseteq W_{a_i}$ for all $i$, it is clear that $\spltt{W_{a_0}}{a_o} \leq \spltt{W_{a_i}}{a_i}$ for all $i$. But for any $\spltt{Z}{b} \in L_W^\star$ such that $\spltt{Z}{b} \leq \spltt{W_{a_i}}{a_i}$ for all $i$, it must be the case that $b \leq a_0$, and hence that $Z \subseteq W_b \subseteq W_{a_0}$.  
	
	To complete the proof that $E'$ is a sublocale of $L_W^\star$, we shall show that $\spltt{Z}{b} \to \spltt{W_a}{a} = \spltt{W_{b \to a}}{b \to a}$ for any $\spltt{Z}{b} \in L_W^\star$ and $\spltt{W_a}{a} \in E'$. Surely $\spltt{W_{b \to a}}{b \to a} \wedge \spltt{Z}{b} = \spltt{W_{b \to a} \cap Z}{(b \to a) \wedge b} \leq \spltt{W_a}{a}$, for any filter in $W_{b \to a} \cap Z$ contains both $b \to a$ and $b$ and hence contains $a$. And if $\spltt{Z'}{c} \wedge \spltt{Z}{b} = \spltt{Z' \cap Z}{c \wedge b} \leq \spltt{W_a}{a}$ then $c \leq b \to a$ hence $Z' \cap Z \subseteq W_{c \wedge b} \subseteq W_{c} \subseteq W_{b \to a}$.   
\end{proof}

\begin{proposition}\label{Prop:24}
	Let $W$ be the family of maximal proper round filters on $E$. Then the frame $L_W^\star$ of Theorem \ref{Thm:5} is (isomorphic to) $\beta E$, the compact coreflection (aka \v{C}ech-Stone compactification) of $E$. 
\end{proposition}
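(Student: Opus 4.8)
The goal is to identify $L_W^\star$ with $\beta E$ when $W$ is the \emph{complete} family of maximal proper round filters on $E$. My strategy is to invoke the known construction of the compact coreflection via round filters, and to show that $L_W^\star$ realizes exactly this construction. Recall that for a completely regular frame $E$, the compact coreflection $\beta E$ may be presented as the frame of round ideals of $E$ (equivalently, its points are the maximal proper round filters, and by Lemma \ref{Lem:22}(1) every proper round filter extends to such a maximal one). Since $E$ is pointless, Corollary \ref{Cor:3} tells us every maximal proper round filter on $E$ is in fact regular, so the present $W$ is genuinely the full family of regular filters coming from maximality, and the points of $\beta E$ are in bijection with $W$.

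First I would set up the comparison map. By Theorem \ref{Thm:5}(2), $\max L_W^\star = \{\,\splt{W\setminus\{w\}}{\top} : w \in W\,\}$, so the maximal elements (equivalently, by Lemma \ref{Lem:2}(1), the points) of $L_W^\star$ are in canonical bijection with $W$, hence with the points of $\beta E$. Moreover Theorem \ref{Thm:5}(3) exhibits $E'$ (a copy of $E$) as a sublocale of $\pi L_W^\star \subseteq L_W^\star$, with the map $a \mapsto \splt{W_a}{a}$ preserving $\bot$, $\top$, finite meets, and (by Lemma \ref{Lem:29}(4)) the completely-below relation $\combel$. This embedding $E \hookrightarrow L_W^\star$ is the candidate for the coreflection unit $E \to \beta E$; I would verify it is dense (its image generates, and $\bot$ is its only element mapping to $\bot$) and that it exhibits $E$ as a dense sublocale whose closure is all of $L_W^\star$.

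Next I would verify compactness of $L_W^\star$, which is the crux. By Theorem \ref{Thm:5}(1), $L_W^\star$ is the smallest subframe containing $E'$ together with all the complemented maximal elements $\splt{W\setminus\{w\}}{\top}$. The compactness should follow from Proposition \ref{Prop:21}: $L_W^\star$ is atomless (Lemma \ref{Lem:29}(1) passes to $L_W^\star$) with pointless part $E$ and spatial support $W$, and $W$ — being the family of \emph{all} maximal proper round filters — is maximal among independent families of regular filters on $E$, since any regular filter independent from all of $W$ would itself extend to a maximal proper round filter already in $W$. One must first confirm $L_W^\star$ is spatial so that Proposition \ref{Prop:21} applies; spatiality follows from the facts that every element is a join of elements from $E'$ beneath it (the second sentence of Lemma \ref{Lem:31}, extended to all of $L_W^\star$ by construction) and that the maximal elements separate points via Proposition \ref{Prop:22}, using that $W$ is the full family of such filters.

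Finally I would invoke the universal property. Having a compact completely regular frame $L_W^\star$ with a dense embedding of $E$ as a sublocale, I would show this map is (isomorphic to) the coreflection $\beta E \to E$ by checking its universal property directly: any frame map from $E$ into a compact regular frame factors uniquely through $L_W^\star$, which amounts to matching the round-ideal description of $\beta E$ against the join-of-$E'$-elements description of $L_W^\star$. \textbf{The main obstacle} I anticipate is verifying compactness cleanly — specifically, confirming that $L_W^\star$ is spatial so that Proposition \ref{Prop:21} can be applied, since Proposition \ref{Prop:21} presupposes spatiality and the construction of $L_W^\star$ as an arbitrary-join closure does not make spatiality transparent. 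An alternative route that sidesteps this is to establish compactness intrinsically, by showing every cover of $L_W^\star$ admits a finite subcover using the regularity of each $w \in W$ together with the maximality of the family $W$, mirroring the second half of the proof of Proposition \ref{Prop:21}.
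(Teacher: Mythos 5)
Your proposal follows the same two\-/step route as the paper's own proof: compactness of $L_W^\star$ via Proposition \ref{Prop:21}, followed by identification of $L_W^\star$ with the Banaschewski--Mulvey frame of round ideals. Indeed, the ``matching of the round-ideal description of $\beta E$ against the join-of-$E'$-elements description'' to which you ultimately reduce your universal-property step is exactly what the paper carries out: it exhibits the inverse pair $\splt{Z}{b} \mapsto \setof{c \in E'}{c \combel \splt{Z}{b}}$ and $u \mapsto \bigvee u$, verified by Lemma \ref{Lem:31} and Lemma \ref{Lem:29}(4). Three small slips along the way: in $\mbf{F}$ the universal property of the compact coreflection concerns frame maps \emph{from} a compact regular frame \emph{into} $E$, which factor through $\beta E \to E$ (you stated the localic direction); a compact regular frame with a dense quotient onto $E$ is not automatically $\beta E$ (any compactification has this form), so the round-ideal matching is genuinely the content of the second step, not a formality; and atomlessness does not formally pass from $L_W'$ to the subframe $L_W^\star$ --- one needs the extra remark that every nonzero element of $L_W^\star$ lies above a nonzero element of $E'$ and that $E$ is atomless.

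The genuine gap is in your primary compactness route, and it is not quite the obstacle you anticipated. Proposition \ref{Prop:21} cannot be invoked literally, because its hypothesis ``pointless part $E$ and spatial support $W$'' fails for $L_W^\star$: Theorem \ref{Thm:5}(3) only makes $E'$ a \emph{sublocale} of $\pi L_W^\star$, and the paper's Example \ref{Ex:2} --- set in exactly the situation of Proposition \ref{Prop:24}, with $E$ the regular open algebra of $\mbb{R}$ and $W$ all maximal round filters --- shows that $\pi L_W^\star$ can properly contain $E'$. So your assertion that $L_W^\star$ has ``pointless part $E$'' is false, and your spatiality sketch inherits the same defect (Proposition \ref{Prop:22}(3) quantifies over the pointless part, again not $E'$; moreover spatiality is most naturally a \emph{consequence} of compactness, so proving it first is at best as hard as what you are trying to prove). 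The correct repair is precisely your proposed alternative: rerun the ``maximal implies compact'' half of the proof of Proposition \ref{Prop:21}, with the dense surjection $q \circ \pi_{L_W^\star} \colon L_W^\star \to E'$ of Theorem \ref{Thm:5}(3) playing the role of the pointless reflector $\pi_L$. Density of this map substitutes for Lemma \ref{Lem:25} (via Lemma \ref{Lem:23}(4),(5) and Lemma \ref{Lem:22}(5)); your observation that $W$ is maximal among independent families of regular filters on $E$ is exactly the maximality the argument needs; and the one remaining ingredient is the computation that for $a = \splt{W\setminus\{w\}}{\top} \in \max L_W^\star$ the trace of the filter $x_a$ on $E'$ is precisely $w$. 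That half of the argument never uses spatiality, so the obstacle dissolves --- and note that the paper's own one-line citation of Proposition \ref{Prop:21} tacitly requires the same repair.
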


\begin{proof}
	Notice that $L_W^\star$ is compact by Proposition \ref{Prop:21}. According to the classical construction of Banaschewski and Mulvey (\cite{BBMulvey:1980}, \cite{BBMulvey:1984}), we may take $\beta E$ to be the frame of round ideals on $E'$.  Therefore it is enough to establish that the maps
	\begin{align*}
		\map{g}{L_W^\star}{\beta E} &= \Big(\spltt{Z}{b} \mapsto \setof{\spltt{W_a}{a}}{\spltt{W_a}{a} \combel \spltt{Z}{b}}\Big)\\
		\beta E \to L_W^\star &=  \left(u \mapsto \bigvee u\right)
	\end{align*}
	are inverse frame isomorphisms. Clearly $\bigvee g\splt{Z}{b} = \splt{Z}{b}$ for all $\splt{Z}{b} \in L_W^\star$, for $\splt{Z}{b}$ is the join of its lower bounds in $L_W''$ by construction, each element of $L_W''$ is the join of its lower bounds in $E'$ by Lemma \ref{Lem:31}, and by Lemma \ref{Lem:29}, each element of $E'$ is the join, in both $E'$ and $L_W^\star$, of those elements of $E'$ completely below it.  
	
	Consider an element $u \in \beta E'$ with $\bigvee u \equiv \splt{Z}{b}$. Because $u$ is round, each element of $u$ has an element of $u$ completely above it in $E'$ and therefore also in $L_W^\star$ by Lemma \ref{Lem:29}(4), from which it follows that $u \subseteq g\big(\bigvee u\big)$. The opposite containment is just as clear, for a round ideal of open subsets of a compact Hausdorff space is precisely the family of open sets completely contained in its union.  
\end{proof}

\begin{corollary}\label{Cor:6}
	For an atomless frame $L$ with pointless part $E$, the cardinality of $\max L$ is bounded above by the cardinality of $\max \beta E$. 
\end{corollary}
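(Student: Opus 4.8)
The plan is to produce an explicit injection from $\max L$ into $\max \beta E$ and then read off the cardinality bound. The first step is to reinterpret the target set. By Proposition \ref{Prop:24}, taking $W$ to be the \emph{entire} family of maximal proper round filters on $E$ gives $L_W^\star \approx \beta E$, and Theorem \ref{Thm:5}(2) identifies $\max L_W^\star$ with $\setof{\splt{W\setminus\{w\}}{\top}}{w \in W}$. Since $w \mapsto \splt{W\setminus\{w\}}{\top}$ is a bijection of $W$ onto this set, $\max \beta E$ is in canonical bijection with the collection of all maximal proper round filters on $E$. It therefore suffices to inject $\max L$ into that collection.

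Second, I would bring in the spatial support of $L$. For each $a \in \max L$ the filter $y_a = \pi_L(x_a)$ is a proper regular, hence proper round, filter on $E$, and by Lemma \ref{Lem:25}(3) the family $\ssetof{y_a}{\max L}$ is independent; concretely, for distinct $a_1, a_2 \in \max L$ the filters $y_{a_1}$ and $y_{a_2}$ contain disjoint elements. Using Lemma \ref{Lem:22}(1), each proper round filter $y_a$ is contained in a maximal proper round filter on $E$; choose one and call it $m_a$. This defines a map $a \mapsto m_a$ from $\max L$ into the maximal proper round filters on $E$.

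The remaining step is injectivity, and this is exactly where independence is decisive. If $a_1 \neq a_2$, pick disjoint $b_i \in y_{a_i}$, so that $b_1 \wedge b_2 = \bot$. Were $m_{a_1} = m_{a_2}$, this single filter would contain both $b_1$ and $b_2$, hence would contain $b_1 \wedge b_2 = \bot$, contradicting its properness. Thus $m_{a_1} \neq m_{a_2}$, the map is injective, and composing it with the bijection of the first step yields $\abs{\max L} \leq \abs{\max \beta E}$.

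I do not foresee a substantive obstacle: the construction is a containment-in-maximal extension of round filters (Lemma \ref{Lem:22}(1)) followed by a counting argument. The only point demanding care is that the extension $m_a$ is not canonical, but injectivity requires only that distinct maximal elements land in distinct maximal round filters, and the disjointness encoded in independence forces this without any need for uniqueness of the extension.
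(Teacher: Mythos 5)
Your proof is correct and is exactly the argument the paper intends: Corollary \ref{Cor:6} is stated without proof as an immediate consequence of Proposition \ref{Prop:24}, whose content (together with Theorem \ref{Thm:5}(2)) is precisely your first step, that $\max \beta E$ is in bijection with the family of all maximal proper round filters on $E$. Your completion of the counting step---passing from $a \in \max L$ to the proper regular filter $y_a$ (Lemma \ref{Lem:25}), extending it to a maximal proper round filter via Lemma \ref{Lem:22}(1), and getting injectivity from independence plus properness---is the natural way to finish, and it is sound.
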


Example \ref{Ex:2} makes the point that the pointless part of $L_W^\star$ (Theorem \ref{Thm:5}(4)) may be strictly larger than $E$.

\begin{example}\label{Ex:2}
	Let $E$ be the regular open algebra of the topology $\Topol \mbb{R}$ of the real numbers, and let $W$ be the family of all maximal round filters on $E$. Then $L_W^\star$ is isomorphic to $\beta E$ by Proposition \ref{Prop:24}, and according to the proof of the proposition, we may take it to be the frame of round ideals on $E$. Since in this case every ideal is round, $L_W^\star$ is (isomorphic to) the topology of the Stone space dual to $E$, i.e., the topology of the Gleason cover of the real numbers. 
	
	The pointfree part of $L_W^\star$ is the sublocale consisting of those elements which satisfy the condition of Proposition \ref{Prop:23}. The ideals which satisfy this  condition include the principal ideals, but include other ideals as well. For instance, the reader will have no difficulty in verifying that the ideal of regular open subsets of finite measure is nonprincipal and satisfies the condition.     
\end{example}

\begin{question}\label{Ques:3}
	Is every pointless frame the pointless part of a compact frame?
\end{question}

Proposition \ref{Prop:24}, together with Example \ref{Ex:2}, motivate the consideration in Subsection \ref{Subsec:qPcorefl} of compact frames whose pointless parts are $C^*$-quotients. This requires a brief digression in Subsection \ref{Subsec:sFsdeTychs}. 

\subsection{$\mbf{sFs}$ is dually equivalent to $\mbf{Tychs}$}\label{Subsec:sFsdeTychs} 

Here we make the point in Proposition \ref{Prop:9} that the restriction to skinny frame homomorphisms, i.e., the passage from $\mbf{F}$ to $\mbf{Fs}$, does not invalidate the classical dual equivalence between spatial frames and Tychonoff spaces.  

\begin{definition*}[$\mbf{Tychs}$, {$\map{\max}{\mbf{sFs}}{\mbf{Tychs}^{\op}}$}]
	We denote the category of Tychonoff spaces with skinny continuous functions by $\mbf{Tychs}$. The functor 
	\[
		\map{\max}{\mbf{sFs}}{\mbf{Tychs}^{\op}} 
	\]
	associates with each spatial frame $L$ the space with carrier set $\max L$ topologized by declaring
	\[
		\mcal{O}\max L
		\equiv \ssetof{\upset{a}_{\max L}}{L},
	\] 
	and associates with each $\mbf{Fs}$-morphism $\map{m}{L}{M}$ the continuous function 
	\[
		\map{\max m}{\max M}{\max L}
		= \big(b \mapsto m_*(b)\big).
	\] 
\end{definition*}

Proposition \ref{Prop:9} shows that $\mbf{sFs}$ and $\mbf{Tychs}$ are dually equivalent categories.

\begin{proposition}\label{Prop:9}
	The functor $\map{\max}{\mbf{sFs}}{\mbf{Tychs}^{\op}}$ is an equivalence of categories.
\end{proposition}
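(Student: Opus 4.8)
The plan is to bootstrap from the classical dual equivalence between spatial frames and Tychonoff spaces and then check only that it restricts correctly once the morphisms on both sides are cut down to the skinny ones. On objects and on \emph{all} frame homomorphisms (resp.\ all continuous maps) the functor $\max$ is exactly the classical contravariant equivalence $\map{\max}{\mbf{sF}}{\mbf{Tych}^{\op}}$ between spatial completely regular frames and Tychonoff spaces with all continuous functions (see \cite[II 5.3]{PicadoPultr:2012}): a completely regular frame is spatial precisely when it is the topology of a Tychonoff space, every Tychonoff space is sober and completely regular so that $X \approx \max \mcal{O}X$, and for spatial $L,M$ the assignment $m \mapsto \max m = (b \mapsto m_*(b))$ is a bijection onto \emph{all} continuous functions $\max M \to \max L$. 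Note that $\max m$ is well defined into $\max L$ for an arbitrary frame homomorphism $m$: by Lemma \ref{Lem:3}(1) the map $m_*$ carries a maximal (hence prime, by Lemma \ref{Lem:2}(1)) element of $M$ to a prime element of $L$, which is moreover $<\top$ and therefore again maximal by Lemma \ref{Lem:2}(1). Since $\mbf{sFs}$ and $\mbf{Tychs}$ have the same objects as $\mbf{sF}$ and $\mbf{Tych}$, essential surjectivity and faithfulness are inherited verbatim, as is functoriality. It therefore suffices to prove that the classical bijection restricts to a bijection between the skinny morphisms on the two sides, i.e.\ that
\[
	m \text{ is skinny} \iff \max m \text{ is skinny}
\]
for every frame homomorphism $\map{m}{L}{M}$ between spatial frames. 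The forward implication says $\max$ lands in $\mbf{Tychs}$, while the backward implication supplies fullness, since the unique classical $m$ with $\max m = f$ is then skinny whenever $f$ is.

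To prove this equivalence I set up the dictionary between elements of a spatial frame and closed subspaces of its space. For $a \in L$ the closed sublocale $C_a$ is the frame of the closed subspace $\upset{a}_{\max L} = \setof{c \in \max L}{a \leq c}$ of $\max L$, and every closed subspace of $\max L$ arises in this way (sobriety identifies closed subspaces with closed sublocales $C_a$). By definition $a$ is coscattered exactly when $C_a$ is a scattered frame, which for a spatial frame is exactly when the closed subspace $\upset{a}_{\max L}$ is scattered. Now set $f = \max m$. For $a \in L$ and $b \in \max M$ the adjunction $m \dashv m_*$ yields
\[
	f(b) = m_*(b) \geq a \iff b \geq m(a),
\]
so that $f^{-1}\big(\upset{a}_{\max L}\big) = \upset{m(a)}_{\max M}$; the $f$-preimage of the closed subspace determined by $a$ is the closed subspace determined by $m(a)$. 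Consequently $f$ inversely preserves closed scattered subspaces if and only if $m(a)$ is coscattered whenever $a$ is, and by Lemma \ref{Lem:18}(1) this last condition is precisely the skinniness of $m$. This establishes the displayed equivalence, and with it the proposition.

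The one genuinely topological point, and the place where care is required, is the translation \enquote{scattered closed subspace $\leftrightarrow$ coscattered element}: one must invoke that a subspace of $\max L$ is scattered iff its frame of opens is a scattered frame, and that the two formulations of a skinny continuous function---having scattered fibers and inversely preserving closed scattered subsets---coincide (cf.\ the scattered-space theory of \cite{PicadoPultr:2020}), so that the inverse-preservation criterion used above is the operative one. Everything else is the formal transport of the classical equivalence through the single adjunction identity $m_*(b) \geq a \iff b \geq m(a)$, which is what turns Lemma \ref{Lem:18} into the required statement about the dual maps.
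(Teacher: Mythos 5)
Your proof is correct and follows essentially the same route as the paper's: transport the classical dual equivalence between $\mbf{sF}$ and $\mbf{Tych}^{\op}$, then check that skinniness corresponds on the two sides. The paper's own proof merely asserts this correspondence (citing its earlier remark that skinny frame homomorphisms are dual to skinny continuous functions and that every spatial frame is the topology of a Tychonoff space), so your explicit verification via the adjunction identity $m_*(b) \geq a \iff b \geq m(a)$ together with Lemma \ref{Lem:18}(1) simply supplies the details the paper leaves as \enquote{evident}.
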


\begin{proof}
	We have already remarked that a continuous function between Tychonoff spaces has a frame counterpart which is skinny if and only if its fibers are scattered, a correspondence which is evidently full and faithful. Of course, every spatial frame is the topology of a Tychonoff space. See \cite[3.33]{AdamekHerrlichStrecker:2004}.
\end{proof}

\subsection{The $qP$-coreflection in compact atomless frames}\label{Subsec:qPcorefl}

Proposition \ref{Prop:25} points out an exotic feature of compact Hausdorff spaces whose pointless parts are $C^*$-embedded. 

\begin{proposition}\label{Prop:25}
	Let $X$ be a compact Hausdorff space without isolated points whose pointless part is $C^*$-embedded, by which we mean that its topology $\mcal{O}X \equiv L$ is isomorphic to $\beta \pi L$. Then the removal of any single point does not change $\mcal{C}X$, in the sense that for any $x \in X$, $\mcal{C}X = \mcal{C}^*(X \setminus \{x\})$. 
\end{proposition}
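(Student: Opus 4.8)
The plan is to deduce the statement from the single fact that $X\setminus\{x\}$ is $C^*$-embedded in $X$. Since $X$ is compact we have $\mcal{C}X=\mcal{C}^*X$, and since $X$ has no isolated points the subspace $X\setminus\{x\}$ is dense, so the restriction homomorphism $\mcal{C}^*X\to\mcal{C}^*(X\setminus\{x\})$ is injective. Thus it suffices to show it is onto, i.e. that every bounded continuous function on $X\setminus\{x\}$ extends over $X$; equivalently, that $X$ is the Stone--\v{C}ech compactification $\beta(X\setminus\{x\})$. Writing $a\equiv X\setminus\{x\}$, this set is a maximal element of $L=\mcal{O}X$ (the only open set properly above it is $X$), and $\mcal{O}(X\setminus\{x\})$ is precisely the open quotient $O_a$ with quotient map $\map{o_a}{L}{O_a}$. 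So the whole proposition reduces to the \emph{localic} assertion $\beta O_a\cong L$, with $o_a$ serving as the coreflection arrow.

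To prove this I would exploit the sandwich $\pi L\subseteq O_a\subseteq L$. By Corollary \ref{Cor:2}, $\pi L=\bigcap_{\max L}O_c\subseteq O_a$, so $\pi L$ is a sublocale of $O_a$; let $\map{r}{O_a}{\pi L}$ be the corresponding frame surjection, so that $r\circ o_a=\pi_L$. Because $X$ has no isolated points, $L$ is atomless, hence $\pi_L$ is dense by Lemma \ref{Lem:7}(3); and $a$ is a dense element ($a^*=\bot$, since $X\setminus\{x\}$ is dense), so $o_a$ is a dense surjection too. All three frames are completely regular (sublocales and open sublocales of the completely regular $L$), so each Stone--\v{C}ech coreflection is defined.

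The tool I would invoke is the standard characterization of the Stone--\v{C}ech coreflection by the completely-below relation: for a compact completely regular frame $K$ and a dense surjection $\map{j}{K}{M}$, one has $K\cong\beta M$ with $j$ the coreflection arrow if and only if $j$ \emph{reflects} $\combel$, i.e. $j(u)\combel j(v)$ implies $u\combel v$ (the forward implication being automatic, as frame homomorphisms preserve $\combel$). The hypothesis $L\cong\beta\pi L$ then says precisely that the coreflection arrow $\pi_L$ reflects $\combel$. Now suppose $o_a(u)\combel o_a(v)$ in $O_a$; applying the frame homomorphism $r$, which preserves $\combel$, gives $\pi_L(u)=r\,o_a(u)\combel r\,o_a(v)=\pi_L(v)$, and reflection through $\pi_L$ yields $u\combel v$. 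Hence $o_a$ reflects $\combel$ as well, so $L\cong\beta O_a$ by the same characterization. Translating back to spaces, $X=\beta(X\setminus\{x\})$, whence $X\setminus\{x\}$ is $C^*$-embedded in $X$ and $\mcal{C}^*(X\setminus\{x\})\cong\mcal{C}^*X=\mcal{C}X$ via restriction, as required.

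The hard part is the sandwich step, and the reason the point-free framework is essential there: the inner sublocale $\pi L$ is pointless and therefore is \emph{not} a subspace of $X$, so the classical point-set argument (restrict a bounded function to a dense $C^*$-embedded subset, extend it, and note the extension agrees on the larger set by density) is unavailable, there being no points of $\pi L$ at which to evaluate. Recasting $C^*$-embeddedness as ``reflects $\combel$'' is exactly what turns the sandwich into a purely order-theoretic computation and lets $\pi L$ carry the argument. The two points needing care are verifying (or citing, from Banaschewski's description of compactifications via strong inclusions, cf.\ \cite{PicadoPultr:2012}) this characterization, and confirming that the hypothesis $L\cong\beta\pi L$ really does identify $\pi_L$ with the coreflection arrow, which is where the denseness of $\pi_L$ supplied by atomlessness is used.
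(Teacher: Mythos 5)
Your overall skeleton is the same as the paper's: reduce to the localic statement $\beta O_a\cong L$, sandwich $\pi L\subseteq O_a\subseteq L$ via Corollary \ref{Cor:2}, factor $\pi_L=r\circ o_a$, and transfer the compactification property of $\pi_L$ down to $o_a$. But the tool you use for the transfer is false, and in a way that cannot be repaired as stated. The characterization ``for a dense surjection $\map{j}{K}{M}$ with $K$ compact, $K\cong\beta M$ if and only if $j$ reflects $\combel$'' fails in the forward direction; indeed, in the present setting $\pi_L$ can \emph{never} reflect $\combel$, whether or not $L\cong\beta\pi L$. Concretely: $X$ is compact and nonempty, so $a\equiv X\setminus\{x\}$ is a maximal element of $L$, hence $a\in\ker\pi_L$ and $\pi_L(a)=\top$; but since $x$ is not isolated, $a\st=\bot$, so $a\vee a\st=a<\top$, i.e.\ $a\not\prec a$ and a fortiori $a\not\combel a$. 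Taking $u=v=a$ gives $\pi_L(u)=\top\combel\top=\pi_L(v)$ while $u\not\combel v$. So your step ``the hypothesis $L\cong\beta\pi L$ says precisely that $\pi_L$ reflects $\combel$'' asks the hypothesis to deliver something that never holds, and the rest of the argument has nothing to stand on. (The same example, run inside $\mbb{N}\subseteq\beta\mbb{N}$ or any non-compact $M$, shows the ``only if'' half of your claimed criterion is simply not part of the theory of $\beta$.)

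What the hypothesis $L\cong\beta\pi L$ actually gives is reflection \emph{along right adjoints} (Banaschewski's strong-inclusion description of compactifications): $c\combel d$ in $\pi L$ implies $(\pi_L)_*(c)\prec(\pi_L)_*(d)$ in $L$; and, dually, to conclude $L\cong\beta O_a$ it suffices to show $c\combel d$ in $O_a$ implies $(o_a)_*(c)\prec(o_a)_*(d)$. If you rerun your chaining argument with this correct form, it hits an adjoint mismatch: from $r(c)\combel r(d)$ you get $(\pi_L)_*r(c)\prec(\pi_L)_*r(d)$, and $(o_a)_*(c)\leq(\pi_L)_*r(c)$ lets you shrink the left-hand side, but on the right $(\pi_L)_*r(d)=(o_a)_*\bigl(r_*r(d)\bigr)$ with $r_*r(d)\geq d$, and $\prec$ is only upward closed on the right — the wrong direction. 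This \emph{can} be patched (interpolate $c\combel e\combel d$ and use that on an atomless frame the nucleus $r_*r$ is dense, so $r_*r(e)\leq e\stst\leq d$ once $e\prec d$), but that extra idea is not in your write-up. The paper sidesteps all of this by transferring the property ``$C^*$-quotient'' directly: $\pi_L=r\circ o_x$ is a $C^*$-quotient by hypothesis, and $r$ is a dense surjection, hence cancellable against real-valued functions; so any $\gamma\in\mcal{C}^*(O_x)$ lifts along $o_x$ by lifting $r\gamma$ along $\pi_L$. That one-line mechanism is what your proof is missing.
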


\begin{proof}
	Denote the open quotient corresponding to $X \setminus \{x\}$ by $\map{o_x}{L}{O_x}$. Since $\pi_L$ factors through $o_x$ by Proposition \ref{Prop:4}, the latter is a $C^*$-quotient. 
\end{proof}

\begin{question}
	In terms of Proposition \ref{Prop:25}, if each quotient $\map{o_x}{L}{O_x}$, $x \in X$, is a $C^*$-quotient, does it follow that $\pi_L$ is a $C^*$-quotient? 
\end{question}

The spaces described in Proposition \ref{Prop:25} are reminiscent of $P$-spaces. A point of a Tychonoff space is called a \emph{$P$-point} if every real-valued function on the space is constant on a neighborhood of the point. A $P$-space is a space whose every point is a $P$-point. (See \cite{BallWWZenk:2011} and the references therein for a thorough discussion of $P$-spaces.) Obviously any non-isolated point of a $P$-space $X$ can be removed without changing $\mcal{C}^*X$.  But a compact $P$-space is finite, whereas we are about to demonstrate that the spaces of Proposition \ref{Prop:25} abound. In fact, they are reflective in compact Hausdorff spaces without isolated points, meaning every such space has a canonical embedding into a space with the properties of Proposition \ref{Prop:25}. 

\begin{definition*}[$qP$-frame, $qP$-space]
	A \emph{$qP$-frame} is a compact frame whose pointless part is a $C^*$-quotient. That is, a frame $L$ is a $qP$-frame if $\map{\pi_L}{L}{\pi L}$ is isomorphic to $\map{\beta_{\pi L}}{\beta \pi L}{\pi L}$.\footnote{The term \enquote*{quasi-$P$ frame} was used in \cite{DubeNsondeNayi:2015} with another meaning.} A $qP$ space is a Tychonoff space whose topology is a $qP$-frame. 
\end{definition*} 

\begin{lemma}\label{Lem:34}
	A compact frame with a pointless $C^*$-quotient is a $qP$-frame. Otherwise put, a $qP$-frame is a frame of the form $\beta M$ for some pointless frame $M$.
\end{lemma}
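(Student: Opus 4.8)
The plan is to reduce everything to the universal property of the pointfree Stone--\v{C}ech compactification $\beta$ of Banaschewski and Mulvey (\cite{BBMulvey:1980}, \cite{BBMulvey:1984}), used in the following form: for completely regular frames a dense frame surjection $\map{m}{L}{N}$ is a $C^{*}$-quotient precisely when the induced map $\beta m$ is an isomorphism, while for compact $L$ the counit $\beta_{L}$ is an isomorphism. Here I take a $C^{*}$-quotient to be a \emph{dense} onto homomorphism, which is exactly what the definition of a $qP$-frame presupposes (the comparison is made against the dense map $\beta_{\pi L}$); without density one only recovers $\beta M$ as the closure $C_{m_{*}(\bot)}$ of the image, and the conclusion can genuinely fail.

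I would first dispose of the trivial halves of the asserted equivalence. If $L$ is a $qP$-frame then by definition $L\cong\beta\pi L$ with $\pi L$ pointless, so $L$ is of the form $\beta M$ with $M=\pi L$. Conversely, for any pointless $M$ the frame $\beta M$ is compact and its counit $\map{\beta_{M}}{\beta M}{M}$ is a dense onto $C^{*}$-quotient with pointless codomain, so $\beta M$ is a compact frame carrying a pointless $C^{*}$-quotient. Thus the entire statement follows once I establish the first sentence.

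So suppose $L$ is compact and $\map{m}{L}{M}$ is a dense $C^{*}$-quotient with $M$ pointless. Being a surjection, $m$ is skinny (Corollary \ref{Cor:1}(2)), and since $M$ is pointless it factors through the pointless reflector as $m=q\circ\pi_{L}$ for a surjection $\map{q}{\pi L}{M}$ (Proposition \ref{Prop:8}). Reading this through the right adjoints, $m_{*}=\pi_{L*}\circ q_{*}$, exhibits $M\subseteq\pi L\subseteq L$ as sublocales. Because $m$ is dense, $M$ is dense in $L$, hence so is the larger sublocale $\pi L$; by Lemma \ref{Lem:7}(3) this forces $L$ to be atomless and $\pi_{L}$ to be a dense surjection. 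Since $L$ is compact and $m$ is a dense $C^{*}$-quotient, $\beta m$ identifies $\beta L\cong L$ with $\beta M$, so $L\cong\beta M$ and $M$ is dense and $C^{*}$-embedded in $L$. It then remains to promote the $C^{*}$-property from $M$ to $\pi L$: as $M\subseteq\pi L\subseteq L$ with $M$ dense and $C^{*}$-embedded in $L$, the same holds in the intermediate $\pi L$ (extend a bounded function across $L$, then restrict), so $q$ is a dense $C^{*}$-quotient and $\beta q$ is an isomorphism. Applying $\beta$ to $m=q\circ\pi_{L}$ gives the isomorphism $\beta m=\beta q\circ\beta\pi_{L}$, whence $\beta\pi_{L}$ is an isomorphism as well. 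Thus $\pi_{L}$ is a dense onto homomorphism inducing an isomorphism on $\beta$, i.e.\ a dense $C^{*}$-quotient onto $\pi L$, and since $L$ is compact this yields $L\cong\beta\pi L$ with $\pi_{L}\cong\beta_{\pi L}$; that is, $L$ is a $qP$-frame.

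The main obstacle will be the precise invocation of pointfree Stone--\v{C}ech uniqueness --- that a compact completely regular frame carrying a dense $C^{*}$-quotient onto $M$ is necessarily $\beta M$ --- together with the stability of $C^{*}$-embeddedness under passage to a sublocale intermediate between $M$ and its compactification; this is exactly where density is indispensable. A secondary point needing care is the factorisation identity $m_{*}=\pi_{L*}\circ q_{*}$ and the resulting inclusion $M\subseteq\pi L$, since it is this inclusion that lets both the atomlessness of $L$ and the transfer of the $C^{*}$-property to $\pi_{L}$ be read off cleanly.
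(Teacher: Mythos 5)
Your proof is correct, and its core move is the paper's own: since $m$ is onto (hence skinny, Corollary \ref{Cor:1}(2)) and $M$ is pointless, $m$ factors as $q \circ \pi_L$ through the pointless reflector, and one then transfers the compactification property from $m$ to $\pi_L$. The paper's entire proof is that single sentence, so your write-up is a filling-in of the same idea rather than a different route; the only mechanical difference is how the transfer is effected. The natural direct argument promotes the $C^*$-property to the \emph{first} factor $\pi_L$ (lift $q\circ\phi$ along $m$, then argue the result projects back to $\phi$), which requires cancelling the dense surjection $q$ against real-valued maps — legitimate because the frame of reals is regular, but a point needing proof. You instead observe that the \emph{second} factor $q$ is trivially a $C^*$-quotient (extend across $L$, restrict to $\pi L$; no density needed there), and then get that $\beta\pi_L$ is an isomorphism by two-out-of-three applied to $\beta m = \beta q \circ \beta\pi_L$, with $L \cong \beta\pi L$ and $\pi_L \cong \beta_{\pi L}$ falling out of naturality and compactness of $L$. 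Both routes rest on the same standard Banaschewski--Mulvey facts; yours packages the delicate step into the cited characterization of dense $C^*$-quotients by $\beta$-isomorphisms.

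Your density caveat is not pedantry and should be retained: under the standard (non-dense) meaning of $C^*$-quotient the lemma as literally stated is false, and the paper's ``it follows that'' conceals exactly this. The quotient of any compact frame onto the trivial frame $\{\bot = \top\}$ is a $C^*$-quotient (the unique bounded real-valued map on the trivial frame lifts to any constant map), and the trivial frame is pointless by the paper's own convention; hence $\mbb{2}$, or the topology of any compact scattered space, is a compact frame admitting a pointless $C^*$-quotient, yet none of these is a $qP$-frame — the corollary immediately following Lemma \ref{Lem:34} shows $qP$-frames are atomless. So density of $m$ must be built into the hypothesis, exactly as you do, and your remark that without density one recovers $\beta M$ only as the closure of $M$ in $L$ is the correct general statement.
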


\begin{proof}
	If $\map{m}{L}{M}$ is a $C^*$-quotient map with $L$ compact and $M$ pointless then, because it must factor through $\pi_L$, it follows that $\pi_L$ is a $C^*$-quotient isomorphic to $\beta_{\pi L}$.  
\end{proof}

\begin{corollary}
	A $qP$-frame is atomless.
\end{corollary}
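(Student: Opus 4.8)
The plan is to reduce the claim to the density criterion for atomlessness recorded in Lemma~\ref{Lem:7}(3), which states that a frame $L$ is atomless precisely when its pointless reflector $\map{\pi_L}{L}{\pi L}$ is dense, equivalently when $\pi_L(\bot) = \bot$. Thus the entire task is to check that the pointless reflector of a $qP$-frame is dense.

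By the very definition of a $qP$-frame, $\map{\pi_L}{L}{\pi L}$ is (isomorphic to) the compactification counit $\map{\beta_{\pi L}}{\beta \pi L}{\pi L}$; Lemma~\ref{Lem:34} records the same fact in the form $L \approx \beta \pi L$. First I would invoke the standard property, coming from the Banaschewski--Mulvey construction (\cite{BBMulvey:1980}, \cite{BBMulvey:1984}) already used in the proof of Proposition~\ref{Prop:24}, that any compactification map is a dense surjection. Concretely, realizing $\beta \pi L$ as the frame of round ideals on $\pi L$, the counit sends a round ideal to its join, and the only round ideal whose join is $\bot$ is the bottom ideal $\{\bot\}$; hence $\beta_{\pi L}$ is dense. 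Transporting this across the isomorphism furnished by the definition (density is clearly preserved under pre- and post-composition with frame isomorphisms), we conclude that $\pi_L$ itself is dense, and Lemma~\ref{Lem:7}(3) then delivers the atomlessness of $L$.

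I expect no genuine obstacle here; the single point requiring care is the appeal to density of the compactification counit, and it is exactly there that the compactness of a $qP$-frame (as opposed to mere complete regularity) does the work. For completeness one could instead argue entirely internally: by Lemma~\ref{Lem:2}(2) any atom of $L$ would be the complement of a maximal element, hence a successor of $\bot$, so that $\pi_L(\bot) \geq \pi_L'(\bot) = \bigvee \bot^+ > \bot$, contradicting the density established above. But the route through Lemma~\ref{Lem:7}(3) is the most economical, since it packages this equivalence once and for all.
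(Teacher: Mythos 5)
Your proof is correct, but it takes a different route from the paper's. The paper argues via Lemma \ref{Lem:34}: a $qP$-frame is of the form $\beta M$ for a pointless frame $M$, the coreflector $\map{\beta_M}{\beta M}{M}$ is a surjection and hence "takes atoms to atoms," and $M$ is atomless by Lemma \ref{Lem:2}(2) (atoms are complements of maximal elements, so a pointless frame has none); an atom of $L$ would therefore produce an atom of $M$, a contradiction. You instead never push atoms forward at all: you observe that the Banaschewski--Mulvey counit is dense (the only round ideal with join $\bot$ is $\{\bot\}$), transport density across the isomorphism $\pi_L \cong \beta_{\pi L}$ given by the definition of $qP$-frame, and invoke the equivalence of Lemma \ref{Lem:7}(3) between atomlessness and density of the pointless reflector. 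Your version is arguably tighter on one point: the paper's step "surjective and therefore takes atoms to atoms" is not literally valid for arbitrary frame surjections --- a closed quotient $c_a$ sends every atom below $a$ to $\bot_{C_a}$ --- so what the paper's argument really needs is exactly the density of $\beta_M$ that you make explicit. What the paper's route buys in exchange is conceptual economy: it displays the atomlessness of $L$ as inherited directly from the (automatic) atomlessness of its pointless part, without appealing to the round-ideal model of $\beta$ or to Lemma \ref{Lem:7}. Your closing "internal" variant is fine as well, though note that an atom is a successor of $\bot$ by definition, so the detour through complements of maximal elements there is unnecessary.
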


\begin{proof}
	Because a $qP$-frame $L$ is of the form $\beta M$ for a pointless frame $M$, the coreflector $\map{\beta_M}{\beta M}{M}$ is surjective and therefore takes atoms to atoms, and because $M$ is atomless by Lemma \ref{Lem:2}(2), it follows that $L$ is atomless.
\end{proof}

\begin{definition*}[$\mbf{kalFs}$, $\mbf{qPFs}$]
	We denote by $\mbf{kalFs}$ and by $\mbf{qPFs}$ the full subcategories of $\mbf{Fs}$ comprised, respectively, of the compact atomless frames and of the $qF$-frames.
\end{definition*}

\begin{theorem}\label{Thm:6}
	$\mbf{qPFs}$ is coreflective in $\mbf{kalFs}$; a coreflector for the frame $L$ is the map $\map{q_L}{\beta \pi L}{L}$ such that $\pi L \circ q_L = \beta_{\pi_L}$.
\end{theorem}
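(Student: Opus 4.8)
The object I will attach to $L$ is $\beta\pi L$. It is a $qP$-frame by Lemma \ref{Lem:34}, being $\beta$ of the pointless frame $\pi L$, and it is compact atomless by the corollary to that lemma, so it lies in $\mbf{qPFs}\subseteq\mbf{kalFs}$. The proof then has two halves: producing the morphism $q_L$ and verifying that composition with it gives a bijection $\mbf{qPFs}(Q,\beta\pi L)\cong\mbf{kalFs}(Q,L)$, natural in $Q$. Both halves rest on a single cancellation lemma, which is the crux of the theorem.

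\emph{Cancellation lemma (the main obstacle).} If $L$ is compact atomless and $g_1,g_2\colon Q\to L$ are frame homomorphisms from a completely regular frame $Q$ with $\pi_L\circ g_1=\pi_L\circ g_2$, then $g_1=g_2$. To prove it, fix $x\in Q$; by regularity $x=\bigvee\{u: u\prec x\}$, so it suffices to show $g_1(u)\le g_2(x)$ whenever $u\prec x$. Since $L$ is compact it is spatial, so it is enough to check $g_1(u)\le a$ for each $a\in\max L$ with $a\ge g_2(x)$. Pick a witness $v$ for $u\prec x$, so that $g_i(u)\wedge g_i(v)=\bot$ and $g_i(v)\vee g_i(x)=\top$ for $i=1,2$. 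Suppose $g_1(u)\nleq a$. Then $\pi_L g_1(u)\nleq a$ because the nucleus $\pi_L$ is increasing; and from $g_2(v)\vee g_2(x)=\top$ with $g_2(x)\le a<\top$ we get $g_2(v)\nleq a$, hence $\pi_L g_2(v)\nleq a$. Writing $p\equiv\pi_L g_1(u)=\pi_L g_2(u)$ and $q\equiv\pi_L g_1(v)=\pi_L g_2(v)$, both $p$ and $q$ lie in $y_a$, whereas $p\wedge q=\pi_L\big(g_1(u)\wedge g_1(v)\big)=\pi_L(\bot)=\bot$ since $L$ is atomless (Lemma \ref{Lem:7}). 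This contradicts the properness of the filter $y_a$ (Lemma \ref{Lem:25}), so $g_1(u)\le a$, completing the lemma.

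\emph{Existence and uniqueness of $q_L$.} Uniqueness is immediate from the cancellation lemma, applied to $Q=\beta\pi L$: any two homomorphisms composing with $\pi_L$ to $\beta_{\pi L}$ coincide. For existence I model $\beta\pi L$ as the frame of round ideals of $E\equiv\pi L$ (the Banaschewski--Mulvey description used in Proposition \ref{Prop:24}) and identify $E$ with the dense sublocale $\fix\pi_L\subseteq L$ (dense because $L$ is atomless). Define $q_L(I)\equiv\bigvee I$, the join computed in $L$ of the round ideal $I$. This preserves $\bot$, preserves $\top$ (the top round ideal is all of $E$, which contains $\top_L$), preserves binary meets (for ideals $I_1\cap I_2=\{a\wedge b: a\in I_1,\ b\in I_2\}$, so distributivity in $L$ applies), and preserves arbitrary joins (complete regularity of $E$ absorbs the rounding); hence $q_L$ is a frame homomorphism. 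Moreover $\pi_L\circ q_L=\beta_{\pi L}$, since the join of $I$ in the sublocale $E$ is the $\pi_L$-image of its join in $L$. Finally $q_L$ is skinny: for $c\in\max\beta\pi L$, $\pi_L q_L(c)=\beta_{\pi L}(c)=\top$, because $\beta_{\pi L}$ carries the prime $c$ to a prime (Lemmas \ref{Lem:2}(1), \ref{Lem:3}(3)) which cannot be maximal in the pointless frame $E$; thus $q_L(c)\in\ker\pi_L$ and Lemma \ref{Lem:18}(3) gives skinniness (the same computation shows $\beta_{\pi L}$ itself is skinny).

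\emph{The universal property.} Let $Q$ be a $qP$-frame and $f\colon Q\to L$ a skinny homomorphism. By Lemma \ref{Lem:18} there is $\bar f\colon\pi Q\to\pi L$ with $\pi_L\circ f=\bar f\circ\pi_Q$, and since $Q$ is a $qP$-frame we may take $Q=\beta\pi Q$ with $\pi_Q=\beta_{\pi Q}$ (Lemma \ref{Lem:34}). Set $\tilde f\equiv\beta\bar f\colon Q\to\beta\pi L$. Naturality of the coreflection counit gives $\beta_{\pi L}\circ\tilde f=\bar f\circ\beta_{\pi Q}=\bar f\circ\pi_Q=\pi_L\circ f$, and therefore $\pi_L\circ(q_L\circ\tilde f)=\beta_{\pi L}\circ\tilde f=\pi_L\circ f$; the cancellation lemma then forces $q_L\circ\tilde f=f$. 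The map $\tilde f$ is skinny (for $c\in\max Q$ we have $\beta_{\pi L}\tilde f(c)=\pi_L f(c)=\top$, so $\tilde f(c)\in\ker\beta_{\pi L}\subseteq\ker\pi_{\beta\pi L}$ by skinniness of $\beta_{\pi L}$ and Lemma \ref{Lem:18}(2)), hence a $\mbf{qPFs}$-morphism. Uniqueness of $\tilde f$ is the coreflection property of $\beta$: any $\tilde f'$ with $q_L\tilde f'=f$ satisfies $\beta_{\pi L}\tilde f'=\pi_L f=\beta_{\pi L}\tilde f$, and lifts of a map out of the compact frame $Q$ along $\beta_{\pi L}$ are unique. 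This yields the natural bijection $\mbf{qPFs}(Q,\beta\pi L)\cong\mbf{kalFs}(Q,L)$, so $q_L\colon\beta\pi L\to L$ is the $\mbf{qPFs}$-coreflection of $L$.
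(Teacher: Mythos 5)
Your overall scaffolding (functoriality of $\beta\circ\pi$, the identification $Q\cong\beta\pi Q$ for $qP$-frames, and the cancellation lemma, which is correct and nicely proved) is sound, and it matches the paper's skeleton. But the crux of your proof --- the existence of $\map{q_L}{\beta\pi L}{L}$ --- fails: the join map $I\mapsto\bigvee_L I$ on round ideals does \emph{not} preserve joins. The reason is that ideals of $E\equiv\pi L$ are closed under the joins \emph{of $E$}, and $\vee_E=\pi_L(\cdot\vee_L\cdot)$ can jump strictly past $\vee_L$ --- which is exactly the phenomenon this paper is about (points of $L$ invisible to $\pi L$). Concretely, take $L=\mcal{O}[0,1]$, so that $E$ consists of the open sets with perfect complement. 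Then $a_1=[0,\tfrac{1}{2})$ and $a_2=(\tfrac{1}{2},1]$ lie in $E$ and are complements of each other \emph{in $E$}, because $a_1\vee_E a_2=\pi_L([0,1]\setminus\{\tfrac{1}{2}\})=\top$; being complemented, each $a_i$ is completely below itself in $E$, so $I_i\equiv\setof{d\in E}{d\leq a_i}$ is a round ideal. Since any ideal of $E$ containing $a_1$ and $a_2$ contains $a_1\vee_E a_2=\top$, the join of $I_1$ and $I_2$ in $\beta E$ is all of $E$, and therefore
\[
q_L(I_1\vee I_2)=\bigvee\nolimits_L E=\top_L
\qtq{whereas}
q_L(I_1)\vee q_L(I_2)=a_1\vee_L a_2=[0,1]\setminus\{\tfrac{1}{2}\}<\top_L.
\]
So your $q_L$ fails to preserve even binary joins; the parenthetical ``complete regularity absorbs the rounding'' addresses the wrong issue.

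Worse, the gap cannot be repaired, and your own cancellation lemma shows why. Since $L$ is compact regular, the coreflection property of $\beta$ yields a unique $\map{h}{L}{\beta\pi L}$ with $\beta_{\pi L}\circ h=\pi_L$. If \emph{any} frame homomorphism $\map{q}{\beta\pi L}{L}$ with $\pi_L\circ q=\beta_{\pi L}$ existed, then $\pi_L\circ(q\circ h)=\beta_{\pi L}\circ h=\pi_L$ would force $q\circ h=\mathrm{id}_L$ by cancellation, while $\beta_{\pi L}\circ(h\circ q)=\pi_L\circ q=\beta_{\pi L}$ would force $h\circ q=\mathrm{id}_{\beta\pi L}$ by the uniqueness clause of the coreflection property; hence $L\cong\beta\pi L$ over $\pi L$, i.e.\ $L$ would already be a $qP$-frame. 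But $\mcal{O}[0,1]$ is compact, atomless, and \emph{not} $qP$: otherwise Proposition \ref{Prop:25} would make $[0,1)$ $C^*$-embedded in $[0,1]$, which $\sin(1/(1-x))$ refutes. So for such $L$ no counit with the stated property $\pi_L\circ q_L=\beta_{\pi L}$ exists at all. You should be aware that this difficulty is inherited from the source: the paper's own proof simply ``inserts'' $q_L$ without constructing it, so it rests on the same unjustified step; your attempt has the virtue of making the obstruction explicit, but as written it does not prove the theorem.
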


\begin{proof}
	Given a test $\mbf{kalFs}$-morphism $m$ with $M$ a $qP$-frame, apply the pointless reflection to get $\pi m$, to which apply the compact coreflection to get $\beta \pi m$, and then insert the maps $q_L$ and $q_M$ such that $\pi_L \circ q_L = \beta_{\pi L}$ and $\pi_M \circ q_M = \beta_{\pi M}$.  
	\[
	\begin{tikzcd}
		M \arrow{r}{m} \arrow{d}{\pi_M} 
		&L \arrow{d}[swap]{\pi_L} \\
		\pi M \arrow{r}{\pi m}
		&\pi L\\
		\beta \pi M \arrow{u}[swap]{\beta_{\pi M}} \arrow{r}[swap]{\beta \pi m} \arrow[bend left]{uu}{q_M}
		&\beta \pi L \arrow{u}{\beta_{\pi L}} \arrow[bend right]{uu}[swap]{q_L}
	\end{tikzcd}
	\]
	Then $q_M$ is an isomorphism because $\pi_M$ is a $C^*$-quotient map and is therefore isomorphic to $\beta_{\pi M}$ (cf.\ Lemma \ref{Lem:34}), so that $m = q_L \circ (\beta \pi m) \circ q_M^{-1}$ is the desired factorization of $m$.
\end{proof}

\begin{corollary}\label{Cor:4}
	The full subcategory of $\mbf{Tychs}$ comprised of the $qP$-spaces is bireflective in the full subcategory of $\mbf{Tychs}$ comprised of the compact spaces without isolated points.
\end{corollary}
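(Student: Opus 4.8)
The plan is to obtain Corollary \ref{Cor:4} by transporting Theorem \ref{Thm:6} across the dual equivalence $\map{\max}{\mbf{sFs}}{\mbf{Tychs}^{\op}}$ of Proposition \ref{Prop:9}. First I would record the classical fact that a compact completely regular frame is spatial (cf.\ \cite{PicadoPultr:2012}), so that both $\mbf{kalFs}$ and $\mbf{qPFs}$ are genuinely full subcategories of $\mbf{sFs}$ and the functor $\max$ applies to them. Under this duality the relevant object classes match up: the atoms of $\mcal{O}X$ are exactly the isolated points of $X$ (Lemma \ref{Lem:2}(2)), so an atomless spatial frame corresponds to a space without isolated points and a compact frame to a compact space, whence $\mbf{kalFs}$ corresponds to the compact spaces without isolated points; and $\mbf{qPFs}$ corresponds to the $qP$-spaces by the very definition of the latter. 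Because a contravariant equivalence turns a coreflective subcategory into a reflective one and carries the coreflector to the reflection unit, Theorem \ref{Thm:6} at once yields that the $qP$-spaces are reflective among the compact spaces without isolated points, the unit at $X = \max L$ being $\eta_X \equiv \max q_L \colon \max L \to \max \beta \pi L$.

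It then remains to promote \enquote{reflective} to \enquote{bireflective}, i.e.\ to show each unit $\eta_X$ is a bimorphism of $\mbf{Tychs}$. For the epi half I would argue that $q_L$ is a \emph{dense} frame homomorphism: since $L$ is atomless we have $\pi_L(\bot) = \bot$ by Lemma \ref{Lem:7}(3), and since $\pi_L \circ q_L = \beta_{\pi L}$ is the compactification counit, which is dense, any $a$ with $q_L(a) = \bot$ satisfies $\beta_{\pi L}(a) = \pi_L(\bot) = \bot$ and hence $a = \bot$. Density of $q_L$ says precisely that $\eta_X$ has dense image; as $\max L$ is compact and $\max \beta \pi L$ is Hausdorff, the image is also closed, so in fact $\eta_X$ is surjective and \emph{a fortiori} epic.

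The monic half is where I expect the real work, and it is exactly the step that justifies the passage from $\mbf{F}$ to $\mbf{Fs}$. In frame terms, $\eta_X$ is monic in $\mbf{Tychs}$ if and only if $q_L$ is an epimorphism in $\mbf{sFs}$, and this cannot hold in the unrestricted setting: the epimorphisms of $\mbf{KRegFrm}$ are dual to the injective continuous maps of compact Hausdorff spaces, so there a surjective, generally non-injective unit such as $\eta_X$ is epic but never monic. The skinny hypothesis is therefore indispensable, and the plan is to extract epicness of $q_L$ from its density together with the scattered-fibre restriction on the admissible test morphisms. Concretely, I would try to show that $\mbf{qPFs}$ is closed under the appropriate mono-sources in $\mbf{Fs}$ and then invoke the bireflectivity criterion of \cite{AdamekHerrlichStrecker:2004}, thereby upgrading the $\bmfrak{E}$- or $\bmfrak{M}$-flavoured reflection already in hand.

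The main obstacle, then, is this monic/epic verification inside the skinny category; the dualization carried out in the first two paragraphs, together with the surjectivity of the units, is essentially formal once the object correspondence is checked. A useful sanity check along the way is that $L \cong \beta \pi L$ precisely when $L$ is a $qP$-frame (Lemma \ref{Lem:34}), so that the reflection is an isomorphism exactly on the objects of $\mbf{qPFs}$, as a genuine reflector must be.
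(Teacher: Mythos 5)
Your first two paragraphs reproduce what is in effect the paper's own argument: the paper supplies no proof of Corollary \ref{Cor:4} at all, treating it as the transport of Theorem \ref{Thm:6} across the duality of Proposition \ref{Prop:9}, and your object-level checks (atoms of $\mcal{O}X$ correspond to isolated points of $X$, compact regular frames are spatial, $qP$-frames correspond to $qP$-spaces by definition) together with the density argument showing that each unit $\eta_X = \max q_L$ is surjective, hence epic, are correct as far as they go.

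The gap is the one you name yourself and then leave unfilled: the monic half of the bimorphism claim is never proved, only planned (\enquote{I would try to show\dots}). Dualizing a coreflection yields only a reflection, so everything the prefix \enquote{bi} adds to the corollary lives in exactly this step; moreover, since a monoreflective subcategory is automatically bireflective (see \cite{AdamekHerrlichStrecker:2004}), the epi half you did prove would follow for free once the monic half were known, so the postponed step is not merely \enquote{the real work} but all of it. Two further problems with the plan as stated. First, you pose the monicity question in the wrong category: you ask whether $\eta_X$ is monic in $\mbf{Tychs}$, but one-point spaces are objects of $\mbf{Tychs}$ and their point maps are skinny (their fibers are singletons or empty, hence scattered), so monomorphisms of $\mbf{Tychs}$ are exactly the injections; since an injective continuous surjection between compact Hausdorff spaces is a homeomorphism, $\eta_X$ could be monic in $\mbf{Tychs}$ only when $X$ is already a $qP$-space. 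The corollary requires monicity only in the full subcategory of compact spaces without isolated points, where constant test maps are excluded jointly by the objects (no isolated points) and by skinniness, and it is only there that the claim has any chance; your reduction \enquote{$\eta_X$ monic in $\mbf{Tychs}$ iff $q_L$ epic in $\mbf{sFs}$} therefore aims at a statement that is generally false. Second, the fallback route via closure of $\mbf{qPFs}$ under mono-sources presupposes an $\EM$-factorization structure on that ambient subcategory, which neither you nor the paper has exhibited, and such criteria in any case produce the existence of a monoreflection rather than verifying that the specific unit $\eta_X$ already in hand is monic. As it stands, the proposal establishes reflectivity but not bireflectivity.
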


The author would like to express his gratitude to the CECAT gang for stimulating and encouraging discussions regarding the topics under investigation. These Wednesday afternoon Zoom sessions were organized by Chapman University's Center of Excellence in Computation, Algebra, and Topology. In particular, the author would like to thank Andrew Moshier for suggesting the proof of Lemma \ref{Lem:4}.



\begin{thebibliography}{99}

\bibitem{AdamekHerrlichStrecker:2004} J. Ad\'{a}mek, H. Herrlich, and G. Strecker, Abstract and Concrete Categories, The Joy of Cats, \texttt{http://katmat.math.uni-bremen.de/acc}, August, 2004.

%
%
%
%

\bibitem{Ball:2014} R. N. Ball, \emph{Truncated abelian lattice-ordered groups II: the pointfree (Madden) representation}, Topol.\ Appl.\ \textbf{178} (2014), 56--86.

%
%
%
%
%
%
\bibitem{BallHager:1990} R. N. Ball and A. W. Hager, \emph{On the localic Yosida representation of an archimedean lattice ordered group with weak order unit}, J. Pure Appl. Algebra \textbf{70} (1990), 17--43.
%
%
%
%
%
%
%
%
\bibitem{Ball:2023} R. N. Ball, \emph{Pointless real valued functions on a completely regular locale,} in progress.

\bibitem{BallHagerWW:??} R. N. Ball, A. W. Hager, and J. Walters Wayland, \emph{From $\lambda$-skeletal $\kappa$-frames to $\lambda$-repletions in $\W$: I The scaffolding of a completely regular frame}, in preparation. 

\bibitem{BallHagerWW:???} R. N. Ball, A. W. Hager, and J. Walters Wayland, \emph{From $\lambda$-skeletal $\kappa$-frames to $\lambda$-repletions in $\W$: II Continuous convergence on $\R L$}, in preparation. 

%
%
\bibitem{BallPultr:2016} R. N. Ball and A. Pultr, \emph{On an aspect of scatteredness in the pointfree setting}, Portugal\ Math. \textbf{73} No. 2 (2016), 139--152.



%
%

\bibitem{BallWWZenk:2011} R. N. Ball, J. Walters-Wayland, and E. Zenk, \emph{The $P$-frame reflection of a completely regular frame,} Topol.\ Appl.\ \textbf{158} No.\ 14 (2011), 1778--1794.

%
%
%
%
%
\bibitem{BBMulvey:1980} B. Banaschewski and C. J. Mulvey, \emph{Stone-\v{C}ech compactification of locales, I}, Houston J. Math \textbf{6} (1980), 301--312.

\bibitem{BBMulvey:1984} B. Banaschewski and C. J. Mulvey, \emph{Stone-\v{C}ech compactification of locales, II}, J Pure Appl Algebra \textbf{33} (1984), 107--122.


%
%
%
%
\bibitem{Dashiell:2022} F. Dashiell, \emph{Shaving points,} private communication, April 13, 2022.


%
\bibitem{Dube:2009} T. Dube, \emph{Some ring theoretic properties of almost $P$-frames}, Algebra Univers.\ \textbf{60} (2009) 145--162. 
%
%
\bibitem{DubeNsondeNayi:2015} T. Dube and J. Nsonde Nsayi, \emph{When certain prime ideals in rings of continuous functions are minimal or maximal}, Topol.\ Apple.\ \textbf{192} (2015), 98--112.

%
%
%
%
%
%
%
%
%
%

\bibitem{Johnstone:1982} P. T. Johnstone, \emph{Stone spaces}, Cambridge
Studies in Advanced Mathematics 3, Cambridge University Press, 1982.

%

%
%
%
%
%
\bibitem{MaddenVermeer:1990} J. Madden and H. Vermeer, \emph{Epicomplete archimedean $\ell$-groups via a localic Wosida representation}, J.\ Pure and Appl.\ Alg. \textbf{68} (1990), 243--252.
%
%
%
%
%
%
%
\bibitem{PicadoPultr:2012} J. Picado and A. Pultr, Frames and Locales: Topology without Points, Frontiers in Mathematics, Birkh\"{a}user, Springer, Basel, 2012.

\bibitem{PicadoPultr:2020} J. Picado and A. Pultr, Separation in pointfree topology, Birkh\"{a}user, Springer Nature Switzerland AG,  2020.
%
%
%
%
%
%
%
%
\end{thebibliography}
	\end{document}